\def\ov#1{{\overline{#1}}}
\def\wh#1{{\widehat{#1}}}
\def\wt#1{{\widetilde{#1}}}
\def\?{\ ???\ \immediate\write16{}%
\immediate\write16{Warning: There was still a question mark . . . }%
\immediate\write16{}}
\newcommand{\conv}{\operatorname{conv}}
\newcommand{\vol}{\operatorname{vol}}
\newcommand{\inter}{\operatorname{int}}
\newcommand{\car}{\operatorname{char}}
\newcommand{\KK}{\operatorname{K}}
\newcommand{\conc}{\operatorname{conc}}
\newcommand{\avol}{\operatorname{\wh{vol}}}
\newcommand{\dd}{\hspace{1pt}\operatorname{d}\hspace{-1pt}}
\renewcommand{\div}{\operatorname{div}}
\newcommand{\stab}{\operatorname{stab}}
\newcommand{\tors}{\operatorname{tors}}
\renewcommand{\i}{\operatorname{i}}
\renewcommand{\Im}{\operatorname{im}}
\newcommand{\e}{{\rm e}}
\newcommand{\h}{\operatorname{h}}
\newcommand{\Hom}{\operatorname{Hom}}
\newcommand{\FS}{{\operatorname{FS}}}
\newcommand{\can}{{\operatorname{can}}}
\newcommand{\ord}{{\operatorname{ord}}}
\newcommand{\val}{{\operatorname{val}}}
\newcommand{\an}{{\operatorname{an}}}
\newcommand{\Aut}{{\operatorname{Aut}}}
\newcommand{\abs}{{\operatorname{abs}}}
\newcommand{\ess}{{\operatorname{ess}}}
\def \C{\mathbb{C}}
\def \F{\mathbb{F}}
\def \G{\mathbb{G}}
\def \K{\mathbb{K}}
\def \L{\mathbb{L}}
\def \N{\mathbb{N}}
\def \P{\mathbb{P}}
\def \Q{\mathbb{Q}}
\def \R{\mathbb{R}}
\def \SS{\mathbb{S}}
\def \T{\mathbb{T}}
\def \Z{\mathbb{Z}}
\def\cF {{\mathcal F}}
\def\cO {{\mathcal O}}
\newcommand{\bfalpha}{{\boldsymbol{\alpha}}}
\numberwithin{equation}{section}
\theoremstyle{definition}
\newtheorem{defn}{Definition}
\numberwithin{defn}{section}
\newtheorem{setting}[defn]{Setting}
\newtheorem{rem}[defn]{Remark}
\newtheorem{exmpl}[defn]{Example}
\newtheorem*{exmpl*}{Example}
\theoremstyle{plain}
\newtheorem{lem}[defn]{Lemma}
\newtheorem{prop}[defn]{Proposition}
\newtheorem{thm}[defn]{Theorem}
\newtheorem{cor}[defn]{Corollary}
\newtheorem{prop-def}[defn]{Proposition-Definition}
\newtheorem{thma}{Theorem}
\newtheorem{cora}[thma]{Corollary}
\begin{document}

\title[Successive minima of toric
  height functions]{Successive minima of toric
  height functions}

\author[Burgos Gil]{Jos\'e Ignacio Burgos Gil}
\address{Instituto de Ciencias Matem\'aticas (CSIC-UAM-UCM-UCM3).
  Calle Nicol\'as Ca\-bre\-ra~15, Campus UAB, Cantoblanco, 28049 Madrid,
  Spain} 
\email{burgos@icmat.es}
\urladdr{\url{http://www.icmat.es/miembros/burgos/}}
\author[Philippon]{Patrice Philippon}
\address{Institut de Math{\'e}matiques de
Jussieu -- U.M.R. 7586 du CNRS, \'Equipe de Th\'eorie des Nombres.
Case 247, 4 place Jussieu, 75252 Paris cedex 05, France}
\email{pph@math.jussieu.fr}
\urladdr{\url{http://www.math.jussieu.fr/~pph}}
\author[Sombra]{Mart{\'\i}n~Sombra}
\address{ICREA \& Universitat de Barcelona, Departament d'{\`A}lgebra i Geometria.
Gran Via 585, 08007 Bar\-ce\-lo\-na, Spain}
\email{sombra@ub.edu}
\urladdr{\url{http://atlas.mat.ub.es/personals/sombra}}

\thanks{Burgos Gil was partially supported by the MICINN research
  projects MTM2009-14163-C02-01 and MTM2010-17389. Philippon was
  partially supported by the CNRS international projects for
  scientific cooperation (PICS) ``Properties of the heights of
  arithmetic varieties'' and ``G\'eom\'etrie diophantienne et calcul
  formel'' and the ANR research project ``Hauteurs, modularit\'e,
  transcendance''.  Sombra was partially supported by the MICINN
  research project MTM2009-14163-C02-01 and the MINECO research
  project MTM2012-38122-C03-02. }

\date{\today} \subjclass[2010]{Primary 14G40; Secondary
 14M25, 52A41.}  \keywords{Height, essential minimum, successive
 minima, toric variety, toric metrized $\R$-divisor, concave function,
Legendre-Fenchel duality}

\begin{abstract}
  Given a toric metrized $\R$-divisor on a toric variety over a global
  field, we give a formula for the essential minimum of the associated
  height function. Under suitable positivity conditions, we also give
  formulae for all the successive minima. We apply these results to
  the study, in the toric setting, of the relation between the
  successive minima and other arithmetic invariants like the height
  and the arithmetic volume.  We also apply our formulae to compute
  the successive minima for several families of examples, including
  weighted projective spaces,  toric
  bundles and translates of subtori.
\end{abstract}
\maketitle

\overfullrule=3mm
\vspace{-8mm}

\tableofcontents
\vspace{-8mm}

\section[Introduction]{Introduction}

The height is a tool that is ubiquitous in Diophantine geometry and
approximation. It plays a central r\^ole in the proof of finiteness
results on integral and rational points on curves and Abelian
varieties like the theorems of Siegel, Mordell-Weil and Faltings, see
for instance \cite{HindrySilverman:dg,BombieriGubler:heights}. It also
very useful in transcendence theory and in the context of Schmidt's
subspace theorem.

Arakelov geometry provides a convenient framework to define and study
heights. Let $\K$ be a global field, that is, a field which is either a
number field or the function field of a regular projective curve, and let $X$
be an
algebraic variety over $\K$ of dimension $n$. To an (adelically)
metrized $\R$-divisor $\ov D$ on $X$ one can associate a real-valued
height function 
\begin{displaymath}
  \h_{\ov D}\colon X(\ov \K)\longrightarrow \R
\end{displaymath}
on the set of algebraic points of $X$, see \S~\ref{sec:succ-algebr-minima} for details. It is a generalization
of the notion of height of algebraic points considered by 
Northcott, Weil and others.

Given $\eta \in \R$, we denote by $X(\ov \K)_{\le \eta}$ the
set of algebraic
points $p\in X(\ov \K)$ with $\h_{\ov D}(p)\le \eta$. 
For $i=1,\dots, n+1$, the \emph{$i$-th minimum} of $X$ with respect to $\ov
D$ is defined as  
\begin{displaymath}
  \upmu_{\ov D}^{i}(X)= \inf\{\eta \in \R \mid \dim\big(\ov {X(\ov
    \K)_{\le \eta}}\big)
  \ge n-i+1\}.
\end{displaymath}
In particular, the first minimum is the infimum of the real numbers
$\eta$ such that the set $X(\ov \K)_{\le \eta}$ is dense. It is also
called the \emph{essential minimum} of $X$ with respect to $\ov D$,
and denoted $\upmu^{\ess}_{\ov D}(X)$.

These successive minima contain important information on the height
function.  The effective version of the generalized Bogomolov
conjecture asks for an explicit lower bound for the essential minimum
of certain varieties in terms of geometric and arithmetic data. Such
lower bounds have been extensively studied and have several applications
in Diophantine geometry and computer algebra, see for instance
\cite{AvendanoKrickSombra:fbslp,AmorosoViada:spst}.

Our aim in this text is to study the successive minima of height
functions in the toric setting.  Toric objects can be described in
combinatorial terms, and their algebro-geometric properties can be
expressed and studied in terms of this description. In particular, a
proper toric variety~$X$ of dimension $n$ over an arbitrary field is
given by a fan $\Sigma$ on a vector space $N_{\R}\simeq
\R^{n}$. Recall that a toric variety is called proper if it is proper as an
algebraic variety. In combinatorial terms, this is equivalent to the fan being
complete, that is, that the union of its cones covers the whole
vector space. A toric $\R$-divisor~$D$ on a proper toric variety $X$ defines a polytope
$\Delta_{D}$ in the dual space $M_{\R}:=N_{\R}^{\vee}$.  There is a
``toric dictionary'' that translates algebro-geometric properties of
the pair $(X,D)$ into combinatorial properties of the fan and the
polytope. 

In \cite{BurgosPhilipponSombra:agtvmmh,
  BurgosMoriwakiPhilipponSombra:aptv}, we started a program to extend
this toric dictionary to the arithmetic aspects of toric
varieties. Suppose that $X$ is a proper toric variety over the global
field $\K$. Then, to a toric metrized $\R$-divisor $\ov D$ on $X$ we
associate a family of concave functions on the polytope
$\vartheta_{\ov D, v}\colon \Delta_{D}\to \R$, indexed by the places
$\mathfrak{M}_{\K}$ of $\K$. These functions are called the
\emph{local roof functions} of $\ov D$ and they are zero except for a
finite set of places. The \emph{global roof function} $\vartheta_{\ov
  D}$ is the concave function on $\Delta _{D}$ defined as a weighted
sum over all places of these local roof functions. The main theme of
this program is that the global roof function is the arithmetic
analogue of the polytope and encodes a lot of information of the pair
$(X,\ov D)$. Among other results, we gave formulae for the height
$\h_{\ov D}(X)$ and the arithmetic volumes $\wh \vol(\ov D)$ and $\wh
\vol_{\chi}(\ov D)$ in terms of this function.

Our first main result in this text is that the essential minimum of a
toric metrized $\R$-divisor is given by the maximum of the
global roof function.

\begin{thma}[Corollary~\ref{cor:4}]\label{thm:5}
Let $X$ be a proper toric variety over $\K$ and  $\ov D$  a
toric metrized $\R$-divisor on $X$. Then
\begin{displaymath}
  \upmu^{\ess}_{\ov D}(X)=  \max_{x\in
   \Delta _{D}}\vartheta_{\ov D}(x).
\end{displaymath}
\end{thma}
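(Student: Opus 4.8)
The plan is to prove the two inequalities $\upmu^{\ess}_{\ov D}(X)\ge\max_{\Delta_{D}}\vartheta_{\ov D}$ and $\upmu^{\ess}_{\ov D}(X)\le\max_{\Delta_{D}}\vartheta_{\ov D}$ separately, both resting on the explicit formula for the height of a point of the dense torus $\T\subseteq X$. For $q\in\T(\ov\K)$ and a place $w$ of $\K(q)$ lying over $v\in\mathfrak M_{\K}$, write $\val_{w}(q)\in N_{\R}$ for the vector of logarithms of the absolute values of the coordinates of $q$. Computing $\h_{\ov D}(q)$ with the canonical section of $\mathcal O(D)$ (which does not vanish on $\T$) together with the toric description of the metric of $\ov D$ by the functions $\psi_{\ov D,v}\colon N_{\R}\to\R$, one gets
\[
  \h_{\ov D}(q)=\frac{1}{[\K(q):\K]}\sum_{w}n_{w}\bigl(-\psi_{\ov D,v}(\val_{w}(q))\bigr),
\]
with $n_{w}$ the local weight and $v$ the place of $\K$ below $w$. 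Recall also that by definition $\vartheta_{\ov D,v}$ is the Legendre--Fenchel dual of $\psi_{\ov D,v}$, so that $\vartheta_{\ov D,v}(x)\le\langle x,u\rangle-\psi_{\ov D,v}(u)$ for all $x\in\Delta_{D}$ and $u\in N_{\R}$, and that $\vartheta_{\ov D}=\sum_{v}n_{v}\vartheta_{\ov D,v}$ is a finite concave function on the polytope $\Delta_{D}$ (here $X$ proper is used), hence attains a maximum.

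For the lower bound, fix a point $x_{0}\in\Delta_{D}$ at which $\vartheta_{\ov D}$ is maximal. Applying the inequality $-\psi_{\ov D,v}(u)\ge\vartheta_{\ov D,v}(x_{0})-\langle x_{0},u\rangle$ with $u=\val_{w}(q)$, multiplying by $n_{w}$ and summing over all places $w$ of $\K(q)$, the contribution $\sum_{w}n_{w}\langle x_{0},\val_{w}(q)\rangle=\bigl\langle x_{0},\sum_{w}n_{w}\val_{w}(q)\bigr\rangle$ vanishes by the product formula applied to the coordinates of $q$ in $\K(q)^{\times}$, while the weighted sum of the $\vartheta_{\ov D,v}(x_{0})$ over the places $w$ equals $[\K(q):\K]\,\vartheta_{\ov D}(x_{0})$. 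Hence $\h_{\ov D}(q)\ge\vartheta_{\ov D}(x_{0})=\max_{\Delta_{D}}\vartheta_{\ov D}$ for \emph{every} $q\in\T(\ov\K)$. Since any Zariski dense subset of $X(\ov\K)$ meets $\T(\ov\K)$, no set $X(\ov\K)_{\le\eta}$ with $\eta<\max_{\Delta_{D}}\vartheta_{\ov D}$ can be dense, which is the claimed bound. Note that this argument uses no positivity assumption on $\ov D$, consistent with the absence of hypotheses in the statement.

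For the upper bound one must produce, for each $\varepsilon>0$, a Zariski dense set of algebraic points of height at most $\max_{\Delta_{D}}\vartheta_{\ov D}+\varepsilon$; the idea is to make the inequality above essentially sharp along a dense family. With $x_{0}$ as above, choose for each $v$ a vector $w_{v}\in N_{\R}$ (equal to $0$ for all but finitely many $v$) that almost attains the infimum defining $\vartheta_{\ov D,v}(x_{0})$. The key point is that, because $x_{0}$ maximizes $\vartheta_{\ov D}=\sum_{v}n_{v}\vartheta_{\ov D,v}$, these near-optimal $w_{v}$ can be chosen subject to the additional constraint $\sum_{v}n_{v}w_{v}=0$: this is a Legendre--Fenchel duality statement, expressing that $0$ lies in the Minkowski sum of the superdifferentials of the $\vartheta_{\ov D,v}$ at the maximum of their sum. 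One then constructs $\xi\in\T(\ov\K)$ whose coordinates are suitable products of prime powers with rational exponents, so that $\val_{w}(\xi)=w_{v}$ for every place $w$ of $\K(\xi)$ over $v$ (possible precisely because $\sum_{v}n_{v}w_{v}=0$, after replacing the $w_{v}$ by nearby values in the relevant value groups). Finally, the translates $\zeta\cdot\xi$ by torsion points $\zeta\in\T(\ov\K)_{\tors}$ form a Zariski dense set, and since torsion points are units at every place one has $\val_{w}(\zeta\xi)=\val_{w}(\xi)=w_{v}$, so the displayed formula gives $\h_{\ov D}(\zeta\xi)=\sum_{v}n_{v}\bigl(-\psi_{\ov D,v}(w_{v})\bigr)$, which is $\le\vartheta_{\ov D}(x_{0})+O(\varepsilon)\le\max_{\Delta_{D}}\vartheta_{\ov D}+O(\varepsilon)$. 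Letting $\varepsilon\to0$ completes the proof.

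The hard part will be this last, constructive, inequality, and within it the convex-analytic balancing step: checking that the local near-minimizers $w_{v}$ can be chosen with $\sum_{v}n_{v}w_{v}=0$, together with the degenerate situations where some infimum defining $\vartheta_{\ov D,v}(x_{0})$ is not attained (a corner of the roof function, pushing $w_{v}$ towards a recession direction) or where $x_{0}$ lies on the boundary of $\Delta_{D}$. These should be handled by perturbing $x_{0}$ and the $w_{v}$ and absorbing the errors into $\varepsilon$; alternatively, one can first reduce, using continuity of both $\upmu^{\ess}_{\ov D}(X)$ and of $\max_{\Delta_{D}}\vartheta_{\ov D}$ under uniform perturbation of the metric, to the case where $D$ is ample and the metric is smooth and semipositive, where every infimum is attained in the interior and the $w_{v}$ are genuine gradients.
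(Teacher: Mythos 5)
Your strategy is the same as the paper's: lower bound by Legendre--Fenchel duality and the product formula; upper bound by constructing a dense family of points of small height near a maximizer $x_0$ of $\vartheta_{\ov D}$; and reduction to the dense orbit $X_0$ via translates by torsion points, which preserve height. The lower bound is correct. The upper bound sketch has two genuine gaps, and neither is repaired by the perturbation or reduction you suggest at the end.

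The first gap is the balancing step. The superdifferential argument does give $u_v\in\partial\vartheta_{\ov D,v}(x_0)$ with $\sum_v n_v u_v = 0$, but such a $u_v$ satisfies $\vartheta_{\ov D,v}(x_0) = \langle x_0,u_v\rangle - \conc(\psi_{\ov D,v})(u_v)$, not $\langle x_0,u_v\rangle - \psi_{\ov D,v}(u_v)$; when $\psi_{\ov D,v}$ is not concave these two differ and the height of the constructed point is then strictly larger than $\vartheta_{\ov D}(x_0)$. Your ``reduce to the semipositive case'' escape goes in the wrong direction: replacing $\psi_v$ by $\conc(\psi_v)$ can only \emph{lower} the height of every torus point, so proving the equality for the concavified metric yields again only the lower bound for the original $\ov D$. (And ``reduce to $D$ ample'' is not a metric perturbation at all, so it cannot be achieved this way.) The paper's fix is Lemma~\ref{lemm:6}, which writes $\conc(\psi_v)(u_v)$ as a limit of finite rational averages $\frac1d\sum_j\psi_v(u_{v,j})$ with $\frac1d\sum_j u_{v,j}=u_v$, paired with Lemma~\ref{constrextotdec}, which produces a degree-$d$ extension in which the finitely many relevant places split completely so the $d$ targets $u_{v,j}$ can be assigned one per place above $v$. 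The second gap is the realization of prescribed valuations. ``Products of prime powers with rational exponents'' works over $\Q$, but over a general global field you need density of $\val(\T(\K)\otimes\Q)$ in the product-formula hyperplane $H_\K$; this is the Dirichlet $S$-unit theorem over number fields, while over a function field with an infinite base field that theorem can fail and the paper uses Riemann--Roch instead (Lemma~\ref{lemm:7}), together with the $L^1$-continuity Lemma~\ref{lemm:8}. Finally, you should treat separately the case where $D$ is not pseudo-effective, in which $\Delta_D=\emptyset$, there is no $x_0$, and both sides are $-\infty$.
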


Our second main result is that, under suitable positivity hypothesis
on $\ov D$, not only the essential minimum, but all the succesive
minima can be read from the global roof function. 

\begin{thma}[Theorem \ref{thm:4}]\label{thm:7}
Let $X$ be a proper toric variety over $\K$ and  $\ov D$  a
semipositive toric metrized $\R$-divisor on $X$ with $D$ ample.
Then, for $i=1,\dots,n+1$,  
  \begin{equation*}
  \upmu^{i}_{\ov D}(X) = \min_{F\in \cF(\Delta_{D})^{n-i+1}} \max_{x\in
    F}\vartheta_{\ov D}(x),
  \end{equation*}
  where $\cF(\Delta_{D})^{n-i+1}$ is the set of faces of the polytope
  $ \Delta_{D}$ of dimension $n-i+1$.
\end{thma}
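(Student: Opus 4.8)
The plan is to prove the two inequalities between $\upmu^{i}_{\ov D}(X)$ and
$R_{i}:=\min_{F\in\cF(\Delta_{D})^{n-i+1}}\max_{x\in F}\vartheta_{\ov D}(x)$
separately. For the bound $\upmu^{i}_{\ov D}(X)\le R_{i}$ I would use the orbit closures of $X$ together with Theorem~\ref{thm:5} (the formula for the essential minimum). Fix a face $F$ of $\Delta_{D}$ with $\dim F=n-i+1$ and let $X_{F}\subseteq X$ be the corresponding toric subvariety. By the toric dictionary and the behaviour of toric metrized $\R$-divisors under restriction to orbit closures (as in \cite{BurgosPhilipponSombra:agtvmmh,BurgosMoriwakiPhilipponSombra:aptv}), $X_{F}$ is a proper toric variety of dimension $n-i+1$ over $\K$ and $\ov D|_{X_{F}}$ is a semipositive toric metrized $\R$-divisor whose underlying divisor is ample, whose polytope is $F$, and whose global roof function is $\vartheta_{\ov D}|_{F}$. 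Since heights are functorial for the closed immersion $X_{F}\hookrightarrow X$, we have $X_{F}(\ov\K)_{\le\eta}\subseteq X(\ov\K)_{\le\eta}$ for all $\eta$, and by Theorem~\ref{thm:5} applied to $X_{F}$ the set $X_{F}(\ov\K)_{\le\eta}$ is Zariski dense in $X_{F}$ whenever $\eta>\max_{x\in F}\vartheta_{\ov D}(x)=\upmu^{\ess}_{\ov D|_{X_{F}}}(X_{F})$. Hence $\dim\ov{X(\ov\K)_{\le\eta}}\ge n-i+1$, so $\upmu^{i}_{\ov D}(X)\le\eta$; letting $\eta$ decrease to $\max_{x\in F}\vartheta_{\ov D}(x)$ and then minimising over $F$ gives $\upmu^{i}_{\ov D}(X)\le R_{i}$.

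For the opposite inequality I would first reduce to a statement about subvarieties. The function $d\mapsto\min_{F\in\cF(\Delta_{D})^{d}}\max_{x\in F}\vartheta_{\ov D}(x)$ is nondecreasing, because every $d$-dimensional face of $\Delta_{D}$ is contained in a $(d+1)$-dimensional one. If $\eta$ satisfies $\dim\ov{X(\ov\K)_{\le\eta}}\ge n-i+1$, then some irreducible component $Y$ of that closure has dimension $\delta\ge n-i+1$, and $X(\ov\K)_{\le\eta}$ is Zariski dense in $Y$ (density in a finite union forces density in each component), so $\upmu^{\ess}_{\ov D}(Y)\le\eta$. It therefore suffices to prove
\[
\upmu^{\ess}_{\ov D}(Y)\ \ge\ \min_{F\in\cF(\Delta_{D})^{\delta}}\max_{x\in F}\vartheta_{\ov D}(x)\qquad\text{for every irreducible closed subvariety }Y\subseteq X\text{ of dimension }\delta,
\]
where a $0$-dimensional $Y$ is a closed point and $\upmu^{\ess}_{\ov D}(Y)$ is its height: granting this, $\eta\ge\min_{\cF(\Delta_{D})^{\delta}}\max\vartheta_{\ov D}\ge\min_{\cF(\Delta_{D})^{n-i+1}}\max\vartheta_{\ov D}=R_{i}$ by monotonicity, and $\upmu^{i}_{\ov D}(X)$ is the infimum of such $\eta$.

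To prove the displayed inequality I would induct on $\dim X$ and degenerate $Y$ to a toric cycle. If $Y$ is contained in the toric boundary, say $Y\subseteq X_{F_{0}}$ for a proper face $F_{0}$ of $\Delta_{D}$, the induction hypothesis applied to the proper toric variety $X_{F_{0}}$ with the restricted metrized $\R$-divisor (roof function $\vartheta_{\ov D}|_{F_{0}}$), together with the fact that faces of $F_{0}$ are faces of $\Delta_{D}$, concludes. Otherwise $Y\cap\T$ is Zariski dense in $Y$, where $\T$ denotes the torus of $X$. Pick a sufficiently generic one-parameter subgroup $\lambda\colon\G_{m}\to\T$; then the flat limit of $Y$, taken as a $\delta$-dimensional cycle, is a nonzero effective sum of orbit closures $\lim_{t\to 0}\lambda(t)\cdot Y=\sum_{j}m_{j}\,[X_{F_{j}}]$ with $\dim F_{j}=\delta$ (a Gr\"obner-type toric degeneration; alternatively one may iterate degenerations, or read this off from the tropicalization of $Y$). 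For $t\in\K^{\times}$ the automorphism $\lambda(t)$ of $X$ fixes the $\R$-divisor $D$ and sends $\ov D$ to $\lambda(t)^{*}\ov D$; the local roof functions of $\lambda(t)^{*}\ov D$ differ from those of $\ov D$ by linear terms which cancel in the weighted sum over all places by the product formula, so $\lambda(t)^{*}\ov D$ has the same global roof function $\vartheta_{\ov D}$, and since $Y\cap\T$ is dense in $Y$ one checks that $\upmu^{\ess}_{\ov D}(\lambda(t)\cdot Y)=\upmu^{\ess}_{\lambda(t)^{*}\ov D}(Y)=\upmu^{\ess}_{\ov D}(Y)$. On the other hand, algebraic points of bounded height on $\lambda(t)\cdot Y$ specialise, as $t\to 0$, to a Zariski-dense subset of $\bigcup_{j}X_{F_{j}}$, so by lower semicontinuity of the essential minimum in this flat family $\upmu^{\ess}_{\ov D}\big(\bigcup_{j}X_{F_{j}}\big)\le\upmu^{\ess}_{\ov D}(Y)$. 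Applying Theorem~\ref{thm:5} to each $X_{F_{j}}$ then gives
\[
\upmu^{\ess}_{\ov D}(Y)\ \ge\ \min_{j}\upmu^{\ess}_{\ov D}(X_{F_{j}})\ =\ \min_{j}\max_{x\in F_{j}}\vartheta_{\ov D}(x)\ \ge\ \min_{F\in\cF(\Delta_{D})^{\delta}}\max_{x\in F}\vartheta_{\ov D}(x).
\]

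The upper bound and the reductions above are essentially formal; the real content, and the main obstacle, lies in the two analytic inputs used in the lower bound. The first is the invariance $\upmu^{\ess}_{\lambda(t)^{*}\ov D}(Y)=\upmu^{\ess}_{\ov D}(Y)$ under the $\T(\K)$-action: although the local metrics genuinely change, the pointwise height differences vanish on $Y\cap\T$ by the product formula — the same mechanism that makes the global roof function well defined — and this has to be made precise. The second, and most delicate, is the lower semicontinuity of the essential minimum along the toric degeneration: I would extend the metric of $\ov D$ to a semipositive metric over the base $\A^{1}$, express $\upmu^{\ess}_{\ov D}$ via arithmetic intersection numbers (or directly via the heights of the specialising points), and invoke semicontinuity of these numbers, taking care that the lower-dimensional components of the scheme-theoretic flat limit — which do not enter the $\delta$-dimensional cycle $\sum_{j}m_{j}[X_{F_{j}}]$ — do not interfere with the density of the specialised points.
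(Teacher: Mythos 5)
Your upper bound $\upmu^{i}_{\ov D}(X)\le R_{i}$ via orbit closures and Corollary~\ref{cor:4} is correct and is what the paper does. The lower bound, however, takes a different and much heavier route with a genuine gap. The paper's lower bound uses no degeneration and no claim about arbitrary subvarieties: by Lemma~\ref{lemm:3}\eqref{item:4}, on each orbit $O(\sigma)$ the torsion translates of any algebraic point are dense and all have the same height, so all successive minima of $O(\sigma)$ coincide with $\upmu^{\abs}_{\ov D}(O(\sigma))$; and deleting from $X(\ov\K)_{\le\eta}$ the points in orbits of dimension $\le n-i$ cannot change whether its closure has dimension $\ge n-i+1$. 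Hence $\upmu^{i}_{\ov D}(X)=\upmu^{i}_{\ov D}\big(\bigcup_{\sigma\in\Sigma^{\le i-1}}O(\sigma)\big)\ge\min_{\sigma\in\Sigma^{\le i-1}}\upmu^{\abs}_{\ov D}(O(\sigma))$, which Theorem~\ref{thm:1} and Proposition~\ref{prop:1} convert into the face minimax.

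The gap in your lower bound is the asserted invariance $\upmu^{\ess}_{\ov D}(\lambda(t)\cdot Y)=\upmu^{\ess}_{\ov D}(Y)$. You justify it by saying the pointwise height differences on $Y\cap\T$ vanish by the product formula; they do not. For $t\in\T(\K)$ and $q\in\T(\ov\K)$ one has
\begin{equation*}
\h_{\ov D}(tq)-\h_{\ov D}(q)=-\sum_{w}n_{w}\big[\psi_{\ov D,w}(\val_{w}(t)+\val_{w}(q))-\psi_{\ov D,w}(\val_{w}(q))\big],
\end{equation*}
which is nonzero in general: for instance $X=\P^1_{\Q}$ with the Fubini--Study metric at $\infty$ and the canonical metric elsewhere, $t=2$, $q=1$ gives $\frac{1}{2}\log 5-\frac{1}{2}\log 2\ne 0$. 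What the product formula does give is that the linear shift disappears after Legendre--Fenchel duality and summation over places, so the global roof function of $X$ is translation invariant and hence so is $\upmu^{\ess}_{\ov D}(X)$; but this says nothing about the essential minimum of a non-toric $Y$, which is not read off a roof function. Torsion translates do preserve heights pointwise, but your degeneration needs $t\to 0$, which is not torsion. The second analytic input, lower semicontinuity of $\upmu^{\ess}$ along the flat family, is also left as a nontrivial assertion. As written, the lower bound is not established, and the intermediate statement about arbitrary $\delta$-dimensional $Y$ is essentially equivalent to the theorem itself, so the reduction buys nothing without an independent proof of it.
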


Whereas there is a considerable amount of work on upper and lower
bounds for the essential minimum, there are very few exact
computations in the literature. By contrast, Theorems \ref{thm:5} and
\ref{thm:7} are very concrete and well-suited for computations.  For
example, they allow to compute the successive minima of the canonical
height on translates of subtori of a projective space as the maximum
of a piecewise affine concave function on the polytope (Proposition
\ref{prop:10}). The following example illustrates this computation.

\begin{exmpl*}
Let  $C \subset \P^3_{\Q}$ be the cubic curve given as the image
of the  map 
\begin{displaymath}
  \P^1 \longrightarrow \P^3, \quad (t_{0}:t_{1}) \longmapsto
  \Big(t_{0}^{3} : 4\, t_{0}^{2}t_{1}: \frac{1}{3}\, t_{0}t_{1}^2:
  \frac{1}{2} \, t_{1}^3\Big).
\end{displaymath} 
Let $\ov H$ be the metrized divisor of $\P^{3}$ given by the
hyperplane at infinity equipped with the canonical metric, and let $\ov D$ be
the restriction of $\ov H$ to $C$.

Figure \ref{fig:localrf} shows the local roof functions associated to
$\ov D$ for each place $v\in \mathfrak{M}_{\Q}$, and Figure
\ref{fig:globalrf} shows the global roof function. This global roof
function is the sum of the local ones, and can be described as the
minimal concave piecewise affine function on the interval $[0,3]$ with
lattice point values
\begin{displaymath}
    \vartheta_{\ov D}(0)=0, \quad \vartheta_{\ov D}(1)= \frac{7}{3} \log(2)+
  \frac{1}{2}\log(3), \quad \vartheta_{\ov D}(2)= \frac{7}{6} \log(2)+
  \log(3), \quad\vartheta_{\ov D}(3)=0.
\end{displaymath}
\captionsetup[subfigure]{labelformat=empty}
\begin{figure}[ht]
  \centering
  \begin{subfigure}[1]{0.33\textwidth}
    \input{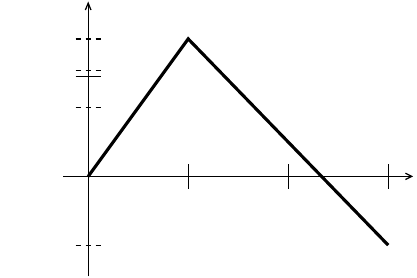_t}
    \caption{$v=\infty$}
    \vspace{5mm}
  \end{subfigure}
  \hspace*{20mm}
  \begin{subfigure}[2]{0.33\textwidth}
    \input{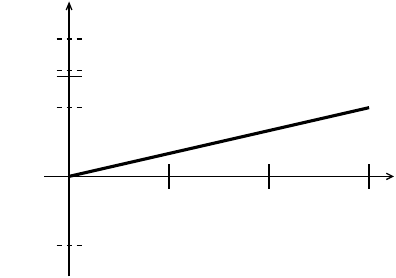_t}    
    \caption{$v=2$}
    \vspace{5mm}
  \end{subfigure}
  \begin{subfigure}[1]{0.33\textwidth}
    \input{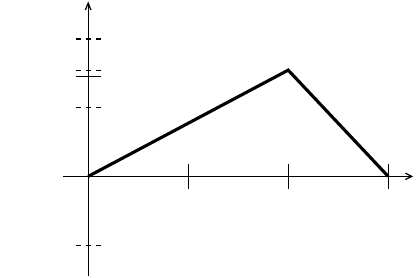_t}
    \caption{$v=3$}
  \end{subfigure}
  \hspace*{20mm}
  \begin{subfigure}[2]{0.33\textwidth}
    \input{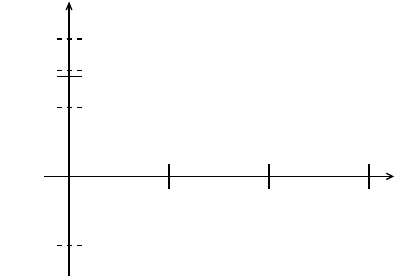_t}    
    \caption{$v\ne\infty, 2,3$}
  \end{subfigure}
\caption{Local roof functions}
\label{fig:localrf}
\end{figure}

\begin{figure}[ht] 
  \centering
  \begin{subfigure}[1]{0.3\textwidth}
    \input{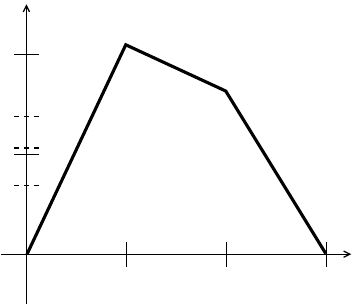_t}
  \end{subfigure}
\caption{Global roof function}
\label{fig:globalrf}
\end{figure}
Theorem \ref{thm:7} then implies  that
\begin{math}
\upmu_{\ov D}^{\ess}(C)= \frac{7}{3} \log(2)+
  \frac{1}{2}\log(3)\text{ and }\upmu_{\ov D}^{2}(C)=0. 
\end{math}
We refer to \S~\ref{sec:transl-subt-can} for explanations on how to do
this kind of computations.
\end{exmpl*}

Our results allow also to compute the successive minima of toric
varieties with respect to weighted $L^{p}$-metrics and of
translates of subtori of a projective space with the Fubini-Study
metric, generalizing the computation of the successive minima of a
subtori with the Fubini-Study metric in \cite{Sombra:msvtp}. Another
nice family of examples is given by toric bundles on a projective
space, including Hirzebruch surfaces. We refer the reader to
\S~\ref{sec:exmpl} for the details and the explicit formulae.

A well-known theorem of Zhang shows that the successive minima of a
height function can be estimated in terms of the height and the degree
of the variety \cite{Zhang:plbas,Zhang:_small}, This result plays a
key r\^ole in the proof of the
Bogomolov conjecture for Abelian varieties and its ulterior
developments, including the
study of the distribution of Galois orbits of points of small height,
see for instance \cite{DavidPhilippon:mhn, Yuan:blbav}.

As a direct consequence of Theorems \ref{thm:5} and \ref{thm:7} and
our previous results in \cite{BurgosPhilipponSombra:agtvmmh,
  BurgosMoriwakiPhilipponSombra:aptv}, we obtain a simple proof of
Zhang's theorem in the toric case. This approach allows also to prove
this result for an arbitrary global field and to relax the positivity
hypothesis on the metrized $\R$-divisor. 

\begin{thma}[Theorem \ref{thm:6}] \label{thm:8} Let $X$ be a proper
  toric variety over $\K$
  of dimension $n$ and $\ov D$ a semipositive toric metrized
  $\R$-divisor on $X$ with $D$ big. Then 
  \begin{equation*}
\sum_{i=1}^{n+1}\upmu^{i}_{\ov D}(X)\le \frac{\h_{\ov D}(X)}{ \deg_{D}(X)}\le (n+1)
\upmu^{\ess}_{\ov D}(X).
  \end{equation*}
\end{thma}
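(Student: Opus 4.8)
The plan is to deduce both inequalities from the formulae for the successive minima (Theorem~\ref{thm:7}), the formula for the essential minimum (Theorem~\ref{thm:5}), and the formula for the height $\h_{\ov D}(X)$ in terms of the global roof function established in \cite{BurgosPhilipponSombra:agtvmmh, BurgosMoriwakiPhilipponSombra:aptv}, namely
\begin{equation*}
  \frac{\h_{\ov D}(X)}{\deg_{D}(X)} = \frac{(n+1)!}{\vol(\Delta_{D})}\int_{\Delta_{D}} \vartheta_{\ov D}(x)\,\dd x,
\end{equation*}
where the normalization is such that $\deg_{D}(X) = n!\,\vol(\Delta_{D})$. First I would reduce to the case where $D$ is ample: since $\ov D$ is semipositive with $D$ big, one can write $D$ as a limit of ample $\R$-divisors (or pass to the ample locus), check that all three quantities in the statement vary continuously under such perturbations, and thereby assume $D$ is ample, so that Theorem~\ref{thm:7} applies and $\Delta_{D}$ is a full-dimensional polytope of dimension $n$.

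For the \emph{right-hand inequality}, by Theorem~\ref{thm:5} we have $\vartheta_{\ov D}(x)\le \upmu^{\ess}_{\ov D}(X)$ for every $x\in\Delta_{D}$, hence
\begin{equation*}
  \frac{\h_{\ov D}(X)}{\deg_{D}(X)} = \frac{n+1}{\vol(\Delta_{D})}\int_{\Delta_{D}} \vartheta_{\ov D}(x)\,\dd x \le (n+1)\,\upmu^{\ess}_{\ov D}(X),
\end{equation*}
which is immediate. For the \emph{left-hand inequality}, the strategy is to produce, for each $i$, a convenient slicing of $\Delta_{D}$ by parallel hyperplanes and to bound the integral of $\vartheta_{\ov D}$ from below by a sum of contributions, each controlled by one of the minima. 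Concretely, I would use the description $\upmu^{i}_{\ov D}(X) = \min_{F\in\cF(\Delta_{D})^{n-i+1}}\max_{x\in F}\vartheta_{\ov D}(x)$: this says there is a face $F_{i}$ of dimension $n-i+1$ on which $\vartheta_{\ov D}\le \upmu^{i}_{\ov D}(X)$ — wait, that is the wrong direction — rather, $\max_{x\in F_i}\vartheta_{\ov D}(x) = \upmu^{i}_{\ov D}(X)$ is the minimum over faces, so for every face $F$ of that dimension $\max_{x\in F}\vartheta_{\ov D}(x)\ge \upmu^{i}_{\ov D}(X)$. The key combinatorial-analytic input is then a \emph{flag of faces} $\Delta_{D}=F_{0}\supset F_{1}\supset\cdots\supset F_{n}$ with $\dim F_{j}=n-j$, chosen to realize the minima, together with the concavity of $\vartheta_{\ov D}$: concavity lets one compare the average of $\vartheta_{\ov D}$ over $F_{j}$ with its average over $F_{j+1}$ plus a ``cone'' correction, in the spirit of the proof that $\int_{\Delta}\vartheta \ge \frac{\vol(\Delta)}{n+1}\sum_{j}\max_{F_j}\vartheta$ for a suitable flag. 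This is exactly the toric/combinatorial avatar of Zhang's original inductive argument on the arithmetic intersection numbers, but now carried out entirely on the polytope side.

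The main obstacle I anticipate is the left-hand inequality, specifically organizing the induction so that the flag of faces realizing $\upmu^{i}_{\ov D}(X)$ can be taken to be \emph{nested} and compatible with the slicing, and then proving the averaging estimate
\begin{equation*}
  \frac{1}{\vol(\Delta_{D})}\int_{\Delta_{D}}\vartheta_{\ov D}(x)\,\dd x \ge \frac{1}{n+1}\sum_{i=1}^{n+1}\upmu^{i}_{\ov D}(X)
\end{equation*}
by a careful use of concavity along the flag. One clean way to do this: pick a generic linear functional to foliate $\Delta_D$, let $\Delta_D(t)$ be the slices, use that $t\mapsto \max_{\Delta_D(t)}\vartheta_{\ov D}$ is concave and dominates a piecewise-linear function whose values at the breakpoints are the successive minima, and integrate. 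The delicate point is matching the combinatorial dimension count ($\dim F = n-i+1$) with the number of times a given minimum $\upmu^{i}$ should appear in the telescoping sum — this bookkeeping, rather than any deep new idea, is where the care is needed. Everything else (the reduction to $D$ ample, the right-hand inequality, the translation of $\h_{\ov D}(X)/\deg_{D}(X)$ into an integral) is formal given the results already cited.
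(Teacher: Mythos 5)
The right-hand inequality in your sketch is exactly the paper's proof: combine $\vartheta_{\ov D}\le \upmu^{\ess}_{\ov D}(X)$ with the identity $\h_{\ov D}(X)=(n+1)!\int_{\Delta_D}\vartheta_{\ov D}\,\dd\vol_M$ and $\deg_D(X)=n!\vol_M(\Delta_D)$. Two issues, though, with the rest.

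First, the reduction to $D$ ample is unnecessary and in fact a detour: the paper does not appeal to Theorem~\ref{thm:4} (ample) but to Theorem~\ref{thm:2}\eqref{item:23} (nef), and $\ov D$ semipositive already forces $D$ nef, so the statement $\upmu^{n-i+1}_{\ov D}(X)\le\max_{x\in F}\vartheta_{\ov D}(x)$ for \emph{every} face $F$ of dimension $i$ (not just the minimizing one) is available directly; no continuity argument for perturbations of $D$ is needed. You should check you actually have the face-to-cone correspondence in the nef case: for each face $F$ of dimension $i$ there is some $\sigma\in\Sigma^{n-i}$ with $F=F_\sigma$, which is what makes Theorem~\ref{thm:2} applicable here.

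Second, and more seriously, the left-hand inequality is where your sketch drifts away from a workable argument. You propose either (a) a single ``flag chosen to realize the minima'' plus a cone-by-cone comparison, or (b) a foliation by parallel hyperplanes with a telescoping piecewise-linear minorant. Both run into the obstruction you yourself flag: there is no reason a \emph{nested} chain of faces realizing each $\upmu^i_{\ov D}(X)$ should exist, and the parallel slices of $\Delta_D$ are not faces, so $\max_{\Delta_D(t)}\vartheta_{\ov D}$ is not controlled by the successive minima. The paper sidesteps this entirely: it chooses, for \emph{every} face $F$, a point $x_F\in F$ attaining $\max_F\vartheta_{\ov D}$, and for \emph{every} complete flag $\Xi=\{F_0\subsetneq\cdots\subsetneq F_n=\Delta_D\}$ forms the simplex $\Delta_\Xi=\conv(x_{F_0},\dots,x_{F_n})$. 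These simplices cover $\Delta_D$ with disjoint interiors. One then defines a single piecewise affine function $f$ on $\Delta_D$, affine on each $\Delta_\Xi$, with $f(x_F)=\upmu^{n-\dim F+1}_{\ov D}(X)$; by Theorem~\ref{thm:2}\eqref{item:23} this value is $\le\vartheta_{\ov D}(x_F)$, so by concavity of $\vartheta_{\ov D}$ one gets $f\le\vartheta_{\ov D}$ globally. Integrating $f$ over each $\Delta_\Xi$ gives the average of the vertex values times $\vol(\Delta_\Xi)$, and summing over all flags collapses to $\frac{1}{n+1}\sum_{i=1}^{n+1}\upmu^i_{\ov D}(X)\,\vol(\Delta_D)$ regardless of the geometry of the individual simplices. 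This simultaneous use of all flags, rather than a single preferred one, is exactly the ingredient your sketch is missing; without it the ``bookkeeping'' worry you raise is a genuine gap, not a routine matter of care.
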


Using our formulae, we can easily construct examples of semipositive
toric metrics on the hyperplane divisor on $\P^{n}_{\Q}$ showing that
almost every configuration of successive minima and height can
actually happen. 

\begin{thma}[Proposition \ref{prop:11}] \label{prop:12}
  Let $n\ge 0$ and $\nu, \mu_{1},\dots, \mu_{n+1}\in \R$ such that 
  \begin{equation}\label{eq:40}
    \mu_{1}\ge \dots\ge \mu_{n+1} \quad \text{ and } \quad
    \sum_{i=1}^{n+1}\mu_{i}\le \nu <(n+1) \mu_{1}. 
  \end{equation}
  Then there exists a semipositive toric metric on $H$, the hyperplane
  divisor on $\P^{n}_{\Q}$, such that
\begin{equation*}
  \h_{\ov H}(\P^{n})=\nu \quad \text{ and } \quad   \upmu_{\ov
    H}^{i}(\P^{n})=\mu_{i} \quad \text{ for } i=1,\dots, n+1.
\end{equation*}
\end{thma}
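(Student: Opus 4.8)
The plan is to realize the prescribed data by choosing an explicit global roof function on the standard simplex $\Delta=\Delta_H\subset M_\R\simeq\R^n$, namely the $n$-dimensional simplex with vertices $0,e_1,\dots,e_n$, and then invoke Theorems \ref{thm:5} and \ref{thm:7} to read off the successive minima, together with the height formula from \cite{BurgosPhilipponSombra:agtvmmh, BurgosMoriwakiPhilipponSombra:aptv} to match the height. The first step is a purely combinatorial/convex-analytic construction: given $\mu_1\ge\cdots\ge\mu_{n+1}$ and $\nu$ satisfying \eqref{eq:40}, I want a concave function $\vartheta\colon\Delta\to\R$ such that $\max_{x\in F}\vartheta(x)=\mu_i$ for every face $F$ of dimension $n-i+1$ (so in particular $\max_\Delta\vartheta=\mu_1$ and $\vartheta$ takes value $\mu_{n+1}$ at each vertex), and such that $\int_\Delta\vartheta\,\dd\vol$, suitably normalized, equals $\nu$. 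A natural candidate is to take $\vartheta$ piecewise affine, concave, with value $\mu_{n+1}$ at all vertices, value $\mu_n$ at the barycenter of each edge (or more precisely the max over each edge equal to $\mu_n$), and interpolating so that the max over each $k$-face is $\mu_{n-k+1}$; the strict inequality $\nu<(n+1)\mu_1$ in \eqref{eq:40} leaves exactly the room needed to push enough ``mass'' toward the interior to hit the target height, while the inequality $\sum_i\mu_i\le\nu$ is what makes the height at least as large as the minimal value compatible with the prescribed minima.

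Next I would check that this $\vartheta$ arises as the global roof function of an honest semipositive toric metric on $H$. By the theory recalled in the Introduction (and developed in \cite{BurgosPhilipponSombra:agtvmmh, BurgosMoriwakiPhilipponSombra:aptv}), a concave function on $\Delta_H$ that is continuous and whose restriction to the boundary is compatible with the toric divisor lifts to a semipositive toric metrized divisor $\ov H$ over $\Q$: one may, for instance, place the entire roof function at a single finite place $v=p$ by taking a suitable $v$-adic metric whose local roof function is $\vartheta$ and keeping the canonical metric everywhere else, or distribute it among finitely many places; semipositivity corresponds exactly to concavity of $\vartheta$. Here one must be mildly careful that the construction of Step 1 can be arranged so that $\vartheta$ vanishes on the vertices only if $\mu_{n+1}=0$ — in general $\mu_{n+1}$ need not be zero, so the metric differs from the canonical one by a constant translation, which is harmless and can be absorbed by scaling the metric at the archimedean place (adding a constant $c$ to the roof function shifts every minimum and the normalized height by $c$). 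With $\ov H$ in hand, Theorem \ref{thm:7} (applicable since $H$ is ample on $\P^n_\Q$ and $\ov H$ is semipositive) gives $\upmu^i_{\ov H}(\P^n)=\min_{F\in\cF(\Delta)^{n-i+1}}\max_{x\in F}\vartheta(x)=\mu_i$, and the height formula gives $\h_{\ov H}(\P^n)=(n+1)!\int_\Delta\vartheta\,\dd\vol=\nu$ after normalizing $\deg_H(\P^n)=1$.

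The main obstacle I anticipate is Step 1: constructing a concave function on the simplex with prescribed maxima over all faces of each dimension \emph{and} a prescribed integral, and verifying that the constraints \eqref{eq:40} are exactly the range of feasibility. The face-max conditions are a nested family of constraints ($\max$ over a $k$-face $\ge\max$ over any of its $(k-1)$-subfaces forces $\mu_{n-k+1}\ge\mu_{n-k}$, which is why the monotonicity hypothesis appears), and one must show that among all concave $\vartheta$ realizing the given $\mu_i$'s the achievable values of the normalized integral form exactly the interval $\big[\sum_i\mu_i,\ (n+1)\mu_1\big)$ — the lower endpoint attained by the ``tent'' function that is the minimal concave function with the prescribed face-maxima (this is essentially the piecewise-affine roof over the vertices interpolated linearly, whose integral over the simplex works out to the average $\tfrac1{n+1}\sum_i\mu_i$ times the volume, giving $\sum_i\mu_i$ after the $(n+1)!$ normalization), and values approaching the upper endpoint $(n+1)\mu_1$ by concave functions that are close to the constant $\mu_1$ on a large interior region and drop steeply near the faces to meet the lower face-maxima on a set of small measure. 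A clean way to organize this is to exhibit a one-parameter family $\vartheta_t$, $t\in[0,1)$, interpolating between the tent function ($t=0$) and such a ``plateau'' function ($t\to1$), note that $\int\vartheta_t$ depends continuously and monotonically on $t$, and conclude by the intermediate value theorem; the face-max conditions are preserved along the family by construction. The remaining verifications — semipositivity, ampleness, and the translation-by-constant normalization — are then routine given the dictionary already established in the cited works.
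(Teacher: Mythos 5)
Your overall strategy matches the paper's: build a concave roof function on the standard simplex, read off the successive minima from Theorem~\ref{thm:7} and the height from the integral formula, and hit the target height by deforming the function along a one-parameter family and using the intermediate value theorem. However, the construction of $\vartheta$ as you describe it has a genuine gap.

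You impose $\max_{x\in F}\vartheta(x)=\mu_i$ for \emph{every} face $F$ of dimension $n-i+1$; in particular $\vartheta$ would take the value $\mu_{n+1}$ at \emph{every} vertex. This is strictly more than Theorem~\ref{thm:7} asks for (the formula there is a $\min$ over faces of a given dimension, so it suffices that \emph{some} face attain $\mu_i$), and it clashes with your claim about the tent function and the lower endpoint $\sum_i\mu_i$. With all vertex values equal to $\mu_{n+1}$, the affine interpolation of the vertex data is the constant $\mu_{n+1}$, whose normalized height is $(n+1)\mu_{n+1}$, not $\sum_i\mu_i$; moreover, among concave functions satisfying your uniform face-max conditions the infimum of the normalized height is in general strictly larger than $\sum_i\mu_i$ (for $n\ge 2$ the superlevel set $\{\vartheta=\mu_1\}$ has to be a full-dimensional convex body meeting every edge, which already forces more mass than the tent). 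Your description of the tent as having average $\tfrac1{n+1}\sum_i\mu_i$ over the simplex silently switches to \emph{distinct} vertex values $\mu_1,\dots,\mu_{n+1}$, contradicting the earlier requirement.

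The fix is to commit to distinct vertex values, which is exactly what the paper does: for $0\le t<1$ take $\theta_t$ to be the smallest concave function on $\Delta^n$ with $\theta_t\equiv\mu_1$ on $t\Delta^n$ and $\theta_t(e_{i-1})=\mu_i$ for $i=2,\dots,n+1$. Because the maximum sits at the origin, the face $\conv(e_{i-1},\dots,e_n)$ (of dimension $n-i+1$, not containing the origin for $i\ge 2$) realizes the $\min$ of $\max_F\theta_t$ among faces of that dimension, even though faces through the origin have larger maxima. Then $\theta_0$ is the affine interpolation with normalized height $\sum_i\mu_i$, $\theta_t\to\mu_1$ pointwise as $t\to 1^-$ with normalized height tending to $(n+1)\mu_1$, and the intermediate value theorem gives the required $t$. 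With this choice the normalization/translation step you mention is unnecessary: the roof function need not vanish at the vertices, and any continuous concave function on $\Delta_H$ is admissible as an Archimedean local roof function.
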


The case left aside in \eqref{eq:40} deserves a separate explanation:
we show that if, with the hypothesis of Theorem \ref{thm:8}, we have
the equality
\begin{displaymath}
  \frac{\h_{\ov D}(X)}{ \deg_{D}(X)}= (n+1)
\upmu^{\ess}_{\ov D}(X),
\end{displaymath}
then the function $\vartheta _{\ov D}$ is constant and, necessarily, 
$\upmu_{\ov D}^{i}(X)=\upmu_{\ov D}^{\ess}(X)$ for all~$i$, see
Corollary \ref{cor:5} below. This observation is relevant when
applying the known equidistribution results on Galois orbits of small
points in the toric case.

By replacing the height of the variety by its $\chi$-arithmetic
volume, Zhang's lower bound for the essential minimum extends to the
case when the metrics are not necessarily semipositive: if $X$ is a
proper variety of dimension $n$ over a number field and $\ov D$ is a
metrized divisor on $X$ with $D$ big, then
\begin{equation} \label{eq:39}
\upmu^{\ess}_{\ov D}(X)\ge     \frac{\avol_{\chi}(\ov D)}{(n+1) \vol(D)},
  \end{equation}
  see for instance \cite[Lemme 5.1]{ChambertLoirThuillier:MMel}.  This
  lower bound is a key result in the study of the distribution of the
  Galois orbits of points of small height. Indeed, all known results
  in this direction are applicable only when the inequality
  \eqref{eq:39} is an equality. This includes the equidistribution theorems of
  Szpiro, Ullmo and Zhang \cite{SzpiroUllmoZhang:epp}, Bilu
  \cite{Bilu:ldspat}, Favre and Rivera-Letelier
  \cite{FavreRivera:eqpph}, Chambert-Loir \cite{ChambertLoir:meeB},
  Baker and Rumely \cite{BakerRumely:esp}, Yuan \cite{Yuan:blbav},
  Berman and Boucksom \cite{BermanBoucksom:gbhsee}, and Chen
  \cite{Chen:davf}.

In the toric case, we can also derive a simple proof of
  the inequality  \eqref{eq:39} for arbitrary
  global fields and toric metrics on a big toric
  $\R$-divisor. More importantly, we can characterize the cases when
  equality occurs. The following statement is a direct consequence of
  Propositions 
  \ref{prop:5} and \ref{prop:6}.

\begin{cora} \label{cor:5} Let $X$ be a proper toric variety over $\K$
  of dimension $n$ and $\ov D$ a toric metrized $\R$-divisor on $X$
  with $D$ big. Then
\begin{equation*}
\upmu^{\ess}_{\ov D}(X)\ge     \frac{\avol_{\chi}(\ov D)}{(n+1) \vol(D)},
  \end{equation*}
with equality if and only if $\vartheta_{\ov D}$ is constant. 
If this is the case, then $\upmu_{\ov D}^{i}(X)=\upmu_{\ov D}^{\ess}(X)$ for all~
$i$. 
\end{cora}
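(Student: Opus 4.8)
The plan is to deduce the corollary from Theorem~\ref{thm:5} together with the formula for the $\chi$-arithmetic volume of a toric metrized $\R$-divisor established in~\cite{BurgosMoriwakiPhilipponSombra:aptv}. For $D$ big, that formula reads $\avol_{\chi}(\ov D)=(n+1)!\int_{\Delta_{D}}\vartheta_{\ov D}(x)\,dx$, while the classical combinatorial formula for the volume of a toric divisor gives $\vol(D)=n!\,\vol(\Delta_{D})$. Hence
\begin{displaymath}
\frac{\avol_{\chi}(\ov D)}{(n+1)\vol(D)} = \frac{(n+1)!\int_{\Delta_{D}}\vartheta_{\ov D}(x)\,dx}{(n+1)\,n!\,\vol(\Delta_{D})} = \frac{1}{\vol(\Delta_{D})}\int_{\Delta_{D}}\vartheta_{\ov D}(x)\,dx ,
\end{displaymath}
the mean value of the global roof function over $\Delta_{D}$. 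Since, by Theorem~\ref{thm:5}, $\upmu^{\ess}_{\ov D}(X)=\max_{x\in\Delta_{D}}\vartheta_{\ov D}(x)$, the asserted inequality is nothing but the elementary inequality ``maximum $\ge$ mean value''; this step presents no difficulty.

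I would then treat the equality case. The maximum of $\vartheta_{\ov D}$ over $\Delta_{D}$ equals its mean value if and only if $\vartheta_{\ov D}$ coincides with its maximum Lebesgue-almost everywhere on $\Delta_{D}$. As $D$ is big, $\Delta_{D}$ is $n$-dimensional; a concave function is continuous on the relative interior of its domain, so coinciding almost everywhere with a constant forces $\vartheta_{\ov D}$ to be that constant on $\inter(\Delta_{D})$, and then, by concavity and upper semicontinuity up to the boundary, on all of $\Delta_{D}$. Conversely a constant roof function obviously yields equality. Thus equality holds exactly when $\vartheta_{\ov D}\equiv c$ for some $c\in\R$, and then $c=\max_{x\in\Delta_{D}}\vartheta_{\ov D}(x)=\upmu^{\ess}_{\ov D}(X)$. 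This, together with the inequality, is the content of Proposition~\ref{prop:5}.

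It remains to prove that $\vartheta_{\ov D}\equiv c$ implies $\upmu^{i}_{\ov D}(X)=c$ for every $i$, which is Proposition~\ref{prop:6}. The chain $\upmu^{1}_{\ov D}(X)\ge\cdots\ge\upmu^{n+1}_{\ov D}(X)$ is immediate from the definition of the successive minima, and $\upmu^{1}_{\ov D}(X)=\upmu^{\ess}_{\ov D}(X)=c$, so the whole point is the lower bound $\h_{\ov D}(p)\ge c$ for every algebraic point $p\in X(\ov\K)$. Here one exploits the rigidity forced by the hypothesis: the global roof function is the \emph{finite} sum $\sum_{v}n_{v}\vartheta_{\ov D,v}$ of the concave local roof functions (almost all of which vanish), and a finite sum of concave functions which is constant must have each summand affine. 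Consequently, for each place $v$, the concave hull of the local metric function $\psi_{\ov D,v}$ is the concave function $u\mapsto\min_{x\in\Delta_{D}}\langle x,u\rangle$ translated by a vector $a_{v}\in N_{\R}$ and shifted by a constant $b_{v}$, with $\sum_{v}n_{v}a_{v}=0$ and $\sum_{v}n_{v}b_{v}=c$. For $p$ in the open torus, the height $\h_{\ov D}(p)$ is the average over the Galois conjugates $p^{\sigma}$ of $-\sum_{v}n_{v}\psi_{\ov D,v}(\val_{v}(p^{\sigma}))$, by the computation underlying the height formula of~\cite{BurgosPhilipponSombra:agtvmmh}; bounding $\psi_{\ov D,v}$ above by its concave hull, using that the tropicalization vectors $\val_{v}(p^{\sigma})$ sum to zero over the places together with $\sum_{v}n_{v}a_{v}=0$, and using the concavity of $u\mapsto\min_{x\in\Delta_{D}}\langle x,u\rangle$ and that it vanishes at the origin, one gets $\h_{\ov D}(p)\ge c$. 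For $p$ on the toric boundary, $p$ lies in an orbit whose closure is a toric subvariety on which $\ov D$ restricts to a metrized $\R$-divisor whose roof function is the restriction of $\vartheta_{\ov D}$ to a face of $\Delta_{D}$, hence still identically $c$, so the bound follows by induction on the dimension. This gives $\upmu^{n+1}_{\ov D}(X)\ge c$, and therefore $\upmu^{i}_{\ov D}(X)=c$ for all $i$.

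The main obstacle is precisely this last step. For a general, non-semipositive toric metric there is no useful lower bound for $\upmu^{n+1}_{\ov D}(X)$, and Theorem~\ref{thm:7} is not directly applicable since $D$ is only assumed big, not ample. What rescues the argument is that the equality case is extremely rigid: the hypothesis that the global roof function is constant forces each local metric function to differ, in the relevant concave-hull sense, from a translate of the support function of $\Delta_{D}$ by a constant, and for data of this shape $\h_{\ov D}(p)\ge c$ is a direct consequence of that concavity together with the product formula. Carrying this out precisely for arbitrary algebraic points, over an arbitrary global field, is the substance of Propositions~\ref{prop:5} and~\ref{prop:6}.
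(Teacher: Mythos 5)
Your treatment of the first two assertions matches the paper's route: the inequality is the ``max $\ge$ mean'' observation applied to the $\chi$-volume formula (Proposition~\ref{prop:5}), and the equality case reduces, exactly as you say, to $\vartheta_{\ov D}\equiv c$ on $\Delta_D$, which is what the paper extracts inside the proof of Proposition~\ref{prop:6} before encoding it via Lemma~\ref{lemm:1} as conditions on the local functions $\psi_{\ov D,v}$. Your direct lower bound $\h_{\ov D}(p)\ge c$ for $p$ in the principal open orbit, using $\psi_{\ov D,v}(u)\le\conc(\Psi_D)(u-u_v)-\gamma_v$, the product formula, and superadditivity of the support function, is also correct.

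The gap is in the induction step on the toric boundary. There you tacitly invoke that the roof function of the restriction of $\ov D$ to an orbit closure $V(\sigma)$ dominates the restriction of $\vartheta_{\ov D}$ to the corresponding face; this is Proposition~\ref{prop:1}, whose very construction --- choosing $m_\sigma$ in the affine span of $F_\sigma$ so that $D-\div(\chi^{-m_\sigma})$ meets $V(\sigma)$ properly, and extending $\psi_{\ov D,v}-m_\sigma$ to the partial compactification $N_\sigma$ --- requires $D$ to be \emph{nef}, not merely big. Without nefness the conclusion in fact fails: on a toric surface with $D$ big but not nef and with $\deg_{D}\big(\overline{O(\sigma)}\big)<0$ for some ray $\sigma$, endow $D$ with the canonical metric. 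Then $\vartheta_{\ov D}\equiv 0$ is constant and equality holds in the displayed inequality (both sides vanish), yet $\upmu^{2}_{\ov D}(X)\le\upmu^{\abs}_{\ov D}(O(\sigma))=-\infty$ exactly as in Example~\ref{exm:1}(1). The paper's own Proposition~\ref{prop:13} is careful to add ``$D$ nef'' before asserting that all successive minima of $\ov D^{\can}$ vanish, and the introduction ties the present remark to the hypotheses of Theorem~\ref{thm:6}, under which $\ov D$ is semipositive and hence $D$ is nef. So the last sentence of the corollary needs $D$ nef; once that is granted, the boundary computation you sketch is unnecessary, since Theorem~\ref{thm:2}\eqref{item:23} immediately gives $\upmu^{i}_{\ov D}(X)\ge\min_{\sigma\in\Sigma^{\le i-1}}\max_{x\in F_\sigma}\vartheta_{\ov D}(x)=c$ for every $i$.
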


The condition that the roof function is constant is \emph{very}
strong, and it is equivalent to the fact that the $v$-adic metrics in
$\ov D$ can be derived from the canonical metric by translation and
scaling (Remark \ref{rem:2}). In particular, these are the only
toric metrics to which the equidistribution theorems mentioned above
can be applied.

In collaboration with Juan Rivera-Letelier, we are currently studying
the equidistribution properties of Galois orbits of points of small
height in the toric setting. We plan to expose our results in a
subsequent paper. 

\medskip \noindent {\bf Acknowledgements.} We thank Dominique Bernardi,
Pierre D\`ebes, Luis Dieulefait and Juan Carlos Naranjo for useful
discussions and pointers to the literature. 

Part of this work was done while the authors met at the Universitat de
Barcelona, the Instituto de Ciencias Matem\'aticas (Madrid) and the
Institut de Math\'ematiques de Jussieu (Paris). 

\section{Preliminary results}
\label{sec:preliminary-results}

In this section we gather some preliminary results on global fields
and on convex analysis.

\subsection{Global fields}
\label{sec:global-fields}
A \emph{global field} $\K$ is either a number field or the function
field of a regular projective curve over an arbitrary field, equipped
with a set of places $\mathfrak{M}_{\K}$.  Each place $v\in
\mathfrak{M}_{\K}$ is a pair consisting of an absolute value
$|\cdot|_{v}$ on $\K$ and a positive weight $n_{v}\in \Q_{> 0}$,
defined as follows.

The places of the field of rational numbers $\Q$ consist of the
Archimedean and the $p$-adic absolutes values, normalized in the
standard way, and with all weights equal to 1. For the function field
$\KK(C)$ of a regular projective curve $C$ over a  field $k$,
the set of places is indexed by the closed points of $C$. For each
closed point $v_{0}\in C$, we consider the
absolute value and weight given, for $\alpha\in \KK(C)^{\times}$, by
\begin{displaymath}
  |\alpha|_{v_{0}}=c_{k}^{-\ord_{v}(\alpha)}, \quad n_{v_{0}}=[k(v_{0}):k], 
\end{displaymath}
where $\ord_{v_{0}}(\alpha)$ denotes the order of $\alpha$ in the discrete
valuation ring $\cO_{C,v_{0}}$ and 
\begin{displaymath}
  c_{k}=
  \begin{cases}
    \e & \text{ if } \# k=\infty, \\
\# k& \text{ if } \# k<\infty.
  \end{cases}
\end{displaymath}

Let $\K_{0}$ denote either $\Q$ or $\KK(C)$, and let $\K$ be a finite
extension of $\K_{0}$. The set of places of $\K$ is then formed by the
pairs $v=(|\cdot|_{v},n_{v})$ where $|\cdot|_{v}$ is an absolute value
on $\K$ extending an absolute value $|\cdot|_{v_{0}}$ on $\K_{0}$ and
\begin{equation}\label{eq:26}
  n_{v}=\frac{[\K_{v}:\K_{0,v_{0}}]}{[\K:\K_{0}]} n_{v_{0}},
\end{equation}
where $\K_{v}$ denotes the completion of $\K$ with respect to
$|\cdot|_{v}$, and similarly for $\K_{0,v_{0}}$. By \cite[Proposition
XII.6.1]{Lang:Algebra}, the weight $n_{v}$ can be also written as
\begin{equation}\label{eq:48}
  n_{v}=\frac{e({v/v_{0}})f({v/v_{0}})}{[\K:\K_{0}]} n_{v_{0}},
\end{equation}
where   $e({v/v_{0}})$ is the ramification degree and $f({v/v_{0}})$ is
the residue class degree of $v$ over $v_{0}$. Therefore, the notion of
global field in this paper is compatible with that in \cite[Definition 
  1.5.12]{BurgosPhilipponSombra:agtvmmh}.

  In the function field case, the extension $\K/\K_{0}$ corresponds to
  a dominant morphism $B\to C$ of regular projective curves and
  $\K=\KK(B)$.  However, the set of places of $\K$ depends on the
  extension and not just on the field $\K$.

Given  $v\in \mathfrak{M}_{\K}$ and $v_{0}\in \mathfrak{M}_{\K_{0}}$,
we write $v\mid v_{0}$ whenever $|\cdot|_{v}$ extends
$|\cdot|_{v_{0}}$. 
The  set of
places of $\K$ satisfies, for all $v_{0}\in \mathfrak{M}_{\K_{0}}$,
\begin{equation}
  \label{eq:46}
  \sum_{v\mid v_{0}}n_{v}=n_{v_{0}}
\end{equation}
and the \emph{product formula}
\begin{displaymath}
  \prod_{v\in \mathfrak{M}_{\K}}|\alpha
  |_{v}^{n_v}=1\quad\text{ for all }\alpha \in \K^{\times}.
\end{displaymath}
Both properties are well-known in the case of number fields. In the
function field case, the equality  \eqref{eq:46} follows from the
projection formula \cite[Proposition~9.2.11]{Liu:agac}, whereas the
product formula for $\K$ follows from \eqref{eq:46} and the product
formula for $\K_{0}$. 

We will first construct finite extensions of $\K$ of arbitrary degree that
are totally split over a given set of places. To this end, we need the
following consequence of Hilbert's irreducibility theorem.

\begin{lem}\label{hilbirred}
  Let $f(x)\in\K[x]$ be a separable polynomial of positive degree,
  $S\subset{\mathfrak M}_\K$ a finite subset of places of $\K$ and
  $(\varepsilon_v)_{v\in S}$ a collection of positive real
  numbers. Then there exists an element $c\in\K$ such that the
  polynomial $f(x)+c$ is irreducible in $\K[x]$ and $|c|_v<
  \varepsilon_v$ for all $v\in S$.
\end{lem}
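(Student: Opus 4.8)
The plan is to deduce the statement from Hilbert's irreducibility theorem by interpreting the condition ``$f(x)+c$ is irreducible'' as the non-vanishing of a single polynomial in the parameter $c$ (after passing to a suitable thin set), and to handle the constraints $|c|_v<\varepsilon_v$ by working with a family of values of $c$ that is dense in the relevant product of completions. First I would set up the generic situation: introduce an indeterminate $T$ and consider the polynomial $f(x)+T\in\K(T)[x]$. Since $f$ is separable of positive degree, $f(x)+T$ is separable in $x$ (its discriminant in $x$ is a nonzero polynomial in $T$, as it does not vanish identically: for instance it is nonzero at the value of $T$ making $f(x)+T=f(x)-f(0)$, or one can argue via $\partial_x(f(x)+T)=f'(x)\ne 0$). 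Moreover $f(x)+T$ is irreducible over $\K(T)$: if it factored, specializing $T$ would contradict the degree, or more directly, $\K(T)[x]/(f(x)+T)\cong \K(x)$ via $T\mapsto -f(x)$, which is a field, so $f(x)+T$ is irreducible. Thus $\K(T)[x]/(f(x)+T)$ is a finite separable extension of $\K(T)$ of degree $d=\deg f$, and Hilbert's irreducibility theorem applies: there is a thin subset $E\subset\K$ (more precisely a set of the form $\{c\in\K: g(c)=0\}$ together with finitely many other thin pieces coming from the proper subgroups of the Galois group of the splitting field, but it suffices to take the complement of a Hilbert set) such that for all $c\in\K\setminus E$, the specialized polynomial $f(x)+c$ is irreducible in $\K[x]$.

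The second step is to arrange the archimedean/non-archimedean constraints. The set $U=\prod_{v\in S}\{t\in\K_v: |t|_v<\varepsilon_v\}$ is a nonempty open subset of $\prod_{v\in S}\K_v$ (it contains $0$). By weak approximation, $\K$ is dense in $\prod_{v\in S}\K_v$, so $\K\cap U$ is an infinite subset of $\K$ (here one uses that each $\K_v$ is infinite, so $U$ is infinite, and the diagonal image of $\K$ meets every nonempty open set). It remains to check that $\K\cap U$ is not contained in the thin set $E$; equivalently, that a Hilbert subset of $\K$ meets $\K\cap U$. This is exactly the strong (local) form of Hilbert's irreducibility theorem: for a global field $\K$ and a finite set of places $S$, every Hilbert subset of $\K$ is dense in $\prod_{v\in S}\K_v$ — see for instance the treatment of Hilbert sets and weak approximation in the standard references (Lang, \emph{Fundamentals of Diophantine Geometry}, Ch.~9, or Serre, \emph{Lectures on the Mordell--Weil theorem}, \S9). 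Concretely, since $\K\cap U$ is infinite and Zariski-dense in $\A^1$, and a Hilbert set has finite complement intersected with any such arithmetic progression only in the number field case one must be slightly more careful; the clean statement is that Hilbert sets in $\K$ have the weak approximation property, which yields $c\in(\K\cap U)\setminus E$.

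Picking such a $c$ gives the claim: $f(x)+c$ is irreducible in $\K[x]$ and $|c|_v<\varepsilon_v$ for every $v\in S$. I would present the argument by: (i) checking separability and irreducibility of $f(x)+T$ over $\K(T)$ via the isomorphism $\K(T)[x]/(f(x)+T)\cong\K(x)$; (ii) invoking Hilbert's irreducibility theorem in the form valid over all global fields (number fields and function fields of curves alike), producing a Hilbert set $\cH\subset\K$ with $f(x)+c$ irreducible for $c\in\cH$; (iii) invoking the compatibility of Hilbert sets with weak approximation to find $c\in\cH$ with $|c|_v<\varepsilon_v$ on $S$.

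The main obstacle is step (iii): one needs the \emph{local} refinement of Hilbert irreducibility — that Hilbert subsets are dense in the product of finitely many completions — rather than the bare statement that they are nonempty (or Zariski-dense). For number fields this is classical (it follows from Eklof--style counting or from the fact that Hilbert sets contain values in every congruence class and every archimedean neighborhood), and for function fields of curves over an arbitrary base field the analogous statement holds as well; in the write-up I would simply cite this refined form from \cite{Lang:Algebra} or a standard reference on Hilbert's irreducibility theorem, since reproving it would take us far afield. A minor secondary point is ensuring $\K_v$ is infinite for every place $v\in S$ (so that $U$ is genuinely open-and-infinite), which holds because completions of global fields at any place are infinite (local fields, or fields of formal Laurent series).
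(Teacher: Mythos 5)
Your proposal follows the same conceptual route as the paper: reduce to a Hilbert irreducibility statement for $f(x)+T$ over $\K(T)$, then invoke a version of Hilbert's theorem that incorporates approximation constraints at a finite set of places. The paper does precisely this, but it cites D\`ebes's Hilbert irreducibility theorem for fields with a product formula \cite[Theorem~3.4]{Debes:DRHS}, which already packages the irreducibility specialization together with the small-norm conditions at the places in $S$, and it then verifies the hypotheses of that theorem. Your verification of the separability and irreducibility of $f(x)+T$ over $\K(T)$ (via $\K(T)[x]/(f(x)+T)\cong\K(x)$) is correct and is a clean alternative to the paper's brief Gauss-lemma-style remark.

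The genuine gap is in your step~(iii). The paper's notion of global field includes function fields $\K=\KK(C)$ of a regular projective curve over an \emph{arbitrary} base field $k$ --- possibly uncountable, possibly algebraically closed, possibly of characteristic zero, possibly of positive characteristic. The references you propose to cite (Lang's \emph{Fundamentals of Diophantine Geometry}, Serre's Mordell--Weil lectures) cover number fields and, with some effort, fields finitely generated over the prime field, but not this generality. Moreover Hilbert irreducibility can actually fail for a field carrying a product formula: the hypothesis that $\K$ is of characteristic zero or imperfect of positive characteristic is needed (it rules out, e.g., finite fields), and your sketch never addresses it. The paper checks exactly these two things for D\`ebes's theorem: (1) every global field $\K$ is of characteristic zero or imperfect of positive characteristic, since $\KK(C)$ is never perfect when $\car k>0$; and (2) the density hypothesis holds, by strong approximation for number fields and by the Riemann--Roch theorem \cite[Theorem~7.3.17]{Liu:agac} for function fields over an arbitrary base. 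Without a reference that establishes the local refinement of Hilbert irreducibility in this generality, and without the characteristic/imperfectness check, your argument does not go through. A small additional slip: you propose to cite \cite{Lang:Algebra} for the refined form of Hilbert irreducibility, but Lang's \emph{Algebra} does not contain such a statement; the paper's choice of D\`ebes's paper is the appropriate reference here.
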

\begin{proof} We want to use Hilbert's irreducibility theorem for
  fields with a product formula in~\cite[Theorem
  3.4]{Debes:DRHS}. To this end, we need to show that any global field
  satisfies its hypothesis. The first hypothesis is that the field is
  either of characteristic zero or imperfect of positive
  characteristic. Since, if $\car(\K)>0$, then $\K$ is the function
  field of a curve over a field of positive characteristic and so it
  is not perfect. Thus, this hypothesis holds for global fields.  The
  second one is a density hypothesis that, when $\K$ is a number
  field, follows from the strong approximation theorem and, when $\K$
  is a function field, follows from the Riemann-Roch theorem for curves
  over an arbitrary field given in~\cite[Theorem 7.3.17]{Liu:agac}.

Consider the polynomial
$$F(x,y) = f(x)+y\in \K[x,y].$$
Being irreducible in $\K[x,y]$ and of positive degree in $x$, it is
also irreducible in $\K(y)[x]$.  
Then \cite[Theorem 3.4]{Debes:DRHS} implies that there exists $c\in\K$ such that
$F(x,c)=f(x)+c$ is irreducible in $\K[x]$ and $|c|_v \leq
\varepsilon_v$ for all $v\in S$ as stated.
\end{proof}

\begin{lem}\label{constrextotdec}
Let $d\geq 1$ be an integer and
$S\subset\mathfrak{M}_\K$ a finite subset of places of $\K$. There exists an
extension $\L/\K$ of degree $d$ such that, for all $v\in S$, there are
$d$ different extensions of the absolute value $|\cdot|_v$ to
$\L$.
\end{lem}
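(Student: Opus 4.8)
The goal is to construct, for a given degree $d\geq 1$ and a finite set $S$ of places of $\K$, an extension $\L/\K$ of degree $d$ in which every place in $S$ splits completely. The plan is to realize $\L$ as $\K[x]/(g(x))$ for a suitable irreducible polynomial $g$ of degree $d$, and to arrange the splitting behaviour by forcing $g$ to be very close, $v$-adically for each $v\in S$, to a polynomial that already splits into $d$ distinct linear factors over $\K_v$.

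First I would fix $d$ distinct elements $a_1,\dots,a_d\in\K$ and set $f_0(x)=\prod_{j=1}^d (x-a_j)\in\K[x]$, a separable polynomial of degree $d$. For each $v\in S$ the polynomial $f_0$ already has $d$ simple roots in $\K\subset\K_v$. The idea is then to perturb $f_0$ by a constant $c\in\K$ of small absolute value at all places of $S$, using Lemma~\ref{hilbirred}: there exists $c\in\K$ with $f_0(x)+c$ irreducible in $\K[x]$ and $|c|_v<\varepsilon_v$ for all $v\in S$, where the $\varepsilon_v$ are to be chosen. Setting $g(x)=f_0(x)+c$ and $\L=\K[x]/(g(x))$ gives an extension of degree exactly $d$ since $g$ is irreducible. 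It remains to choose the $\varepsilon_v$ small enough that, for each $v\in S$, the polynomial $g$ still factors into $d$ distinct linear factors over $\K_v$; by the correspondence between factorizations of $g$ over $\K_v$ and the places of $\L$ above $v$ (together with the fact that each such local factor is linear, hence $\L_w=\K_v$), this yields $d$ distinct extensions of $|\cdot|_v$ to $\L$.

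The key technical input is a quantitative version of Hensel's lemma, or equivalently the continuity of roots of polynomials in their coefficients over a complete field: if a monic polynomial $h_0\in\K_v[x]$ of degree $d$ has $d$ simple roots $\alpha_1,\dots,\alpha_d\in\K_v$, then there is $\delta_v>0$ such that every monic $h\in\K_v[x]$ of degree $d$ whose coefficients are within $\delta_v$ of those of $h_0$ also has $d$ distinct roots in $\K_v$, one near each $\alpha_j$. Applying this with $h_0=f_0$ (viewed over $\K_v$) and noting that the coefficients of $g=f_0+c$ differ from those of $f_0$ only in the constant term, which differs by $c$, I would choose $\varepsilon_v\leq\delta_v$. (In the Archimedean and the non-Archimedean case alike, such a $\delta_v$ exists; in the non-Archimedean case one gets it directly from the strong form of Hensel's lemma applied to each root, using that the roots are simple so the derivative is a $v$-adic unit at each root after suitable normalization.) For the finitely many $v\in S$, take $\varepsilon_v=\min(1,\delta_v)$ and feed this into Lemma~\ref{hilbirred}.

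The main obstacle, and the place where a little care is needed, is making the Hensel/continuity-of-roots step uniform and precise: one must ensure that the $d$ roots produced over $\K_v$ are genuinely \emph{distinct} (not merely that roots exist), and that they all lie in $\K_v$ itself rather than in a proper extension — this is exactly what simplicity of the roots of $f_0$ buys us, since a simple root perturbs to a unique nearby simple root in the same complete field. A secondary point is to confirm the dictionary $g=\prod_{w\mid v}g_w$ over $\K_v$ with $\deg g_w=[\L_w:\K_v]$, so that $d$ linear factors over $\K_v$ force $d$ places $w\mid v$ each with $\L_w=\K_v$; this is standard (e.g.\ from the structure of $\L\otimes_\K\K_v$), and once it is in place the argument is complete. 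I would also remark that the case $d=1$ is trivial ($\L=\K$), so one may assume $d\geq 2$, which guarantees that Lemma~\ref{hilbirred} is applied to a polynomial $f_0$ of positive degree as required.
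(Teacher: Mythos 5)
Your proposal is correct and follows essentially the same route as the paper: both perturb the split polynomial $\prod_j(x-a_j)$ by a $v$-adically small constant $c$ obtained from Lemma~\ref{hilbirred}, and both conclude that the roots of the perturbed polynomial remain in $\K_v$ for each $v\in S$. The paper simply makes your ``$\delta_v$ from continuity of roots / Hensel'' step explicit by taking $\varepsilon_v = \bigl(\tfrac{1}{4}\min_{i\ne j}|\beta_i-\beta_j|_v\bigr)^d$, bounding $|\alpha_{\sigma(v,i)}-\beta_i|_v\le |c|_v^{1/d}$, and then invoking Krasner's lemma (non-Archimedean) or a conjugation argument (Archimedean) rather than a generic continuity-of-roots principle.
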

\begin{proof}
Let $\beta_1,\dots,\beta_d \in\K$ such that $\beta _{i}\not=\beta_{j}$
for $i\not=j$. Set $f(x)=\prod_{j=1}^d(x-\beta_i) \in
\K[x]$, which  is a separable polynomial of positive degree. For $v\in
S$, put
\begin{displaymath}
  \varepsilon _{v}=\Big(\frac{1}{4}\min_{i\not = j}|\beta _{i}-\beta
    _{j}|_{v}\Big)^{d}. 
\end{displaymath}
By Lemma~\ref{hilbirred}, there is an element $c\in \K$ such that
$f(x)+c$ is irreducible and $|c|_{v}<\varepsilon _{v}$ for $v\in
S$. Set 
\begin{displaymath}
\L=\K[x]/(f(x)+c).  
\end{displaymath}

Since $f(x)$ is monic of degree $d$, so is $f(x)+c$.  Denote by
$\alpha_1,\dots,\alpha_d$ the roots of $f(x)+c$ in an algebraic
closure of $\K$. For $v\in S$ and $i\in \{1,\dots, d\}$ we have that $|f(\beta
_i)+c|_v = |c|_v$, from which it follows that there exists an index
$\sigma(v,i)\in\{1,\dots,d\}$ satisfying
\begin{equation}\label{eq:majrac}
|\alpha_{\sigma(v,i)}-\beta_{i}|_v \leq |c|_v^{1/d} < \varepsilon_v^{1/d}.
\end{equation}
By the choice of $\varepsilon _{v}$, we deduce that $\sigma
(v,i)\not=\sigma (v,j)$ for $i\not=j$, and so  $\sigma (v,\cdot)$ is
a bijection. Let $\tau (v,\cdot)$ denote the inverse bijection. Then,
using \eqref{eq:majrac} and the definition of $\varepsilon_v$, we
obtain, for $i\in \{1,\dots,d\}$ and $j\not= i$,
\begin{multline}\label{eq:9}
  |\alpha _{i}-\alpha _{j}|_{v} > |\beta _{\tau
    (v,i)}-\beta _{\tau (v,j)}|_{v}-2\varepsilon_v^{1/d}
    \ge 2\varepsilon_{v}^{1/d}
    > 2|\alpha_{i}-\beta_{\tau (v,i)}|_{v}.  
\end{multline}
This implies
that $f(x)+c$ is separable. Moreover,  the inequality \eqref{eq:9} also implies
that, for each $i\in\{1,\dots,d\}$, we have $\K_{v}(\alpha
_{i})=\K_v(\beta_{\sigma(v,i)})=\K_v$. If $v$ is non-Archimedean, this
follows from Krasner's lemma \cite[page 152]{Neukirch:ANT}. If
$v$ is Archimedean, we only need to see
that, if $\K_v=\R$, then $\K_{v}(\alpha _{i})=\R$. Assume that, on the
contrary, 
$\K_{v}(\alpha _{i})=\C$. Since the coefficients of $f(x)+c$ are real,
there is  $j\not=i$ such that $\alpha _{j}$ is the complex conjugate
of $\alpha _{i}$. By hypothesis, $\beta_{\sigma(v,i)}\in \K_{v}=\R$
and so 
\begin{displaymath}
  |\alpha _{j}-\alpha _{i}|_{v}\le 2|\alpha _{i}-\beta_{\tau (v,i)}|_v<
  \min_{\stackrel{1\leq j\leq
    d}{j\not=i}}|\alpha_i-\alpha_j|_v,
\end{displaymath}
which is a contradiction.
 
Thus, for all $v\in S$, the polynomial $f(x)+c$ splits completely in
$\K_v[x]$. Then, by~\cite[Proposition 8.2]{Neukirch:ANT}, this implies
that there are $d$ distinct places of the extension
$\L=\K[x]/(f(x)+c)$ over~$v$,  completing the proof.
\end{proof}

Let $\G_{m}$ be the multiplicative group over $\K$ and $\T\simeq
\G^{n}_{m}$ a split torus of dimension $n$ over $\K$. Let
$N=\Hom(\G_{m},\T)$ be the lattice of cocharacters of $\T$ and write
$N_{\R}=N\otimes \R$.  We fix a splitting $\T\simeq \G_{m}^{n}$, which
induces isomorphisms $\T(\K)\simeq (\K^{\times})^{n}$ and
$N_{\R}\simeq \R^{n}$. Given elements $x\in \T(\K)$ and $u\in N_{\R}$,
we denote by $x_{i}$ and $u_{i}$, $i=1,\dots,n$, the components of the
image of $x$ and $u$ under the previous isomorphisms.  Consider the
space $\bigoplus _{v\in \mathfrak{M}_{\K}}N_{\R}$ with the norm given
by
\begin{displaymath}
  \|(u_{v})_{v}\|=\sum_{v\in \mathfrak{M}_{\K}}n_{v}
  \sum_{i=1}^{n}|u_{v,i}|.
\end{displaymath}
The induced topology is called the \emph{$L^{1}$-topology}. It does
not depend on the choice of the splitting of $\T$. We denote by
$H_{\K}\subset \bigoplus _{v\in \mathfrak{M}_{\K}}N_{\R}$ the subspace
defined by
  \begin{equation}\label{eq:16}
    H_{\K}=\Big\{(u_{v})_{v}\in\bigoplus _{v\in
      \mathfrak{M}_{\K}}N_{\R} \, \Big|\  \sum_{v}n_{v}u_{v}=0\Big\}
  \end{equation}
  with the induced $L^{1}$-topology. 

For each $v\in
  \mathfrak{M}_{\K}$, there is a map $\val_{v}\colon \T(\K)\to N_{\R}$,
  given, in the fixed  splitting, by
  \begin{equation}\label{eq:44}
    \val_{v}(x_{1},\dots,x_{n})=(-\log|x_{1}|_{v},\dots,-\log|x_{n}|_{v}).
  \end{equation}
  This map does not depend on the choice of the splitting.
  By the product formula, we can define a map $\val\colon \T(\K)\to
  H_{\K}$ as
  \begin{displaymath}
    \val(x)=(\val_{v}(x))_{v\in \mathfrak{M}_{\K}}.
  \end{displaymath}
This is a group homomorphism, and so it can be extended to a map
\begin{displaymath}
  \val\colon \T(\K)\otimes \Q\to H_{\K}.
\end{displaymath}

Dirichlet's unit theorem does not hold for general global fields. 
Nevertheless, the following result, that in the case of number fields is
a consequence of Dirichlet's unit theorem, is true in general.

\begin{lem}\label{lemm:7} The set $\val(\T(\K)\otimes\Q)$ is dense in
  $H_{\K}$ with respect to the $L^{1}$-topology.
\end{lem}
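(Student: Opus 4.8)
The plan is to prove density of $\val(\T(\K)\otimes\Q)$ in $H_\K$ by reducing to a local construction: for each place $v_0\in\mathfrak{M}_{\K_0}$ we want elements of $\T(\K)\otimes\Q$ whose valuation is concentrated (up to a small error, and up to the constraint $\sum_v n_v u_v=0$) near the places above $v_0$. Since finitely supported tuples are dense in $H_\K$ for the $L^1$-topology, and since each coordinate of $\T\simeq\G_m^n$ can be treated independently, it suffices to work with $\T=\G_m$ and to approximate, for an arbitrary finite set $S\subset\mathfrak{M}_\K$ and a prescribed tuple $(u_v)_{v\in S}$ with $\sum_{v\in S}n_v u_v=0$, an element $x\in\K^\times\otimes\Q$ with $\val_v(x)$ close to $u_v$ for $v\in S$ and $\val_w(x)$ small for the finitely many other places $w$ where it could be large.

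The key step is to produce enough multiplicatively independent elements with controlled valuations, and this is where Lemma~\ref{constrextotdec} enters. First I would use that lemma: given the finite set $S$ of places of $\K$ where we want to prescribe the valuation, and an integer $d\ge 2$, we obtain a finite extension $\L/\K$ in which every $v\in S$ splits into $d$ distinct places $w_1(v),\dots,w_d(v)$, each with $\L_{w_j(v)}=\K_v$, hence with the same local weight contribution. Working in $\L$ gives us more room: an element of $\K^\times$ now has, above each $v\in S$, several places over which its valuations must coincide, but by taking norms and traces one can transfer approximations. Concretely, I would pick elements $\gamma\in\L^\times$ that are units at all places of $\L$ except those above one chosen $v_0\in\mathfrak{M}_{\K_0}$, using that the group of $v_0$-units (or $S$-units) in a function field has the expected rank by the Riemann–Roch theorem — the same tool invoked in the proof of Lemma~\ref{hilbirred} — while in the number field case this is Dirichlet's unit theorem. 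The norm $\Norm_{\L/\K}(\gamma)\in\K^\times$ then has valuations supported above $v_0$, and by varying $\gamma$ and taking $\Q$-linear combinations in $\T(\K)\otimes\Q$ we can hit a dense subset of the subspace of $H_\K$ supported over the places above $v_0$. Summing over all $v_0$ occurring in $S$ gives the claim.

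The main obstacle is the absence of Dirichlet's unit theorem for general global fields, as the paper explicitly flags; the heart of the argument is to replace it by a Riemann–Roch count of $S$-units on the curve $B$ with $\K=\KK(B)$. Specifically one must check that, for a finite set $T$ of closed points of $B$, the group $\cO_{B,T}^\times$ of functions regular and invertible outside $T$ has rank $\#T-1$ after tensoring with $\Q$ (this follows from the exact sequence relating $\cO_{B,T}^\times$, the divisor group supported on $T$, and $\Pic^0(B)$, the latter being torsion modulo the constants only over finite fields — over infinite base fields one argues with the degree map and finiteness is not needed, since we only want $\Q$-density). Once this rank statement is in hand, the surjectivity of $\val$ onto a dense subspace of the relevant finite-dimensional piece of $H_\K$ is linear algebra, and the passage from a dense subspace of each $v_0$-supported piece to density in all of $H_\K$ is the elementary observation that the $v_0$-supported pieces span a dense subspace of $H_\K$ in the $L^1$-topology.
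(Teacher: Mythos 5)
Your proposal takes a genuinely different route from the paper (which does not invoke Lemma~\ref{constrextotdec} here at all; it simply applies Dirichlet's $S$-unit theorem in the number field and finite-base-field cases, and a direct Riemann--Roch estimate on the degree of an auxiliary effective divisor otherwise). Unfortunately there are two real gaps in your argument.

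First, the final reduction is false: the subspaces of $H_\K$ supported over a single $v_0\in\mathfrak{M}_{\K_0}$ do \emph{not} span a dense subspace of $H_\K$. Each such subspace consists of finitely supported tuples $(u_v)_v$ with $\sum_{v\mid v_0}n_v u_v=0$ for that particular $v_0$, so their sum is the set of tuples whose weighted sum vanishes \emph{in each fiber separately}; this is a proper closed subspace of $H_\K$ (for $\K=\Q$ it is the zero subspace, since each fiber is a singleton). The genuine content of the lemma is precisely that one can move mass \emph{between} different $v_0$-fibers, and the fiber-by-fiber construction you propose cannot do that. Second, the $S$-unit rank count you lean on is also false over an infinite base field $k$. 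From the exact sequence $1\to k^\times\to\cO_{B,S}^\times\to\Div^0_S(B)\to\Pic^0(B)$ one sees that $\cO_{B,S}^\times/k^\times$ embeds in $\Div^0_S(B)\simeq\Z^{\#S-1}$ with cokernel injecting into $\Pic^0(B)$; when $k$ is infinite, $\Pic^0(B)$ can have non-torsion elements (e.g.\ $B$ an elliptic curve over $\C$), and if $S=\{P,Q\}$ with $[P-Q]$ non-torsion then $\cO_{B,S}^\times=k^\times$ has $\Q$-rank $0$, not $\#S-1=1$. Tensoring with $\Q$ does not repair this, contrary to your parenthetical. This is exactly why the paper abandons any attempt to control the support of the auxiliary divisor in the function-field-over-infinite-field case: it uses Riemann--Roch to produce $\alpha$ with $lD+E=E'+\div(\alpha)$ where $E'$ is effective of \emph{bounded degree} $r$ but of uncontrolled support, which is enough for the $L^1$-topology and accounts for the paper's remark that one only gets $L^1$-density (not direct-sum density) in that case.
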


\begin{proof}
  Since the torus is split, working component-wise, it is enough to
  treat the case $n=1$. Thus $\T(\K)=\K^{\times}$.

  First suppose that $\K$ is an number field or the function field of
  a curve over a finite field. For each finite subset
  $S\subset \mathfrak{M}_{\K}$ we put
  \begin{align*}
    H_{\K,S}&=\{(u_{v})_{v\in\mathfrak{M}_{\K}}\in H_{\K}\mid u_{v}=0 \text{ for }v\not
    \in S\}, \\
  \K_{S}& =\{\alpha \in \K\mid |\alpha |_{v}=1 \text{ for }v\not
    \in S\}\subset \K^{\times}.
 \end{align*}
 Dirichlet $S$-unit theorem \cite[Chapter IV, \S~4, Theorem
 9]{Weil:bnt} states that $\val(\K_{S})$ is a lattice in
 $H_{\K,S}$. Let $u\in H_{\K}$. Then there exists a finite subset $S$
 such that $u\in H_{\K,S}$. Let $\varepsilon >0$. By the density of
 rational numbers, we can find an element $u'\in \val(\K_{S}\otimes
 \Q)\subset H_{\K,S}$ with $\|u-u'\|<\varepsilon $, proving the lemma
 in this case.

 Now let $B\to C$ be a dominant morphism of regular projective curves
 over an infinite field $k$ and set $\K=\KK(B)$ with the induced structure of
 global field.  In this case, Dirichlet $S$-unit theorem may not hold
 and the lemma is a consequence of the Riemann-Roch theorem.

  Let $(u_{v})_{v}\in H_{\K}$ and
  $\varepsilon >0$. We have to  
  show that there is an element $x\in \K^{\times}\otimes \Q$ such that
  \begin{displaymath}
    \|(u_{v})_{v}-\val(x)\|<\varepsilon.
  \end{displaymath}
  Since $\Q$ is dense in $\R$, we may assume without loss of generality
  that $u_{v}\in \Q$ for all $v\in \mathfrak{M}_{\K}$. Since there is
  a finite subset $S$ such that $u\in H_{\K,S}$, we can choose an integer
  $q\ge 1$ such
  that $qu_{v}\in \Z$ for all $v\in 
  \mathfrak{M}_{\K}$. 

  Recall that the set of places of $\K$ is indexed by the set of
  closed points of $B$. With notation as in \eqref{eq:48}, we consider
  the Weil divisor on $B$ given by
  \begin{displaymath}
    D=\sum_{v\in \mathfrak{M}_{\K}}e(v/v_{0}) qu_{v}[v].
  \end{displaymath}
By the definition of the weights $n_{v}$ and the product formula,
 \begin{displaymath}
   \deg(D)=\sum_{v\in
     \mathfrak{M}_{\K}} e(v/v_{0}) qu_{v} [k(v):k] = 
   q[\K:\K_{0}] \sum_{v\in \mathfrak{M}_{\K}}n_{v}u_{v}=0.
 \end{displaymath}
Let $E$ be an effective Weil divisor on $B$ with $\deg(E)=r\ge g(B)$,
 where $g(B)$ is the genus of $B$, and choose an integer 
$$
l> \frac{2r}{\varepsilon q[\K:\K_{0}]}.
$$
 Since $\deg(lD+E)=l\deg(D)+\deg(E)= r\ge g(B)$, by the
 Riemann-Roch theorem~\cite[Theorem 7.3.17]{Liu:agac} we can find an
 element $\alpha \in \K^{\times}$ and an effective divisor $E'$ on $B$
 with
 $\deg(E')=r$ such that
 \begin{equation}\label{eq:12}
   lD+E=E'+\div(\alpha ).
 \end{equation}
 Writing $E'-E=\sum_{v}a_{v}[v]$, the equation \eqref{eq:12} reads
 \begin{equation}
   \label{eq:55}
 a_{v}= e(v/v_{0}) l qu_{v}-\ord_{v}(\alpha)=e(v/v_{0}) l q\Big(u_{v}-\frac{1}{lq}\val_{v}(\alpha)\Big) \quad \text{ for all } v.
 \end{equation}
Put
 \begin{math}
   x=\alpha^{\frac{1}{lq}}=\alpha\otimes \frac{1}{lq} \in \K^{\times }\otimes \Q.
 \end{math}
Using that $E$ and $E'$ are effective divisors of degree
 $r$, we deduce that
 \begin{displaymath}
 \sum_{v\in
   \mathfrak{M}_{\K}}[k(v):k] |a_v| \le \deg(E)+\deg(E') = 2r  
 \end{displaymath}
 and, using \eqref{eq:55}, 
 \begin{multline*}
   \|(u_{v})_{v}-\val(x)\|=  \Big\|(u_{v})_{v}-\frac{1}{lq}
   \val(\alpha)\Big\|= \frac{1}{lq} \Big\|\Big(\frac{a_{v}}{e(v/v_{0})
   }\Big)_{v}\Big\| \\= 
 \frac{1}{lq [\K:\K_{0}]}\sum_{v\in \mathfrak{M}_{\K}}[k(v):k]|a_v|
   \le \frac{2r}{lq [\K:\K_{0}]}<\varepsilon,
 \end{multline*}
 obtaining the result.
\end{proof}

\begin{rem}
The space $H_{\K}$ has another natural
  topology, the \emph{direct sum topology}.
  A subset $U\subset H_{\K}$ is open for the direct sum topology if
  and only if its intersections with all the subsets $H_{\K,S}$ are
  open. The direct sum topology is finer than the $L^{1}$-topology. In
  fact, a sequence $(u_{j})_{j\ge 1}$ of elements of $H_{\K}$ converges to
  $u\in H_{\K}$ in the direct sum topology if and only it converges in the
  $L^{1}$-topology and there is a finite subset $S\subset
  \mathfrak{M}_{\K}$ such that $u_{j}\in H_{\K,S}$ for all $j\ge 1$.   

  The proof of Lemma \ref{lemm:7} for number fields and function
  fields over a finite field shows the stronger result that the set
  $\val(\T(\K)\otimes\Q)$ is dense in $H_{\K}$ for the direct sum
  topology.  By contrast, the proof of Lemma \ref{lemm:7} for general
  function fields only shows density for the $L^{1}$-topology because
  we have no control on the support of the divisor $E'$ in the
  equation \eqref{eq:12}.
\end{rem}

Although the $L^{1}$-topology is coarser than the direct sum topology,
the next result shows that it will be enough for our purposes.

\begin{lem}\label{lemm:8} Let $\Psi \colon N_{\R}\to \R$ be a
  continuous function with $\Psi (0)=0$ and Lipschitz at $0$. Let $(\psi
  _{v})_{v\in \mathfrak{M}_{\K}}$ be a collection of continuous
  functions on $N_{\R}$ such that  there is a finite subset
  $S\subset \mathfrak{M}_{\K}$ with  $\psi _{v}=\Psi $ for $v\not \in S$. Then   
  the map $\bigoplus _{v\in \mathfrak{M}_{\K}}N_{\R}\to \R$ given by
  \begin{displaymath}
    (u_{v})_{v}\longmapsto \sum_{v\in \mathfrak{M}_{\K}} n_{v}\psi _{v}(u_{v}) 
  \end{displaymath}
  is continuous with respect to the $L^{1}$-topology.
\end{lem}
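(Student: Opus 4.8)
The plan is to prove continuity at an arbitrary point $(a_{v})_{v} \in \bigoplus_{v} N_{\R}$ by splitting the sum over $\mathfrak{M}_{\K}$ into the finite ``exceptional'' part $S$ (enlarged to also contain the support of $(a_v)_v$) and the cofinite ``generic'' part on which every $\psi_v$ equals $\Psi$. On the finite part the map is a finite sum of compositions of continuous functions with coordinate projections, hence continuous for the $L^1$-topology (which on the finite-dimensional subspace $\bigoplus_{v \in S'} N_\R$ agrees with the usual topology). So the whole issue is the tail: I must control $\sum_{v \notin S'} n_v \Psi(u_v)$ when $(u_v)_v$ is $L^1$-close to $(a_v)_v$, noting that $a_v = 0$ for $v \notin S'$ and $\Psi(0) = 0$.

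First I would fix the Lipschitz constant $L$ of $\Psi$ at $0$, so that there is a radius $\delta_0 > 0$ with $|\Psi(u)| \le L \|u\|_1$ whenever $\|u\|_1 \le \delta_0$, where $\|\cdot\|_1$ denotes the $\ell^1$-norm on a single copy of $N_\R \simeq \R^n$. Given $\varepsilon > 0$, take any $(u_v)_v$ with $\|(u_v)_v - (a_v)_v\| < \eta$ for a small $\eta$ to be chosen, in particular $< \min(\delta_0, \varepsilon/(2L))$ (using that all weights $n_v$ are positive so the norm dominates each term). Then for $v \notin S'$ we have $\|u_v\|_1 \le \|u_v\|_1 = \|u_v - a_v\|_1 \le \delta_0$, whence $n_v |\Psi(u_v)| \le L\, n_v \|u_v\|_1$, and summing over $v \notin S'$ gives
\begin{displaymath}
  \sum_{v \notin S'} n_v |\Psi(u_v)| \le L \sum_{v \notin S'} n_v \|u_v - a_v\|_1 \le L \, \|(u_v)_v - (a_v)_v\| < L \cdot \frac{\varepsilon}{2L} = \frac{\varepsilon}{2}.
\end{displaymath}
This bounds the tail contribution at $(u_v)_v$ by $\varepsilon/2$; the tail contribution at $(a_v)_v$ is exactly $0$ since each $a_v = 0$ and $\Psi(0)=0$.

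Next I would handle the finite part: the map $(u_v)_{v \in S'} \mapsto \sum_{v \in S'} n_v \psi_v(u_v)$ is continuous on the finite-dimensional space $\bigoplus_{v \in S'} N_\R$, so there is $\eta' > 0$ such that $\|(u_v)_{v \in S'} - (a_v)_{v \in S'}\| < \eta'$ forces the finite part to change by less than $\varepsilon/2$. Taking $\eta = \min(\eta', \delta_0, \varepsilon/(2L))$ and combining the two estimates via the triangle inequality yields that the total map changes by less than $\varepsilon$, which is the desired continuity. The only mild subtlety — and the one place to be careful — is making sure $S$ is enlarged so that $S' \supseteq S$ and $S' \supseteq \{v : a_v \ne 0\}$, so that simultaneously $\psi_v = \Psi$ and $a_v = 0$ for all $v \notin S'$; with that bookkeeping in place the argument is essentially the standard ``$\varepsilon/2 + \varepsilon/2$'' splitting and there is no real obstacle, the Lipschitz-at-$0$ hypothesis being exactly what makes the infinite tail summable against the $L^1$-norm.
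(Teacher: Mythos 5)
Your proof is correct and follows essentially the same route as the paper's: enlarge $S$ to $S'$ to absorb the support of the base point, use finite-dimensional continuity on the exceptional coordinates, and control the tail by the Lipschitz-at-$0$ bound applied termwise so that the $L^1$-norm $\sum_{v\notin S'} n_v\|u_v\|_1$ directly dominates $\sum_{v\notin S'} n_v|\Psi(u_v)|$. One small wrinkle worth flagging (present in the paper's proof as well): the bound $\|(u_v)_v - (a_v)_v\| < \delta_0$ only gives $n_v\|u_v - a_v\|_1 < \delta_0$, not $\|u_v\|_1 \le \delta_0$, since the weights $n_v$ can be smaller than $1$; in the intended application $\Psi$ is conic so the linear bound $|\Psi(u)|\le B\|u\|$ holds for all $u$, which is what makes the tail estimate go through without having to confine each $u_v$ to a fixed ball.
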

\begin{proof} First note that the function in the lemma is
  well-defined because the sum only involves a finite number of
  nonzero terms. Within this proof, we will indistinctly denote by  $\| \cdot \|$  the
  $L^{1}$-norm on $N_{\R}\simeq\R^{r}$ or on $\bigoplus_{v}N_{ \R}$. 
  
Since $\Psi $ is Lipschitz at $0$, there are constants
  $B>0$ and $\varepsilon _{0}>0$ such that, for $u'\in 
  N_{\R}$ with $\|u'\|\le \varepsilon _{0}$, 
  \begin{displaymath}
    |\Psi(u')|\le B\|u'\|,
  \end{displaymath}

  Fix $(u_{v})_{v}\in \bigoplus _{v}N_{\R}$ and $\varepsilon
  >0$. Write
  \begin{displaymath}
    S'=S\cup\{v\in \mathfrak{M}_{\K}\mid u_{v}\not = 0\}.
  \end{displaymath}
Since $\psi _{v}$ is continuous in $u_{v}$, we can
  choose $0<\delta 
  <\min(\varepsilon /2B,\varepsilon _{0})$ such that, for all $v\in S'$, 
  \begin{displaymath}
    n_{v}\|u_{v}-u'_{v}\|< \delta \Longrightarrow
    n_{v}|\psi _{v}(u_{v})-\psi_{v}(u'_{v})|<
    \frac{\varepsilon}{2\#S'}.
  \end{displaymath}
  If $\|(u'_{v})_{v}-(u_{v})_{v}\|<\delta $ then
  $n_{v}\|u_{v}-u'_{v}\|< \delta$ for all $v\in
  \mathfrak{M}_{\K}$, and $\sum_{v\notin
    S'}n_v\|u'_v\|<\delta$. Therefore
  \begin{multline*}
    \left|
      \sum_{v\in \mathfrak{M}_{\K}}n_{v}\psi _{v}(u'_{v})
      -\sum_{v\in \mathfrak{M}_{\K}}n_{v}\psi _{v}(u_{v})
      \right|\\
      \le\sum_{v\in S'}n_{v}|\psi _{v}(u'_{v})-\psi
      _{v}(u_{v})|+
      \sum_{v\not \in S'}n_{v}|\Psi (u'_{v})|\\
      <
      \sum_{v\in S'}\frac{\varepsilon}{2\#S'}+
     B\sum_{v\not \in S'}n_{v}\|u'_{v}\|<
      \frac{\varepsilon}{2}+B\delta < \varepsilon. 
  \end{multline*}
This shows the continuity at the point $(u_{v})_{v}$. Since this point
is arbitrary we obtain the lemma.
\end{proof}

\subsection{Concavification of functions}
\label{sec:concavification} 
We next introduce the concavification of a function and study its
basic properties. 

\begin{defn}\label{def:3}
Let $f\colon N_{\R}\to \R$ be a
  function. The \emph{concavification} of $f$, denoted $\conc(f)$, is
  the smallest
  concave function on $N_{\R}$ that is bounded below by 
  $f$.   
\end{defn}

The concavification of a function may not exists but if it exists, it
is unique.  Recall from from \cite[Definition
A.1]{BurgosMoriwakiPhilipponSombra:aptv} that the \emph{stabilizer} of
the function $f$ is the subset of $M_{\R}$ given by
\begin{equation}\label{eq:50}
  \stab(f)= \{x\in M_{\R}\mid x - f \text{ is bounded below}\}. 
\end{equation}

\begin{lem} \label{lemm:2} Let $f\colon N_{\R}\to \R$ be a
  function. Then $\conc(f)$ exists if and only if
  $\stab(f)\ne\emptyset$. If this is the case, then for $u\in
  N_{\R}$,
  \begin{displaymath}
    \conc(f)(u)= \sup \sum_{j=1}^{\ell}\nu_{j}f(u_{j}),
  \end{displaymath}
  where the supremum is over all expressions of $u$ as a convex
  combination of points of $N_{\R}$, that is, all expressions of the
  form $u=\sum_{j=1}^{\ell}\nu_{j} u_{j}$ with $\ell\in\N$,
  $\nu_{j}\ge 0$ for all~$j$,
  $\sum_{j=1}^{\ell}\nu _{j}=1$ and
  $u_{j}\in N_{\R}$.
 \end{lem}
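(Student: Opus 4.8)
The plan is to prove the two assertions of Lemma~\ref{lemm:2} in turn: first the equivalence $\conc(f)$ exists $\iff \stab(f)\neq\emptyset$, and then the explicit formula for $\conc(f)$ as a supremum over convex combinations. Throughout, write $g(u)=\sup\sum_{j=1}^{\ell}\nu_{j}f(u_{j})$ for the right-hand side, where the supremum ranges over all finite convex combinations $u=\sum_{j}\nu_{j}u_{j}$.

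For the equivalence, the easy direction is that if $\conc(f)$ exists then, being a concave function that is finite everywhere on $N_{\R}$, it admits a supporting affine function $a$ at any point (a concave function finite on all of $N_{\R}$ has nonempty superdifferential everywhere); then $x=-\text{(linear part of }a)$, suitably interpreted as an element of $M_{\R}$, satisfies $x-\conc(f)$ bounded below, hence $x-f$ bounded below since $f\le\conc(f)$, so $\stab(f)\neq\emptyset$. Conversely, suppose $x\in\stab(f)$, so $\langle x,\cdot\rangle - f$ is bounded below by some constant $-c$, i.e. $f(u)\le \langle x,u\rangle+c$ for all $u$. The affine function $u\mapsto\langle x,u\rangle+c$ is concave and bounds $f$ from above, so the family of concave functions $\ge f$ is nonempty; its pointwise infimum is again concave (the infimum of concave functions is concave) and is clearly the smallest such function, hence $\conc(f)$ exists. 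This also shows $\conc(f)$ is everywhere finite-valued, since it is sandwiched between $f$ and the affine majorant.

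For the formula, I would show $g=\conc(f)$ under the assumption $\stab(f)\neq\emptyset$. First, $g$ is well-defined (finite): using the affine majorant $f(u)\le\langle x,u\rangle+c$, any convex combination gives $\sum_{j}\nu_{j}f(u_{j})\le\sum_{j}\nu_{j}(\langle x,u_{j}\rangle+c)=\langle x,u\rangle+c$, so $g(u)\le\langle x,u\rangle+c<\infty$; and taking the trivial combination $u=u$ gives $g(u)\ge f(u)$, so $f\le g$. Next, $g$ is concave: given $u=\lambda u'+(1-\lambda)u''$ and near-optimal convex-combination representations of $u'$ and $u''$, concatenating them with weights scaled by $\lambda$ and $1-\lambda$ produces a convex-combination representation of $u$ whose value is within $\varepsilon$ of $\lambda g(u')+(1-\lambda)g(u'')$; letting $\varepsilon\to0$ gives concavity. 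Finally, $g$ is the \emph{smallest} concave function $\ge f$: if $h$ is any concave function with $h\ge f$, then for every convex combination $u=\sum_{j}\nu_{j}u_{j}$ we have, by Jensen's inequality for the concave function $h$, $h(u)\ge\sum_{j}\nu_{j}h(u_{j})\ge\sum_{j}\nu_{j}f(u_{j})$; taking the supremum over all such representations yields $h(u)\ge g(u)$. Combining, $g$ is a concave function $\ge f$ that is $\le$ every concave function $\ge f$, so $g=\conc(f)$.

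The only genuinely delicate point is the existence of a supporting affine function for a real-valued concave function on all of $N_{\R}$, used in the forward direction of the equivalence; this is standard convex analysis (a concave function that is finite on an open convex set is continuous on it and has nonempty superdifferential at interior points, and here every point is interior), so I would simply cite it or dispatch it in one line. Everything else is a routine manipulation of convex combinations and Jensen's inequality, done in the order: (i) existence equivalence, (ii) $g$ finite and $\ge f$, (iii) $g$ concave, (iv) minimality of $g$ via Jensen.
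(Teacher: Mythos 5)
Your proof is correct and, for the existence equivalence, follows essentially the same route as the paper: from $x\in\stab(f)$ you get an affine majorant, hence $\conc(f)$ exists; conversely any finite-valued concave function on all of $N_{\R}$ has nonempty stabilizer. The paper simply asserts the latter fact, while you justify it via supporting hyperplanes (nonempty superdifferential at interior points) — a reasonable one-liner. The one divergence is that for the formula $\conc(f)(u)=\sup\sum_j\nu_j f(u_j)$ the paper cites Rockafellar, \emph{Convex analysis}, Theorem 5.3, whereas you reprove it from scratch: $g$ finite via the affine majorant, $g\ge f$ via the trivial combination, $g$ concave by concatenating near-optimal representations, and minimality of $g$ via Jensen's inequality applied to any concave $h\ge f$. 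That argument is correct and is precisely what the cited theorem encapsulates; going through it costs a few lines but makes the lemma self-contained. One small slip: in the forward direction you take $x$ to be $-$(linear part of the supporting affine function $a$); since $\conc(f)\le a$ with $a(u)=\langle x_0,u\rangle+c$, it is $x_0$ itself (not $-x_0$) that lies in $\stab(\conc(f))$, because $\langle x_0,u\rangle-\conc(f)(u)\ge -c$. The sign error does not affect the rest of the argument, which proceeds correctly with the inclusion $\stab(\conc(f))\subset\stab(f)$.
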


 \begin{proof} Clearly,  $\conc(f)$ exists if and only if
   there exists a concave function $g\colon N_{\R}\to \R$ with $f\le
   g$.

Assume that $\stab(f)\not =\emptyset$. 
  Let $x\in\stab(f)$. Then, there exists $c\in \R$ such that $f(u)\le
  \left<x,u\right>+c$ for all $u\in N_{\R}$. Since the function
  $\left<x,u\right>+c$ is concave, we deduce that $\conc(f)$
  exists. Conversely, assume that $\conc(f)$ exists. Since $\conc(f)$
  is concave, $\stab(\conc(f))\not =\emptyset $. Therefore
  $\stab(f)\supset \stab(\conc(f))$ is not empty.

  The expression for
  $\conc(f)(u)$ follows from \cite[Theorem 5.3]{Rockafellar:ca}, see
  \emph{loc. cit.} page 36.
\end{proof}

If the function $f$ is locally bounded below, we can
assume that the numbers $\nu _{j}$ of the previous lemma are rational
numbers.

\begin{lem}\label{lemm:6}
   Let $f\colon N_{\R}\to \R$ be a function
such that 
  $\stab(f)\ne\emptyset$ and which is locally bounded below. Then, for $u\in
  N_{\R}$,
  \begin{displaymath}
    \conc(f)(u)= \sup \frac{1}{d}\sum_{j=1}^{d}f(u_{j}),
  \end{displaymath}
  where the supremum is over all expressions of the
  form $u=\frac{1}{d}\sum_{j=1}^{d}u_{j}$ with 
  $u_{j}\in N_{\R}$.
\end{lem}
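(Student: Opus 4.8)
The plan is to show that the two suprema coincide. Denote by $g(u)$ the right-hand side supremum, taken over representations $u=\frac1d\sum_{j=1}^d u_j$ with equal rational weights $\frac1d$. Since any such representation is in particular a convex combination of the $u_j$ with weights $\nu_j=\frac1d$, Lemma~\ref{lemm:2} immediately gives $g(u)\le \conc(f)(u)$; moreover the hypothesis $\stab(f)\ne\emptyset$ guarantees that $\conc(f)$ exists, so this inequality is meaningful. The real content is the reverse inequality $\conc(f)(u)\le g(u)$, for which, again by Lemma~\ref{lemm:2}, it suffices to fix an arbitrary convex combination $u=\sum_{j=1}^{\ell}\nu_j u_j$ with $\nu_j\ge 0$, $\sum_j\nu_j=1$, and show that $\sum_{j=1}^{\ell}\nu_j f(u_j)\le g(u)$.

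First I would treat the case of rational weights $\nu_j\in\Q_{\ge 0}$: writing $\nu_j=\frac{m_j}{d}$ with $m_j\in\N$ and $\sum_j m_j=d$, the point $u$ is expressed as $\frac1d\sum_{j}m_j u_j$, i.e.\ as an average of $d$ points (each $u_j$ repeated $m_j$ times), so $\sum_j\nu_j f(u_j)=\frac1d\sum_j m_j f(u_j)\le g(u)$ directly from the definition of $g$. For the general case with real weights $\nu_j$, I would approximate: pick $x\in\stab(f)$ and $c\in\R$ with $f(w)\le\langle x,w\rangle+c$ for all $w$, and use the local lower bound on $f$ near each $u_j$. Choosing rational weights $\nu_j'\ge 0$ with $\sum_j\nu_j'=1$ and $\nu_j'$ close to $\nu_j$, one can also perturb the points $u_j$ slightly to points $u_j'$ so that $u=\sum_j\nu_j' u_j'$ still holds exactly (this is possible because the affine relation $\sum_j\nu_j u_j=u$ can be maintained after a small change of the weights by a compensating small change of the points, keeping all $u_j'$ in a fixed compact neighborhood of $\{u_1,\dots,u_\ell\}$). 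On that compact neighborhood $f$ is bounded below by hypothesis and bounded above by $\langle x,\cdot\rangle+c$, hence $f$ restricted there is sandwiched and, after possibly also invoking continuity considerations or just the two-sided bound, the quantity $\sum_j\nu_j' f(u_j')$ can be made to exceed $\sum_j\nu_j f(u_j)-\varepsilon$. Since $\sum_j\nu_j' f(u_j')\le g(u)$ by the rational case, letting $\varepsilon\to 0$ yields $\sum_j\nu_j f(u_j)\le g(u)$, and taking the supremum over all convex combinations gives $\conc(f)(u)\le g(u)$.

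The main obstacle is the perturbation argument in the last paragraph: one must simultaneously rationalize the weights \emph{and} adjust the points so that the average remains \emph{exactly} equal to the prescribed $u$, while keeping everything inside a compact set on which the local-lower-boundedness of $f$ can be applied and controlling the resulting change in $\sum\nu_j f(u_j)$. Handling the boundary case where some $u_j$ has weight tending to zero, and making sure the error terms coming from both the weight perturbation and the point perturbation are uniformly small, is where the care is needed; the rest is a direct unwinding of Lemma~\ref{lemm:2}.
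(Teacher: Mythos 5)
Your easy inequality and the rational-weights case are both fine and match the paper's starting point. But the perturbation step in the general case has a genuine gap: you want to move each $u_j$ to a nearby $u_j'$ so that $u=\sum_j\nu_j'u_j'$ holds exactly, and then assert that $\sum_j\nu_j'f(u_j')$ is close to $\sum_j\nu_jf(u_j)$. That last step requires some continuity of $f$, which is not available. The hypotheses give only that $f$ is locally bounded below and bounded above by an affine function (from $\stab(f)\neq\emptyset$); a two-sided bound of this kind does not control oscillation, so an arbitrarily small move $u_j\to u_j'$ may change $f$ by an amount of order the width of the sandwich. (For instance, a function taking one constant value on $\Q^n$ and a smaller constant on its complement satisfies all the hypotheses, yet jumps everywhere.) So "sandwiched, hence under control" is the missing ingredient that your plan cannot supply.

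The paper's proof avoids perturbing any of the given points. Starting from the near-optimal convex combination $u=\sum_{j=1}^k\nu_ju_j$ with $\nu_j>0$, it chooses rational weights $a_j/d$ that approximate $\nu_j$ strictly from below, within a tolerance $\eta$. Because all the slacks $\nu_j-a_j/d$ are nonnegative and small, they sum to a small positive remainder $a_{k+1}/d=1-\sum_{j\le k}a_j/d$. One then appends a single new point
\begin{displaymath}
u_{k+1}=\frac{d}{a_{k+1}}\Bigl(u-\sum_{j=1}^{k}\frac{a_j}{d}u_j\Bigr)
\end{displaymath}
carrying exactly this remainder weight, so the rational-weight average of $u_1,\dots,u_{k+1}$ equals $u$ on the nose. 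The algebra shows $\|u_{k+1}\|\le 2\sum_j\|u_j\|$ (the tolerance $\eta$ cancels against the remainder weight, leaving a bound independent of $\eta$), so $u_{k+1}$ stays in a fixed compact set where $|f|\le B$. Since the extra point has weight at most $k\eta$ and each weight error is at most $\eta$, the difference between $\sum_j\nu_jf(u_j)$ and $\frac1d\sum_{j=1}^{k+1}a_jf(u_j)$ is at most $2k\eta B$, which is made $\le\varepsilon/2$ by taking $\eta$ small. This uses the local lower boundedness only to get a two-sided bound for $|f|$ on a fixed compact, never to compare $f$ at two nearby points. Replacing your perturbation step by this one-sided approximation plus a single compensating point closes the gap.
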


\begin{proof}
  By Lemma \ref{lemm:2}, it is clear that 
  \begin{displaymath}
    \conc(f)(u)\ge \sup \frac{1}{d}\sum_{j=1}^{d}f(u_{j}).
  \end{displaymath}
  Thus, we only need to show the other inequality. Let $\varepsilon
  >0$. By Lemma \ref{lemm:2} we can find a convex combination
  $u=\sum_{j=1}^{k}\nu_{j}u_{j}$ with $\nu_{j}> 0$ for all~$j$ and
  $\sum_{j=1}^{\ell}\nu _{j}=1$ such that
 \begin{equation}\label{eq:57}
    \conc(f)(u)\le \sum_{j=1}^{k}\nu _{j}f(u_{j})+\varepsilon /2.
  \end{equation}
 
Fix an isomorphism $N_{\R}\simeq \R^{n}$ and consider the associated
$L^{1}$-norm, that we denote by $\|\cdot\|$. 
Since $\stab(f)\not=\emptyset$ and $f$ is locally bounded below, the
  function $|f|$ is bounded on compact subsets. Therefore, 
  there is a constant $B>0$ such that $|f(v)|\le B$ for all $v\in N_{\R}$ with
  $\|v\|\le 2\sum_{j=1}^{k}\|u_{j}\|$. In particular, $|f(u_{j})|\le B$.

  Set $\eta=\min\{\frac{\varepsilon}{4kB}, \nu_{1},\dots, \nu_{k}\} >0$ and choose
  integers $d\ge 1$ and $a_{j}\ge 1$, $j=1,\dots,k$, such that
  \begin{equation}\label{eq:10}
    \frac{\eta}{2}<\nu _{j}-\frac{a_{j}}{d}<\eta.
  \end{equation}
  Put
  \begin{displaymath}
a_{k+1}=d-\sum_{j=1}^{k}a_{j}, \quad   u_{k+1}=\frac{d}{a_{k+1}}\bigg(u
      -\sum_{j=1}^{k}\frac{a_{j}}{d}u_{j}\bigg),     
  \end{displaymath}
  so that $\sum_{j=1}^{k+1}\frac{a_{j}}{d}=1$ and $
  \sum_{j=1}^{k+1}\frac{a_{j}}{d}u_{j}=u$ holds. Moreover, the
  inequalities in \eqref{eq:10} imply that $
  \frac{k\eta}{2}<\frac{a_{k+1}}{d}<k\eta $ and
  \begin{displaymath}
  \|u_{k+1}\|\le\bigg\|\frac{d}{a_{k+1}}\bigg(u-\sum_{j=1}^{k}
        \frac{a_{j}}{d}u_{j}\bigg)\bigg\|\le
    \frac{2}{k\eta}\eta\sum_{j=1}^{k}\|u_{j}\|\le 2
    \sum_{j=1}^{k}\|u_{j}\|. 
  \end{displaymath}
  Thus $|f(u_{k+1})|\le B$. Now we compute
  \begin{multline*}
    \bigg|\sum_{j=1}^{k}\nu
      _{j}f(u_{j})-\sum_{j=1}^{k+1}\frac{a_{j}}{d}f(u_{j})
    \bigg|\le \bigg|\sum_{j=1}^{k}\bigg(\nu
        _{j}-\frac{a_{j}}{d}\bigg)f(u_{j})-\frac{a_{k+1}}{d}f(u_{k+1})
    \bigg|\\ \le \eta k B+\eta k B\le \varepsilon /2.
  \end{multline*}
Combining this with  \eqref{eq:57},
  \begin{displaymath}
    \conc(f)(u)\le \frac{1}{d}\sum_{j=1}^{k+1}a_{j}f(u_{j})+\varepsilon,
  \end{displaymath}
which proves  the result.
\end{proof}

\begin{rem}
In the previous lemma, the hypothesis that $f$ is locally bounded
below is necessary because there exist non-concave functions that satisfy the
concavity condition for rational convex combinations. For instance, a discontinuous $\Q$-linear function
from $\R$ to $\R$ is not concave because it is not continuous, but it
satisfies the concavity condition for rational combinations because it is
$\Q$-linear. 

The condition of being locally bounded below is trivially satisfied if
$f$ is continuous.  
\end{rem}

The next result gives a criterion for the stability of a conic function to
be nonempty and, \emph{a fortiori}, for the  existence of its
concavification.

\begin{lem} \label{lemm:4} Let $\Psi\colon N_{\R}\to \R$ be a conic
  function. Then 
  $\stab(\Psi)\not =\emptyset$ if and only if, for all collections of points $u_{j}\in N_{\R}$,
  $j=1,\dots,\ell$, such that 
  \begin{math}
\sum_{j=1}^{\ell}u_{j}=0,
  \end{math} we have
  \begin{equation} \label{eq:13}
    \sum_{j=1}^{\ell}\Psi(u_{j})\le 0.
  \end{equation}
\end{lem}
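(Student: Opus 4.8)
The plan is to prove both implications by relating the stabilizer of the conic function $\Psi$ to a separation/hyperplane argument, using that a conic function is determined by its values on the unit sphere and that the concave hull can be computed by Lemma~\ref{lemm:2}. First I would handle the easy direction: if $\stab(\Psi)\ne\emptyset$, pick $x\in\stab(\Psi)$, so that $\Psi(u)\le\langle x,u\rangle+c$ for all $u\in N_\R$ and some constant $c$. Since $\Psi$ is conic, $\Psi(0)=0$ and $\Psi(tu)=t\Psi(u)$ for $t\ge 0$, which forces $c\ge 0$ and in fact $\Psi(u)\le\langle x,u\rangle$ for all $u$ (apply the bound to $tu$, divide by $t$, and let $t\to\infty$). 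Then for any collection $u_1,\dots,u_\ell$ with $\sum_j u_j=0$ we get $\sum_j\Psi(u_j)\le\sum_j\langle x,u_j\rangle=\langle x,\sum_j u_j\rangle=0$, which is exactly \eqref{eq:13}.

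For the converse, assume \eqref{eq:13} holds for all finite collections summing to zero. The key observation is that this hypothesis implies $\conc(\Psi)$ exists and is finite everywhere, and moreover $\conc(\Psi)(0)=0$. Indeed, by Lemma~\ref{lemm:2} we have $\conc(\Psi)(0)=\sup\sum_{j=1}^\ell\nu_j\Psi(u_j)$ over convex combinations $0=\sum_j\nu_j u_j$; clearing denominators and using that $\Psi$ is conic (so $\nu_j\Psi(u_j)=\Psi(\nu_j u_j)$ and $\sum_j\nu_j u_j=0$ rescales to an integer collection summing to zero, possibly after subdividing), the hypothesis \eqref{eq:13} gives that every such sum is $\le 0$, hence $\conc(\Psi)(0)\le 0$; the reverse inequality is clear by taking the trivial combination. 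So I first need to argue that $\conc(\Psi)$ exists at all, i.e. that $\stab(\Psi)\ne\emptyset$ — but that is precisely what we are trying to prove, so I must be careful to phrase this as: the hypothesis guarantees the supremum in the Lemma~\ref{lemm:2} formula is finite (take $u=0$ and the zero-sum condition to bound things), and then run a standard argument that a function whose "concave hull sup" is finite and not identically $+\infty$ actually admits a concave majorant. Concretely, one shows $\widetilde\Psi(u):=\sup\sum_j\nu_j\Psi(u_j)$ (sup over convex combinations of $u$) is concave, conic, and finite: finiteness at a general $u$ follows because $u$ together with $-u$ and $0$'s reduces a putative $+\infty$ to a violation of \eqref{eq:13}; then $\widetilde\Psi$ is a concave majorant of $\Psi$, so $\stab(\Psi)\supseteq\stab(\widetilde\Psi)\ne\emptyset$.

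The cleanest route, which I would actually write up, avoids the circularity as follows. Consider the convex cone $C\subset N_\R\times\R$ generated by the epigraph-type set $\{(u,t)\mid t\le\Psi(u)\}$, more precisely let $C=\cone\{(u,\Psi(u))\mid u\in N_\R\}$ (this is a cone since $\Psi$ is conic, so it equals $\{(su,s\Psi(u))\mid s\ge 0,\ u\in N_\R\}$ together with its convex combinations). The hypothesis \eqref{eq:13} says precisely that $C$ does not contain any point $(0,t)$ with $t>0$: a point $(0,t)\in C$ with $t>0$ would be a convex-combination-with-nonnegative-coefficients $\sum s_j(u_j,\Psi(u_j))$ with $\sum s_j u_j=0$ and $\sum s_j\Psi(u_j)=t>0$, and after rescaling/subdividing to make the $s_j$ integers this contradicts \eqref{eq:13}. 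Hence $(0,1)\notin\overline C$ — here one needs a small argument that the relevant cone or its closure still misses $(0,1)$, using conicity to control the closure — and by the separation theorem there is a linear functional $(x,-\lambda)\in M_\R\times\R$ with $(x,-\lambda)\cdot(0,1)=-\lambda<0\le(x,-\lambda)\cdot(u,\Psi(u))$ for all $u$, i.e. $\lambda>0$ and $\langle x,u\rangle\ge\lambda\Psi(u)$, so $\lambda^{-1}x\in\stab(\Psi)$. \textbf{The main obstacle} I anticipate is the closure issue: the cone $C$ need not be closed, and separating a point from a non-closed convex set requires care (one typically separates from the closure, then argues the closure still avoids $(0,1)$ using the conic structure and finiteness coming from \eqref{eq:13}, or else one works with the interior). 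Handling this topological point — perhaps by instead directly verifying that the candidate function $\widetilde\Psi$ above is real-valued and concave, which is more hands-on and sidesteps closures — is where the real work lies; everything else is bookkeeping with conicity and clearing denominators.
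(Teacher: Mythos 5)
Your hands-on construction of $\widetilde\Psi(u)=\sup\sum_j\Psi(w_j)$ over decompositions $u=\sum_j w_j$ (for conic $\Psi$ this coincides with your supremum over convex combinations), with finiteness obtained by adjoining $-u$ to the collection and applying the zero-sum hypothesis to bound the sum by $-\Psi(-u)$, is exactly the paper's proof of the harder direction, and once that bound is in hand there is no circularity: $\widetilde\Psi$ is a finite concave majorant of $\Psi$, so $\stab(\Psi)\supset\stab(\widetilde\Psi)\ne\emptyset$. The separation-theorem detour is unnecessary and, as you rightly anticipate, raises a closure issue that the direct construction sidesteps; the easy direction is handled the same way in the paper (conicity upgrades $\Psi\le\langle x,\cdot\rangle+c$ to $\Psi\le\langle x,\cdot\rangle$, so the sum telescopes to zero).
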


\begin{proof}
Suppose that, for all zero-sum families of points $u_{j}\in N_{\R}$,
  $j=1,\dots,\ell$, the inequality \eqref{eq:13} holds.
For $u\in N_{\R}$, set 
\begin{equation} \label{eq:34}
  \Phi(u)=\sup \sum_{j=1}^{\ell}\Psi(w_{j}) \in\R\cup\{\infty\}, 
\end{equation}
where the supremum is over all  $w_{j}\in N_{\R}$,
  $j=1,\dots,\ell$, such that $u=\sum_{j}w_{j}$.
By~\eqref{eq:13} applied to the points $-u$ and $w_{j}$,
$j=1,\dots, \ell$, 
  \begin{displaymath}
\sum_{j=1}^{\ell}\Psi(w_{j}) \le -\Psi(-u)
  \end{displaymath}
  and so the supremum in \eqref{eq:34} is finite.  Hence,
  \eqref{eq:34} defines a conic function $\Phi\colon N_{\R}\to \R$.
  By construction, $\Phi$ is concave and $\Phi\ge \Psi$. Hence,
  $\stab(\Psi)\supset \stab(\Phi) \ne \emptyset$.

  Conversely, assume that $\stab(\Psi)\not = \emptyset$. Let $x\in
  \stab(\Psi)$. Since $\Psi$ is conic, we have $\Psi(u)\le
  \left<x,u\right>$. Thus
  \begin{displaymath}
    \sum_{j=1}^{\ell}\Psi(u_{j})\le
    \sum_{j=1}^{\ell}\langle x,u_{j}\rangle=0. 
  \end{displaymath}
\end{proof}

\begin{lem}\label{lemm:5}
  Let $f\colon N_{\R}\to \R$ be a function with $\stab(f)\not =
  \emptyset$ and $g\colon N_{\R}\to \R$ another function with $|f-g|$
  bounded. Then $\stab(g)\not = \emptyset$ and $|\conc(f)-\conc(g)|$ is
  bounded. 
\end{lem}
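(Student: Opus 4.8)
The plan is to reduce everything to the variational description of the concavification given in Lemma \ref{lemm:2}, after first showing that the hypothesis ``$|f-g|$ bounded'' forces $\stab(f)=\stab(g)$.

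Fix $C\ge 0$ with $|f(u)-g(u)|\le C$ for all $u\in N_{\R}$. First I would check, straight from the definition \eqref{eq:50}, that $\stab(f)\subseteq\stab(g)$: if $x\in\stab(f)$, so that $u\mapsto\langle x,u\rangle-f(u)$ is bounded below by some $c\in\R$, then $u\mapsto\langle x,u\rangle-g(u)$ is bounded below by $c-C$, hence $x\in\stab(g)$. Exchanging the roles of $f$ and $g$ gives $\stab(f)=\stab(g)$; in particular $\stab(g)\ne\emptyset$, which is the first assertion.

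Since both stabilizers are nonempty, Lemma \ref{lemm:2} ensures that $\conc(f)$ and $\conc(g)$ exist and that, for every $u\in N_{\R}$, $\conc(f)(u)$ equals the supremum of $\sum_{j=1}^{\ell}\nu_j f(u_j)$ over all expressions of $u$ as a convex combination $u=\sum_{j=1}^{\ell}\nu_j u_j$ of points of $N_{\R}$, and likewise for $g$. For any fixed such convex combination, $\sum_j\nu_j f(u_j)\le\sum_j\nu_j\bigl(g(u_j)+C\bigr)=\sum_j\nu_j g(u_j)+C\le\conc(g)(u)+C$; taking the supremum over all convex combinations of $u$ yields $\conc(f)(u)\le\conc(g)(u)+C$. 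The symmetric estimate gives the reverse inequality, so $|\conc(f)(u)-\conc(g)(u)|\le C$ for all $u\in N_{\R}$, as desired.

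I do not anticipate a genuine obstacle: the argument is essentially bookkeeping. The only mild subtleties are that $\stab$ is defined via boundedness \emph{below}, which is exactly what makes the transfer of the additive constant $C$ work cleanly, and that one must invoke Lemma \ref{lemm:2} (rather than the bare existence of $\conc$) in order to have the pointwise formula available for the comparison.
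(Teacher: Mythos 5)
Your proof is correct and follows essentially the same route as the paper's: both establish $\stab(f)=\stab(g)$ from the boundedness of $|f-g|$ and then use the variational formula for $\conc$ from Lemma~\ref{lemm:2} to transfer the additive bound through the supremum. You simply spell out the stabilizer equality in more detail than the paper does.
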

\begin{proof}
  If $|f-g|$ is bounded, then $\stab(f)=\stab(g)$, which gives the
  first statement. Since $|f-g|$ is bounded we can choose $B>0$ such
  that $|f(u)-g(u)|\le B$ for all $u\in N_{\R}$. Fix a point $u\in
  N_{\R}$ and consider a convex combination of points of~$N_{\R}$
  \begin{displaymath}
    u=\sum_{j=1}^{\ell}\nu _{j}u_{j}.
  \end{displaymath}
Then
  \begin{displaymath}
    \sum_{j=1}^{\ell} \nu _{j}f(u_{j})-\conc(g)(u)\le
    \sum_{j=1}^{\ell} \nu _{j}f(u_{j})-\sum_{j=1}^{\ell} \nu
    _{j}g(u_{j})\le B.
  \end{displaymath}
  Since this is true for any convex combination as above, we deduce
  \begin{displaymath}
    \conc(f)(u)-\conc(g)(u)\le B.
  \end{displaymath}
  By symmetry $\conc(g)(u)-\conc(f)(u)\le B$ and the second statement
  follows. 
\end{proof}

\section{Successive minima of toric metrized $\R$-divisors}
\label{sec:succ-algebr-minima}

In this section, we give the formulae for the successive minima of the
height function  associated to a toric metrized $\R$-divisor on a
proper toric variety over a global field. We will use the notations
and results in \cite{BurgosPhilipponSombra:agtvmmh,
  BurgosMoriwakiPhilipponSombra:aptv} although, for the convenience of
the reader, we recall below some of them.

Let $\K$ be a global field as in the previous section and $X$ a
variety over $\K$, that is, a reduced and irreducible separated scheme
of finite type over $\K$.  The elements of $X(\ov \K)$ will be called
the \emph{algebraic points} of $X$. For each place $v\in
\mathfrak{M}_{\K}$, we denote by $X_{v}^{\an}$ the $v$-adic
analytification of $X$. If $v$ is Archimedean, this is is a complex
space (equipped with an anti-linear involution if $\K_{v}\simeq \R$)
and, if $v$ is non-Archimedean, it is a Berkovich space.

Given a (quasi-algebraic)
metrized $\R$-divisor $\ov D$ on $X$ as in \cite[Definition
3.3]{BurgosMoriwakiPhilipponSombra:aptv}, we consider the associated
height function
\begin{displaymath}
  \h_{\ov D}\colon X(\ov \K)\to \R
\end{displaymath}
 defined as follows. 

For each $p\in X(\ov \K)$ choose a function $f\in {\rm
  K}(X)^{\times}_{\R}=\KK(X)^{\times}\otimes \R$ such that $p\not \in
|D-\div(f)|$, the support of $D-\div(f)$. For instance, when $D$ is a
Cartier divisor, we can take $f$ as a local equation of $D$ at $p$.

Choose a finite extension $\F$ of $\K$ such that $p\in X(\F)$. To $f$,
we can associate a metrized $\R$-divisor $\widehat \div(f)$ and we
consider the metrized $\R$-divisor $\ov D-\wh\div(f)$ on $X$. For
simplicity, we also denote by $\ov D-\wh\div(f)$ the metrized
$\R$-divisor on $X_{\F}$ obtained by base change.  To each place $w\in
\mathfrak{M}_{\F}$ can associate a $w$-adic Green function
$$g_{\ov D-\wh \div (f),w}\colon (X^{\an}_{\F})_{w}\setminus
|D-\div(f)|\to \R,
$$
see \cite[Definitions 3.3 and
3.4]{BurgosMoriwakiPhilipponSombra:aptv}. For instance, if $\ov D$ is
a metrized Cartier divisor on $X$ and $p\notin |D|$, we have that
$g_{\ov D,w}(p)= -\log\|s_{D}(p)\|_{w}$ with $s_{D}$ the canonical 
rational section of the line bundle $ \cO(D)$ and $\|\cdot\|_{w}$ the $w$-adic
metric on $\cO(D)_{w}^{\an}$ obtained from the extension of $\ov D$ on
$X_{\F }$ by base change. We denote by $\iota_{w}\colon
X(\F){\rightarrow} (X_{\F})^{\an}_{w}$ the inclusion of the
$\F$-rational points of $X$ into the $v$-adic analytification.

\begin{defn}\label{def:4} With the previous notations, the
  \emph{height} of $p$ with respect to $\ov D$ is given by
  \begin{displaymath}
    \h_{\ov D}(p)= \sum_{w\in \mathfrak{M}_{\F}}n_{w} g_{\ov
      D-\wh \div (f),w}(\iota_{w}(p)). 
  \end{displaymath}
\end{defn}

The height is independent of the choice of the rational function
$f$ and of the extension $\F$.

\begin{rem} \label{rem:1}
This definition is the natural extension to metrized
  $\R$-divisor of the height functions of points from Arakelov
  geometry as in \cite{BostGilletSoule:HpvpGf,
    Zhang:_small,Gubler:lchs,ChambertLoir:meeB,
    BurgosPhilipponSombra:agtvmmh}. Observe that, to define the height
  of cycles of arbitrary dimension in
  \cite{BurgosPhilipponSombra:agtvmmh}, we need the variety to be
  proper and the metrics to be DSP, but these conditions are not
  needed in the case of points. The reason is that Definition
  \ref{def:4} is equivalent to first restricting the metrized divisor to
  the point and then computing the height of the point with respect to
  this restriction, together with the observation that a point is
  proper and that every metric on a point is semipositive.
\end{rem}

Instead of choosing a finite extension where the point $p$ is defined,
we can express the height of an algebraic point in terms of its Galois
orbit. For each place $v$, we choose an arbitrary inclusion
$\jmath\colon \ov
\K\hookrightarrow \ov \K_{v}$. This inclusion induces a map $X(\ov
\K)\hookrightarrow X(\ov \K_{v})$, that we also denote by
$\jmath$. Let 
\begin{equation}\label{eq:35}
X(\ov \K_{v})\overset{\iota}{\longrightarrow} X_{\ov \K_{v}}^{\an}\overset{\pi}{\longrightarrow}
X^{\an}_{v},
\end{equation}
be the maps induced from the extension of valued fields
$\K_{v}\hookrightarrow \ov \K_{v}$, see for instance
\cite[\S~1.2]{BurgosPhilipponSombra:agtvmmh} for the non-Archimedean
case. Consider then the composition
\begin{displaymath}
\varphi_{v}=\pi\circ\iota\circ \jmath \colon X(\ov \K)\to
X^{\an}_{v}.   
\end{displaymath}
Let $G_{\K}=\Aut(\ov \K/\K)$ be the absolute Galois group of $\K$ and
$G_{\K}\cdot p$ the Galois orbit of $p$. The image
$\varphi_{v}(G_{\K}\cdot p)$ of the Galois orbit of $p$ in
$X^{\an}_{v}$ does not depend on the choice of the inclusion $\jmath$
and will be denoted by $(G_{\K}\cdot p)_{v}$.

\begin{prop}\label{prop:4} With the previous hypothesis and
  notation, the height of $p$ with respect to $\ov D$ is given by
  \begin{displaymath}
    \h_{\ov D}(p)=\sum_{v\in \mathfrak{M}_{\K}}\frac{n_{v}}{\# (G_{\K}\cdot p)_{v}}
    \sum_{q\in (G_{\K}\cdot p)_{v}}g_{\ov
      D-\wh \div(f),v}(q),
  \end{displaymath}
  with $f\in {\rm
  K}(X)^{\times}_{\R}$ such that $p\notin |D-\div(f)|$. 
\end{prop}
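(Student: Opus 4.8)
The plan is to start from Definition \ref{def:4}, which computes $\h_{\ov D}(p)$ as a sum over the places $w$ of a fixed finite extension $\F/\K$ with $p\in X(\F)$, and to regroup this sum according to the place $v\in\mathfrak{M}_{\K}$ below $w$. First I would fix $f\in \KK(X)^{\times}_{\R}$ with $p\notin |D-\div(f)|$ and choose $\F$ finite over $\K$, Galois if convenient, with $p\in X(\F)$. Then I would split
\begin{displaymath}
  \h_{\ov D}(p)= \sum_{v\in\mathfrak{M}_{\K}}\ \sum_{w\mid v} n_{w}\, g_{\ov D-\wh\div(f),w}(\iota_{w}(p)).
\end{displaymath}

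The key step is to identify, for each $v$, the inner sum $\sum_{w\mid v}n_{w}\,g_{\ov D-\wh\div(f),w}(\iota_{w}(p))$ with $\frac{n_{v}}{\#(G_{\K}\cdot p)_{v}}\sum_{q\in (G_{\K}\cdot p)_{v}}g_{\ov D-\wh\div(f),v}(q)$. Here I would use the standard dictionary between the places of $\F$ above $v$ and the $G_{\K}$-orbits (equivalently $\Gal(\ov\K_{v}/\K_{v})$-orbits) of the embeddings $\F\hookrightarrow\ov\K_{v}$: each place $w\mid v$ corresponds to such an orbit, its local degree $[\F_{w}:\K_{v}]$ equals the size of that orbit, and the Green function $g_{\ov D-\wh\div(f),w}$ on $(X_{\F})^{\an}_{w}$ is the pullback of $g_{\ov D-\wh\div(f),v}$ on $X^{\an}_{v}$ under the map induced by $\K_{v}\hookrightarrow\F_{w}$. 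Using the weight relation \eqref{eq:26}, $n_{w}=\frac{[\F_{w}:\K_{v}]}{[\F:\K]}n_{v}$, so $n_{w}\,g_{\ov D-\wh\div(f),w}(\iota_{w}(p))$ becomes $\frac{n_{v}}{[\F:\K]}$ times the sum of $g_{\ov D-\wh\div(f),v}$ over the embeddings $\F\hookrightarrow\ov\K_{v}$ lying in the orbit corresponding to $w$, evaluated at the images of $p$. Summing over all $w\mid v$ collects all $[\F:\K]$ embeddings, and the images of $p\in X(\F)$ under these embeddings, composed with $\iota$ and $\pi$ as in \eqref{eq:35}, are exactly the points of $(G_{\K}\cdot p)_{v}$, each appearing $[\F:\K]/\#(G_{\K}\cdot p)_{v}$ times (the size of the stabilizer). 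This yields the asserted formula, and independence of $f$ is inherited from Definition \ref{def:4}, while independence of the auxiliary $\F$ and of $\jmath$ was already noted for $(G_{\K}\cdot p)_{v}$.

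The main obstacle is the bookkeeping of multiplicities in the non-Galois case and the verification that the Green function is genuinely compatible with base change to $\F_{w}$ — i.e.\ that $g_{\ov D-\wh\div(f),w}$ equals the pullback of $g_{\ov D-\wh\div(f),v}$ along the natural morphism of analytic spaces. This compatibility is part of the construction of metrized $\R$-divisors and their Green functions in \cite{BurgosMoriwakiPhilipponSombra:aptv}, so I would cite the relevant statement there rather than reprove it; with that in hand, the only real work is the elementary group-theoretic count relating $\sum_{w\mid v}$ to an average over the Galois orbit, which I would carry out by passing to a Galois closure if it streamlines the argument. One can also phrase the whole computation more invariantly by noting that both sides equal the integral of $g_{\ov D-\wh\div(f),v}$ against the normalized counting measure on $(G_{\K}\cdot p)_{v}$ times $n_{v}$, summed over $v$, which is manifestly independent of all choices.
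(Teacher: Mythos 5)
Your decomposition of the sum by $v\in\mathfrak{M}_{\K}$, the use of \eqref{eq:26} for the weights, and the idea of matching places above $v$ with Galois data are the same as the paper's, and your argument is correct whenever $\F/\K$ is separable --- in particular over number fields. The gap is that the paper also works over function fields in positive characteristic, where $p$ may only be definable over an inseparable extension. In that situation two of your claims fail: the local degree $[\F_{w}:\K_{v}]$ is \emph{not} the number of $\K_{v}$-embeddings $\F\hookrightarrow\ov\K_{v}$ inducing $w$ (that number is the separable local degree $[\F_{w}:\K_{v}]_{\sep}$), and the total number of embeddings is $[\F:\K]_{\sep}$, not $[\F:\K]$, so summing over $w\mid v$ does not ``collect all $[\F:\K]$ embeddings.'' Your ``pass to a Galois closure if it streamlines the argument'' is exactly where the problem hides: a Galois closure need not exist here, only a normal closure.

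The paper's proof is structured to avoid this. It takes $\F/\K$ finite \emph{normal} (not Galois), sets $G=\Aut(\F/\K)$, and uses the tower $\K\subset\F^{G}\subset\F$ where $\F^{G}/\K$ is purely inseparable and $\F/\F^{G}$ is Galois. Purely inseparable extensions have a unique place over each $v$ with local degree equal to the global degree, so
\begin{displaymath}
  \frac{[\F_{w}:\K_{v}]}{[\F:\K]}=\frac{[\F_{w}:(\F^{G})_{v}]}{[\F:\F^{G}]}=\frac{1}{\#\mathfrak{M}_{\F,v}},
\end{displaymath}
the last equality because $\F/\F^{G}$ is Galois, hence $G$ acts transitively on $\mathfrak{M}_{\F,v}$ with constant local degree. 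This replaces your embedding count by a count of places, and then the equivariance $\pi_{w}(\iota_{w}(\gamma q))=\pi_{\gamma^{-1}w}(\iota_{\gamma^{-1}w}(q))$ gives that $w\mapsto\pi_{w}(\iota_{w}(p))$ is a $G$-equivariant surjection $\mathfrak{M}_{\F,v}\twoheadrightarrow(G_{\K}\cdot p)_{v}$ with constant fiber size, which is precisely the bookkeeping your proposal needs. So the intended route is sound; you should replace the embedding count by the place count and insert the normal-closure/purely-inseparable reduction to cover the positive-characteristic case.
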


\begin{proof} Replacing $\ov D$ by $\ov D-\wh \div (f)$, we may assume
  that $p\not\in |D|$. Choose then a finite normal extension $\F$ of
  $\K$ such that $p\in X(\F)$.  Similarly as in~\eqref{eq:35}, for
  each $v\in \mathfrak{M}_{\K}$ and $w\in \mathfrak{M}_{\F}$ with
  $w\mid v$, there are maps
\begin{displaymath}
  X(\F)\overset{\iota_{w}}{\longrightarrow} (X_{\F})^{\an}_{w}
  \overset{\pi _{w}}{\longrightarrow} X^{\an}_{v},
\end{displaymath}
and the corresponding Green functions of $\ov D$ verify that $g_{\ov
  D,w}=g_{\ov D,v}\circ \pi _{w}$. 

Write $G=\Aut(\F,\K)$ and let $\F^{G}$ be the fixed field. Then
$\F/\F^{G}$ is a Galois extension with Galois group $G$ and
$\F^{G}/\K$ is purely inseparable.  Hence, for $v\in
\mathfrak{M}_{\K}$,
  \begin{displaymath}
    \frac{[\F_{w}:\K_{v}]}{[\F:\K]}=
    \frac{[\F_{w}:(\F^{G})_{v}]}{[\F:\F^{G}]}=\frac{1}{\#\mathfrak{M}_{\F,v}},
  \end{displaymath}
where $\mathfrak{M}_{\F,v}$ denotes the set of places of $\mathfrak{M}_{\F}$ over $v$. 
Then, from the definition of the height
of $p$ in Definition \ref{def:4} and the weights of $\F$ in
\eqref{eq:26}, it follows that
\begin{multline}
  \label{eq:43}
    \h_{\ov D}(p)=\sum_{v\in \mathfrak{M}_{\K}}n_{v}\sum_{w\mid v}
\frac{[\F_{w}:\K_{v}]}{[\F:\K]} g_{\ov D,v}(\pi
    _{w}(\iota_{w}(p)))\\=\sum_{v\in \mathfrak{M}_{\K}}\frac{n_{v}}{\# \mathfrak{M}_{\F,v}} \sum_{w\mid v}g_{\ov D,v}(\pi
    _{w}(\iota_{w}(p))).
\end{multline}

The group $G$ acts on $X(\F)$,  on $\mathfrak{M}_{\F,v}$ and on
$(G_{\K}\cdot p)_{v}$, since $p$ is defined over~$\F$. Both
actions are compatible with the previous maps: for each $\gamma \in
G$, $q\in X(\F)$ and $w\mid v$,
\begin{displaymath}
  \pi _{w}(\iota_{w}(\gamma q))=\pi _{\gamma ^{-1}w}(\iota_{\gamma
    ^{-1}w}(q)). 
\end{displaymath}
Furthermore, the action of $G$ on $\mathfrak{M}_{\F,v}$ is
transitive. Hence, the map
\begin{equation*}
  \mathfrak{M}_{\F,v}\longrightarrow (G_{\K}\cdot p)_{v}, \quad
w \longmapsto \pi _{w}(\iota_{w}(p))
\end{equation*}
is surjective and equivariant with respect to the action of $G$. We
deduce that all the fibers of this map have the same
cardinality. Hence, 
\begin{equation*}
\frac{1}{\# \mathfrak{M}_{\F,v}} \sum_{w\mid v}g_{\ov D,v}(\pi
    _{w}(\iota_{w}(p)))  = \frac{1}{\# (G_{\K}\cdot p)_{v}}
    \sum_{q\in (G_{\K}\cdot p)_{v}}g_{\ov
      D,v}(q).
\end{equation*}
The statement follows from this together with \eqref{eq:43}.
\end{proof}

\begin{defn}
  \label{def:1} Let $X$ be a variety over $\K$ and $W\subset X$ a
  locally closed subset.  For $\eta\in \R$, consider the subset of
  algebraic points of $W$ given by
\begin{displaymath}
  W(\ov \K)_{\le \eta}=\{p\in W(\ov \K)\mid \h_{\ov D}(p)\le \eta\}.
\end{displaymath}
Let $d=\dim(W)$. For $i=1,\dots,d+1$, the \emph{$i$-th
  successive minimum} of $W$ with
respect to $\ov D$ is defined as
\begin{displaymath}
  \upmu^{i}_{\ov D}(W)=\inf\big\{\eta\in\R \mid \dim\big(\ov{  W(\ov \K)_{\le
      \eta}}\big)\ge d-i+1\big\}.
\end{displaymath}
We set $\upmu^{\abs}_{\ov D}(W)= \upmu^{d+1}_{\ov D}(W)$ and
$\upmu^{\ess}_{\ov D}(W)= \upmu^{1}_{\ov D}(W)$ for the
\emph{absolute minimum} and the \emph{essential minimum} of $W$ with
respect to $\ov D$, respectively.
\end{defn}

Clearly, 
\begin{equation} \label{eq:2}
 \upmu^{\ess}_{\ov D}(W)=\upmu^{1}_{\ov D}(W)\ge \upmu^{2}_{\ov D}(X)\ge \dots
\ge   \upmu^{d+1}_{\ov
    D}(W)=\upmu^{\abs}_{\ov D}(W).
\end{equation}

The following result shows that the successive minima are stable with
respect to finite maps. 

\begin{prop}\label{prop:9}
  Let $f\colon X\to Y$ be a dominant morphism of varieties over
  $\K$ and $\ov D$ a metrized $\R$-divisor on $Y$.
  \begin{enumerate}
  \item \label{item:8} If $f$ is generically finite then $\upmu_{f^{\ast}\ov
      D}^{\ess}(X)=\upmu_{\ov D}^{\ess}(Y)$.
  \item \label{item:3} If $f$ is finite then $\upmu_{f^{\ast}\ov
      D}^{i}(X)=\upmu_{\ov D}^{i}(Y)$ for $i=1, \dots, \dim (Y)+1 $.
  \end{enumerate}
\end{prop}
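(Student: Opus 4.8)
The plan is to reduce everything to the behavior of the height function under the morphism $f$ and the elementary relation between dimensions of images under dominant/finite morphisms. First I would recall the fundamental pullback compatibility: for $p \in X(\ov\K)$, one has $\h_{f^{\ast}\ov D}(p) = \h_{\ov D}(f(p))$. This is immediate from Definition~\ref{def:4} together with the functoriality of Green functions and metrized $\R$-divisors under pullback (if $g$ is a local equation of $D$ at $f(p)$, then $f^{\ast}g$ is a local equation of $f^{\ast}D$ at $p$, and the local Green functions match via the induced maps on analytifications). Consequently, for every $\eta \in \R$,
\begin{displaymath}
  X(\ov\K)_{\le \eta} = f^{-1}\big(Y(\ov\K)_{\le \eta}\big),
\end{displaymath}
where $f$ on algebraic points is the induced map $X(\ov\K) \to Y(\ov\K)$.

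For part~\eqref{item:8}, since $f$ is dominant we have $f\big(X(\ov\K)_{\le\eta}\big)$ dense in $Y(\ov\K)_{\le\eta}$ in an appropriate sense; more precisely $\ov{f(X(\ov\K)_{\le\eta})} = \ov{Y(\ov\K)_{\le\eta}}$ because $f$ has dense image and algebraic points are dense. Hence $X(\ov\K)_{\le\eta}$ is dense in $X$ if and only if $Y(\ov\K)_{\le\eta}$ is dense in $Y$: the ``only if'' is clear since $f$ is dominant and continuous, and the ``if'' uses that $f$ is generically finite, so the preimage of a dense constructible set meets a dense open subset of $X$ in a dense set (a generically finite dominant morphism is, over a dense open of $Y$, finite and surjective, so preimages of dense sets there are dense). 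Taking infima over $\eta$ gives $\upmu^{\ess}_{f^{\ast}\ov D}(X) = \upmu^{\ess}_{\ov D}(Y)$.

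For part~\eqref{item:3}, I would argue with all the successive minima at once. Because $f$ is finite, it is in particular closed and has finite fibers, so for any subset $Z \subset Y(\ov\K)$ one has $\dim\ov{f^{-1}(Z)} = \dim\ov{Z}$: indeed $f$ restricts to a finite surjective morphism $\ov{f^{-1}(Z)} \to \ov{Z}$ of closed subschemes (surjectivity because $f$ is closed and dominant onto $\ov Z$ when restricted suitably, using $\dim X = \dim Y$ since $f$ finite and dominant forces equidimensionality), and finite morphisms preserve dimension. Applying this with $Z = Y(\ov\K)_{\le\eta}$ and using the identity $X(\ov\K)_{\le\eta} = f^{-1}(Y(\ov\K)_{\le\eta})$ from the first paragraph, we get $\dim\ov{X(\ov\K)_{\le\eta}} = \dim\ov{Y(\ov\K)_{\le\eta}}$ for every $\eta$. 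Since $\dim X = \dim Y =: d$, the defining condition $\dim\ov{(\cdot)_{\le\eta}} \ge d - i + 1$ holds for $X$ exactly when it holds for $Y$, so the infima defining $\upmu^{i}_{f^{\ast}\ov D}(X)$ and $\upmu^{i}_{\ov D}(Y)$ coincide for $i = 1,\dots,d+1$.

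The main obstacle I anticipate is making the density/dimension comparison of Zariski closures of sets of algebraic points fully rigorous: one must be careful that $X(\ov\K)_{\le\eta}$ need not be constructible, so statements like ``preimage of a dense set is dense'' require knowing that the set is dense in some dense \emph{open} subset over which $f$ is nice, and the interplay between the Zariski topology on the scheme and density of $\ov\K$-points. For number fields and function fields this is standard (algebraic points are very dense), but it is worth stating the small lemma that for a dominant morphism $f\colon X \to Y$ and $Z \subset Y(\ov\K)$ with $f^{-1}(Z) = X(\ov\K)_{\le\eta}$, one has $\dim\ov{f^{-1}(Z)} = \dim\ov Z$ when $f$ is finite, and $\ov{f^{-1}(Z)}$ dominates $\ov Z$ when $f$ is merely generically finite — this is the only genuinely geometric input, everything else being the formal pullback identity for heights.
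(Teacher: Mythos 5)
Your proposal is correct and follows essentially the same route as the paper: both proofs hinge on the pullback identity $\h_{f^{\ast}\ov D}(p)=\h_{\ov D}(f(p))$, which gives $X(\ov\K)_{\le\eta}=f^{-1}(Y(\ov\K)_{\le\eta})$, and both then exploit that a finite morphism is closed and has finite fibers, so that the Zariski closure of $f^{-1}(Z)$ dominates $\ov Z$ and has the same dimension. The small packaging differences — you state a standalone lemma $\dim\ov{f^{-1}(Z)}=\dim\ov Z$ and then compare the defining conditions for $\upmu^{i}$, whereas the paper argues by pushing/pulling test closed subsets through $f$ in both directions; and you handle the generically finite case by a direct density argument, whereas the paper derives it formally from the finite case by restriction to dense opens — amount to the same underlying idea, namely restricting to an open where $f$ is finite and using properness of finite morphisms. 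Your concern about non-constructibility of $X(\ov\K)_{\le\eta}$ is not actually an obstruction: the argument that finite surjective morphisms of irreducible varieties preserve dimension of Zariski closures of arbitrary subsets goes through without any constructibility hypothesis (closedness of $f$ gives $f(\ov{f^{-1}(Z)})\supset\ov{Z}$, and finiteness bounds dimensions both ways), which is in fact exactly how the paper argues.
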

\begin{proof}
For   $p\in X(\ov \K)$, the equality $    \h_{f^{\ast}\ov D}(p)=\h_{\ov D}(f(p))$
  holds, see \cite[Theorem 1.5.11(2)]{BurgosPhilipponSombra:agtvmmh}
  and Remark \ref{rem:1}.  It follows that, for any
  real number $\eta $, we have $X(\ov \K)_{\le \eta }=f^{-1} Y(\ov
  \K)_{\le \eta }$.

  We first prove \eqref{item:3}. Being finite, the
  morphism $f$ is proper and, since it is dominant, it is also
  surjective. Hence 
  $Y(\ov \K)_{\le \eta}= f(X(\ov \K)_{\le \eta})$.  

  Now let $1\le i\le \dim(Y)+1$ and suppose that $\upmu_{\ov
    D}^{i}(X)>\eta$. Then there exists a closed subset $V\subset X$
  of dimension bounded by $n-i+1$ and containing $X(\ov \K)_{\le
    \eta}$. The image $f(V)$ is a closed subset of dimension
  bounded by $n-i+1$ and containing $Y(\ov \K)_{\le \eta}$. Hence,
  $\upmu_{\ov D}^{i}(Y)>\eta$ and, since this holds for all real
  numbers below the $i$-th minimum of $X$, it follows that $\upmu_{\ov
    D}^{i}(X)\le \upmu_{\ov D}^{i}(Y)$.

  Conversely, suppose that $\upmu_{\ov D}^{i}(Y)>\eta$ and let
  $W\subset Y$ be a closed subset of dimension bounded by $n-i+1$
  which contains $Y(\ov \K)_{\le \eta}$. Since $f$ is finite, the
  preimage $f^{-1}(W)$ is a closed subset of dimension bounded
  by $n-i+1$ which contains $X(\ov \K)_{\le \eta}$. Hence, $\upmu_{\ov
    D}^{i}(X)>\eta$ and we conclude that $\upmu_{\ov D}^{i}(X)=
  \upmu_{\ov D}^{i}(Y)$.

The statement \eqref{item:8} follows from \eqref{item:3} by restricting
$f$ to  open dense subsets of $X$ and $Y$ where it is finite. 
\end{proof}

We now specialize to the toric case.  
Let $\T\simeq \G_{m}^{n}$ be a split torus of dimension $n$ over $\K$. Let
$N=\Hom(\G_{m},\T)$ be the 
lattice of cocharacters of $\T$, $M=\Hom(\T,\G_{m})=N^{\vee}$ the 
lattice of characters, and write $N_{\R}=N\otimes \R$ and
$M_{\R}=M\otimes \R$. 

Let $X$ be a proper toric variety over $\K$ with torus $\T$, described
by a complete fan $\Sigma$ on $N_{\R}$. Recall that, to each cone
$\sigma \in \Sigma $ correspond an open affine subset
$X_{\sigma }$ and an orbit $O(\sigma )$. In particular, for $\sigma
=\{0\}$ we obtain the principal open subset $X_{0}$ that, in this case,
agrees with the orbit $O(0)$. It is canonically isomorphic to the
split torus $\T$ that acts on the toric variety $X$. The action of
$\T$ on $X$ will be denoted by $(t,p)\mapsto t\cdot p$.

A toric $\R$-divisor on $X$ is an $\R$-divisor invariant under the
action of $\T$. Such a divisor $D$ defines a function $\Psi_{D}\colon
N_{\R}\to \R$ whose restriction to each cone of the fan~$\Sigma$ is
linear, and which is called a ``virtual support function''.  The toric
$\R$-divisor $D$ is nef if and only if $\Psi_{D}$ is concave.  One can
also associate to $D$ the subset $\Delta_{D}\subset M_{\R}$ given as
$\Delta_{D}=\stab(\Psi_{D})$, the stability set of $\Psi_{D}$ as in
\eqref{eq:50}. If $D$ is pseudo-effective, $\Delta_{D}$ is a polytope
and, otherwise, it is the empty set.

For each place $v\in \mathfrak{M}_{\K}$, we associate to the 
torus $\T$ an analytic space $\T^{\an}_{v}$ and we denote by
$\SS^{\an}_{v}$ its compact subtorus. In the Archimedean case, it is
isomorphic to~$(S^{1})^{n}$. In the non-Archimedean case, it is a
compact analytic group, see \cite
[\S~4.2]{BurgosPhilipponSombra:agtvmmh} for a description.  Then, a
metrized $\R$-divisor $\ov D$ on $X$ is \emph{toric} if $D$ is a toric
$\R$-divisor and  its $v$-adic Green function $g_{\ov D,v}$ is invariant with respect
to the action of~$\SS^{\an}_{v}$ or, equivalently, if its 
 $v$-adic metric $\|\cdot\|_{v}$  is invariant with respect
to the action of~$\SS^{\an}_{v}$, for all $v$.

A toric metrized $\R$-divisor $\ov D$ on $X$ defines an adelic family of
continuous functions $\psi_{\ov D,v}\colon N_{\R}\to \R$ indexed by the places of
$\K$. For $v\in \mathfrak{M}_{\K}$, this function is given, for $p\in \T^{\an}_{v}$, by
\begin{equation}\label{eq:45}
  \psi_{\ov D,v}(\val_{v}(p))= \log \|s_{D}(p)\|_{v},
\end{equation}
where $\val_{v}$ is the valuation map in \eqref{eq:44} and $s_{D}$ is
the canonical rational $\R$-section of $D$ as in
\cite[\S~3]{BurgosMoriwakiPhilipponSombra:aptv}.

The  family of functions associated to $\ov D$ satisfies that, for all $v\in
\mathfrak{M}_{\K}$, the function $|\psi_{\ov D,v}-\Psi_{D}|$ is
bounded and, for all $v$ except for a finite number, $\psi_{\ov
  D,v}=\Psi_{D}$.  In particular, the stability set of $\psi_{\ov
  D,v}$ coincides with $\Delta_{D}$.  The toric metrized $\R$-divisor
$\ov D$ is semipositive if and only if $\psi_{\ov D,v}$ is concave
for all $v$.

\begin{exmpl}\label{exm:6} Let $X$ be a proper toric variety over $\K$
  and $D$ a toric $\R$-divisor on~$X$.  The \emph{canonical metric}
  on $D$ is the metric defined, for each $v\in
  \mathfrak{M}_{\K}$ and $p\in
  \T^{\an}_{v}$, by
\begin{displaymath}
\log \|s_{D}(p)\|_{\can,v}=  \Psi_{D}(\val_{v}(p)), 
\end{displaymath}
see \cite[Proposition-Definition
4.3.15]{BurgosPhilipponSombra:agtvmmh}.  We denote the resulting toric
metrized $\R$-divisor by $\ov D^{\can}$. In this case, $\psi_{\ov
  D^{\can},v}=\Psi_{D}$ for all $v$.  In particular, $\ov D^{\can}$ is
semipositive if and only if $D$ is nef.
\end{exmpl}

For each $v\in \mathfrak{M}_{\K}$, we consider the \emph{local roof
  function} $ \vartheta_{\ov D,v}\colon \Delta_{D}\to \R$ that is
given, for $x\in \Delta_{D}$, by
\begin{displaymath}
  \vartheta_{\ov D,v}(x)= \psi_{\ov D,v}^{\vee}(x)= \inf_{u\in
    N_{\R}}(\langle x,u\rangle
  -\psi_{\ov D,v}(u)).
\end{displaymath}
When $\psi_{\ov D,v}$ is concave, the function $ \vartheta_{\ov D,v}$
coincides with the Legendre-Fenchel dual of $\psi_{\ov D,v}$.  This
gives an adelic family of continuous concave functions on $\Delta_{D}$
which are  zero except for a finite
number of places.

The \emph{global roof function} $\vartheta_{\ov D}\colon \Delta_{D}\to
\R$ is defined as the weighted sum 
\begin{displaymath}
  \vartheta_{\ov D}=\sum_{v\in\mathfrak{M}_{\K}}n_{v}\vartheta_{\ov D,v}.
\end{displaymath}
As it is customary in convex analysis, we can also consider
$\vartheta_{\ov D}$ as a function from the whole of $M_{\R}$ to the
extended real line $\R\cup 
\{-\infty\}$ by writing $\vartheta_{\ov D}(x)=-\infty$ for  $x\not 
\in \Delta $. With this convention, $D$ is not pseudo-effective if and
only if $\vartheta_{\ov D}\equiv -\infty$ on $M_{\R}$. 

In the case of toric varieties, the height of an algebraic point can
be expressed in terms of the family of functions $\{\psi_{\ov
  D,v}\}_{v\in \mathfrak{M}_{\K}}$.  Let $p$ is an algebraic point in
the principal open subset $X_{0}$. Since $D$ is a toric $\R$-divisor,
$p$ is not in the support of $D$.  Choose a finite extension $\F$ of
$\K$ such that $p\in X_{0}(\F)$. For simplicity, we will also denote
by $\ov D$ the toric metrized $\R$-divisor on $X_{\F}$ obtained by
base change. Then, by the definition of the height and the definition
of these functions in \eqref{eq:45},
\begin{multline}\label{eq:42}
  \h_{\ov D}(p)=- \sum_{w\in \mathfrak{M}_{\F}} n_{w}\log\| s_{D}(p)\|_{w}
\\  = -\sum_{w\in \mathfrak{M}_{\F}} n_{w}\psi_{\ov D, w}(\val_{w}(p))  =
-\sum_{v\in \mathfrak{M}_{\K}}\sum_{w\mid v} n_{w}\psi_{\ov D, v}(\val_{w}(p)),
\end{multline}
since $\psi_{\ov D,w}=\psi_{\ov D,v}$ for all $w\mid v$
\cite[Proposition 4.3.8]{BurgosPhilipponSombra:agtvmmh}.

\medskip
The following is the key technical result to study successive minima of
toric varieties.

\begin{thm} \label{thm:1}
Let $X$ be a proper toric variety over $\K$ and  $\ov D$  a
toric metrized $\R$-divisor on $X$. Then
\begin{displaymath}
  \upmu^{\abs}_{\ov D}(X_{0}) = \max_{x\in
    M_{\R}}\vartheta_{\ov D}(x).
\end{displaymath}
\end{thm}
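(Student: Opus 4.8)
The plan is to prove the two inequalities $\upmu^{\abs}_{\ov D}(X_0) \le \max_{x} \vartheta_{\ov D}(x)$ and $\upmu^{\abs}_{\ov D}(X_0) \ge \max_{x} \vartheta_{\ov D}(x)$ separately, using the explicit description of the height of a torus point from \eqref{eq:42} together with the density result Lemma~\ref{lemm:7} and the continuity result Lemma~\ref{lemm:8}. Throughout, set $\vartheta^{\max} = \max_{x\in M_\R}\vartheta_{\ov D}(x)$; since the $\psi_{\ov D,v}$ agree with $\Psi_D$ off a finite set $S$ of places and $|\psi_{\ov D,v}-\Psi_D|$ is bounded, the global roof function $\vartheta_{\ov D}$ is a continuous concave function on the polytope $\Delta_D$, hence attains its maximum. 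The first, easier, observation is that for a torus point $p\in X_0(\ov \K)$ the height is a weighted sum of values of the concave functions $-\psi_{\ov D,w}$ evaluated at $\val_w(p)$; by the very definition of $\vartheta_{\ov D,v}(x)$ as the infimal value of $\langle x,u\rangle - \psi_{\ov D,v}(u)$, for any fixed $x\in\Delta_D$ one has $-\psi_{\ov D,v}(u) \ge \vartheta_{\ov D,v}(x) - \langle x,u\rangle$, and summing with weights and using the product formula (which kills the $\langle x,\cdot\rangle$ terms since $\val(p)\in H_\K$) gives $\h_{\ov D}(p)\ge \vartheta_{\ov D}(x)$ for \emph{every} algebraic torus point $p$. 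Taking $x$ to be a maximizer yields $\h_{\ov D}(p)\ge \vartheta^{\max}$ for all $p\in X_0(\ov \K)$, which forces $X_0(\ov\K)_{\le \eta}=\emptyset$ for $\eta<\vartheta^{\max}$ and hence $\upmu^{\abs}_{\ov D}(X_0)\ge\vartheta^{\max}$.

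For the reverse inequality I must exhibit, for every $\varepsilon>0$, a Zariski-dense set of algebraic torus points of height $\le \vartheta^{\max}+\varepsilon$; more precisely (since $\upmu^{\abs}=\upmu^{\dim X_0+1}$) it suffices to produce, for each $\varepsilon>0$, a single algebraic point with height $\le \vartheta^{\max}+\varepsilon$, because such a point $p$ has $\dim\ov{\{p\}}=0$ so $\upmu^{\abs}_{\ov D}(X_0)\le \h_{\ov D}(p)$. Pick $x_0\in\Delta_D$ with $\vartheta_{\ov D}(x_0)=\vartheta^{\max}$. For each $v\in S$, by the (finite-dimensional) Legendre–Fenchel duality for the concave, bounded-difference-from-$\Psi_D$ function $\psi_{\ov D,v}$, choose $u_v\in N_\R$ nearly achieving the infimum defining $\vartheta_{\ov D,v}(x_0)$, i.e. with $\langle x_0,u_v\rangle - \psi_{\ov D,v}(u_v) < \vartheta_{\ov D,v}(x_0)+\varepsilon'$; for $v\notin S$ one may take $u_v=0$ (then $\psi_{\ov D,v}(0)=\Psi_D(0)=0=\vartheta_{\ov D,v}(x_0)$ if $0$ is where the conic $\Psi_D$ is dualized — one should be a little careful here and instead just demand $u_v=0$ for all but finitely many $v$). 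Now the tuple $(u_v)_v$ need not lie in $H_\K$, but after a harmless translation I can arrange $\sum_v n_v u_v$ to be anything in $N_\R$ and then correct; more cleanly, replace $(u_v)_v$ by its image under projection to the affine subspace $H_\K + (\text{const})$ and absorb the error using that $\sum_v n_v\langle x_0,u_v\rangle$ contributes a bounded amount. By Lemma~\ref{lemm:7} the set $\val(\T(\K)\otimes\Q)$ is dense in $H_\K$ for the $L^1$-topology, so I can find $t\in\T(\K)\otimes\Q$ (hence, clearing denominators, a genuine algebraic torus point after passing to a root, i.e. a point of $X_0(\ov\K)$) with $\val(t)$ as $L^1$-close as desired to the $H_\K$-adjusted tuple $(u_v)_v$. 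Finally, by Lemma~\ref{lemm:8} applied to $\Psi = -\Psi_D$ and $\psi_v = -\psi_{\ov D,v}$ — which is legitimate since $\Psi_D$ is piecewise linear hence Lipschitz and $\Psi_D(0)=0$ — the function $(w_v)_v\mapsto -\sum_v n_v \psi_{\ov D,v}(w_v)$ is $L^1$-continuous, so $\h_{\ov D}(t) = -\sum_v n_v\psi_{\ov D,v}(\val_v(t))$ is within $\varepsilon$ of $-\sum_v n_v\psi_{\ov D,v}(u_v)$, which by the choice of the $u_v$ (and the product formula eliminating the $\langle x_0,\cdot\rangle$ terms) is within $O(\varepsilon)$ of $\sum_v n_v\vartheta_{\ov D,v}(x_0)=\vartheta^{\max}$. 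Hence $\upmu^{\abs}_{\ov D}(X_0)\le \vartheta^{\max}+O(\varepsilon)$ for all $\varepsilon$, giving the claim.

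The main obstacle I anticipate is the bookkeeping in the reverse inequality: the near-optimal tuple $(u_v)_{v}$ coming from local Legendre–Fenchel duality lives in $\bigoplus_v N_\R$, not in the constraint subspace $H_\K$, so it is not directly $\val$ of a $\K$-point, and one must carefully perturb it into $H_\K$ while controlling the resulting change in $\sum_v n_v \psi_{\ov D,v}(u_v)$; the product formula is what makes the linear term $\sum_v n_v\langle x_0,u_v\rangle$ vanish once the tuple is in $H_\K$, but one has to ensure the perturbation is small in $L^1$ and uses only finitely many places so that Lemma~\ref{lemm:8} applies. A secondary subtlety is that $\val(\T(\K)\otimes\Q)$ — not $\val(\T(\K))$ — is dense, so the point produced lives in $\T(\K)\otimes\Q$; this is fine because such a "point with rational exponents" corresponds to an honest algebraic point of $\T$ after adjoining suitable roots, and the height formula \eqref{eq:42} extends by continuity/functoriality to this setting (this is implicitly how $\val$ was extended to $\T(\K)\otimes\Q$ in Section~2). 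Everything else — existence of the maximum, the direct inequality via $-\psi_{\ov D,v}(u)\ge \vartheta_{\ov D,v}(x)-\langle x,u\rangle$, and the translation of "height $\le\eta$ for all torus points" into a statement about $\upmu^{\abs}$ — is routine given the preliminaries.
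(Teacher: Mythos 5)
The easy inequality $\upmu^{\abs}_{\ov D}(X_0)\ge\max_x\vartheta_{\ov D}(x)$ is handled correctly and in essentially the same way as the paper. The gap is in the reverse inequality, and it is genuine: the step you flag as ``a harmless translation'' / ``projection to $H_\K$'' is precisely the hard point and is not justified. Choosing $u_v$ near-achieving the infimum in $\vartheta_{\ov D,v}(x_0)=\inf_u(\langle x_0,u\rangle-\psi_{\ov D,v}(u))$ independently at each place gives a tuple $(u_v)_v$ with $\sum_v n_v u_v$ that need not be close to $0$, and shifting the $u_v$ into $H_\K$ afterwards can change $\sum_v n_v\psi_{\ov D,v}(u_v)$ by an amount proportional to $|\sum_v n_v u_v|$, which you do not control. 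The argument that makes this work in the paper is Rockafellar's subdifferential sum rule at the maximizer $x_0$: since $0\in\partial\vartheta_{\ov D}(x_0)=\sum_v n_v\,\partial\vartheta_{\ov D,v}(x_0)$, one can choose \emph{exact} subgradients $u_v\in\partial\vartheta_{\ov D,v}(x_0)$ that already satisfy $\sum_v n_vu_v=0$. That choice is not automatic from ``near-minimizers'' and is the crucial missing idea.

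There is a second, related omission. You implicitly treat $\psi_{\ov D,v}$ as concave (``the concave, bounded-difference-from-$\Psi_D$ function $\psi_{\ov D,v}$''), but the theorem is stated for arbitrary toric metrized $\R$-divisors, not semipositive ones. When $\psi_{\ov D,v}$ is not concave, a subgradient $u_v\in\partial\vartheta_{\ov D,v}(x_0)$ gives the identity $\conc(\psi_{\ov D,v})(u_v)=\langle x_0,u_v\rangle-\vartheta_{\ov D,v}(x_0)$, and $\psi_{\ov D,v}(u_v)$ can be strictly smaller. Bridging this requires expressing $\conc(\psi_{\ov D,v})(u_v)$ as a rational average $\frac1d\sum_j\psi_{\ov D,v}(u_{v,j})$ (Lemma~\ref{lemm:6}) and then realizing the $d$ points $u_{v,j}$ at $d$ distinct places above $v$ in a degree-$d$ extension $\F/\K$ totally split over $S$ (Lemma~\ref{constrextotdec}); only then does density in $H_\F$ plus $L^1$-continuity produce an algebraic point with the desired height. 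None of this appears in your plan. Finally, you do not address the case $\Delta_D=\emptyset$ (i.e.\ $D$ not pseudo-effective), where $\vartheta_{\ov D}\equiv-\infty$ and the claim $\upmu^{\abs}_{\ov D}(X_0)=-\infty$ needs the separate construction via Lemma~\ref{lemm:4}.
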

\begin{proof}
We first show that
\begin{equation}
  \label{eq:14}
  \upmu^{\abs}_{\ov D}(X_{0}) \ge \max_{x\in
    M_{\R}}\vartheta_{\ov D}(x).
\end{equation}

For shorthand we write $\Psi =\Psi _{D}$, $\Delta =\Delta _{D}$, $\psi
_{v}=\psi _{\ov 
  D,v}$ and $\vartheta _{v}=\vartheta _{\ov D,v}$.
Let $p$ be an algebraic point of $X_{0}$ and choose a finite extension
$\F$ of $\K$ such that $p\in
X_{0}(\F)$.
We have $\sum_{w\in \mathfrak{M}_{\F}} n_{w}\val_{w}(p)=0$ and recall that,
for each $w\in \mathfrak{M}_{\F}$ and $x\in \Delta$, 
\begin{displaymath}
  \vartheta_{w}(x)=\inf_{u\in N_{\R}} \left(\langle x,u\rangle-
  \psi_{w}(u)\right).
\end{displaymath}
Hence, by \eqref{eq:42} and \eqref{eq:46}, for any
$x\in \Delta$, 
\begin{multline*}
  \h_{\ov D}(p) =-\sum_{w} n_{w}
  \psi_{w}(\val_{w}(p))\\
  = \sum_{w} n_{w}(\langle x,\val_{w}(p)\rangle-
  \psi_{w}(\val_{w}(p)))
  \ge \sum_{w}n_{w} \vartheta_{w}(x)=
  \vartheta_{\ov D}(x). 
\end{multline*}
We conclude that $\upmu^{\abs}_{\ov D}(X_{0})\ge \vartheta_{\ov D}(x)$ for all $x\in
\Delta$. Since $\upmu^{\abs}_{\ov D}(X_{0})\ge 
-\infty=\vartheta_{\ov D}(x)$ for all $x\not \in \Delta $, we obtain the
inequality \eqref{eq:14}. 

We now prove
\begin{equation}
  \label{eq:15}
 \upmu^{\abs}_{\ov D}(X_{0}) \le \max_{x\in
    M_{\R}}\vartheta_{\ov D}(x). 
\end{equation}
Let $S$ be a nonempty subset of places $v\in \mathfrak{M}_{\K}$ that
contains all Archimedean places and all places $v$ such that $
\psi_{v}\not\equiv\Psi$. In particular, $S$ contains all the places
where $\vartheta_{v}\not\equiv 0$.

Suppose first that $D$ is pseudo-effective. By \cite[Proposition
4.9(2)]{BurgosMoriwakiPhilipponSombra:aptv}, this is equivalent to the
fact that $\Delta\ne\emptyset$.  By Lemma \ref{lemm:2}, this implies
that $\conc(\psi _{v})$
exists for all $v$. 

Let $x_{0}\in \Delta$ such that $\vartheta_{\ov D}(x_{0})= \max_{x\in
  \Delta}\vartheta_{\ov D}(x)$.
By \cite[Theorem 23.8]{Rockafellar:ca},
\begin{displaymath}
 0\in \sum_{v\in S}n_{v} \partial \vartheta_{v}(x_{0}).
\end{displaymath}
Choose a collection $u_{v}\in N_{\R}$, $v\in S$,  such that 
\begin{displaymath}
u_{v}\in \partial \vartheta_{v}(x_{0})\quad \text{ and } \quad \sum_{v\in
  S}n_{v} u_{v}=0.   
\end{displaymath}
For $v\not \in S$ put $u_{v}=0$.
Since $\vartheta_{v}= \psi_{v}^{\vee}=
\conc(\psi_{v})^{\vee}$ and $\conc(\psi_{v})$ is
concave,
\begin{equation}\label{eq:7}
  \vartheta_{v}(x_{0})= \langle x_{0},u_{v}\rangle
  -\conc(\psi_{v})(u_{v}). 
\end{equation}

Let $\varepsilon >0$. Using Lemma \ref{lemm:6}, we deduce that there
exists $d\ge 1$ and, for all $v\in S$, there exists  
$u_{v,j}\in N_{\R}$, $j=1,\dots, d$, such that
\begin{displaymath}
  \frac{1}{d}\sum_{j=1}^{d}u_{v,j}=u_{v}\quad \text{ and } \quad
  \conc(\psi_{v})(u_{v})\le   \frac{1}{d}\sum_{j=1}^{d}
  \psi_{v}(u_{v,j})+\frac{\varepsilon}{2\sum_{v\in
      S}n_{v}}.
\end{displaymath}
For $v\not \in S$, put $u_{v,j}=0$, $j=1,\dots,d$.
Since for $v\notin S$ we have $\psi_v=\Psi$ and hence $\psi_v(0)=\conc(\psi_v)(0)=0$, we deduce 
\begin{equation} \label{eq:5}
\sum_{v\in \mathfrak{M}_{\K}}
\sum_{j=1}^{d}\frac{n_{v}}{d}\psi_{v}(u_{v,j})\ge
 \sum_{v\in \mathfrak{M}_{\K}}n_{v}
 \conc(\psi_{v})(u_{v})-\frac{\varepsilon}{2}.
\end{equation}
Let $\F/\K$ be an extension of degree $d$ such that all places
in $S$ split completely, as given by Lemma \ref{constrextotdec}.  For each
$v\in S$ and $w\in \mathfrak{M}_{\F}$ such that $w\mid v$, we have 
$n_{w}=n_{v}/d$. We  number the places
above a given place $v\in S$ and  write
them as $w(v,j)$, $j=1,\dots,d$.

Let $p=\alpha^{r}=\alpha\otimes r\in\T(\F)\otimes \Q$ with
$\alpha\in \T(\F)$ and $r\in \Q$. 
Such an element  may be
viewed as a point of $\T$ defined over some radical extension of $\F$.
Hence, $\val_w(p)=r\val_w(\alpha)$ is the common value at $p$ of the valuation maps
associated to the places of this extension over $w$. 

Recall that $H_{\F}\subset \bigoplus _{w\in \mathfrak{M}_{\F}}N_{\R}$ is the
hyperplane defined by the equation
\begin{displaymath}
  \sum_{w\in \mathfrak{M}_{\F}}n_{w}z_{w} =0
\end{displaymath}
as in \eqref{eq:16}.

The functions $\Psi $ and $\psi _{v}$ satisfy the hypothesis of Lemma
\ref{lemm:8}.  Hence, we deduce from Lemmas \ref{lemm:7} and
\ref{lemm:8} that there exists $p=\alpha\otimes r\in\T(\F)\otimes \Q$
with $\alpha\in \T(\F)$ and $r\in \Q$ such that $\val_w(p)=0$ for $w$
above $v\notin S$, with $\val_{w(v,j)}(p)$ sufficiently close to
$u_{v,j}$ for all $v\in S$ and $j=1,\dots,d$. Therefore
\begin{equation} \label{eq:6}
\sum_{w\in \mathfrak{M}_{\F}}  n_{w}\psi_{w}(\val_{w}(p))\ge
\sum_{v\in \mathfrak{M}_{\K}}
\sum_{j=1}^{d}\frac{n_{v}}{d}\psi_{v}(u_{v,j})-
\frac{\varepsilon}{2}.
\end{equation}
From \eqref{eq:5} and \eqref{eq:6}, we deduce that
\begin{displaymath}
\h_{\ov D}(p)  = -\sum_{w\in \mathfrak{M}_{\F}}
n_{w}\psi_{w}(\val_{w}(p))\le -\sum_{v\in
\mathfrak{M}_{\K}}n_{v}\conc(\psi_{v})(u_{v})+ \varepsilon.
\end{displaymath}
Using $\sum_{v\in\mathfrak{M}_\K}n_vu_v=0$ and \eqref{eq:7}, we obtain
\begin{displaymath}
  \h_{\ov D}(p)  \le \sum_{v\in
    \mathfrak{M}_{\K}}n_{v}( \langle x_{0},u_{v}\rangle
  -\conc(\psi_{v})(u_{v}))+ \varepsilon 
  =\vartheta_{\ov D}(x_{0})+\varepsilon. 
\end{displaymath}
From this, we deduce that $\upmu^{\abs}_{\ov D}(X_{0}) \le \max_{x\in
  M_{\R}}\vartheta_{\ov D}(x)+\varepsilon$ for all $\varepsilon >0$
proving the inequality \eqref{eq:15} in the case when $D$ is
pseudo-effective.

If $D$ is not pseudo-effective, then $\stab(\Psi)=\emptyset$ and $\max_{x\in M_\R}\vartheta_{\ov D}(x)=-\infty$. By Lemma
\ref{lemm:4}, there exist $u_{j}\in N_{\R}$, $j=1,\dots, \ell$, such
that 
  \begin{displaymath}
\sum_{j=1}^{\ell}u_{j}=0\quad \text{ and }\quad     \sum_{j=1}^{\ell}\Psi(u_{j})>0.
  \end{displaymath}
Using Lemmas \ref{lemm:8} and \ref{lemm:7}, there exists $p=\alpha\otimes r\in
\T(\K)\otimes \Q$ such that
\begin{displaymath}
\eta:=\sum_{v\in \mathfrak{M}_{\K}} n_{v}\Psi(\val_{v}(p))>0 .
\end{displaymath}
For $l\ge 1$ such that $lr\in\N$, we view $p_l:=\alpha\otimes lr$ as a
point of $\T(\K)$. Then  
\begin{displaymath}
\h_{\ov D^{\can}}(p_l) = \sum_{v\in \mathfrak{M}_{\K}}
-n_{v}\Psi(\val_{v}(p_l))= l\sum_{v\in \mathfrak{M}_{\K}}
-n_{v}\Psi(\val_{v}(p))=-l\eta,
\end{displaymath}
where $\ov D^{\can}$ denotes the $\R$-divisor $D$ equipped with the
canonical metric as in Example \ref{exm:6}.  Since the difference
between the functions $\h_{\ov D^{\can}}$ and $\h_{\ov D}$ is bounded,
it follows that $\lim_{l\to\infty }\h_{\ov D}(p_l) = -\infty$. Hence
$\upmu^{\abs}_{\ov D}(X_{0})=-\infty$, which completes the proof.
\end{proof}

\begin{cor} \label{cor:2}
Let $X$ be a proper toric variety over $\K$ and  $\ov D$  a
toric metrized $\R$-divisor on $X$. Then $\upmu^{\abs}_{\ov D}(X_{0})
>-\infty$ if and only if $D$ is pseudo-effective. 
\end{cor}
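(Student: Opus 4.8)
The plan is to read this off directly from Theorem~\ref{thm:1}, which identifies $\upmu^{\abs}_{\ov D}(X_{0})$ with $\max_{x\in M_{\R}}\vartheta_{\ov D}(x)$. So the whole task reduces to checking that this maximum is a real number exactly when $D$ is pseudo-effective, and there is no serious obstacle: everything needed is already recorded in the text preceding Theorem~\ref{thm:1}.

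First I would dispose of the negative direction. Recall that $D$ is pseudo-effective if and only if $\Delta_{D}=\stab(\Psi_{D})\ne\emptyset$, by \cite[Proposition~4.9(2)]{BurgosMoriwakiPhilipponSombra:aptv}, and that by our convention $\vartheta_{\ov D}(x)=-\infty$ for every $x\notin\Delta_{D}$. Hence if $D$ is not pseudo-effective, then $\Delta_{D}=\emptyset$, so $\vartheta_{\ov D}\equiv-\infty$ on all of $M_{\R}$ and $\max_{x\in M_{\R}}\vartheta_{\ov D}(x)=-\infty$; Theorem~\ref{thm:1} then gives $\upmu^{\abs}_{\ov D}(X_{0})=-\infty$.

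Conversely, suppose $D$ is pseudo-effective, so that $\Delta_{D}$ is a nonempty (compact) polytope. By construction the local roof functions $\vartheta_{\ov D,v}$ are continuous real-valued concave functions on $\Delta_{D}$, and they vanish identically for all but finitely many $v\in\mathfrak{M}_{\K}$; therefore $\vartheta_{\ov D}=\sum_{v}n_{v}\vartheta_{\ov D,v}$ is a finite weighted sum of such functions and is in particular continuous and real-valued on $\Delta_{D}$. A continuous real-valued function on a nonempty compact set attains a finite maximum, so $\max_{x\in M_{\R}}\vartheta_{\ov D}(x)=\max_{x\in\Delta_{D}}\vartheta_{\ov D}(x)\in\R$, and by Theorem~\ref{thm:1} this equals $\upmu^{\abs}_{\ov D}(X_{0})$, which is therefore $>-\infty$. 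The only point that requires any verification is the finiteness of $\vartheta_{\ov D}$ on $\Delta_{D}$, and this is immediate from the stated properties of the local roof functions; hence I expect no genuine difficulty in this argument.
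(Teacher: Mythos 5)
Your argument is correct and follows exactly the same route as the paper: the paper's proof simply invokes the equivalence ``$D$ pseudo-effective $\iff$ $\Delta_D\ne\emptyset$'' from \cite[Proposition~4.9(2)]{BurgosMoriwakiPhilipponSombra:aptv} and then cites Theorem~\ref{thm:1}, leaving implicit the two easy observations you spell out (that $\vartheta_{\ov D}\equiv-\infty$ when $\Delta_D=\emptyset$, and that a continuous real-valued concave function on a nonempty compact polytope attains a finite maximum). Your write-up is just a slightly more detailed version of the same two-line argument.
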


\begin{proof}
By \cite[Proposition 4.9(2)]{BurgosMoriwakiPhilipponSombra:aptv}, $D$
is pseudo-effective if and only if $\Delta$ is not empty. The result
follows then from Theorem \ref{thm:1}.
\end{proof}

The next lemma shows that the successive minima of a toric variety with
respect to a toric metrized $\R$-divisor can be computed in terms of
the absolute minima of the orbits under the action of $\T$.

\begin{lem} \label{lemm:3} Let $X$ be a proper toric variety over
  $\K$ of dimension $n$ and $\ov D$ a toric metrized $\R$-divisor on
  $X$. 
  \begin{enumerate}
  \item \label{item:4} Let $\sigma\in \Sigma$. Then
    $\upmu_{\ov D}^{i}(O(\sigma))= \upmu_{\ov D}^{\abs}(O(\sigma))$ for  $i=1,\dots,\dim(O(\sigma))+1$.
  \item \label{item:5} For $i=1,\dots, n+1$,
  \begin{displaymath}
\upmu_{\ov D}^{i}(X)= \min_{\sigma\in \Sigma^{\le i-1}} \upmu_{\ov D}^{\abs}(O(\sigma)),    
  \end{displaymath}
where  $\Sigma^{\le i-1} $ denotes the set of cones of $\Sigma$ of
dimension $\le i-1$. 
  \end{enumerate}
\end{lem}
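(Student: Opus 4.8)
The plan is to reduce the computation of the successive minima of $X$ to the computation of absolute minima of orbit closures, exploiting the stratification $X = \bigsqcup_{\sigma \in \Sigma} O(\sigma)$ and the fact that the closed subsets of a toric variety that are relevant for the dimension count in Definition~\ref{def:1} can be taken to be unions of orbit closures. For part \eqref{item:4}, I would observe that for any $\eta$, the set $O(\sigma)(\ov\K)_{\le\eta}$, if nonempty, is dense in $O(\sigma)$: this is because $O(\sigma)$ is itself (equivariantly) a torus $\T_{\sigma} := \T/\T_{\sigma}$ with $\T_\sigma$ the subtorus fixing $O(\sigma)$, the restriction of $\ov D$ to the orbit closure $V(\sigma) = \overline{O(\sigma)}$ is again a toric metrized $\R$-divisor, and $O(\sigma)$ is the principal open subset of $V(\sigma)$. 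Then Theorem~\ref{thm:1} applied to $V(\sigma)$ with its divisor says $\upmu^{\abs}_{\ov D}(O(\sigma)) = \max_{x} \vartheta(x)$ is finite exactly when nonempty, and the key point is the \emph{homogeneity} of the orbit: given any point $p \in O(\sigma)(\ov\K)_{\le\eta}$, its $\T(\ov\K)$-translates (or rather, translates by torsion-type elements as in the proof of Theorem~\ref{thm:1}) remain of height $\le\eta$ up to $\varepsilon$, but more simply, the set $O(\sigma)(\ov\K)_{\le\eta}$ is stable under the action of points of $\T$ with all valuations zero, hence under a Zariski-dense subgroup, hence is itself Zariski-dense in $O(\sigma)$ as soon as it is nonempty. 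This forces $\upmu^{i}_{\ov D}(O(\sigma))$ to be independent of $i$ and equal to $\upmu^{\abs}_{\ov D}(O(\sigma)) = \upmu^{\ess}_{\ov D}(O(\sigma))$.

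**Part \eqref{item:5}.** For the lower bound $\upmu^{i}_{\ov D}(X) \ge \min_{\sigma \in \Sigma^{\le i-1}} \upmu^{\abs}_{\ov D}(O(\sigma))$, fix $\eta < \min_{\sigma \in \Sigma^{\le i-1}} \upmu^{\abs}_{\ov D}(O(\sigma))$; then for every cone $\sigma$ of dimension $\le i-1$ the set $O(\sigma)(\ov\K)_{\le\eta}$ is empty (its absolute minimum exceeds $\eta$), so $X(\ov\K)_{\le\eta}$ is contained in $\bigcup_{\dim\sigma \ge i} O(\sigma) = \bigcup_{\dim\sigma = i} V(\sigma)$, which is a closed subset of dimension $n - i \le n - i + 1$. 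Wait --- I need to be careful about the dimension bookkeeping: a cone $\sigma$ of dimension $j$ has $\dim O(\sigma) = n - j$, so orbits with $\dim\sigma \ge i$ have dimension $\le n-i$, and their union is closed of dimension $\le n-i$; this gives $\dim \overline{X(\ov\K)_{\le\eta}} \le n-i < n-i+1$, whence $\upmu^{i}_{\ov D}(X) \ge \eta$, and letting $\eta$ increase to the minimum gives the inequality. For the reverse inequality, let $\sigma_0 \in \Sigma^{\le i-1}$ realize the minimum and set $\eta = \upmu^{\abs}_{\ov D}(O(\sigma_0))$ (assume first it is finite). For any $\eta' > \eta$, the set $O(\sigma_0)(\ov\K)_{\le\eta'}$ is nonempty hence, by part~\eqref{item:4}, Zariski-dense in $O(\sigma_0)$, so $\overline{X(\ov\K)_{\le\eta'}} \supseteq \overline{O(\sigma_0)} = V(\sigma_0)$, which has dimension $n - \dim\sigma_0 \ge n - (i-1) = n - i + 1$. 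Hence $\upmu^{i}_{\ov D}(X) \le \eta'$ for all $\eta' > \eta$, giving $\upmu^{i}_{\ov D}(X) \le \eta$. The case where the minimum is $-\infty$ (some $O(\sigma_0)$ with $D|_{V(\sigma_0)}$ not pseudo-effective, via Corollary~\ref{cor:2}) is handled the same way: $O(\sigma_0)(\ov\K)_{\le\eta'}$ is nonempty for every $\eta' \in \R$, forcing $\upmu^{i}_{\ov D}(X) = -\infty$ too.

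**Main obstacle.** The step I expect to require the most care is justifying that the restriction of a toric metrized $\R$-divisor $\ov D$ to an orbit closure $V(\sigma)$ is again a toric metrized $\R$-divisor on $V(\sigma)$ (with its own torus $O(\sigma)$), so that Theorem~\ref{thm:1} and the density-from-homogeneity argument apply uniformly across all strata; this is presumably available from the toric dictionary in \cite{BurgosPhilipponSombra:agtvmmh}, and one must check that the height functions are compatible, i.e.\ that $\h_{\ov D}|_{O(\sigma)(\ov\K)}$ coincides with the height attached to this restricted datum --- which follows from Remark~\ref{rem:1} (restriction to a subvariety commutes with height of points) together with the observation that $V(\sigma)$ is itself proper. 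A secondary subtlety is the dimension count identity $\dim O(\sigma) = n - \dim\sigma$ and the fact that $\overline{O(\sigma)} = \bigcup_{\tau \supseteq \sigma} O(\tau)$, so that the ``boundary'' orbits in a $V(\sigma)$ have strictly smaller dimension and do not interfere with the density argument inside the principal open orbit. Once these structural facts are in place, both inequalities in \eqref{item:5} are short, and \eqref{item:4} is essentially immediate from homogeneity of the torus orbit plus Theorem~\ref{thm:1}.
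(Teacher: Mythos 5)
Your proof is correct and takes essentially the same route as the paper's: both argue that translation by a Zariski-dense, height-preserving subgroup of $\T(\ov\K)$ (the paper uses the torsion points, you use the group of points of trivial valuation at all places, both of which lie in the compact tori $\SS_v^{\an}$ and hence preserve the toric metrics) forces each level set $O(\sigma)(\ov\K)_{\le\eta}$ to be empty or Zariski-dense in $O(\sigma)$, and part~\eqref{item:5} then follows by the same dimension count over the orbit stratification. Your ``main obstacle'' paragraph is a red herring, though: the paper never invokes Theorem~\ref{thm:1} on orbit closures in the proof of this lemma --- that machinery only enters later, in Proposition~\ref{prop:1} and Theorem~\ref{thm:2} --- and here the quantities $\upmu^{\abs}_{\ov D}(O(\sigma))$ are treated as black boxes whose values are never computed.
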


\begin{proof}
  We first prove \eqref{item:4}.  Let $\T(\ov \K)_{\tors}$ denote the
  subgroup of torsion points of the group of algebraic points of
  $\T$. Under the identification $\T(\ov \K)= \Hom(M,\ov \K^{\times})$,
  this subgroup corresponds to $\Hom(M,\upmu_{\infty})$, the
  homomorphisms from $M$ to the group of roots of unity. This implies
  that, if $t\in \T(\ov \K)_{\tors}$ and
  $p\in X(\ov \K)$, then $\h_{\ov D}(t\cdot p)=\h_{\ov D}(p)$.

Now let $\varepsilon >0$ and choose an
algebraic point  $p$ of the orbit $O(\sigma)$ such that $\h_{\ov D}(p)\le
\upmu_{\ov D}^{\abs}(O(\sigma))+\varepsilon$. Then, $\T(\ov \K)_{\tors}
\cdot p$ is a dense subset of algebraic points of $O(\sigma )$ of the
same height as $p$. Hence, 
\begin{displaymath}
\upmu_{\ov D}^{\ess}(O(\sigma))\le 
\upmu_{\ov D}^{\abs}(O(\sigma))+\varepsilon.
\end{displaymath}
We deduce that $\upmu_{\ov D}^{\ess}(O(\sigma))\le 
\upmu_{\ov D}^{\abs}(O(\sigma))$. If follows that the chain of
inequalities in \eqref{eq:2}, applied to the variety $O(\sigma)$,
shrinks to the equalities in the statement.

Now we consider  \eqref{item:5}.
Let $\sigma \in \Sigma^{\le i-1}$. Then, $O(\sigma)$ is of
dimension $\ge n-i+1$ and so 
\begin{displaymath}
  \upmu_{\ov D}^{i}(X)\le \upmu_{\ov D}^{\ess}(O(\sigma)).
\end{displaymath}
Using \eqref{item:4}, we deduce that $\upmu_{\ov D}^{i}(X)\le \min_{\sigma\in
  \Sigma^{\le i-1}} \upmu_{\ov D}^{\abs}(O(\sigma))$.

For the reverse inequality, observe that, for a cone $\sigma \in
\Sigma$ of dimension $\ge i$, the orbit $O(\sigma)$ is of dimension
$\le n-i$. Using the decomposition of $X$ into orbits, we deduce that
\begin{displaymath}
  \upmu_{\ov D}^{i}(X)= \upmu_{\ov D}^{i}\Big(X\setminus \bigcup_{\sigma\in \Sigma^{\ge
      i}}O(\sigma)\Big)= \upmu_{\ov D}^{i}\Big(\bigcup_{\sigma\in \Sigma^{\le
      i-1}}O(\sigma)\Big).
\end{displaymath}
Hence, 
\begin{displaymath}
  \upmu_{\ov D}^{i}(X)\ge \upmu_{\ov D}^{\abs}\Big(\bigcup_{\sigma\in \Sigma^{\le
      i-1}}O(\sigma)\Big) =  \min_{\sigma\in
  \Sigma^{\le i-1}} \upmu_{\ov D}^{\abs}(O(\sigma)),
\end{displaymath}
which proves the result. 
\end{proof}

Theorem \ref{thm:5} in the introduction is a direct consequence of
the previous results.

\begin{cor} \label{cor:4} 
Let $X$ be a proper toric variety over $\K$ and  $\ov D$  a
toric metrized $\R$-divisor on $X$. Then
\begin{displaymath}
  \upmu^{\ess}_{\ov D}(X)=  \max_{x\in
    M_{\R}}\vartheta_{\ov D}(x).
\end{displaymath}
\end{cor}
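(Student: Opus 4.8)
The plan is to derive Corollary \ref{cor:4} directly from Theorem \ref{thm:1} together with Lemma \ref{lemm:3}. By Lemma \ref{lemm:3}\eqref{item:5} applied with $i=1$, the essential minimum is
\begin{displaymath}
  \upmu^{\ess}_{\ov D}(X)= \upmu^{1}_{\ov D}(X)= \min_{\sigma\in \Sigma^{\le 0}} \upmu^{\abs}_{\ov D}(O(\sigma)),
\end{displaymath}
and since $\Sigma^{\le 0}=\{\{0\}\}$ consists only of the zero cone, whose orbit $O(\{0\})$ is the principal open subset $X_{0}$, this reduces to $\upmu^{\ess}_{\ov D}(X)= \upmu^{\abs}_{\ov D}(X_{0})$. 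Now Theorem \ref{thm:1} identifies the right-hand side with $\max_{x\in M_{\R}}\vartheta_{\ov D}(x)$, and we are done.

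First I would recall explicitly from the paragraph preceding Theorem \ref{thm:1} that for the zero cone $\sigma=\{0\}$ the open affine $X_{\{0\}}$ coincides with the orbit $O(\{0\})$, and that this is canonically the split torus $\T$ acting on $X$; thus $X_{0}=O(\{0\})$ in the notation used throughout. Then I would invoke Lemma \ref{lemm:3}\eqref{item:5}, noting that a cone of dimension $\le 0$ is necessarily the zero cone, so the minimum over $\Sigma^{\le 0}$ has a single term. This gives $\upmu^{\ess}_{\ov D}(X)=\upmu^{\abs}_{\ov D}(X_{0})$. Finally I would apply Theorem \ref{thm:1} verbatim to rewrite $\upmu^{\abs}_{\ov D}(X_{0})$ as $\max_{x\in M_{\R}}\vartheta_{\ov D}(x)$, completing the proof.

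There is essentially no obstacle here: the corollary is a bookkeeping consequence of the two previous results, and the only point requiring a word of care is the identification $O(\{0\})=X_{0}$, which is standard toric geometry and already recalled in the text. (One could equally cite Lemma \ref{lemm:3}\eqref{item:4} with $\sigma=\{0\}$ to get $\upmu^{\ess}_{\ov D}(X_{0})=\upmu^{\abs}_{\ov D}(X_{0})$ and then use that $X_{0}$ is dense in $X$ so that $\upmu^{\ess}_{\ov D}(X)=\upmu^{\ess}_{\ov D}(X_{0})$, but routing through Lemma \ref{lemm:3}\eqref{item:5} is cleaner.) The whole argument fits in two or three sentences, so the "hard part" is simply making sure the reader sees that the essential minimum of $X$ equals the absolute minimum of the dense torus orbit, which is exactly the content that Lemma \ref{lemm:3} was designed to supply.
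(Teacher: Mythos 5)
Your argument is exactly the paper's: apply Lemma \ref{lemm:3}\eqref{item:5} with $i=1$ to get $\upmu^{\ess}_{\ov D}(X)=\upmu^{\abs}_{\ov D}(X_{0})$, then invoke Theorem \ref{thm:1}. Correct, and your parenthetical aside about the alternative route via Lemma \ref{lemm:3}\eqref{item:4} is also fine.
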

\begin{proof}
  From  Lemma
  \ref{lemm:3}\eqref{item:5} and Theorem \ref{thm:1} we obtain
  \begin{displaymath}
    \upmu^{\ess}_{\ov D}(X)=  \upmu^{\abs}_{\ov D}(X_{0})=\max_{x\in
    M_{\R}}\vartheta_{\ov D}(x).
  \end{displaymath}
\end{proof}

Using this  result, we can deduce some  relations between the essential
minimum and the positivity properties of $\ov D$ in the toric
setting. 

\begin{cor}\label{cor:7}
Let $X$ be a proper toric variety over $\K$
  of dimension $n$ and $\ov D$ a toric metrized $\R$-divisor on
  $X$. Then 
  \begin{enumerate}
  \item \label{item:7} $\ov D$ is pseudo-effective if and only if
    $\upmu^{\ess}_{\ov D}(X)\ge0$;
  \item \label{item:9} $\ov D$ is big if and only if
    $\dim(\Delta_{D})=n$ and $\upmu^{\ess}_{\ov D}(X) > 0$.
  \end{enumerate}
\end{cor}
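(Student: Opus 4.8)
The plan is to rewrite both conditions in terms of the global roof function via Corollary~\ref{cor:4}, which gives $\upmu^{\ess}_{\ov D}(X)=\max_{x\in M_{\R}}\vartheta_{\ov D}(x)$, and then to match the right-hand side with the known combinatorial characterizations of geometric and arithmetic positivity of toric (metrized) $\R$-divisors. Throughout I keep the convention $\vartheta_{\ov D}\equiv-\infty$ off $\Delta_{D}$, so that $\max_{x}\vartheta_{\ov D}(x)$ is finite exactly when $\Delta_{D}\ne\emptyset$, equivalently, by \cite[Proposition~4.9]{BurgosMoriwakiPhilipponSombra:aptv}, exactly when $D$ is pseudo-effective; likewise $\dim\Delta_{D}=n$ exactly when $D$ is big. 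I would prove \eqref{item:9} first and deduce \eqref{item:7} from it.

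\emph{Part \eqref{item:9}.} Recall that $\ov D$ is big if and only if $\wh{\vol}(\ov D)>0$, and use the toric formula for the arithmetic volume from \cite{BurgosPhilipponSombra:agtvmmh,BurgosMoriwakiPhilipponSombra:aptv}, which for $D$ big expresses $\wh{\vol}(\ov D)$ as $(n+1)!$ times the integral over $\Delta_{D}$ of $\max(\vartheta_{\ov D},0)$. If $\ov D$ is big then $D$ is big, so $\dim\Delta_{D}=n$, and the integral above is positive; since $\vartheta_{\ov D}$ is continuous on $\Delta_{D}$, this forces $\vartheta_{\ov D}$ to be positive on a nonempty relatively open, hence positive measure, subset of $\Delta_{D}$, so $\max_{\Delta_{D}}\vartheta_{\ov D}>0$, i.e. $\upmu^{\ess}_{\ov D}(X)>0$ by Corollary~\ref{cor:4}. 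Conversely, if $\dim\Delta_{D}=n$ and $\upmu^{\ess}_{\ov D}(X)=\max_{\Delta_{D}}\vartheta_{\ov D}>0$, then $\{\vartheta_{\ov D}>0\}$ is a nonempty relatively open subset of the $n$-dimensional polytope $\Delta_{D}$, hence of positive measure, so the integral of $\max(\vartheta_{\ov D},0)$ is positive and $\ov D$ is big.

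\emph{Part \eqref{item:7}.} By the first paragraph, this amounts to showing that $\ov D$ is pseudo-effective if and only if $\max_{x\in M_{\R}}\vartheta_{\ov D}(x)\ge0$. Assume first $\ov D$ pseudo-effective. Then $D$ is pseudo-effective, so $\Delta_{D}\ne\emptyset$, and $\ov D+\varepsilon\ov A$ is big for every ample $\ov A$ on $X$ and every $\varepsilon>0$; hence $\max\vartheta_{\ov D+\varepsilon\ov A}>0$ by \eqref{item:9}. Using $\psi_{\ov D+\varepsilon\ov A,v}=\psi_{\ov D,v}+\varepsilon\psi_{\ov A,v}$ and the resulting description of $\vartheta_{\ov D+\varepsilon\ov A,v}$ as the sup-convolution of $\vartheta_{\ov D,v}$ and $\vartheta_{\varepsilon\ov A,v}$, one sees that at a maximizing point the relevant arguments of the finitely many nonzero $\vartheta_{\ov D,v}$ all lie within $O(\varepsilon)$ of one another inside the compact set $\Delta_{D}$; uniform continuity then yields $\max\vartheta_{\ov D+\varepsilon\ov A}\le\max_{\Delta_{D}}\vartheta_{\ov D}+o(1)$ as $\varepsilon\to0$, whence $\max_{\Delta_{D}}\vartheta_{\ov D}\ge0$. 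Conversely, assume $\max_{\Delta_{D}}\vartheta_{\ov D}\ge0$ (so $\Delta_{D}\ne\emptyset$) and let $\ov A$ be ample. Then $D+A$ is big, so $\dim\Delta_{D+A}=n$; and evaluating the sup-convolution defining $\vartheta_{\ov D+\ov A}$ at a sum of maximizing points gives $\max\vartheta_{\ov D+\ov A}\ge\max_{\Delta_{D}}\vartheta_{\ov D}+\max_{\Delta_{A}}\vartheta_{\ov A}$. Since an ample $\ov A$ is in particular big, $\max\vartheta_{\ov A}>0$ by \eqref{item:9}, so $\max\vartheta_{\ov D+\ov A}>0$ and $\ov D+\ov A$ is big, again by \eqref{item:9}. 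As this holds for every ample $\ov A$, the metrized divisor $\ov D$ is pseudo-effective; combining with Corollary~\ref{cor:4} proves \eqref{item:7}.

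The step I expect to be the main obstacle is the first implication in \eqref{item:7}: unlike bigness, arithmetic pseudo-effectivity is not read off a single integral, so one has to run the perturbation by $\varepsilon\ov A$ and control simultaneously the shrinking of the domain $\Delta_{D}+\varepsilon\Delta_{A}$ to $\Delta_{D}$ and the convergence of $\vartheta_{\ov D+\varepsilon\ov A}$ to $\vartheta_{\ov D}$ on it. A shorter route, if such a statement is available, is to quote directly the combinatorial characterizations of arithmetic pseudo-effectivity and bigness of toric metrized $\R$-divisors from \cite{BurgosMoriwakiPhilipponSombra:aptv} and feed in Corollary~\ref{cor:4}; the argument above is the self-contained version. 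One should also handle the $-\infty$ convention with a little care, as it is what makes both equivalences valid uniformly when $D$ fails to be pseudo-effective.
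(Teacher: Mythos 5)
Your proof is correct in outline, but it takes a genuinely different route than the paper. The paper's own proof is a one-liner: it quotes \cite[Theorem~2(3,4)]{BurgosMoriwakiPhilipponSombra:aptv}, which already gives the combinatorial characterizations ``$\ov D$ pseudo-effective $\iff$ $\max\vartheta_{\ov D}\ge 0$'' and ``$\ov D$ big $\iff$ $\dim\Delta_{D}=n$ and $\max\vartheta_{\ov D}>0$'', and then just substitutes Corollary~\ref{cor:4}. You instead \emph{re-derive} that characterization: part~\eqref{item:9} from the arithmetic volume formula $\avol(\ov D)=(n+1)!\int_{\Delta_D}\max(0,\vartheta_{\ov D})\dd\vol_M$ together with ``big $\iff$ $\avol>0$'', and part~\eqref{item:7} by a perturbation $\ov D\rightsquigarrow\ov D+\varepsilon\ov A$ reduced to part~\eqref{item:9}. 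Your argument buys self-containedness at the cost of length, and it is exactly the extra work the cited theorem packages away.

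The one step you should be more explicit about is the estimate $\max\vartheta_{\ov D+\varepsilon\ov A}\le\max_{\Delta_D}\vartheta_{\ov D}+o(1)$. What actually makes it work is that for each place $v$ the sup-convolution splits $x_0=y_v+z_v$ with $z_v\in\varepsilon\Delta_A$, so $z_v=O(\varepsilon)$ and $\vartheta_{\varepsilon\ov A,v}(z_v)=\varepsilon\,\vartheta_{\ov A,v}(z_v/\varepsilon)=O(\varepsilon)$, while all the $y_v$ lie in $\Delta_D$ within $O(\varepsilon)$ of one another; since only finitely many places carry a non-canonical metric, uniform continuity of each $\vartheta_{\ov D,v}$ on the compact set $\Delta_D$ finishes the estimate. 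You gesture at this but leave both the $O(\varepsilon)$ control on $z_v$ and the reduction to finitely many places implicit. You also implicitly use a particular definition of arithmetic pseudo-effectivity (``$\ov D+\ov A$ big for every ample $\ov A$''); one should check this matches the definition in \cite{BurgosMoriwakiPhilipponSombra:aptv}, or simply quote their Theorem~2(3,4) as you yourself suggest at the end, which is what the paper does.
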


\begin{proof}
  This follows from Corollary \ref{cor:4} and \cite[Theorem
  2(3,4)]{BurgosMoriwakiPhilipponSombra:aptv}.
\end{proof}

It is possible to reformulate Corollary \ref{cor:4} to give a formula
for the essential minimum in terms of the functions $\psi _{\ov D,v}$
that is useful when computing the essential minimum in concrete
situations as those considered in \S~\ref{sec:exmpl}.

For convenience we recall the definition of sup-convolution of two
concave functions \cite[\S~2.3]{BurgosPhilipponSombra:agtvmmh}. Let
$\Delta \subset M_{\R}$ be a polytope and $\psi 
_{1}$, $\psi _{2}$ two concave functions on $N_{\R}$ whose stability set is
$\Delta $. Then 
\begin{displaymath}
  (\psi _{1}\boxplus \psi _{2})(u)=
  \sup_{u_{1}+u_{2}=u} \psi _{1}(u_{1})+ \psi _{2}(u_{2}).
\end{displaymath}
This is a concave function on $N_{\R}$ with stability set $\Delta $.
The sup-convolution is an associative operation. In fact it
corresponds to the pointwise addition by Legendre-Fenchel duality
\cite[Proposition 
  2.3.1]{BurgosPhilipponSombra:agtvmmh}. That is,
\begin{displaymath}
  \psi _{1}\boxplus \psi _{2}=(\psi _{1}^{\vee}+
  \psi^{\vee} _{2})^{\vee}.
\end{displaymath}
Moreover, if $\psi_2=\Psi $
is the support 
function of $\Delta $, then $\psi _{1}\boxplus \Psi =\psi _{1}$.

Recall also the right multiplication of a concave function by an
scalar.
\begin{displaymath}
  (\psi _{1}\lambda)(u)=\lambda \psi _{1}(u/\lambda ). 
\end{displaymath}
This operation is dual of the usual left multiplication
\begin{displaymath} 
  \psi _{1}\lambda=(\lambda \psi _{1}^{\vee})^{\vee}.
\end{displaymath}

\begin{cor}\label{cor:8} Let $X$ be a proper toric variety over $\K$
  and  $\ov D$  a
toric metrized $\R$-divisor on $X$. 
\begin{enumerate}
\item \label{item:21} If $D$ is pseudo-effective, then
\begin{displaymath} 
\upmu^{\ess}_{\ov D}(X) = \upmu^{\abs}_{\ov D}(X_{0})=
-\Big(\boxplus_{v\in\mathfrak{M}_\K} \conc(\psi_{\ov D,v}n_v)\Big)(0).
\end{displaymath}
\item \label{item:20} If $\ov D$ is semipositive and its $v$-adic metric
agrees with the canonical
metric for all places except one place $v_{0}$, then 
\begin{equation*}
\upmu^{\ess}_{\ov D}(X) = \upmu^{\abs}_{\ov D}(X_{0}) = -n_{v_{0}}\psi_{\ov D,v_{0}}(0).
\end{equation*}

\end{enumerate}
\end{cor}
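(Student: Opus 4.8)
The plan is to reduce the whole statement to Theorem~\ref{thm:1} and Corollary~\ref{cor:4}, which already give
$\upmu^{\ess}_{\ov D}(X) = \upmu^{\abs}_{\ov D}(X_{0}) = \max_{x\in M_{\R}}\vartheta_{\ov D}(x)$,
and then to rewrite this maximum by Legendre--Fenchel duality. First I would note that, regarding $\vartheta_{\ov D}$ as a concave function on $M_{\R}$ with value $-\infty$ outside $\Delta_{D}$, its dual satisfies $\vartheta_{\ov D}^{\vee}(0) = \inf_{x\in M_{\R}}\bigl(\langle x,0\rangle - \vartheta_{\ov D}(x)\bigr) = -\max_{x\in M_{\R}}\vartheta_{\ov D}(x)$. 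So part~\eqref{item:21} is equivalent to the identity $\boxplus_{v\in\mathfrak{M}_{\K}}\conc(\psi_{\ov D,v}n_{v}) = \vartheta_{\ov D}^{\vee}$, and part~\eqref{item:20} will follow by specializing it.

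The core step is then pure convex analysis. Since $D$ is pseudo-effective, $\Delta_{D} = \stab(\psi_{\ov D,v})\ne\emptyset$ for every $v$, so $\conc(\psi_{\ov D,v})$ exists by Lemma~\ref{lemm:2}. I would use three compatibilities: right multiplication by the positive scalar $n_{v}$ commutes with concavification, so $\conc(\psi_{\ov D,v}n_{v}) = \conc(\psi_{\ov D,v})n_{v}$ (the smallest concave majorant transports along the reparametrization $u\mapsto u/n_{v}$ and the rescaling); combining $\psi\lambda = (\lambda\psi^{\vee})^{\vee}$ with biduality of Legendre--Fenchel duals and the equality $\vartheta_{\ov D,v} = \psi_{\ov D,v}^{\vee} = \conc(\psi_{\ov D,v})^{\vee}$ already used in the proof of Theorem~\ref{thm:1}, one obtains $\bigl(\conc(\psi_{\ov D,v}n_{v})\bigr)^{\vee} = n_{v}\vartheta_{\ov D,v}$; and sup-convolution is dual to pointwise addition. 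Putting these together gives $\boxplus_{v}\conc(\psi_{\ov D,v}n_{v}) = \bigl(\sum_{v}n_{v}\vartheta_{\ov D,v}\bigr)^{\vee} = \vartheta_{\ov D}^{\vee}$, whence part~\eqref{item:21} upon evaluating at $0$. I would take care to justify that the a priori infinite sup-convolution is meaningful: for all but finitely many $v$ one has $\psi_{\ov D,v} = \Psi_{D}$, which is conic and hence invariant under right multiplication by $n_{v}$, so $\conc(\psi_{\ov D,v}n_{v}) = \conc(\Psi_{D})$; the latter has the same Legendre--Fenchel dual as the support function of $\Delta_{D}$, hence is neutral for $\boxplus$, so the operation reduces to a finite one.

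For part~\eqref{item:20}, the hypotheses make $\ov D$ semipositive, so every $\psi_{\ov D,v}$ is already concave and $\conc(\psi_{\ov D,v}n_{v}) = \psi_{\ov D,v}n_{v}$; moreover $\psi_{\ov D,v} = \Psi_{D}$ for all $v\ne v_{0}$ by Example~\ref{exm:6}, so $\Psi_{D}$ is concave, $D$ is nef, hence pseudo-effective, and part~\eqref{item:21} applies. Since $\Psi_{D}$ is conic, $\psi_{\ov D,v}n_{v} = \Psi_{D}n_{v} = \Psi_{D}$ is the support function of $\Delta_{D}$ and is neutral for $\boxplus$, so $\boxplus_{v}\conc(\psi_{\ov D,v}n_{v}) = (\psi_{\ov D,v_{0}}n_{v_{0}})\boxplus\Psi_{D} = \psi_{\ov D,v_{0}}n_{v_{0}}$; evaluating at $0$ and using $(\psi_{\ov D,v_{0}}n_{v_{0}})(0) = n_{v_{0}}\psi_{\ov D,v_{0}}(0)$ gives $\upmu^{\ess}_{\ov D}(X) = -n_{v_{0}}\psi_{\ov D,v_{0}}(0)$.

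The main obstacle I anticipate is the bookkeeping of the second paragraph: handling the interplay of concavification, right scalar multiplication and Legendre--Fenchel duality for the possibly non-concave functions $\psi_{\ov D,v}$, and in particular arguing cleanly that the infinite sup-convolution collapses to a finite one thanks to the neutrality of $\conc(\Psi_{D})$.
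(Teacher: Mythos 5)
Your proof is correct and follows essentially the same route as the paper's: reduce to $\upmu^{\abs}_{\ov D}(X_{0}) = \max_{x}\vartheta_{\ov D}(x) = -\vartheta_{\ov D}^{\vee}(0)$ via Theorem~\ref{thm:1} and Corollary~\ref{cor:4}, then identify $\vartheta_{\ov D}^{\vee}$ with the sup-convolution $\boxplus_{v}\conc(\psi_{\ov D,v}n_{v})$ using the duality of sum with sup-convolution and of left with right scalar multiplication, and finally collapse the sup-convolution to a single term for part~\eqref{item:20}. Your extra remarks on $\conc(\psi_{\ov D,v}n_{v})=\conc(\psi_{\ov D,v})n_{v}$ and on the neutrality of $\conc(\Psi_{D})$ for $\boxplus$ (so the infinite sup-convolution is effectively finite) are accurate clarifications of points the paper leaves implicit, not a different argument.
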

\begin{proof}
  By Theorem \ref{thm:1},   $\upmu^{\abs}_{\ov D}(X_{0}) = \max_{x\in
    M_{\R}}\vartheta_{\ov D}(x)$. From the definition of
  Legen\-dre-Fenchel duality, $\max_{x\in
    M_{\R}}\vartheta_{\ov D}(x)=-\vartheta _{\ov D}^{\vee}(0)$. By the
  duality between the
  sum and the sup-convolution, and  that
  between the right and the left multiplication, 
  \begin{displaymath}
    \vartheta _{\ov D}^{\vee}=\boxplus_{v\in\mathfrak{M}_\K} 
    \conc(\psi_{\ov D,v}n_v).
  \end{displaymath}
  Hence we obtain the first statement.

  If $\ov D$ is semipositive and its metric agrees with the canonical
  metric for all places except one place $v_{0}$, then  $\psi_{\ov
    D,v}=\Psi_{D}$ for all $v\ne v_{0}$ and $\psi_{\ov D,v_{0}}$ is
  concave. Since the semipositivity of $\ov D$ implies that $D$ is
  pseudo-effective, the first statement implies that
 \begin{displaymath}
   \upmu^{\ess}_{\ov D}(X) =
    -(\psi_{\ov D,v_{0}}n_{v_{0}})(0)=-n_{v_{0}}\psi_{\ov
      D,v_{0}}(0/n_{v_{0}})=-n_{v_{0}}\psi_{\ov D,v_{0}}(0),
  \end{displaymath}
  which proves the second statement.
\end{proof}

\begin{rem}\label{rem:3}
If $\ov D$ is semipositive and its $v$-adic metric
agrees with the canonical
metric for all places except one place $v_{0}$, then we can identify a
dense set of points whose height agrees with $\upmu^{\ess}_{\ov
  D}(X)$. Namely, each point
\begin{displaymath}
  p\in\T(\ov \K)_{\tors}\subset X_{0}(\ov \K)
\end{displaymath}
satisfies $\val_{v}(p)=0$ for all $v\in \mathfrak{M}_{\K}$. Hence
\begin{displaymath}
  \h_{\ov D}(p)=-\sum_{v\in \mathfrak{M}_{\K}}n_{v}\psi _{\ov D,v}(0)=
  -n_{v_{0}}\psi _{\ov D,v_{0}}(0)=\upmu_{\ov D}^{\ess}(X).
\end{displaymath}

\end{rem}

We now want to extend Corollary \ref{cor:4}
to the other successive
minima. To do this, we have to describe the restriction  to  an orbit of a toric
metrized $\R$-divisor in terms of its roof function.

Let $X$ be a proper toric variety and $\ov
D$ a toric metrized $\R$-divisor with $D$ nef. Let $\Sigma $ be the
fan of $X$ and $\Delta_{D} $ the polytope associated to $D$.
Recall that, as in \cite[Example~2.5.13]{BurgosPhilipponSombra:agtvmmh}, to a
cone $\sigma \in \Sigma $ we can associate a face
$F_{\sigma }\subset \Delta_{D} $ given by
\begin{displaymath}
  F_{\sigma }=\{x\in \Delta_{D} \mid \langle y-x,u\rangle \ge 0\text{ for
    all }y\in \Delta_{D} , u\in \sigma\}.
\end{displaymath}
Choose an element $m_{\sigma}\in M_{\R}$ in the affine space generated by the
face $F_{\sigma}$. Then the $\R$-divisor $D- \div(\chi^{-m_{\sigma
  }})$ intersects the orbit closure $V(\sigma )$ properly, see
\cite[Proposition 3.3.14]{BurgosPhilipponSombra:agtvmmh} for the case
of Cartier divisors. We set
\begin{equation}
  \label{eq:63}
  D_{\sigma}=(D-\div(\chi^{-m_{\sigma
  }}))|_{V(\sigma)} \quad \text{ and } \quad   \ov D_{\sigma}=(\ov D-\wh\div(\chi^{-m_{\sigma
  }}))|_{V(\sigma)}
\end{equation}
with $\wh\div(\chi^{-m_{\sigma }})$ as in \cite[Definition
3.4]{BurgosMoriwakiPhilipponSombra:aptv}.
In this situation, $\Delta_{D_{\sigma}}=F_{\sigma}-m_{\sigma}$.

\begin{prop}\label{prop:1} 
Let $X$ be a proper toric variety and $\ov
D$ a toric metrized $\R$-divisor with $D$ nef. Let $\Sigma $ be the
fan of $X$ and $\sigma \in \Sigma $. With notation as in
\eqref{eq:63}, for all $x\in F_{\sigma}-m_{\sigma}$, 
\begin{displaymath}
\vartheta _{\ov D_{\sigma}}(x) \ge \vartheta _{\ov D}(x+m_{\sigma}).
\end{displaymath}
Moreover, if $\ov D$ is semipositive, the equality holds. 
\end{prop}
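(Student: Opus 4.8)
The statement concerns the roof function of the restriction $\ov D_{\sigma}$ of a toric metrized $\R$-divisor to an orbit closure $V(\sigma)$, so the natural approach is to work place by place on the local roof functions and then sum with the weights $n_{v}$. The key observation is that restriction of a toric metrized $\R$-divisor to $V(\sigma)$ corresponds, on the level of the functions $\psi_{\ov D,v}$, to restricting to the quotient lattice $N(\sigma)_{\R} = N_{\R}/\langle\sigma\rangle_{\R}$ after the twist by $\chi^{-m_{\sigma}}$; on the dual side, $F_{\sigma}-m_{\sigma}$ lives in the linear subspace $M(\sigma)_{\R}$ parallel to $\aff(F_{\sigma})$. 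Concretely, writing $\pi_{\sigma}\colon N_{\R}\to N(\sigma)_{\R}$ for the projection, I would first record the local statement
\begin{displaymath}
  \psi_{\ov D_{\sigma},v}(\ov u) = \sup_{\pi_{\sigma}(u)=\ov u}\bigl(\psi_{\ov D,v}(u) + \langle m_{\sigma},u\rangle\bigr) \qquad\text{when }\ov D\text{ is semipositive,}
\end{displaymath}
and the inequality $\psi_{\ov D_{\sigma},v}(\ov u) \ge \psi_{\ov D,v}(u) + \langle m_{\sigma},u\rangle$ for any lift $u$ of $\ov u$ in general. This should follow from the description of the $v$-adic metric of a restriction in \cite{BurgosMoriwakiPhilipponSombra:aptv} and \cite{BurgosPhilipponSombra:agtvmmh}, together with the fact that the twist by $\chi^{-m_{\sigma}}$ shifts $\psi_{\ov D,v}$ by the linear function $u\mapsto\langle m_{\sigma},u\rangle$; the sup over fibers of $\pi_{\sigma}$ is exactly the passage to the metric on the quotient torus, which in the semipositive case is an honest (concave) sup-convolution-type identity.

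**Dualizing.** Granting the local statement, I would dualize. For $x\in F_{\sigma}-m_{\sigma}\subset M(\sigma)_{\R}$, unwinding the definition of the local roof function gives
\begin{displaymath}
  \vartheta_{\ov D_{\sigma},v}(x) = \inf_{\ov u\in N(\sigma)_{\R}}\bigl(\langle x,\ov u\rangle - \psi_{\ov D_{\sigma},v}(\ov u)\bigr).
\end{displaymath}
Using the inequality $\psi_{\ov D_{\sigma},v}(\ov u)\ge \psi_{\ov D,v}(u)+\langle m_{\sigma},u\rangle$ with a chosen lift $u$ of $\ov u$, and the pairing identity $\langle x,\ov u\rangle = \langle x,u\rangle$ (valid because $x\in M(\sigma)_{\R}$, i.e.\ $x$ annihilates $\langle\sigma\rangle_{\R}$), one obtains
\begin{displaymath}
  \langle x,\ov u\rangle - \psi_{\ov D_{\sigma},v}(\ov u) \le \langle x,u\rangle - \langle m_{\sigma},u\rangle - \psi_{\ov D,v}(u) = \langle x+m_{\sigma},u\rangle - \psi_{\ov D,v}(u),
\end{displaymath}
where in the last step I reabsorb the $-\langle m_{\sigma},u\rangle$ term; note $\langle x,u\rangle-\langle m_{\sigma},u\rangle$ should read $\langle x,u\rangle - \langle m_{\sigma},u\rangle$, and since $\langle x,u\rangle=\langle x,\ov u\rangle$ the correct regrouping is $\langle x,u\rangle - \langle m_{\sigma},u\rangle = \langle x,u\rangle - \langle m_{\sigma},u\rangle$—I will be careful with signs here, the point being that $\psi_{\ov D,v}(u)+\langle m_{\sigma},u\rangle$ is the shifted function whose dual is evaluated at $x+m_{\sigma}$. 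Taking the infimum over all $u$ lifting all $\ov u$, i.e.\ over all $u\in N_{\R}$, yields $\vartheta_{\ov D_{\sigma},v}(x)\le \vartheta_{\ov D,v}(x+m_{\sigma})$. Summing over $v$ with weights $n_{v}$ gives the inequality $\vartheta_{\ov D_{\sigma}}(x)\le\vartheta_{\ov D}(x+m_{\sigma})$—which is the \emph{opposite} of what is claimed, so I must recheck the direction of the local inequality: restricting to a subvariety should \emph{increase} the roof function (fewer constraints on the dual side become more constraints downstairs, hence the sup defining $\psi_{\ov D_{\sigma},v}$ is a \emph{supremum} over a fiber, making $\psi_{\ov D_{\sigma},v}\ge$ the restriction of $\psi_{\ov D,v}$ and thus $\vartheta_{\ov D_{\sigma},v}\le$, but the twist and the dimension drop reverse this). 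The safe route is to go through the identity $\Delta_{D_{\sigma}}=F_{\sigma}-m_{\sigma}$ and the fact, proved in \cite{BurgosPhilipponSombra:agtvmmh}, that the Legendre–Fenchel dual of a restriction equals the restriction of the dual to the face (in the semipositive case), which directly gives equality; the inequality in the nef case then comes from the one-sided bound on $\psi$.

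**Main obstacle.** The heart of the matter is the local identity relating $\psi_{\ov D_{\sigma},v}$ to $\psi_{\ov D,v}$, precisely because the restriction construction \eqref{eq:63} involves the auxiliary twist by $\chi^{-m_{\sigma}}$ and because in the merely nef (not semipositive) case one only has an inequality between $g_{\ov D,v}$ and its ``concavification along the orbit''. I expect to spend most of the effort pinning down this local statement cleanly: identifying the metric on $\cO(D_{\sigma})_{v}^{\an}$ induced by restriction with the metric whose associated function on $N(\sigma)_{\R}$ is the fiberwise sup (resp.\ concave fiberwise sup) of $\psi_{\ov D,v}+\langle m_{\sigma},\cdot\rangle$, citing \cite[\S3]{BurgosMoriwakiPhilipponSombra:aptv} and the orbit-restriction formalism of \cite[\S3.3, \S4.3]{BurgosPhilipponSombra:agtvmmh} for the semipositive/nef case and the general DSP case respectively. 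Once that is in hand, the dualization is a formal manipulation of Legendre–Fenchel transforms restricted to the affine subspace $\aff(F_{\sigma})$, and the global statement follows by the weighted sum over places, using that both sides are $\equiv$ outside a finite set of $v$.
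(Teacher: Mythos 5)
Your outline of the semipositive case is essentially the paper's: there the Legendre--Fenchel identity $\psi_{\ov D,v}(\sigma)^{\vee}(x)=\vartheta_{\ov D,v}(x+m_{\sigma})$ on $F_{\sigma}-m_{\sigma}$ (the analogue of \cite[Proposition 4.8.9]{BurgosPhilipponSombra:agtvmmh}, applied to the boundary restriction of $\psi_{\ov D,v}-m_{\sigma}$ on the partial compactification $N_{\sigma}$) gives equality place by place, and summing with the weights $n_{v}$ concludes. That part is fine, up to the sign slip $\langle m_{\sigma},u\rangle$ versus $-\langle m_{\sigma},u\rangle$ which you already flag.

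The genuine gap is in the non-semipositive (merely $D$ nef) case, which is exactly where the statement is only an inequality. Your proposed ``easy'' local inequality $\psi_{\ov D_{\sigma},v}(\ov u)\ge\psi_{\ov D,v}(u)-\langle m_{\sigma},u\rangle$ for an arbitrary lift $u$ is false: since $\psi_{\ov D_{\sigma},v}$ is defined as a \emph{boundary value} (a limit in $N_{\sigma}$), not a supremum over the fibre, it can fall strictly below interior values of $\psi_{\ov D,v}-m_{\sigma}$ when $\psi_{\ov D,v}$ is not concave. The paper's Example~\ref{exm:1}(2) on $\P^{1}_{\Q}$, where $\psi_{\infty}$ has a bump at $u=100$, is precisely such a counterexample: the restricted function is identically $0$ while $\psi_{\infty}(100)=1$. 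You also noticed that if one pursues this route the resulting inequality between roof functions comes out with the wrong sign, but the resolution you sketch (``the inequality in the nef case then comes from the one-sided bound on $\psi$'') is not a correct argument and is left unspecified. The paper instead uses a concavification device: replace each $\psi_{\ov D,v}$ by $\conc(\psi_{\ov D,v})$ (possible by Lemma~\ref{lemm:5} since $|\psi_{\ov D,v}-\Psi_{D}|$ is bounded and $\Psi_{D}$ is concave), which yields a semipositive $\ov D'$ with $\vartheta_{\ov D'}=\vartheta_{\ov D}$ because $\conc(f)^{\vee}=f^{\vee}$; then the pointwise inequality $\psi_{\ov D,v}\le\conc(\psi_{\ov D,v})$ on $N_{\R}$ persists after restriction to $N(\sigma)_{\R}$, so the order-reversing duality gives $\vartheta_{\ov D_{\sigma},v}\ge\vartheta_{\ov D'_{\sigma},v}$, and the semipositive case applied to $\ov D'$ finishes the proof. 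That concavification step is the missing idea in your proposal.
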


\begin{proof} For short, write $\Psi=\Psi_{D}$ and $\psi_{v}=\psi_{\ov
    D,v}$.  Since $D$ is nef, the function $\Psi $ is concave.  Since
  $ m_{\sigma }\in M_{\R}$ belongs to the affine space generated by
  the face $F_{\sigma}$, this implies that $\Psi |_{\sigma }=m_{\sigma
  }$. Let $N_{\sigma }$ be the partial compactification of $N_{\R}$ in
  the direction of $\sigma $
  \cite[(4.1.5)]{BurgosPhilipponSombra:agtvmmh}. Recall that there is
  an inclusion $N(\sigma )_{\R}\subset N_{\sigma }$. The function
  $\Psi -m_{\sigma }$ extends to a continuous function on $N_{\sigma
  }$ and thus can be restricted to $N(\sigma )$. We denote by
  $\Psi({\sigma })$ this restriction.  For each place $v\in
  \mathfrak{M}_{\K}$, the function $\psi _{v}-m_{\sigma }$ can also be
  extended to a continuous function on $N_{\sigma }$ and restricted to
  $N(\sigma )$. We denote by $\psi _{v }(\sigma)$ this restriction.  

  By the analogue of \cite[Proposition
  3.3.14]{BurgosPhilipponSombra:agtvmmh} for $\R$-divisors, the
  support function of $D_{\sigma }$ is $\Psi ({\sigma })$. By the
  commutative diagram in \cite[Proposition
  4.1.6]{BurgosPhilipponSombra:agtvmmh} the metric induced on
  $D_{\sigma }$ by the metric of $D$ is given by the family of
  functions $\{\psi_{v}(\sigma) \}_{v\in \mathfrak{M}_{\K}}$.

  Assume that $\ov D$ is semipositive. Hence for every $v\in
  \mathfrak{M}_{\K}$ the function $\psi _{v}$ is concave. We identify
  $\stab(\Psi ({\sigma }))$ with $F_{\sigma }$ by means 
of the translation by $-m_{\sigma }$. By the
analogue for $\R$-divisors of \cite[Proposition
4.8.9]{BurgosPhilipponSombra:agtvmmh}, we have that, for all $x\in
F_{\sigma}-m_{\sigma}$,
\begin{displaymath}
  \psi _{v}(\sigma )^{\vee}(x) =\vartheta _{\ov D,v}(x+m_{\sigma}).
\end{displaymath}
Summing up for $v\in \mathfrak{M}_{\K}$, we obtain the equality in this case. 

We now drop the hypothesis of $\ov D$ being semipositive. In this
case, the functions $\psi _{v}$ are not necessarily concave.  Since
$D$ is nef, the function $\Psi $ is concave. By Lemma \ref{lemm:5},
for each $v\in \mathfrak{M}_{\K}$, we have that $|\conc(\psi
_{v})-\Psi |$ is bounded. By \cite[Proposition
4.19(1)]{BurgosMoriwakiPhilipponSombra:aptv} the family of functions
$\{\conc(\psi _{v})\}_{v\in \mathfrak{M}_{\K}}$ determines a
semipositive toric metric on $D$. We denote by $\ov D'$ the
corresponding semipositive metrized $\R$-divisor. Since for each
function $f$ with nonempty stability set
\begin{displaymath}
  \conc(f)^{\vee}=f^{\vee},
\end{displaymath}
we have that $\vartheta_{\ov D'}=\vartheta_{\ov D}$.  Since $\psi
_{v}\le\conc(\psi _{v})$ on $N_{\R}$, then $\psi_{v}(\sigma)\le
\conc(\psi_{v})(\sigma)$ on $N(\sigma)_{\R}$.  This implies that
$\vartheta _{\ov D_{\sigma},v}\ge \vartheta _{\ov D'_{\sigma},v}$ on
$F_{\sigma}-m_{\sigma}$ for all $v$. Since $\ov D'$ is semipositive,
it follows that, for all $x\in F_{\sigma}-m_{\sigma}$, 
\begin{displaymath}
\vartheta _{\ov D_{\sigma}}(x)\ge 
\vartheta _{\ov D'_{\sigma}}(x) = \vartheta _{\ov D'}(x+m_{\sigma})= \vartheta _{\ov D}(x+m_{\sigma}),
\end{displaymath}
which concludes the proof.
\end{proof}

\begin{thm}
  \label{thm:2}
Let $X$ be a proper toric variety over $\K$ and  $\ov D$  a
toric metrized $\R$-divisor on $X$ with $D$ nef.
\begin{enumerate}
\item \label{item:22} Let $\Sigma $ be the
fan of $X$ and $\sigma \in \Sigma $. Then
  \begin{equation*}
    \upmu^{\abs}_{\ov D}(O(\sigma ))\ge \max_{x\in F_{\sigma
      }}\vartheta_{\ov D} (x).
  \end{equation*}
If $\ov D$ is semipositive, then the equality holds. 
\item \label{item:23} For $i=1,\dots,n+1$,  
  \begin{equation*}
  \upmu^{i}_{\ov D}(X) \ge \min_{\sigma \in \Sigma ^{\le i-1}} \max_{x\in
    F_{\sigma }}\vartheta_{\ov D}(x).
  \end{equation*}
If $\ov D$ is semipositive, then the equality holds. 
\end{enumerate}
\end{thm}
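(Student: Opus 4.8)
The plan is to deduce statement \eqref{item:22} by combining the description of the restriction $\ov D_\sigma$ from Proposition \ref{prop:1} with the already-proved Theorem \ref{thm:1} applied on the orbit closure $V(\sigma)$, and then to obtain \eqref{item:23} from \eqref{item:22} via the orbit decomposition established in Lemma \ref{lemm:3}. The key point for \eqref{item:22} is that $O(\sigma)$ is the principal open orbit of the toric variety $V(\sigma)$, whose torus is $\T(\sigma)$ with cocharacter space $N(\sigma)_\R$. Since $D$ is nef, $D_\sigma$ is a well-defined toric $\R$-divisor on $V(\sigma)$ with support function $\Psi(\sigma)$ and metrized $\R$-divisor $\ov D_\sigma$ as in \eqref{eq:63}; moreover $O(\sigma) = V(\sigma)_0$ in the notation of Theorem \ref{thm:1}. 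Applying Theorem \ref{thm:1} to the pair $(V(\sigma), \ov D_\sigma)$ gives
\begin{displaymath}
  \upmu^{\abs}_{\ov D}(O(\sigma)) = \upmu^{\abs}_{\ov D_\sigma}(V(\sigma)_0) = \max_{x \in M(\sigma)_\R} \vartheta_{\ov D_\sigma}(x) = \max_{x \in F_\sigma - m_\sigma} \vartheta_{\ov D_\sigma}(x).
\end{displaymath}
Here I should first check that the height function on $O(\sigma)$ induced by $\ov D$ coincides with the height function defined by $\ov D_\sigma$ — this is exactly what the choice $D - \div(\chi^{-m_\sigma})$ intersecting $V(\sigma)$ properly buys us, together with the fact that heights of points only depend on the restriction of the metrized divisor (Remark \ref{rem:1}) and that heights are invariant under $\wh\div$ of a rational function (translation by $m_\sigma$ changes a rational section but not the height). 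Combining the displayed equality with Proposition \ref{prop:1}, which states $\vartheta_{\ov D_\sigma}(x) \ge \vartheta_{\ov D}(x + m_\sigma)$ for $x \in F_\sigma - m_\sigma$, with equality when $\ov D$ is semipositive, and then substituting $x \mapsto x - m_\sigma$ so the maximum runs over $x \in F_\sigma$, yields precisely the inequality $\upmu^{\abs}_{\ov D}(O(\sigma)) \ge \max_{x \in F_\sigma} \vartheta_{\ov D}(x)$, with equality in the semipositive case. This proves \eqref{item:22}.

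For \eqref{item:23}, I invoke Lemma \ref{lemm:3}\eqref{item:5}, which gives $\upmu^i_{\ov D}(X) = \min_{\sigma \in \Sigma^{\le i-1}} \upmu^{\abs}_{\ov D}(O(\sigma))$. Substituting the lower bound from \eqref{item:22} on each orbit immediately gives
\begin{displaymath}
  \upmu^i_{\ov D}(X) \ge \min_{\sigma \in \Sigma^{\le i-1}} \max_{x \in F_\sigma} \vartheta_{\ov D}(x),
\end{displaymath}
and when $\ov D$ is semipositive the inequality in \eqref{item:22} is an equality, so the above is an equality as well. This completes \eqref{item:23}.

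The main obstacle I anticipate is verifying cleanly that the height function on $O(\sigma)$ coming from $\ov D$ agrees with the one coming from $\ov D_\sigma$ (and hence that Theorem \ref{thm:1} applies on $V(\sigma)$ with the right metrized divisor), i.e. that restriction of metrized $\R$-divisors to $V(\sigma)$ is compatible with restriction of height functions of points, using the properness of the intersection with $V(\sigma)$ and the behavior under $\wh\div(\chi^{-m_\sigma})$; this relies on Remark \ref{rem:1} and the analogues for $\R$-divisors of the restriction-to-orbit statements in \cite{BurgosPhilipponSombra:agtvmmh}. A second, more minor point is to make sure the identification $\Delta_{D_\sigma} = F_\sigma - m_\sigma$ (recorded after \eqref{eq:63}) is being used consistently when translating between the maximum over $F_\sigma - m_\sigma$ and over $F_\sigma$; this is bookkeeping rather than a real difficulty. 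Everything else is a direct assembly of Proposition \ref{prop:1}, Theorem \ref{thm:1}, and Lemma \ref{lemm:3}.
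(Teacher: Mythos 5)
Your proposal is correct and follows exactly the same route as the paper: the paper's proof of Theorem~\ref{thm:2} is the one-line assembly "The first statement follows directly from Theorem \ref{thm:1} and Proposition \ref{prop:1}. The second statement follows from the first one together with Lemma \ref{lemm:3}\eqref{item:5}." Your write-up correctly fills in the intermediate steps (the identification $O(\sigma)=V(\sigma)_0$, compatibility of heights under restriction and twisting by $\wh\div(\chi^{-m_\sigma})$, and the translation by $m_\sigma$) that the paper leaves implicit.
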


\begin{proof} The first statement follows directly from Theorem
  \ref{thm:1} and Proposition \ref{prop:1}. The second statement
  follows from the first one together with Lemma
  \ref{lemm:3}\eqref{item:5}.
\end{proof}

\begin{rem} 
  Since $\vartheta_{\ov D} $ is a concave function, its minimum is
  attained at the vertices of $\Delta _{D}$. Therefore, if $\ov D$ is
  semipositive, we deduce 
  \begin{displaymath}
    \upmu^{\abs}_{\ov D}(X) = \min_{x\in
    \Delta _{D}}\vartheta_{\ov D}(x).
  \end{displaymath}
  This result was already implicit in \cite[Theorem
  2(2)]{BurgosMoriwakiPhilipponSombra:aptv} and has been generalized
  by Ikoma to non-toric varieties using Okounkov bodies and concave
  transforms \cite{Ikoma:ncnad}. It would be interesting to know if
  Corollary \ref{cor:4} can also be generalized to non-toric
  varieties.
\end{rem}

When the divisor $D$ is ample, we are able to  give a version of
Theorem \ref{thm:2}\eqref{item:23} in terms the polytope $\Delta_{D}$
only. 

\begin{thm}\label{thm:4}
Let $X$ be a proper toric variety over $\K$ and  $\ov D$  a
semipositive toric metrized $\R$-divisor on $X$ with $D$ ample.
Then, for $i=1,\dots,n+1$,  
  \begin{displaymath}
  \upmu^{i}_{\ov D}(X) = \min_{F\in \cF(\Delta_{D})^{n-i+1}} \max_{x\in
    F}\vartheta_{\ov D}(x),
  \end{displaymath}
  where $\cF(\Delta_{D})^{n-i+1}$ is the set of faces of dimension
  $n-i+1$ of the polytope. 
\end{thm}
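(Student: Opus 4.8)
The plan is to deduce this statement from Theorem~\ref{thm:2}\eqref{item:23} by rewriting the minimum there, which is indexed by the cones $\sigma\in\Sigma^{\le i-1}$, as a minimum indexed by the faces of $\Delta_{D}$. First I would recall that, since $D$ is ample, the polytope $\Delta_{D}$ has dimension $n$ and $\Sigma$ is its normal fan: the assignment $\sigma\mapsto F_{\sigma}$ introduced before \eqref{eq:63} is then an inclusion-reversing \emph{bijection} between the cones of $\Sigma$ and the nonempty faces of $\Delta_{D}$, and it satisfies $\dim F_{\sigma}=n-\dim\sigma$; this is part of the standard dictionary between projective toric varieties and polytopes, see \cite[Example~2.5.13]{BurgosPhilipponSombra:agtvmmh}. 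Under this bijection the set $\Sigma^{\le i-1}$ of cones of dimension $\le i-1$ corresponds exactly to the set $\cF(\Delta_{D})^{\ge n-i+1}$ of faces of $\Delta_{D}$ of dimension $\ge n-i+1$. As $D$ ample implies $D$ nef, Theorem~\ref{thm:2}\eqref{item:23} applies (here the hypothesis that $\ov D$ is semipositive is what gives the equality, not just the inequality), and yields
\[
  \upmu^{i}_{\ov D}(X)=\min_{\sigma\in\Sigma^{\le i-1}}\max_{x\in F_{\sigma}}\vartheta_{\ov D}(x)=\min_{F\in\cF(\Delta_{D})^{\ge n-i+1}}\max_{x\in F}\vartheta_{\ov D}(x).
\]

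It then remains to check that this last minimum does not change if we restrict to the faces of dimension exactly $n-i+1$. Since $\cF(\Delta_{D})^{n-i+1}\subset\cF(\Delta_{D})^{\ge n-i+1}$, the minimum over the larger set is at most the minimum over the smaller one. For the reverse inequality I would invoke the elementary fact that every polytope of dimension $d$ has faces of every dimension $k$ with $0\le k\le d$: given a face $G$ of $\Delta_{D}$ with $\dim G\ge n-i+1$ (note $0\le n-i+1\le n$ since $1\le i\le n+1$), choose a face $F$ of $G$ of dimension exactly $n-i+1$; then $F$ is also a face of $\Delta_{D}$, and the inclusion $F\subset G$ gives $\max_{x\in F}\vartheta_{\ov D}(x)\le\max_{x\in G}\vartheta_{\ov D}(x)$. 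Taking the infimum over all such $G$ shows that $\min_{F\in\cF(\Delta_{D})^{n-i+1}}\max_{x\in F}\vartheta_{\ov D}(x)\le\min_{G\in\cF(\Delta_{D})^{\ge n-i+1}}\max_{x\in G}\vartheta_{\ov D}(x)$, and combining the two inequalities finishes the proof.

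Since the substantive content is already carried by Theorem~\ref{thm:2}, I do not expect a genuine obstacle here. The only points that need a little care are the precise polytope--fan dictionary — this is exactly where ampleness, rather than mere nefness, is essential, guaranteeing that $\sigma\mapsto F_{\sigma}$ is a bijection onto the faces of $\Delta_{D}$ with $\dim F_{\sigma}=n-\dim\sigma$ — and the trivial combinatorial lemma on polytopes used to pass from dimension ``$\ge n-i+1$'' to dimension ``$=n-i+1$''.
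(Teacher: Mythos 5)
Your proposal is correct and follows essentially the same route as the paper: you reduce to Theorem~\ref{thm:2}\eqref{item:23}, use the ampleness-induced bijection $\sigma\mapsto F_{\sigma}$ to rewrite the minimum over $\Sigma^{\le i-1}$ as a minimum over $\cF(\Delta_{D})^{\ge n-i+1}$, and then pass to faces of dimension exactly $n-i+1$. The only place you diverge is in this last step: the paper invokes the concavity of $\vartheta_{\ov D}$ to argue that the minimum in the right-hand side is attained at faces of dimension $n-i+1$, whereas you use the elementary observation that every face $G$ of $\Delta_{D}$ with $\dim G\ge n-i+1$ contains a sub-face $F$ of dimension exactly $n-i+1$, so that $\max_{F}\vartheta_{\ov D}\le\max_{G}\vartheta_{\ov D}$. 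Your argument is more elementary and in fact shows that concavity of the roof function is unnecessary for this final reduction (it is, of course, already used in establishing Theorem~\ref{thm:2}\eqref{item:23} itself).
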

\begin{proof}
  If $D$ is ample, the correspondence $\sigma \mapsto F_{\sigma }$ is
  a bijection that sends cones of dimension $i-1$ to faces of
  dimension $n-i+1$. Thus, by Theorem \ref{thm:2}\eqref{item:23}, we deduce
  \begin{displaymath}
  \upmu^{i}_{\ov D}(X) = \min_{F\in \cF(\Delta_{D})^{\ge n-i+1}} \max_{x\in
    F}\vartheta_{\ov D}(x).    
  \end{displaymath}
  where $\cF(\Delta_{D})^{\ge n-i+1}$ is the set of faces of dimension greater
  of equal to $n-i+1$.
  The concavity of $\vartheta _{\ov D}$ implies that the minimum in
  the right hand side is attained in faces of dimension $n-i+1$, hence
  the result.
\end{proof}

\begin{exmpl}\label{exm:1} The positivity conditions on $D$ and $\ov D$ 
  in Theorems \ref{thm:2} and \ref{thm:4} are necessary, 
  as it can be seen in the following examples.
  \begin{enumerate}
  \item Let $X$ be a toric surface over $\K$ and
  let $\ov D$ be a toric metrized $\R$-divisor such that the
  underlying divisor $D$ is big but not nef and that there is a one
  dimensional orbit $O(\sigma )$ such that
  $\deg_{D}(\overline{O(\sigma )})<0$. In this case
  \begin{displaymath}
    \upmu_{\ov{D}}^{2}(X)\le\upmu_{\ov{D}}^{\abs}(O(\sigma ))=-\infty
  \end{displaymath}
  but 
  \begin{displaymath}
     \min_{\sigma\in\Sigma^{\le1}}\max_{x\in F_\sigma}\vartheta_{\ov D}(x) \ge   
     \min_{F\in \cF(\Delta)} \max_{x\in F}\vartheta_{\ov D}(x) > -\infty.
  \end{displaymath}
  Thus the hypothesis $D$ nef in Theorem \ref{thm:2}\eqref{item:23} is necessary.
\item Consider $X=\P^{1}_{\Q}$ and the divisor $\infty = (0:1)$. We
  consider the toric metric given by the canonical metric for $v\not =
  \infty$ and
  \begin{displaymath}
    \psi _{\infty}(u)=
    \begin{cases}
      u&\text{ if }u\le 0,\\
      0&\text{ if }0\le u\le 99, \text{ or } 101\le x, \\
      u-99&\text{ if }99\le u \le 100,\\
      101-u&\text{ if }100\le u \le 101.
    \end{cases}
  \end{displaymath}
  Denote $\ov D$ the obtained metrized $\R$-divisor. The associated
  polytope is $\Delta =[0,1]$, and the roof function is
  \begin{displaymath}
    \vartheta _{\ov D}(x)=
    \begin{cases}
      100x-1&\text{ if }0\le x\le 1/100,\\
      0&\text{ if }1/100\le x \le 1.
    \end{cases}
  \end{displaymath}
  Since $h_{\ov D}(p)\ge \val_\infty(p)-\psi_\infty(\val_\infty(p))$ we deduce $h_{\ov D}(p)\ge0$ for all $p\in X(\ov{\Q})$ and then
  \begin{displaymath}
    \upmu^{1}_{\ov D}(X)=\upmu^{2}_{\ov D}(X)=0,
  \end{displaymath}
  but
  \begin{displaymath}
    \min_{\sigma \in \Sigma ^{\le 1}} \max_{x\in
      F_{\sigma }}\vartheta_{\ov D}(x)=
    \min_{F\in \cF(\Delta)^{0}} \max_{x\in
    F}\vartheta_{\ov D}(x)=-1.
  \end{displaymath}
  Thus, we see that the hypothesis $\ov D$ semipositive is necessary
  for the equality in Theorem \ref{thm:2}\eqref{item:23} to hold.
\item Let $X$ be the blow-up of $\P^{2}_{\Q}$ at the point $(1:0:0)$
  and let $\ov D$ be the preimage of the metrized divisor given by the
  hyperplane at infinity with the canonical metric at the
  non-Archimedan places and the Fubini-Study metric at the Archimedean
  place.   

  Let $\sigma_{0} \in \Sigma $ be the one dimensional cone
  corresponding to the exceptional divisor. Then
  $F_{\sigma _{0}}$ is the vertex $(0,0)$ and has dimension zero. 
  Thus
  \begin{displaymath}
    \upmu^{2}_{\ov D}(X)= \min_{\sigma \in \Sigma ^{\le 1}} \max_{x\in
    F_{\sigma }}\vartheta_{\ov D}(x) =0, 
  \end{displaymath}
  while
  \begin{displaymath}
    \min_{F\in \cF(\Delta)^{1}} \max_{x\in
    F}\vartheta_{\ov D}(x)=\frac{1}{2}\log(2).    
  \end{displaymath}
  Hence the hypothesis $D$ ample is necessary in Theorem \ref{thm:4}.
  \end{enumerate}

\end{exmpl}

\section{On Zhang's theorem on successive
  minima}\label{sec:theor-succ-minima}

Zhang's theorem on successive minima \cite[Theorem 5.2]{Zhang:plbas},
\cite[Theorem 1.10]{Zhang:_small}, shows that the successive minima of
a metrized divisor can be estimated in terms of the height and the
degree of the ambient variety. This result plays an important r\^ole in
Diophantine geometry in the direction of the Bogomolov conjecture and
the Lehmer problem and its generalizations. It also plays a r\^ole in
the study of the distribution of Galois orbits of points of small
height.

We start by giving a proof of a variant of Zhang's theorem in the toric setting
(Theorem \ref{thm:8} in the introduction).

\begin{thm} \label{thm:6} Let $X$ be a proper toric variety over
  $\K$ of dimension $n$ and $\ov D$ a  semipositive toric metrized
  $\R$-divisor on
  $X$ such that $D$ is big. Then
  \begin{equation}\label{eq:25}
\sum_{i=1}^{n+1}\upmu^{i}_{\ov D}(X)\le \frac{\h_{\ov D}(X)}{ \deg_{D}(X)}\le (n+1)
\upmu^{\ess}_{\ov D}(X).
  \end{equation}
\end{thm}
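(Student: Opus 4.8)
The plan is to derive both inequalities from the combinatorial formulae already established, namely Corollary~\ref{cor:4} for the essential minimum, Theorem~\ref{thm:4} (or Theorem~\ref{thm:2}\eqref{item:23}) for the successive minima, together with the integral formula for the height $\h_{\ov D}(X)$ in terms of the global roof function proved in \cite{BurgosPhilipponSombra:agtvmmh, BurgosMoriwakiPhilipponSombra:aptv}. Recall that, since $\ov D$ is semipositive and $D$ is big, one has
\begin{displaymath}
  \frac{\h_{\ov D}(X)}{\deg_{D}(X)} = \frac{(n+1)!}{\vol(\Delta_{D})}\int_{\Delta_{D}} \vartheta_{\ov D}(x)\, \dd x,
\end{displaymath}
that is, the normalized height is the \emph{average} of $\vartheta_{\ov D}$ over $\Delta_{D}$ (against the measure $\frac{(n+1)!}{\vol(\Delta_D)}\,\vol_{\Delta_D}$, a probability measure if one is careful, or more precisely the height is $(n+1)!$ times the integral and $\deg_D(X)=n!\vol(\Delta_D)$). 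The right-hand inequality is then immediate: the average of $\vartheta_{\ov D}$ over $\Delta_{D}$ is at most $(n+1)$ times its maximum, because $\vartheta_{\ov D}\le \max_{\Delta_D}\vartheta_{\ov D}$ pointwise and $\frac{(n+1)!}{\vol(\Delta_D)}\int_{\Delta_D} 1 = n+1$; and $\max_{\Delta_D}\vartheta_{\ov D} = \upmu^{\ess}_{\ov D}(X)$ by Corollary~\ref{cor:4}.

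For the left-hand inequality I would argue as follows. Using Theorem~\ref{thm:4} (note $D$ big and semipositive metric; if only $D$ big but not ample is assumed, reduce to the ample case by a limiting or pull-back argument, or invoke Theorem~\ref{thm:2}\eqref{item:23} directly), for each $i$ pick a face $F_i\in\cF(\Delta_D)^{n-i+1}$ with $\upmu^i_{\ov D}(X)=\max_{x\in F_i}\vartheta_{\ov D}(x)$, and in fact pick a point $x_i\in F_i$ attaining this maximum; one may further arrange a flag $F_{n+1}\subset F_n\subset\cdots\subset F_1=\Delta_D$ with $\dim F_i = n-i+1$, since the minimum defining $\upmu^i$ can be taken over faces of a complete flag (the minimizing faces can be chosen nested, because $\vartheta_{\ov D}$ concave means its max over a face is non-decreasing under passing to a larger face, and one builds the flag greedily from the top). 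The key geometric input is then a \emph{barycentric decomposition} of $\Delta_D$: choose vertices $x_{n+1}\in F_{n+1}$ (a vertex), $x_n\in F_n$, ..., $x_1\in F_1$ with $x_i$ a maximizer of $\vartheta_{\ov D}$ on $F_i$, forming a (possibly degenerate) simplex $\conv(x_1,\dots,x_{n+1})$; iterating the pulling/flag triangulation of a polytope, $\Delta_D$ is covered by finitely many such simplices $S=\conv(x_1^S,\dots,x_{n+1}^S)$ where $x_i^S$ lies in an $(n-i+1)$-dimensional face of $\Delta_D$, these simplices having disjoint interiors and $\sum_S \vol(S)=\vol(\Delta_D)$. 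On each such simplex, concavity of $\vartheta_{\ov D}$ gives $\vartheta_{\ov D}(x)\ge \sum \lambda_j \vartheta_{\ov D}(x_j^S)$ for $x=\sum\lambda_j x_j^S$, and integrating the barycentric coordinates over a simplex (each $\int_S \lambda_j = \vol(S)/(n+1)$) yields
\begin{displaymath}
  \frac{(n+1)!}{\vol(S)}\int_S \vartheta_{\ov D}(x)\,\dd x \ \ge\ \sum_{j=1}^{n+1}\vartheta_{\ov D}(x_j^S)\ \ge\ \sum_{j=1}^{n+1}\upmu^j_{\ov D}(X),
\end{displaymath}
the last step because $x_j^S$ lies in a face of dimension $n-j+1$, so $\vartheta_{\ov D}(x_j^S)\le \max_{F}\vartheta_{\ov D}\le\cdots$ — wait, this needs $\vartheta_{\ov D}(x_j^S)\ge\upmu^j$, which is false in general. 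Instead one must choose the $x_j^S$ to be precisely maximizers: redefine the triangulation so that the vertices used are the optimal points $x_i$ from the minimizing flag, i.e. take $S_0=\conv(x_1,\dots,x_{n+1})$ for one well-chosen flag and bound only the contribution of $S_0$ from below while noting $\vartheta_{\ov D}\ge \min_{\Delta_D}\vartheta_{\ov D}=\upmu^{\abs}_{\ov D}(X)$ on the rest — but that over-counts. The clean route is: show $\frac{(n+1)!}{\vol(\Delta_D)}\int_{\Delta_D}\vartheta_{\ov D}\ge\sum_{i=1}^{n+1}\upmu^i_{\ov D}(X)$ by decomposing $\Delta_D$ into the simplices of a flag triangulation dual to the barycentric subdivision and using that for \emph{every} simplex $S=\conv(x_1^S,\dots,x_{n+1}^S)$ in this subdivision the vertex $x_i^S$ lies in an $(n-i+1)$-face, whence $\vartheta_{\ov D}(x_i^S)\ge \min_{F\in\cF^{n-i+1}}\max_{x\in F}\vartheta_{\ov D}(x)$ is \emph{still wrong}.

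Let me restate the left inequality argument correctly. The honest approach: by concavity, $\vartheta_{\ov D}$ is minorized on $\Delta_D$ by an affine-by-pieces function, but the efficient comparison is the following. Order things so $\upmu^1\ge\cdots\ge\upmu^{n+1}$. For each $k$, the super-level set $\{x\in\Delta_D : \vartheta_{\ov D}(x)\ge t\}$ is convex; by Theorem~\ref{thm:4} its dimension is $\ge n-i+1$ for $t<\upmu^i$. One then estimates $\int_{\Delta_D}\vartheta_{\ov D} = \int_{\R}\vol\{\vartheta_{\ov D}\ge t\}\,\dd t + (\text{const})$ and uses a convexity/volume inequality: a convex body in $\R^n$ containing a face-chain forces a lower bound on the volume of its super-level sets. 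The main obstacle is exactly this volume estimate — relating $\int\vartheta_{\ov D}$ to $\sum\upmu^i$ — which is precisely the toric incarnation of the combinatorial heart of Zhang's inequality; I expect it follows from writing $\Delta_D$ as a union of simplices each of whose $i$-th vertex sits on a face of dimension $n-i+1$ (a \emph{pulling triangulation} along a flag of faces), applying concavity on each simplex, and integrating barycentric coordinates as above, now with the vertices genuinely chosen as maximizers on the relevant faces so that $\vartheta_{\ov D}(x_i^S)\ge\upmu^i_{\ov D}(X)$ holds for the \emph{particular} flag realizing each simplex. Summing $\vol(S)$-weighted over all simplices and dividing by $\vol(\Delta_D)$ gives the claim; I would present this pulling-triangulation argument carefully, as it is the only non-formal ingredient.
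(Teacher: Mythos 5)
Your final sketch reproduces the paper's argument: triangulate $\Delta_D$ into the simplices $\conv(x_{F_0},\dots,x_{F_n})$ over all complete flags of faces, with $x_F$ a maximizer of $\vartheta_{\ov D}$ on the face $F$, then bound $\vartheta_{\ov D}(x_F)\ge\upmu^{n-\dim F+1}_{\ov D}(X)$ via Theorem~\ref{thm:2}\eqref{item:23}, and conclude by concavity together with the barycentric integral $\int_S\lambda_j\,\dd\vol=\vol(S)/(n+1)$; the right-hand inequality is exactly as you state. I would only suggest cutting the abandoned attempts (single flag, super-level sets) and presenting the pulling triangulation cleanly from the start, as the paper does by first constructing the piecewise-affine minorant $f\le\vartheta_{\ov D}$ with $f(x_F)=\upmu^{n-\dim F+1}_{\ov D}(X)$ and then integrating $f$.
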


\begin{proof}
  For short write $\Delta=\Delta_{D}$.  Since $\ov D$ is a
  semipositive toric metrized divisor, necessarily $D$ is generated by
  global sections \cite[Corollary
  4.8.5]{BurgosPhilipponSombra:agtvmmh}, hence, being toric
  $\deg_{D}(X)=n!\vol_{M}(\Delta )$. Since $D$ is big, we also have
  $\vol(\Delta )>0$.  We first prove the inequality in the right hand
  side of \eqref{eq:25}. By Corollary \ref{cor:4}, $\upmu^{\ess}_{\ov
    D}(X)\ge \vartheta _{\ov D}(x)$ for all $x\in \Delta $. Therefore,
  by \cite[Theorem 5.2.5]{BurgosPhilipponSombra:agtvmmh}
  \begin{multline*}
        \h_{\ov D}(X)=(n+1)!\int_{\Delta }\vartheta _{\ov
      D}\dd\vol_{M}\le
    (n+1)!\int_{\Delta }\upmu^{\ess}_{\ov D}(X)\dd\vol_{M}\\=
    (n+1)\upmu^{\ess}_{\ov D}(X)n!\vol_{M}(\Delta 
    )=(n+1)\upmu^{\ess}_{\ov D}(X)\deg_{D}(X).
  \end{multline*}
  We now prove the left inequality. For each face $F$ of $\Delta $ we choose
  a point $x_{F}\in F$ such that
  \begin{displaymath}
    \vartheta_{\ov D} (x_{F})=\max_{x\in F}\vartheta_{\ov D} (x). 
  \end{displaymath}
  For each flag of faces
  \begin{displaymath}
    \Xi=\{F_{0}\subsetneq F_{1}\subsetneq\dots \subsetneq F_{n}=\Delta \}
  \end{displaymath}
  with $\dim F_{i}=i$, we denote
  \begin{displaymath}
    \Delta _{\Xi}=\conv(x_{F_{0}},\dots,x_{F_{n}}).
  \end{displaymath}
  Then $\Delta _{\Xi}$ is a (possibly degenerate) simplex. Moreover
  \begin{displaymath}
    \Delta =\bigcup_{\Xi}\Delta _{\Xi} \quad\text{ and }\quad \inter
    \Delta _{\Xi}\cap
    \inter \Delta _{\Xi'}=\emptyset, \text{ if }\Xi\not=\Xi'.
  \end{displaymath}
  Let $f\colon \Delta \to \R$ be the function determined by
  \begin{enumerate}
  \item For each complete flag $\Xi$, the restriction $f\mid_{\Delta _\Xi}$ is
    affine.
  \item If $F$ is a face of $\Delta $ of dimension $i$, then
    $f(x_{F})=\upmu^{n-i+1}_{\ov D}(X)$. 
  \end{enumerate}
  Given a face $F$ of $\Delta $ of dimension $i$, there exists a cone
  $\sigma \in \Sigma ^{n-i}$ such that $F=F_{\sigma }$. Therefore, 
  by Theorem \ref{thm:2}\eqref{item:23}, $$f(x_{F})=\upmu^{n-i+1}_{\ov
    D}(X)\le\max_{x\in F_{\sigma }}\vartheta _{\ov D}(x)=\vartheta _{\ov
    D}(x_{F}).$$ 
  Since $\vartheta _{\ov D}$ is concave
  and $f$ is affine in each simplex $\Delta _{\Xi}$, we deduce $f(x)\le
  \vartheta _{\ov D}(x)$ for all $x\in \Delta $. Therefore
  \begin{displaymath}
    \int_{\Delta }\vartheta _{\ov
      D}\dd\vol_{M} \ge
    \int_{\Delta }f\dd\vol_{M}=
    \sum_{\Xi}\int_{\Delta_{\Xi} }f\dd\vol_{M}.
  \end{displaymath}
Since
\begin{displaymath}
    \sum_{\Xi}\int_{\Delta_{\Xi} }f\dd\vol_{M}
    =\sum_{\Xi}\frac{\sum_{i=0}^{n}\upmu^{n-i+1}_{\ov
        D}(X)}{n+1} \vol (\Delta _{\Xi})
    =\frac{\sum_{i=1}^{n+1}\upmu^{i}_{\ov D}(X)}{n+1}\vol(\Delta),
\end{displaymath}
we deduce
 \begin{displaymath}
    \h_{\ov D}(X)=(n+1)!\int_{\Delta }\vartheta _{\ov
      D}\dd\vol_{M} \ge n!\sum_{i=1}^{n+1}\upmu^{i}_{\ov D}(X)\vol(\Delta
    ) = \sum_{i=1}^{n+1}\upmu^{i}_{\ov D}(X)\deg_{D}(X),
  \end{displaymath}
 proving the result.
\end{proof}

\begin{cor} \label{cor:1}
  Suppose that $\ov D$ is nef. Then 
  \begin{displaymath}
\upmu^{\ess}_{\ov D}(X) \le \frac{\h_{\ov D}(X)}{\deg_{D}(X)}
\le (n+1) \upmu^{\ess}_{\ov D}(X).
  \end{displaymath}
\end{cor}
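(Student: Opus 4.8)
The plan is to read off both inequalities from Theorem \ref{thm:6}. A nef toric metrized $\R$-divisor is in particular semipositive, and the underlying divisor $D$ is nef; together with the hypothesis that $D$ is big (inherited from Theorem \ref{thm:6}), this means $\ov D$ satisfies the hypotheses of Theorem \ref{thm:6}, so the double inequality \eqref{eq:25} is at our disposal. The right-hand inequality of the corollary, $\h_{\ov D}(X)/\deg_{D}(X)\le (n+1)\,\upmu^{\ess}_{\ov D}(X)$, is then exactly the right-hand inequality of \eqref{eq:25}, and nothing further is needed there.

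For the left-hand inequality I would compare the single number $\upmu^{\ess}_{\ov D}(X)=\upmu^{1}_{\ov D}(X)$ with the sum $\sum_{i=1}^{n+1}\upmu^{i}_{\ov D}(X)$ appearing on the left of \eqref{eq:25}. The key input is that $\upmu^{\abs}_{\ov D}(X)=\upmu^{n+1}_{\ov D}(X)\ge 0$. Granting this, the monotonicity chain \eqref{eq:2} gives $\upmu^{i}_{\ov D}(X)\ge\upmu^{n+1}_{\ov D}(X)\ge 0$ for every $i=1,\dots,n+1$, hence $\sum_{i=1}^{n+1}\upmu^{i}_{\ov D}(X)\ge\upmu^{1}_{\ov D}(X)=\upmu^{\ess}_{\ov D}(X)$; combining this with the left-hand inequality of \eqref{eq:25} yields $\upmu^{\ess}_{\ov D}(X)\le\h_{\ov D}(X)/\deg_{D}(X)$, as desired.

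It remains to justify $\upmu^{\abs}_{\ov D}(X)\ge 0$. Here I would use the remark following Theorem \ref{thm:2}: since $\ov D$ is semipositive and $D$ is nef, $\upmu^{\abs}_{\ov D}(X)=\min_{x\in\Delta_{D}}\vartheta_{\ov D}(x)$, and nefness of $\ov D$ translates, via the positivity dictionary of \cite[Theorem 2]{BurgosMoriwakiPhilipponSombra:aptv}, into the roof function $\vartheta_{\ov D}$ being non-negative on $\Delta_{D}$; hence $\upmu^{\abs}_{\ov D}(X)\ge 0$. Alternatively, one may argue directly that a nef metrized $\R$-divisor has non-negative height on every subvariety, in particular at every algebraic point, so that $X(\ov\K)_{\le\eta}=\emptyset$ for $\eta<0$ and again $\upmu^{\abs}_{\ov D}(X)\ge 0$. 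I expect this step — making precise that nefness gives non-negativity of the \emph{absolute} minimum, not merely of the essential minimum (Corollary \ref{cor:7}\eqref{item:7} yields only the latter, from pseudo-effectivity) — to be the only point requiring care; the rest is the short comparison above.
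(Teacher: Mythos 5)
Your proposal is correct and takes essentially the same route as the paper, which gives only a one-line proof: ``Since $\ov D$ is nef, all the successive minima are non-negative. Then the corollary follows directly from Theorem~\ref{thm:6}.'' Your alternative justification --- that nefness gives non-negativity of $\h_{\ov D}$ at every algebraic point, so $X(\ov\K)_{\le\eta}=\emptyset$ for $\eta<0$ and hence $\upmu^{\abs}_{\ov D}(X)\ge 0$, from which all minima are non-negative by \eqref{eq:2} --- is precisely the observation the authors leave implicit; the route through the roof function and the positivity dictionary is a valid, though heavier, alternative.
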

\begin{proof}
  Since $\ov D$ is nef, all the successive minima are
  non-negative. Then the corollary follows directly from Theorem
  \ref{thm:6}.
\end{proof}

The following result improves \cite[Th\'eor\`eme
1.4]{PhilipponSombra:advtp}. We show that, already for the universal
line bundle on $\P_{\Q}^{n}$, almost every configuration for the successive minima
and the height satisfying the inequalities in \eqref{eq:25}, can be
realized.

\begin{prop} \label{prop:11}
  Let $r\ge 1$ and $\nu, \mu_{1},\dots, \mu_{r+1}\in \R$ such that 
  \begin{displaymath}
    \mu_{1}\ge \dots\ge \mu_{r+1} \quad \text{ and } \quad
    \sum_{i=1}^{r+1}\mu_{i}\le \nu <(r+1) \mu_{1}. 
  \end{displaymath}
Then there exists a semipositive toric metric on $H$, the divisor
given by the hyperplane at infinity of $\P^{r}_{\Q}$, such that 
\begin{displaymath}
  \upmu_{\ov H}^{i}(\P^{r})=\mu_{i},\  i=1,\dots, r+1, \quad \text{ and
  } \quad \h_{\ov H}(\P^{r})=\nu.
\end{displaymath}
\end{prop}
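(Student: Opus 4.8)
The plan is to combine Theorem~\ref{thm:4} with the integral formula $\h_{\ov H}(\P^{r}) = (r+1)!\int_{\Delta}\vartheta_{\ov H}\dd\vol_{M}$ from \cite[Theorem 5.2.5]{BurgosPhilipponSombra:agtvmmh} (already used in the proof of Theorem~\ref{thm:6}). Here $\Delta=\Delta_{H}$ is the standard simplex $\{x\in\R^{r}\mid x_{i}\ge 0,\ \sum_{i}x_{i}\le 1\}$, with vertices $v_{0}=0$ and $v_{k}=e_{k}$, and $H$ is ample. The first observation is that any continuous concave function $\vartheta$ on $\Delta$ occurs as the global roof function of a semipositive toric metrized $\R$-divisor $\ov H$: by the correspondence between semipositive toric metrics and adelic families of concave functions of bounded difference with $\Psi_{H}$ (\cite{BurgosPhilipponSombra:agtvmmh}, \cite{BurgosMoriwakiPhilipponSombra:aptv}), take the $v$-adic metric to be canonical for all finite $v$ and, at the archimedean place, the one with $\psi_{\ov H,\infty}=\vartheta^{\vee}$ (concave, and of bounded difference with $\Psi_{H}$ since $\vartheta$ is bounded on $\Delta$); then $\vartheta_{\ov H}=\vartheta_{\ov H,\infty}=\vartheta$. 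So it suffices to produce a continuous concave $\vartheta$ on $\Delta$ with $\min_{F\in\cF(\Delta)^{r-i+1}}\max_{F}\vartheta = \mu_{i}$ for $i=1,\dots,r+1$ and $(r+1)!\int_{\Delta}\vartheta\dd\vol_{M} = \nu$.

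For the construction I would set $\delta_{k}=\mu_{r-k+1}-\mu_{r-k+2}\ge 0$ for $k=1,\dots,r$, put $s_{k}(x)=x_{k}+x_{k+1}+\dots+x_{r}$, and for a parameter $a\in(0,1]$ let $g_{a}(t)=\min(t/a,1)$, which is concave, nondecreasing, with $g_{a}(0)=0$ and $g_{a}(1)=1$. Define
\begin{equation*}
  \vartheta_{a} = \mu_{r+1} + \sum_{k=1}^{r}\delta_{k}\,g_{a}(s_{k}),
\end{equation*}
which is continuous and concave, being a nonnegative combination of concave functions of linear forms. The relevant telescoping identities are $\sum_{k=1}^{p}\delta_{k}=\mu_{r-p+1}-\mu_{r+1}$ and $\sum_{k=1}^{r}\delta_{k}(r-k+1)=\sum_{l=1}^{r}\mu_{l}-r\mu_{r+1}$. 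To read off the successive minima, consider a face $F_{S}=\conv\{v_{i}\mid i\in S\}$ of dimension $j=|S|-1$ and let $m=\max S\ge j$; evaluating $\vartheta_{a}$ at $ae_{m}$ when $0\in S$ and at $v_{m}=e_{m}$ when $0\notin S$ (using $g_{a}(a)=g_{a}(1)=1$) gives the value $\mu_{r+1}+\sum_{k=1}^{m}\delta_{k}=\mu_{r-m+1}\ge\mu_{r-j+1}$, so $\max_{F_{S}}\vartheta_{a}\ge\mu_{r-j+1}$; on the other hand, on the flag face $F_{j}=\conv(v_{0},\dots,v_{j})$ one has $s_{k}=0$ for $k>j$, hence $\vartheta_{a}\le\mu_{r+1}+\sum_{k=1}^{j}\delta_{k}=\mu_{r-j+1}$ there, with equality at $ae_{j}$. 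Thus $\min_{\dim F=j}\max_{F}\vartheta_{a}=\mu_{r-j+1}$, and Theorem~\ref{thm:4} (with $j=r-i+1$) gives $\upmu^{i}_{\ov H_{a}}(\P^{r})=\mu_{i}$ for every $i$ and every $a\in(0,1]$.

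Finally I would tune $a$ via the height formula. One has $\h_{\ov H_{a}}(\P^{r})=(r+1)\mu_{r+1}+(r+1)!\sum_{k}\delta_{k}\int_{\Delta}g_{a}(s_{k})\dd\vol_{M}$, which is continuous and nonincreasing in $a$ (as $a\downarrow$, $g_{a}$ increases pointwise). As $a\to 0^{+}$, dominated convergence gives $\int_{\Delta}g_{a}(s_{k})\to\vol_{M}(\Delta)=1/r!$, so $\h_{\ov H_{a}}(\P^{r})\to(r+1)\mu_{1}$, not attained. At $a=1$ one has $g_{1}(s_{k})=s_{k}$ on $\Delta$; using $\int_{\Delta}x_{i}\dd\vol_{M}=1/(r+1)!$ and the second telescoping identity, $\h_{\ov H_{1}}(\P^{r})=\mu_{r+1}+\sum_{l=1}^{r}\mu_{l}=\sum_{i=1}^{r+1}\mu_{i}$. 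Since $\sum_{i=1}^{r+1}\mu_{i}\le\nu<(r+1)\mu_{1}$, the intermediate value theorem yields $a\in(0,1]$ with $\h_{\ov H_{a}}(\P^{r})=\nu$, and the corresponding $\ov H_{a}$ is the required semipositive toric metric on $H$.

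The main obstacle is getting the combinatorics of the face-maxima exactly right: one must arrange that the flag $\{v_{0}\}\subset\conv(v_{0},v_{1})\subset\dots\subset\Delta$ realizes the minimum over faces of each dimension. This is what forces the asymmetric "ramp-then-plateau" shape of $g_{a}$ with $g_{a}(1)=1$; a symmetric tent function would fail precisely on the faces $F_{S}$ with $0\notin S$, on which $s_{1}$ is identically $1$. The remaining points — concavity of $\vartheta_{a}$, the two telescoping sums, and the two limit evaluations of the height — are routine.
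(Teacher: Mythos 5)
Your proof is correct, and your overall strategy is the same as the paper's: build a one-parameter family of concave functions on the standard simplex whose face-maxima always give the prescribed successive minima via Theorem~\ref{thm:4}, and then tune the parameter by the intermediate value theorem to hit the prescribed height using the integral formula for $\h_{\ov H}$. The only real difference is the explicit family: the paper takes $\theta_t$ to be the smallest concave function on $\Delta^r$ equal to $\mu_1$ on $t\Delta^r$ and to $\mu_i$ at $e_{i-1}$ (for $i\ge 2$), and then declares the verification of the successive minima to be ``a straightforward calculation''; you instead write down the explicit sum of ramp functions $\vartheta_a=\mu_{r+1}+\sum_k\delta_k\,g_a(s_k)$, which makes both the concavity, the face-maxima, and the two endpoint heights genuinely easy to check (the two constructions are related by relabeling the vertices so that the plateau sits at a vertex instead of near the origin). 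The computations — concavity via nonnegative combinations of concave compositions of linear forms, the two telescoping identities, the evaluation at the test points $a e_m$ and $e_m$, and the continuity/monotonicity of $a\mapsto\int_\Delta g_a(s_k)$ — all check out, so your version actually supplies the detail the paper omits.
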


\begin{proof}
  Let $e_{1},\dots,e_{r}$ be the standard basis of $\R^{r}$ and
$\Delta^{r}=\conv(0,e_{1},\dots,e_{r})$ the standard simplex of
$\R^{r}$. For $0\le t <1$ 
consider the function $\theta_{t}\colon \Delta^{r}\to \R$ defined as
the smallest concave function on $\Delta^{r}$ such that 
\begin{displaymath}
  \theta_{t}(x) = 
  \begin{cases}
\mu_{1} & \text{ for } x\in t\Delta^{r}, \\
\mu_{i} & \text{ for } x=e_{i-1} \text{ and } i=2,\dots, r+1.
  \end{cases}
\end{displaymath}
Then the integral $\int_{\Delta^{r}}\theta_{t}\dd x$ varies
continuously in the interval 
\begin{displaymath}
\left[  \frac{1}{(r+1)!} \sum_{i=1}^{r+1}\mu_{i}, \frac{1}{r!} \mu_{1}\right).
\end{displaymath}
In particular, there exists $t$ such that the corresponding integral
gives $\frac{\nu}{(r+1)!}$.  Consider the semipositive toric metric
$(\|\cdot\|_{v})_{v}$ on $H$ given by, for $v=\infty$, the toric
metric associated to $\theta_{t}$ and, for $v\ne\infty$, the canonical
metric. A straightforward calculation shows that this metric satisfies
the required conditions.
\end{proof}

For the right hand inequality in Theorem \ref{thm:6},
we can relax the hypothesis of semipositivity of the metrized
$\R$-divisor, by replacing the height by the arithmetic
volume or the $\chi$-arithmetic volume of the divisor. In our present
toric setting, the obtained lower bound of the essential minimum in
terms of the $\chi$-arithmetic 
volume extends \cite[Lemme 5.1]{ChambertLoirThuillier:MMel} to
arbitrary global fields and metrized $\R$-divisors.

\begin{prop} \label{prop:5} Let $X$ be a proper toric variety over $\K$
  of dimension $n$ and $\ov D$ a toric metrized $\R$-divisor on $X$
  such that $D$ is big. Then
\begin{equation} \label{eq:3}
\upmu^{\ess}_{\ov D}(X)\ge     \frac{\avol_{\chi}(\ov D)}{(n+1) \vol(D)}.
  \end{equation}
If $\ov D$ is pseudo-effective, then 
\begin{equation} \label{eq:19}
\upmu^{\ess}_{\ov D}(X)\ge     \frac{\avol(\ov D)}{(n+1) \vol(D)}.
  \end{equation}
\end{prop}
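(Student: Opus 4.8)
The plan is to obtain both inequalities by combining the formula for the essential minimum in Corollary~\ref{cor:4} with the integral expressions for the arithmetic volumes of a big toric metrized $\R$-divisor. Since $D$ is big, the associated polytope $\Delta_{D}$ is full-dimensional and $\vol(D)=n!\,\vol_{M}(\Delta_{D})>0$, so the quotients in \eqref{eq:3} and \eqref{eq:19} are well defined; this is the toric volume formula, see \cite{BurgosPhilipponSombra:agtvmmh}. The remaining inputs are the toric formulae for the volumes from \cite{BurgosMoriwakiPhilipponSombra:aptv}, which give
\begin{displaymath}
  \avol_{\chi}(\ov D)=(n+1)!\int_{\Delta_{D}}\vartheta_{\ov D}\dd\vol_{M}
  \qquad\text{and}\qquad
  \avol(\ov D)\le(n+1)!\int_{\Delta_{D}}\vartheta_{\ov D}^{+}\dd\vol_{M},
\end{displaymath}
where $\vartheta_{\ov D}^{+}=\max(\vartheta_{\ov D},0)$; here one uses that $\vartheta_{\ov D}$, being a weighted sum of Legendre--Fenchel duals, is concave and continuous on $\Delta_{D}$, hence bounded and integrable.

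For \eqref{eq:3} I would simply invoke Corollary~\ref{cor:4}: it gives $\vartheta_{\ov D}(x)\le\max_{y\in M_{\R}}\vartheta_{\ov D}(y)=\upmu^{\ess}_{\ov D}(X)$ for every $x\in\Delta_{D}$, so integrating over $\Delta_{D}$ yields
\begin{displaymath}
  \avol_{\chi}(\ov D)=(n+1)!\int_{\Delta_{D}}\vartheta_{\ov D}\dd\vol_{M}
  \le(n+1)!\,\vol_{M}(\Delta_{D})\,\upmu^{\ess}_{\ov D}(X)
  =(n+1)\,\vol(D)\,\upmu^{\ess}_{\ov D}(X).
\end{displaymath}
This is the same estimate as in the proof of the right inequality of Theorem~\ref{thm:6}, with $\h_{\ov D}(X)$ and $\deg_{D}(X)$ there replaced by $\avol_{\chi}(\ov D)$ and $\vol(D)$. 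For \eqref{eq:19} I would add the hypothesis that $\ov D$ is pseudo-effective, which by Corollary~\ref{cor:7}\eqref{item:7} is equivalent to $\upmu^{\ess}_{\ov D}(X)\ge0$; then $\vartheta_{\ov D}^{+}(x)\le\max(\upmu^{\ess}_{\ov D}(X),0)=\upmu^{\ess}_{\ov D}(X)$ for all $x\in\Delta_{D}$, and running the same one-line estimate with $\vartheta_{\ov D}$ replaced by $\vartheta_{\ov D}^{+}$ gives $\avol(\ov D)\le(n+1)\vol(D)\,\upmu^{\ess}_{\ov D}(X)$.

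The argument is thus essentially formal, and the main points to take care of are locating the precise statements of the two volume formulae in \cite{BurgosMoriwakiPhilipponSombra:aptv} and observing why pseudo-effectivity cannot be dropped in \eqref{eq:19}: if $\ov D$ is not pseudo-effective then $\upmu^{\ess}_{\ov D}(X)=\max_{\Delta_{D}}\vartheta_{\ov D}<0$, hence $\vartheta_{\ov D}^{+}\equiv0$ and $\avol(\ov D)=0$, so \eqref{eq:19} would fail. The same computation also records the equality case needed later for Corollary~\ref{cor:5}: equality in \eqref{eq:3} forces $\int_{\Delta_{D}}\vartheta_{\ov D}\dd\vol_{M}=\vol_{M}(\Delta_{D})\max_{\Delta_{D}}\vartheta_{\ov D}$, which for the continuous concave function $\vartheta_{\ov D}$ on the full-dimensional polytope $\Delta_{D}$ holds if and only if $\vartheta_{\ov D}$ is constant.
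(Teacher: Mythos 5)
Your proposal is correct and follows the same route as the paper: apply Corollary~\ref{cor:4} to bound $\vartheta_{\ov D}$ by $\upmu^{\ess}_{\ov D}(X)$, then integrate over $\Delta_{D}$ using the toric formulae for $\avol_{\chi}$ and $\avol$ from \cite[Theorem~1]{BurgosMoriwakiPhilipponSombra:aptv}, together with $\vol(D)=n!\,\vol_{M}(\Delta_{D})>0$ and, for \eqref{eq:19}, Corollary~\ref{cor:7}\eqref{item:7}. The only (harmless) deviation is that you state $\avol(\ov D)\le(n+1)!\int_{\Delta_{D}}\vartheta_{\ov D}^{+}\dd\vol_{M}$ with an inequality, whereas the cited theorem in fact gives equality; your weaker version still suffices.
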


\begin{proof}
  For short write $\Delta=\Delta_{D}$. We first prove \eqref{eq:3}. By
  Corollary \ref{cor:4}, $\upmu^{\ess}_{\ov D}(X)\ge \vartheta _{\ov
    D}(x)$ for all $x\in \Delta $. Therefore, using the formula for
  the $\chi$-arithmetic volume of a toric metrized $\R$-divisor in
  \cite[Theorem 1]{BurgosMoriwakiPhilipponSombra:aptv} and the
  classical formula for the volume of a toric variety with respect to
  a toric divisor, we have
  \begin{multline}\label{eq:21}
        \avol_{\chi}(\ov D)=(n+1)!\int_{\Delta }\vartheta _{\ov
      D}\dd\vol_{M}\le
    (n+1)!\int_{\Delta }\upmu^{\ess}_{\ov D}(X)\dd\vol_{M}\\=
    (n+1)\upmu^{\ess}_{\ov D}(X)n!\vol(\Delta 
    )=(n+1)\upmu^{\ess}_{\ov D}(X)\vol(D),
  \end{multline}
  Since $D$ is big, we have that $\vol(D)>0$, and the inequality
  follows. 

  By Corollary \ref{cor:7}\eqref{item:7}, if $\ov D$ is
  pseudo-effective then $\upmu^{\ess}_{\ov D}(X)\ge \max(0,\vartheta
  _{\ov D}(x))$ for all $x\in \Delta $. The inequality \eqref{eq:19}
  follows similarly because, by using
  \cite[Theorem~1]{BurgosMoriwakiPhilipponSombra:aptv}  and Corollary
  \ref{cor:7}\eqref{item:7},
  \begin{multline}\label{eq:54}
        \avol(\ov D)=(n+1)!\int_{\Delta }\max(0,\vartheta _{\ov
      D})\dd\vol_{M}\\ \le
    (n+1)!\int_{\Delta }\upmu^{\ess}_{\ov D}(X)\dd\vol_{M}=
    (n+1)\upmu^{\ess}_{\ov D}(X)\vol(D).
  \end{multline}
\end{proof}

Now we will characterize when equality occurs in the lower bounds in
Proposition~\ref{prop:5}. First we need a technical lemma.

\begin{lem}
  \label{lemm:1}
  Let $\Psi\colon N_{\R}\to \R$ be a conic function such that
  $\stab(\Psi) $ has nonempty interior, and $f\colon N_{\R}\to \R$ a continuous
  function such that $|f-\Psi|$ is bounded. Let $u_{0}\in N_{\R}$ and
  $\gamma\in \R$. The following conditions are equivalent:
  \begin{enumerate}
\item \label{item:6} $f^{\vee}(x)= \langle x,u_{0}\rangle + \gamma$
  for all $x\in \stab(\Psi )$;
  \item \label{item:1} $\conc(f)(u)=\conc(\Psi)(u-u_{0}) -\gamma$ for
    all $u\in N_{\R}$; 
\item \label{item:2}  $ f(u_{0})=-\gamma$ and $f(u)\le
  \conc(\Psi)(u-u_{0}) -\gamma$ for
  all $u\in N_{\R}$.
  \end{enumerate}
\end{lem}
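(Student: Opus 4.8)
The strategy is to prove the cyclic chain of implications $\eqref{item:6}\Rightarrow\eqref{item:1}\Rightarrow\eqref{item:2}\Rightarrow\eqref{item:6}$, exploiting Legendre--Fenchel duality together with the earlier machinery on concavification (Lemmas~\ref{lemm:2}--\ref{lemm:6}). Throughout we use that $\stab(f)=\stab(\Psi)=:\Delta$ since $|f-\Psi|$ is bounded (Lemma~\ref{lemm:5}), that $\Delta$ has nonempty interior by hypothesis, and that $\conc(f)$ and $\conc(\Psi)$ exist (Lemma~\ref{lemm:2}); moreover $\conc(\Psi)$ is a conic function and $\conc(f)^{\vee}=f^{\vee}$, $\conc(\Psi)^{\vee}=\Psi^{\vee}$.

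\textbf{Step 1: $\eqref{item:6}\Rightarrow\eqref{item:1}$.} Since $\conc(\Psi)$ is conic with stability set $\Delta$ having nonempty interior, it is closed and concave, so $\conc(\Psi)=\conc(\Psi)^{\vee\vee}=(\Psi^{\vee})^{\vee}$ on $N_{\R}$; here $\Psi^{\vee}$ is the indicator-type function equal to $0$ on $\Delta$ and $-\infty$ off $\Delta$ (this is where $\Psi$ conic is used). Now $f^{\vee}$ is a closed concave function on $M_{\R}$ with effective domain $\Delta$, and the hypothesis says $f^{\vee}(x)=\langle x,u_{0}\rangle+\gamma$ for $x\in\Delta$. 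Taking the Legendre--Fenchel dual and using $\conc(f)=(f^{\vee})^{\vee}$, a direct computation gives, for all $u\in N_{\R}$,
\begin{displaymath}
  \conc(f)(u)=\inf_{x\in\Delta}\bigl(\langle x,u\rangle-f^{\vee}(x)\bigr)
  =\inf_{x\in\Delta}\langle x,u-u_{0}\rangle-\gamma
  =(\Psi^{\vee})^{\vee}(u-u_{0})-\gamma=\conc(\Psi)(u-u_{0})-\gamma,
\end{displaymath}
which is \eqref{item:1}. The translation identity $(\Psi^{\vee})^{\vee}(u-u_{0})=\inf_{x}(\langle x,u-u_{0}\rangle-\Psi^{\vee}(x))$ is just the standard behaviour of duality under affine translation.

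\textbf{Step 2: $\eqref{item:1}\Rightarrow\eqref{item:2}$.} Assuming \eqref{item:1}, evaluate at $u=u_{0}$: since $\conc(\Psi)$ is conic, $\conc(\Psi)(0)=0$, so $\conc(f)(u_{0})=-\gamma$. As $f\le\conc(f)$ always, we get $f(u_{0})\le-\gamma$; for the reverse inequality I would note that by \eqref{item:1} the function $u\mapsto\conc(\Psi)(u-u_{0})-\gamma$ is concave, majorizes $f$ (it equals $\conc(f)\ge f$), and so by minimality $\conc(f)\le\conc(\Psi)(\cdot-u_{0})-\gamma$ as well, giving equality; but then $f(u_{0})\le\conc(f)(u_{0})=-\gamma$ — to force $f(u_{0})=-\gamma$ one uses Lemma~\ref{lemm:6} applied at $u_{0}$, expressing $\conc(f)(u_{0})$ as a supremum of averages $\frac1d\sum f(u_{j})$ with $\frac1d\sum u_{j}=u_{0}$, together with the fact (from \eqref{item:1} and concavity of $\conc(\Psi)$) that $\frac1d\sum f(u_{j})\le\frac1d\sum\conc(\Psi)(u_{j}-u_{0})-\gamma\le\conc(\Psi)(0)-\gamma=-\gamma$, and noting the trivial expression $u_{0}=u_{0}$ itself already realizes $f(u_{0})$; so $f(u_{0})\le-\gamma\le f(u_{0})$. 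Finally $f(u)\le\conc(f)(u)=\conc(\Psi)(u-u_{0})-\gamma$ for all $u$, which is \eqref{item:2}. \emph{This is the delicate step}: showing $f(u_{0})=-\gamma$ rather than merely $\le$ requires exploiting local boundedness below of $f$ (which holds since $f$ is continuous) through Lemma~\ref{lemm:6}, and making sure the concavity inequality for $\conc(\Psi)$ is applied to a \emph{zero-sum} family $(u_{j}-u_{0})_{j}$ scaled appropriately.

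\textbf{Step 3: $\eqref{item:2}\Rightarrow\eqref{item:6}$.} Assume \eqref{item:2}. The function $g(u):=\conc(\Psi)(u-u_{0})-\gamma$ is concave and $\ge f$, so $\conc(f)\le g$; conversely, I would use Lemma~\ref{lemm:6} once more: for any representation $u=\frac1d\sum u_{j}$, the concavity of $\conc(\Psi)$ gives $\frac1d\sum f(u_{j})\le\frac1d\sum g(u_{j})\le\conc(\Psi)\bigl(\tfrac1d\sum(u_{j}-u_{0})\bigr)-\gamma=g(u)$, hence $\conc(f)(u)\le g(u)$, so $\conc(f)=g$. Taking duals and using $f^{\vee}=\conc(f)^{\vee}$, $\Psi^{\vee}=\conc(\Psi)^{\vee}$, together with the affine-translation rule for duality,
\begin{displaymath}
  f^{\vee}(x)=g^{\vee}(x)=\Psi^{\vee}(x)+\langle x,u_{0}\rangle+\gamma,
\end{displaymath}
and since $\Psi^{\vee}(x)=0$ for $x\in\stab(\Psi)=\Delta$, this is exactly \eqref{item:6}. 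The only subtlety is to record that $\conc(\Psi)^{\vee}=\Psi^{\vee}$ equals $0$ precisely on $\Delta$ (using $\Psi$ conic and $\operatorname{int}\Delta\ne\emptyset$ so that no closure pathologies arise), which is where the nonempty-interior hypothesis is genuinely used.
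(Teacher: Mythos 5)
Your overall plan — establishing the cycle \eqref{item:6}$\Rightarrow$\eqref{item:1}$\Rightarrow$\eqref{item:2}$\Rightarrow$\eqref{item:6} via duality and concavification — is a reasonable reorganization of the paper's argument, and your Step~1 is correct and essentially the same as the paper's. However, there are two genuine gaps.

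\textbf{Step 2 is broken at the claim $f(u_0)=-\gamma$.} You correctly get $\conc(f)(u_0)=-\gamma$, hence $f(u_0)\le-\gamma$. For the reverse inequality you write that the trivial decomposition $u_0=u_0$ ``realizes $f(u_0)$, so $f(u_0)\le-\gamma\le f(u_0)$'', but this does not follow: Lemma~\ref{lemm:6} represents $\conc(f)(u_0)$ as a \emph{supremum} over decompositions, and the trivial one contributes only $f(u_0)$, giving again the inequality $f(u_0)\le\conc(f)(u_0)=-\gamma$ you already had. Nothing forces the supremum to be attained at the trivial decomposition, so $-\gamma\le f(u_0)$ is unjustified. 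This is precisely the delicate point the lemma's hypotheses are there for. The paper's argument is a strict-separation contradiction: assume $f(u_0)=-\gamma-\varepsilon$ with $\varepsilon>0$, pick $x_0\in\inter\stab(\Psi)$, and use the interior hypothesis to get a uniform linear decay $f(u)-\langle x_0,u-u_0\rangle\le -K\|u-u_0\|-\gamma$ away from $u_0$, and continuity of $f$ to get a uniform strict gap near $u_0$; together these give $f(u)\le\langle x_0,u-u_0\rangle-\gamma-s$ everywhere for some $s>0$, so $f^\vee(x_0)\ge\langle x_0,u_0\rangle+\gamma+s$, contradicting \eqref{item:6} (equivalently \eqref{item:1}). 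Your proposal never invokes the nonempty-interior hypothesis or the continuity of $f$ in an essential way, which is a sign the step cannot go through as written.

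\textbf{Step 3 contains a logical slip.} You set $g(u)=\conc(\Psi)(u-u_0)-\gamma$ and claim $\conc(f)=g$, but both halves of your argument (the ``$g$ concave and $\ge f$'' part and the ``conversely, via Lemma~\ref{lemm:6}'' part) only yield the same inequality $\conc(f)\le g$; the chain $\frac1d\sum f(u_j)\le\dots\le g(u)$ again gives $\conc(f)(u)\le g(u)$, not $\ge$. So the conclusion $\conc(f)=g$ is not established, and indeed establishing it is equivalent to the implication you are trying to prove. The fix is to avoid the detour through $\conc(f)=g$ and bound $f^\vee(x)$ directly for $x\in\stab(\Psi)$, as the paper does: the inequality in \eqref{item:2} gives $f^\vee(x)\ge g^\vee(x)=\Psi^\vee(x)+\langle x,u_0\rangle+\gamma=\langle x,u_0\rangle+\gamma$, and evaluating the infimum defining $f^\vee(x)$ at $u=u_0$ and using $f(u_0)=-\gamma$ gives $f^\vee(x)\le\langle x,u_0\rangle+\gamma$. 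That closes \eqref{item:2}$\Rightarrow$\eqref{item:6} cleanly.
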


\begin{proof}
  Set $\Delta=\stab(\Psi)$, which is a convex subset of
  $M_{\R}$ and agrees with $\stab(f)$ by the hypothesis $|f-\Psi |$
  bounded.  

  \eqref{item:6} $\Rightarrow$ \eqref{item:1}: the function $f$ is
  asymptotically conic in the sense of \cite[Definition
  A.3]{BurgosMoriwakiPhilipponSombra:aptv}. Hence
  $\stab(\conc(f))=\stab(f)=\Delta$ and $\conc(f)= f^{\vee\vee}$.
  Thus
  \begin{equation}
    \label{eq:51}
    \conc(f)(u)=(f^{\vee})^{\vee}=\inf_{x\in \Delta }\langle x,u-u_{0}
    \rangle -\gamma. 
  \end{equation}
  Analogously
  \begin{displaymath}
    \conc(\Psi )(u)=(\Psi ^{\vee})^{\vee}=\inf_{x\in \Delta }\langle x,u
    \rangle.     
  \end{displaymath}
  Thus, the right hand side of  \eqref{eq:51} agrees with
  $\conc(\Psi)(u-u_{0}) 
  -\gamma$. 

\eqref{item:1} $\Rightarrow$ \eqref{item:6}: This follows from the
fact that $f^{\vee}=\conc(f)^{\vee}$

\eqref{item:6} $\Rightarrow$ \eqref{item:2}: Since, for any function
with non empty stability set,
\begin{equation}
  \label{eq:52}
  f(u)\le \conc(f)(u)
\end{equation}
the bound for $f(u)$
follows from the implication \eqref{item:6} $\Rightarrow$
\eqref{item:1}. Thus we only have to show that $f(u_{0})=-\gamma $. 

By equations \eqref{eq:51} and \eqref{eq:52}, we have $f(u_{0})\le
-\gamma $. Thus assume that $f(u_{0})=-\gamma -\varepsilon $ for 
some $\varepsilon >0$.

Let $x_{0}$ be a point in the interior of $\Delta $ and choose a norm
$\|\cdot\|$ on $N_{\R}$. By \eqref{eq:51} and \eqref{eq:52}, and using
that $x_{0}$ belongs to the interior of $\Delta $, we deduce
that there exists $K>0$ such that for all $u\in N_\R$
\begin{align}
   f(u)-\langle x_{0},u-u_{0}\rangle  \le
  \inf_{x\in \Delta } \langle x-x_{0},u-u_{0}\rangle - \gamma\le -K\|u-u_{0}\|-\gamma \label{eq:53}.
\end{align}

By the continuity of $f$ there is $\eta >0$ such that, if
$\|u-u_{0}\|\le \eta$ then  $$f(u)-\langle x_{0},u-u_{0}\rangle\le
-\gamma -\varepsilon /2.$$ By the inequality  \eqref{eq:53}, if
$\|u-u_{0}\|\ge \eta$, then  $f(u)-\langle x_{0},u-u_{0}\rangle\le
-\gamma -\eta K$. Put $s=\min(\varepsilon /2,\eta K)>0$. Hence
\begin{displaymath}
  f(u)\le \langle x_{0},u-u_{0}\rangle - \gamma -s
\end{displaymath}
for all $u\in N_\R$. Thus
\begin{multline*}
  f^{\vee}(x_0)=\inf_{u\in N_{\R}}\langle x_0,u \rangle -f(u) \ge
  \inf_{u\in N_{\R}}\langle x_0,u \rangle-\langle x_{0},u-u_{0}\rangle +
  \gamma +s = \langle x_0,u_{0} \rangle+
  \gamma +s, 
\end{multline*}
contradicting \eqref{item:6}. Therefore $f(u_{0})=-\gamma $ finishing
the proof of \eqref{item:2}.

\eqref{item:2} $\Rightarrow$ \eqref{item:6}: let $x\in \Delta$. We have
that
\begin{displaymath}
  f^{\vee}(x)= \inf_{u\in N_{\R}}\langle x,u\rangle -f(u). 
\end{displaymath}
Hence, the inequality in
\eqref{item:2} implies that
\begin{math}
  f^{\vee}(x)\ge  \langle
  x,u_{0}\rangle + \gamma
\end{math}
and, on the other hand, 
\begin{math}
  f^{\vee}(x)\le \langle x,u_{0}\rangle - f(u_{0}) = \langle
  x,u_{0}\rangle + \gamma, 
\end{math}
which implies the statement. 
\end{proof}

Recall that $H_{\F}\subset \bigoplus _{w\in \mathfrak{M}_{\F}}N_{\R}$ is the
hyperplane defined in \eqref{eq:16}.

\begin{prop} \label{prop:6} Let $X$ be a proper toric variety over $\K$
  of dimension $n$ and $\ov D$ a toric metrized $\R$-divisor on $X$
  such that $D$ is big. 
  \begin{enumerate}
  \item \label{item:10} The equality 
\begin{equation*}
\upmu^{\ess}_{\ov D}(X)=     \frac{\avol_{\chi}(\ov D)}{(n+1) \vol(D)}
  \end{equation*}
  holds if and only if there exist real numbers
  $(\gamma_{v})_{v}\in \bigoplus_{v\in \mathfrak{M}_{\K}} \R$, and vectors
  $(u_{v})_{v}\in H_{\K}\subset \bigoplus_{v\in \mathfrak{M}_{\K}}
  N_{\R}$, indexed by the set of places of $\K$, such that
  \begin{enumerate}
  \item \label{item:17} $\psi _{\ov D,v}(u_{v})=-\gamma_{v}$, for all $v\in
    \mathfrak{M}_{\K}$ and
  \item \label{item:18} $\psi _{\ov D,v}(u)\le \conc(\Psi_{D})(u-u_{v})
    -\gamma_{v}$ for all 
    $v\in \mathfrak{M}_{\K}$. 
  \end{enumerate}
\item \label{item:11} If $\ov D$ is big, then the
  equality  
\begin{equation*}
\upmu^{\ess}_{\ov D}(X)=     \frac{\avol(\ov D)}{(n+1) \vol(D)}
  \end{equation*}
  holds if and only if there exist
  $(\gamma_{v})_{v}\in \bigoplus_{v\in \mathfrak{M}_{\K}} \R$ and
  $(u_{v})_{v}\in H_{\K}\subset \bigoplus_{v\in \mathfrak{M}_{\K}}
  N_{\R}$ such that
  \begin{enumerate}
  \item \label{item:19} $\sum_{v} n_{v}\gamma_{v}> 0$,
  \item $\psi _{\ov D,v}(u_{v})=-\gamma_{v}$, for all $v\in
    \mathfrak{M}_{\K}$ and
  \item $\psi _{\ov D,v}(u)\le \conc(\Psi_{D})(u-u_{v}) -\gamma_{v}$ for all
    $v\in \mathfrak{M}_{\K}$. 
  \end{enumerate}
  \end{enumerate}
\end{prop}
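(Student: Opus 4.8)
The plan is to reduce both statements to the single assertion that equality in Proposition~\ref{prop:5} holds if and only if the global roof function $\vartheta_{\ov D}$ is constant on $\Delta=\Delta_D$ (and, in case \eqref{item:11}, constant with positive value), and then to convert this constancy into the per-place conditions \eqref{item:17}--\eqref{item:18} by means of Lemma~\ref{lemm:1}.

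For the reduction, recall from the proof of Proposition~\ref{prop:5} that $\avol_\chi(\ov D)=(n+1)!\int_\Delta\vartheta_{\ov D}\dd\vol_M$ and $\avol(\ov D)=(n+1)!\int_\Delta\max(0,\vartheta_{\ov D})\dd\vol_M$ by \cite[Theorem~1]{BurgosMoriwakiPhilipponSombra:aptv}, that $(n+1)\vol(D)=(n+1)!\vol_M(\Delta)$ with $\vol_M(\Delta)>0$ because $D$ is big (so $\Delta$ is full-dimensional, $\dim\Delta=n$), and that $\upmu^{\ess}_{\ov D}(X)=\max_{x\in\Delta}\vartheta_{\ov D}(x)$ by Corollary~\ref{cor:4}. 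Thus \eqref{eq:3} amounts to $\int_\Delta\big(\upmu^{\ess}_{\ov D}(X)-\vartheta_{\ov D}\big)\dd\vol_M\ge 0$ with continuous nonnegative integrand, so since $\vol_M(\Delta)>0$ equality holds if and only if $\vartheta_{\ov D}$ is constant on $\Delta$, necessarily with value $\upmu^{\ess}_{\ov D}(X)$. Likewise \eqref{eq:19} amounts to $\int_\Delta\big(\upmu^{\ess}_{\ov D}(X)-\max(0,\vartheta_{\ov D})\big)\dd\vol_M\ge 0$; when $\ov D$ is big we have $\upmu^{\ess}_{\ov D}(X)>0$ by Corollary~\ref{cor:7}\eqref{item:9}, so equality forces $\max(0,\vartheta_{\ov D})\equiv\upmu^{\ess}_{\ov D}(X)$, i.e. $\vartheta_{\ov D}$ constant with positive value, and conversely if $\vartheta_{\ov D}\equiv c>0$ then $\avol(\ov D)=(n+1)!\,c\,\vol_M(\Delta)$ and equality holds.

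Now suppose $\vartheta_{\ov D}$ is constant, say equal to $c$. Each local roof function $\vartheta_{\ov D,v}=\psi_{\ov D,v}^{\vee}$ is continuous and concave on $\Delta$, and all but finitely many of them vanish identically there. Since $\sum_v n_v\vartheta_{\ov D,v}=c$ is affine and the $n_v$ are positive, a standard midpoint-convexity argument on segments in the interior of the full-dimensional polytope $\Delta$, together with continuity, shows that each $\vartheta_{\ov D,v}$ is affine on $\Delta$; hence $\vartheta_{\ov D,v}(x)=\langle x,u_v\rangle+\gamma_v$ on $\Delta$ for uniquely determined $u_v\in N_\R$ and $\gamma_v\in\R$, with $u_v=0$ and $\gamma_v=0$ for all but finitely many $v$. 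Applying Lemma~\ref{lemm:1} with $\Psi=\Psi_D$ (conic, with $\stab(\Psi_D)=\Delta$ of nonempty interior since $D$ is big) and $f=\psi_{\ov D,v}$ (continuous, with $|\psi_{\ov D,v}-\Psi_D|$ bounded), the equality $\psi_{\ov D,v}^{\vee}(x)=\langle x,u_v\rangle+\gamma_v$ on $\Delta$ is equivalent to the pair of conditions $\psi_{\ov D,v}(u_v)=-\gamma_v$ and $\psi_{\ov D,v}(u)\le\conc(\Psi_D)(u-u_v)-\gamma_v$ for all $u$, i.e. to \eqref{item:17}--\eqref{item:18}. Summing $\vartheta_{\ov D,v}(x)=\langle x,u_v\rangle+\gamma_v$ with weights $n_v$ gives $c=\langle x,\sum_v n_v u_v\rangle+\sum_v n_v\gamma_v$ for all $x\in\Delta$; full-dimensionality of $\Delta$ forces $\sum_v n_v u_v=0$, i.e. $(u_v)_v\in H_\K$, and $\sum_v n_v\gamma_v=c=\upmu^{\ess}_{\ov D}(X)$. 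Conversely, given $(\gamma_v)_v$ and $(u_v)_v\in H_\K$ satisfying \eqref{item:17}--\eqref{item:18}, Lemma~\ref{lemm:1} yields $\vartheta_{\ov D,v}(x)=\langle x,u_v\rangle+\gamma_v$ on $\Delta$ for every $v$, and summing (a finite sum, using $\sum_v n_v u_v=0$) gives $\vartheta_{\ov D}(x)=\sum_v n_v\gamma_v$, a constant. This proves \eqref{item:10}; for \eqref{item:11}, the extra requirement $\sum_v n_v\gamma_v>0$ in \eqref{item:19} is exactly the condition that this constant value be positive, which is what the reduction above required. The main point is the bookkeeping of hypotheses rather than any genuine difficulty: one must check that Lemma~\ref{lemm:1} is applicable at every place (which uses $D$ big, so $\stab(\Psi_D)$ is full-dimensional, plus the standing boundedness of $|\psi_{\ov D,v}-\Psi_D|$), and one must keep track of the sign condition in \eqref{item:11}, the only place where the stronger hypothesis ``$\ov D$ big'' — as opposed to merely ``$D$ big'' — is really used, through $\upmu^{\ess}_{\ov D}(X)>0$.
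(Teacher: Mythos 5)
Your proof is correct and follows essentially the same route as the paper: reduce to the observation that equality in Proposition~\ref{prop:5} is equivalent to $\vartheta_{\ov D}$ being constant on $\Delta_D$ (positive in case~\eqref{item:11}), use concavity of each $\vartheta_{\ov D,v}$ and full-dimensionality of $\Delta_D$ to conclude each local roof function is affine with $(u_v)_v\in H_\K$, and then invoke Lemma~\ref{lemm:1} place by place. The only cosmetic difference is that you spell out the midpoint-concavity argument and the uniqueness of the $(u_v,\gamma_v)$, which the paper leaves implicit.
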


\begin{proof} For short we write $\Delta =\Delta _{D}$.
  We first prove \eqref{item:10}.  By \eqref{eq:21}, the equality for
  the essential minimum holds if and only if, for all $x\in \Delta$,
 \begin{equation}
   \label{eq:23}
 \vartheta_{\ov D}(x) =
\upmu^{\ess}_{\ov D}(X).
 \end{equation}
 Since $\vartheta_{\ov D} = \sum_{v}n_{v}\vartheta_{\ov D, v}$, 
 the functions $\vartheta_{\ov D, v}$ are concave and the weights
 $n_{v}$ are positive, it
 follows that all the functions $\vartheta _{\ov D,v}$ are affine and
 their linear parts add to zero. Hence, \eqref{eq:23} holds if and
 only if there exists a collection of real numbers
 $\{\gamma_{v}\}_{v}$, with $\gamma_{v}=0$ for all but a finite number
 of $v$ and $(u_{v})_{v}\in H_{\K}$ such that, for all $x\in \Delta$,
  \begin{equation}\label{eq:24}
    \vartheta_{\ov D,v}(x) = \langle u_{v},x\rangle +\gamma_{v}.
  \end{equation}
  By \cite[Proposition 4.16(1)]{BurgosMoriwakiPhilipponSombra:aptv},
  the functions $|\psi_{\ov D,v}-\Psi_D |$ are bounded. Therefore, Lemma
  \ref{lemm:1} implies that \eqref{eq:24} is
  equivalent to the conditions \eqref{item:17} and \eqref{item:18}, since $\mu^\ess_{\ov D}(X) = \vartheta_{\ov D} = \sum_vn_v\gamma_v$.

  The proof of \eqref{item:11} is similar, but using equation
  \eqref{eq:54} and observing that Corollary \ref{cor:7}\eqref{item:9}
  implies the extra condition \eqref{item:19}.
\end{proof}

Proposition \ref{prop:6} also gives a criterion for when the right
inequality in Theorem \ref{thm:6} is an equality.
\begin{cor}\label{cor:3}
  Let $X$ be a proper toric variety over
  $\K$ of dimension $n$ and $\ov D$ a  semipositive toric metrized
  $\R$-divisor on
  $X$ such that $D$ is big. Then the equality 
  \begin{equation*}
    \frac{\h_{\ov D}(X)}{ \deg_{D}(X)}= (n+1)
    \upmu^{\ess}_{\ov D}(X)
  \end{equation*}
  holds if and only if 
  there exist
  $(\gamma_{v})_{v}\in \bigoplus_{v\in \mathfrak{M}_{\K}} \R$ and 
  $(u_{v})_{v}\in H_{\K}\subset \bigoplus_{v\in \mathfrak{M}_{\K}}
  N_{\R}$ such that, for $v\in \mathfrak{M}_{\K}$, 
  \begin{equation*}
    \psi _{\ov D,v}(u)=\Psi_{D}(u-u_{v}) -\gamma_{v} \quad \text{
      for all    } u\in N_{\R}.
  \end{equation*}
\end{cor}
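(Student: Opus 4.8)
The plan is to rewrite the quotient $\h_{\ov D}(X)/\deg_D(X)$ in terms of the $\chi$-arithmetic volume so as to reduce to Proposition~\ref{prop:6}\eqref{item:10}, and then to dualize the two conditions appearing there by means of Lemma~\ref{lemm:1}, exploiting the concavity that the semipositivity hypothesis provides.

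First I would observe that, since $\ov D$ is semipositive, $D$ is generated by global sections by \cite[Corollary 4.8.5]{BurgosPhilipponSombra:agtvmmh}, hence nef. Being toric, this gives $\deg_D(X)=n!\vol_{M}(\Delta_{D})$ as in the proof of Theorem~\ref{thm:6}, and $\vol(D)=n!\vol_{M}(\Delta_{D})$ as in \eqref{eq:21}, so $\deg_D(X)=\vol(D)$. Moreover, \cite[Theorem 5.2.5]{BurgosPhilipponSombra:agtvmmh} and \cite[Theorem 1]{BurgosMoriwakiPhilipponSombra:aptv} give $\h_{\ov D}(X)=(n+1)!\int_{\Delta_{D}}\vartheta_{\ov D}\dd\vol_{M}=\avol_{\chi}(\ov D)$. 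Therefore $\h_{\ov D}(X)/\deg_D(X)=\avol_{\chi}(\ov D)/\vol(D)$, and the equality in the statement is equivalent to $\upmu^{\ess}_{\ov D}(X)=\avol_{\chi}(\ov D)/((n+1)\vol(D))$, which is precisely the equality studied in Proposition~\ref{prop:6}\eqref{item:10}.

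Next I would apply that proposition: the equality holds if and only if there are $(\gamma_{v})_{v}\in\bigoplus_{v}\R$ and $(u_{v})_{v}\in H_{\K}$ with $\psi_{\ov D,v}(u_{v})=-\gamma_{v}$ and $\psi_{\ov D,v}(u)\le\conc(\Psi_{D})(u-u_{v})-\gamma_{v}$ for all $u\in N_{\R}$ and all $v$. Since $D$ is nef, $\Psi_{D}$ is concave and $\conc(\Psi_{D})=\Psi_{D}$; since $D$ is big, $\stab(\Psi_{D})=\Delta_{D}$ has nonempty interior; and $|\psi_{\ov D,v}-\Psi_{D}|$ is bounded by \cite[Proposition 4.16(1)]{BurgosMoriwakiPhilipponSombra:aptv}. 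Hence, for each $v$, the two conditions above are exactly condition~\eqref{item:2} of Lemma~\ref{lemm:1} applied with $f=\psi_{\ov D,v}$, $\Psi=\Psi_{D}$, $u_{0}=u_{v}$ and $\gamma=\gamma_{v}$. By the equivalence \eqref{item:2}$\Leftrightarrow$\eqref{item:1} of that lemma they are equivalent to $\conc(\psi_{\ov D,v})(u)=\Psi_{D}(u-u_{v})-\gamma_{v}$ for all $u$. Finally, since $\ov D$ is semipositive, each $\psi_{\ov D,v}$ is concave, so $\conc(\psi_{\ov D,v})=\psi_{\ov D,v}$, and the condition reads $\psi_{\ov D,v}(u)=\Psi_{D}(u-u_{v})-\gamma_{v}$ for all $u\in N_{\R}$ and all $v$, as claimed; the constraints that $(\gamma_{v})_{v}$ has finite support and $(u_{v})_{v}\in H_{\K}$ are untouched by this translation and already appear on both sides.

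I do not anticipate a genuine difficulty: the content of the argument is the bookkeeping reduction to Proposition~\ref{prop:6}\eqref{item:10} together with the remark that \emph{both} concavifications collapse here, the one of $\Psi_{D}$ because $D$ is nef and the one of $\psi_{\ov D,v}$ because $\ov D$ is semipositive, which is exactly what turns the inequality condition \eqref{item:18} of that proposition into the affine-translate identity in the statement. The only point requiring care is invoking the hypothesis that $D$ is big to ensure $\stab(\Psi_{D})$ has nonempty interior, so that Lemma~\ref{lemm:1} is applicable.
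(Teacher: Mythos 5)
Your proof is correct and follows the same approach as the paper: both reduce the stated equality to the $\chi$-arithmetic-volume equality of Proposition~\ref{prop:6}\eqref{item:10} using $\h_{\ov D}(X)=\avol_{\chi}(\ov D)$ and $\deg_D(X)=\vol(D)$, and then invoke Lemma~\ref{lemm:1} together with the observation that semipositivity makes each $\psi_{\ov D,v}$ concave and nefness makes $\Psi_D$ concave, so both concavifications disappear. The paper simply cites these ingredients without spelling out the chain of equivalences; you have filled in exactly that bookkeeping.
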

\begin{proof}
  Since $\ov D$ is assumed to be semipositive, we have that $\h_{\ov
    D}(X)=\avol_{\chi}(\ov D)$ and all the functions $\psi _{\ov D,v}$
  are concave. Thus the corollary follows from Proposition
  \ref{prop:6}\eqref{item:10} and Lemma \ref{lemm:1}.
\end{proof}

\begin{rem}\label{rem:2}
  Observe that, if a metrized $\R$-divisor
  $\ov
  D=(D,(\|\cdot\|_{v})_{v\in \mathfrak{M}_{\K}})$ the equivalent
  conditions of Corollary \ref{cor:3}, then its metric is very
  close to the canonical
  metric. For instance, if there is an element $t\in \T(\K)$ such that
  $\val_{v}(t)=u_{v}$ then
  \begin{displaymath}
    \|\cdot\|_{v}=\e^{-\gamma _{v}}t^{\ast}\|\cdot\|_{\can,v}.
  \end{displaymath}  
\end{rem}

\section{Examples}
\label{sec:exmpl}

The previous results allow us to compute the successive minima of several
examples. The difficulty of the computations increases with the number
of places where the metric differs from the canonical one. The
following subsections are ordered increasingly according to this level
of difficulty. 

\subsection{Canonical metric}
\label{sec:canonical-metric}
As a first example, we show that the essential minimum of a toric
variety with respect to a pseudo-effective toric $\R$-divisor equipped
with the canonical metric at all the places as in Example \ref{exm:6},
is zero.

\begin{prop}\label{prop:13}
Let $X$ be a proper toric variety over $\K$ of dimension $n$ and $\ov D$ a toric
metrized $\R$-divisor with the canonical metric. Then
\begin{displaymath}
  \upmu^{\ess}_{\ov D}(X)=
  \begin{cases}
    0&\text{ if }D\text{ is pseudo-effective,}\\
    -\infty&\text{ otherwise.}
  \end{cases}
\end{displaymath}
Moreover, if $D$ is nef, then
\begin{math}
  \upmu^{i}_{\ov D}(X)=0
\end{math}
for $i=1,\dots,n+1.$
\end{prop}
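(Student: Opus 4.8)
The plan is to deduce everything from the machinery already developed, specializing to the canonical metric where $\psi_{\ov D,v}=\Psi_D$ for all $v$ (Example~\ref{exm:6}). First I would compute the roof function: since $\psi_{\ov D,v}=\Psi_D$ is conic for every $v$, its Legendre--Fenchel--type dual $\vartheta_{\ov D,v}=\psi_{\ov D,v}^{\vee}$ is identically zero on $\Delta_D=\stab(\Psi_D)$ when $\Delta_D\neq\emptyset$, because for a conic (positively homogeneous) function $\Psi$ one has $\Psi^{\vee}(x)=\inf_{u}(\langle x,u\rangle-\Psi(u))$, and this infimum is $0$ precisely on the stability set. Hence $\vartheta_{\ov D}=\sum_v n_v\vartheta_{\ov D,v}\equiv 0$ on $\Delta_D$. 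Then Corollary~\ref{cor:4} gives $\upmu^{\ess}_{\ov D}(X)=\max_{x\in M_{\R}}\vartheta_{\ov D}(x)=0$ when $D$ is pseudo-effective (equivalently $\Delta_D\neq\emptyset$, by \cite[Proposition~4.9(2)]{BurgosMoriwakiPhilipponSombra:aptv}), and $=-\infty$ otherwise since then $\vartheta_{\ov D}\equiv-\infty$. Alternatively, the non-pseudo-effective case also follows directly from Corollary~\ref{cor:2}.

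For the last assertion, assume $D$ is nef. Then $\ov D=\ov D^{\can}$ is semipositive (Example~\ref{exm:6}), so I would like to apply Theorem~\ref{thm:2}\eqref{item:23}, which under the nef hypothesis gives
\begin{displaymath}
  \upmu^{i}_{\ov D}(X)=\min_{\sigma\in\Sigma^{\le i-1}}\max_{x\in F_{\sigma}}\vartheta_{\ov D}(x)
\end{displaymath}
for $i=1,\dots,n+1$. Since $\vartheta_{\ov D}\equiv 0$ on all of $\Delta_D$, in particular on every face $F_{\sigma}$, each inner maximum is $0$, and the set $\Sigma^{\le i-1}$ is nonempty (it always contains the cone $\{0\}$), so the minimum is $0$. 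Therefore $\upmu^{i}_{\ov D}(X)=0$ for all $i$.

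I do not anticipate a serious obstacle here; the only point requiring a little care is the computation $\Psi^{\vee}\equiv 0$ on $\stab(\Psi)$ for a conic function, but this is standard convex analysis: $\Psi$ conic forces $\langle x,u\rangle-\Psi(u)\ge 0$ for all $u$ exactly when $x\in\stab(\Psi)$ (taking $u\to tu$, $t\to\infty$, shows negativity would be unbounded), and the value $0$ is attained at $u=0$. One should also note that Theorem~\ref{thm:2} requires only $D$ nef (not ample) for the stated equalities when $\ov D$ is semipositive, which is exactly our situation, so there is no need to invoke Theorem~\ref{thm:4}.
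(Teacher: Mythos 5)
Your proof is correct and follows essentially the same route as the paper's: observe that the canonical metric forces $\vartheta_{\ov D}\equiv 0$ on $\Delta_D$, then invoke Corollary~\ref{cor:4} for the essential minimum and Theorem~\ref{thm:2}\eqref{item:23} for the remaining minima when $D$ is nef. The only difference is that you spell out the convex-analytic reason that $\Psi_D^{\vee}\equiv 0$ on $\stab(\Psi_D)$, which the paper states without comment.
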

\begin{proof}
  Since the metric of $\ov D$ is the canonical one, we have that, for
  all $v\in \mathfrak{M}_{\K}$, the local roof function $\vartheta
  _{\ov D,v}$ is zero on $\Delta_{D}$, and so the global roof function
  $\vartheta_{\ov D}$ is also zero on $\Delta_{D}$. The result then
  follows from Corollary \ref{cor:4}, since $\Delta _{D}\not=\emptyset$
  if and only if $D$ is pseudo-effective.

  If $D$ is nef, the result about the successive minima follows
  similarly from Theorem~\ref{thm:2}\eqref{item:23}.
\end{proof}

\subsection{Weighted $L^{p}$-metrics on toric varieties.}
\label{sec:Twisted-FS}

In the next three subsections we consider the case when only one
metric (the Archimedean one in $\Q$) differs from the canonical
one. This will allow us to use Corollary \ref{cor:8}\eqref{item:20}.

We  introduce a general family of Archimedean metrics we
toric varieties. 
To this end, let $X$ be a proper toric variety of dimension $n$ over
$\Q$, with fan $\Sigma $, $D$ a
nef toric divisor on $X$ and $\Delta =\Delta _{D}$ the polytope
associated to $D$. The support function associated to $D$ is the 
support 
function of $\Delta $. It is  given, for $u\in N_{\R}$, by
\begin{equation}\label{eq:32}
  \Psi (u)=\min_{m\in \Delta \cap M} \langle m,u\rangle=
  \min_{m\in \cF(\Delta) ^{0}} \langle m,u\rangle.
\end{equation}

  Let $\bfalpha=(\alpha _{m})_{m\in M\cap \Delta }$ be a
 collection of non-negative real numbers such that, if $m$ is a vertex
 of $\Delta $, then $\alpha
 _{m}>0$. Let
$\Lambda >0$ be a real number. We consider the metric on $\cO(D)$ over
$X_{0}(\C)$ given, for $p\in X_{0}(\C)$, by
\begin{equation*}
  \|s_{D}(p)\|_{\Lambda ,\bfalpha}=\bigg(\sum_{m\in \Delta \cap M}\alpha
    _{m}|\chi^{m}(p)|^{\Lambda } \bigg)^{\frac{-1}{\Lambda }}.
\end{equation*}
The function associated to this metric, $\psi _{\Lambda
  ,\bfalpha}\colon N_{\R}\to \R$, is given by 
\begin{displaymath}
  \psi_{\Lambda ,\bfalpha}(u)=\frac{-1}{\Lambda
  }\log\bigg(\sum_{m\in\Delta\cap M}\alpha _{m}\e^{-\Lambda \langle m,u\rangle}\bigg).
\end{displaymath}

\begin{prop}\label{prop:15} The function $\psi_{\Lambda ,\bfalpha}$ is
  concave and $|\psi _{\Lambda ,\bfalpha}-\Psi |$ is bounded. Therefore,
  the metric $\|\cdot\|_{\Lambda ,\bfalpha}$ extends to a continuous
  semipositive metric on $\cO(D)$ over $X(\C)$.
\end{prop}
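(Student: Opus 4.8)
The plan is to prove the two analytic properties of $\psi_{\Lambda,\bfalpha}$ separately — that it is concave, and that $|\psi_{\Lambda,\bfalpha}-\Psi|$ is bounded — and then to invoke the dictionary between functions on $N_\R$ and metrics on $\cO(D)$ over $X(\C)$, together with the characterization of semipositivity by concavity, to get the last assertion. Throughout I would use that the hypothesis $\alpha_m>0$ for every vertex $m$ of $\Delta$ guarantees that the sum $\sum_{m\in\Delta\cap M}\alpha_m\e^{-\Lambda\langle m,u\rangle}$ always has at least one strictly positive term, so that $\psi_{\Lambda,\bfalpha}$ is a well-defined real-valued function on $N_\R$; being a composition of smooth functions it is moreover continuous.

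For concavity, I would observe that $\psi_{\Lambda,\bfalpha}$ is, up to the negative factor $-1/\Lambda$, the composition of the convex log-sum-exp function with affine maps in $u$, hence concave. Concretely, for $u=\lambda u_1+(1-\lambda)u_2$ with $\lambda\in(0,1)$ I would write each summand as $\bigl(\alpha_m\e^{-\Lambda\langle m,u_1\rangle}\bigr)^{\lambda}\bigl(\alpha_m\e^{-\Lambda\langle m,u_2\rangle}\bigr)^{1-\lambda}$, apply Hölder's inequality with exponents $1/\lambda$ and $1/(1-\lambda)$ to the sum over $m$, and then apply the decreasing map $-\Lambda^{-1}\log(\cdot)$ (legitimate since $\Lambda>0$) to obtain $\psi_{\Lambda,\bfalpha}(\lambda u_1+(1-\lambda)u_2)\ge\lambda\,\psi_{\Lambda,\bfalpha}(u_1)+(1-\lambda)\,\psi_{\Lambda,\bfalpha}(u_2)$; the endpoints $\lambda\in\{0,1\}$ are trivial. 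This step is routine.

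For the bound on $|\psi_{\Lambda,\bfalpha}-\Psi|$, I would fix $u\in N_\R$ and use \eqref{eq:32}: the minimum $\Psi(u)=\min_{m\in\Delta\cap M}\langle m,u\rangle$ is attained at a vertex $m^{\ast}$ of $\Delta$, and $\alpha_{m^{\ast}}\ge a:=\min\{\alpha_m\mid \alpha_m>0\}>0$. Keeping only the term $m^{\ast}$ in the sum gives the lower bound $\sum_m\alpha_m\e^{-\Lambda\langle m,u\rangle}\ge a\,\e^{-\Lambda\Psi(u)}$, while $\langle m,u\rangle\ge\Psi(u)$ for all $m\in\Delta$ gives the upper bound $\sum_m\alpha_m\e^{-\Lambda\langle m,u\rangle}\le S\,\e^{-\Lambda\Psi(u)}$ with $S:=\sum_m\alpha_m$. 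Applying $-\Lambda^{-1}\log(\cdot)$ yields $\Psi(u)-\Lambda^{-1}\log S\le\psi_{\Lambda,\bfalpha}(u)\le\Psi(u)-\Lambda^{-1}\log a$, so that $|\psi_{\Lambda,\bfalpha}-\Psi|\le\Lambda^{-1}\max(|\log a|,|\log S|)$, a constant independent of $u$.

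Finally, since $\Psi$ is the support function of $\Delta=\Delta_D$ and $\psi_{\Lambda,\bfalpha}$ is a continuous function on $N_\R$ differing from it by a bounded amount, it determines a continuous metric on $\cO(D)$ over all of $X(\C)$ extending $\|\cdot\|_{\Lambda,\bfalpha}$ on $X_0(\C)$ — the boundedness is precisely what is needed to extend the metric across the toric boundary — and its concavity is exactly the condition for this metric to be semipositive; here I would cite the correspondence between (concave) functions and (semipositive) toric metrics from \cite[\S~4]{BurgosPhilipponSombra:agtvmmh}, together with \cite[Proposition~4.16]{BurgosMoriwakiPhilipponSombra:aptv}. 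None of the individual steps is genuinely hard; the only point that requires care, and which I would treat as the main obstacle, is matching exactly the hypotheses of that dictionary (continuity on $N_\R$, and boundedness of the difference with the support function of $\Delta_D$), which is precisely what the two estimates above are designed to supply.
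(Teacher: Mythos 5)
Your proposal is correct and follows essentially the same route as the paper: concavity via the log-convexity of the sum $\sum_m\alpha_m\e^{-\Lambda\langle m,u\rangle}$ (the paper cites Boyd--Vandenberghe where you unfold the standard Hölder argument), boundedness of $|\psi_{\Lambda,\bfalpha}-\Psi|$ by sandwiching that sum between constant multiples of $\e^{-\Lambda\Psi(u)}$ using \eqref{eq:32}, and the extension to a continuous semipositive metric via the toric correspondence in \cite[Theorem~4.8.1]{BurgosPhilipponSombra:agtvmmh}. The only superficial differences are the exact constants in the sandwich estimate, which are harmless.
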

\begin{proof}
  Each function $\alpha _{m}\e^{-\Lambda \langle m,u\rangle}$ is
  log-convex. Since sums of log-convex functions are log-convex
  \cite[\S~3.5.2]{Boyd:Vandenberghe:co}, we deduce that
  $\psi_{\Lambda ,\bfalpha}$ is concave.

  For the second statement we first observe that
  \begin{multline*}
    \min_{m\in\cF(\Delta )^{0}} \alpha_{m}
    \max_{m\in\cF(\Delta )^{0}}\e^{-\Lambda \langle
      m,u\rangle} \le
    \sum_{m\in\Delta\cap M}\alpha _{m}\e^{-\Lambda \langle m,u\rangle}
    \\\le \# (\Delta\cap M)
    \max_{m\in\Delta\cap M } \alpha_{m}
    \max_{m\in\Delta\cap M}\e^{-\Lambda \langle
      m,u\rangle}. 
  \end{multline*}
  Using the equality \eqref{eq:32}, we deduce that $|\psi _{\Lambda
    ,\bfalpha}-\Psi |$ is bounded.

  The last statement follows then from
  \cite[Theorem~4.8.1]{BurgosPhilipponSombra:agtvmmh}.
\end{proof}
Let $\ov D$ be the metrized divisor given by $D$, the metric
$\|\cdot\|_{\Lambda ,\bfalpha }$ at the Archimedean place and the
canonical metric at the non-Archimedean places. Hence, the adelic
family of functions associated to $\ov D$ is given by $\psi _{\Lambda
  ,\bfalpha}$ at the Archimedean place and by $\Psi $ at the
non-Archimedean places.

\begin{exmpl}\label{exm:8}
  When $\Delta $ is the standard simplex,
  the toric variety is the projective space and the divisor is the
  hyperplane at infinity. When
   $\Lambda =2$ and $\alpha
  _{m}=1$ for all $m\in \Delta \cap M=\cF(\Delta )^{0}$ we recover the
  Fubini-Study metric. When $\Lambda =2$ and $\alpha 
  _{m}$ are arbitrary positive numbers, we recover the case of the
  weighted Fubini-Study metric as
in \cite[Example 6.5]{BurgosMoriwakiPhilipponSombra:aptv}. For general
$\Lambda $ we obtain
  weighted versions of the $ L^{p}$ metric. 
\end{exmpl}

Thus the metrics we are considering in this section are the natural
generalization to arbitrary 
proper toric varieties over $\Q$ of the weighted Fubini-Study metric
and weighted 
$ L^{p}$-metric. In fact, they are  the inverse images of
the weighted Fubini-Study and weighted $ L^{p}$-metrics on the
projective space by a suitable toric morphism.

 We first compute the absolute minima of the orbits of $X$.

\begin{prop}\label{prop:2}
  Let $\sigma \in \Sigma $ and $F_{\sigma }\subset \Delta $ the
  corresponding face. Then
  \begin{displaymath}
    \upmu^{\abs}(O(\sigma ))=\frac{1}{\Lambda }\log \Big(\sum_{m\in
      F_{\sigma }\cap M}\alpha 
     _{m}\Big).
  \end{displaymath}
\end{prop}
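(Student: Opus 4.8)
The plan is to combine Theorem~\ref{thm:2}\eqref{item:22} with an elementary computation of the maximum of the roof function over the face $F_\sigma$. First I would observe that the metrized divisor $\ov D$ considered here is semipositive: its associated function is $\Psi=\Psi_D$ at every non-Archimedean place (concave, as $D$ is nef) and $\psi_{\Lambda,\bfalpha}$ at the Archimedean place (concave, by Proposition~\ref{prop:15}). Hence Theorem~\ref{thm:2}\eqref{item:22} applies \emph{with equality}, giving $\upmu^{\abs}(O(\sigma))=\max_{x\in F_\sigma}\vartheta_{\ov D}(x)$. Moreover, since at a non-Archimedean place $\vartheta_{\ov D,v}=\Psi_D^\vee$ vanishes on $\Delta_D$, the global roof function reduces to its Archimedean term: $\vartheta_{\ov D}=\psi_{\Lambda,\bfalpha}^\vee$ on $\Delta_D$. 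So the proposition is equivalent to the identity
\[
\max_{x\in F_\sigma}\psi_{\Lambda,\bfalpha}^\vee(x)=\tfrac1\Lambda\log\Big(\sum_{m\in F_\sigma\cap M}\alpha_m\Big)=:\tfrac1\Lambda\log A ,
\]
where $A>0$ because every vertex of $F_\sigma$ is a vertex of $\Delta_D$ and therefore carries positive weight.

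For the lower bound I would evaluate $\psi_{\Lambda,\bfalpha}^\vee$ at the $\bfalpha$-weighted barycenter $x^\ast:=\tfrac1A\sum_{m\in F_\sigma\cap M}\alpha_m m$, which lies in $F_\sigma$ since it is a convex combination of lattice points of $F_\sigma$. For every $u\in N_\R$,
\[
\langle x^\ast,u\rangle-\psi_{\Lambda,\bfalpha}(u)=\tfrac1\Lambda\log\sum_{m\in\Delta_D\cap M}\alpha_m\, e^{\Lambda\langle x^\ast-m,u\rangle}\ \ge\ \tfrac1\Lambda\log\sum_{m\in F_\sigma\cap M}\alpha_m\, e^{\Lambda\langle x^\ast-m,u\rangle}\ \ge\ \tfrac1\Lambda\log A,
\]
the last inequality being Jensen's inequality for the convex function $\exp$ applied to the probability vector $(\alpha_m/A)_{m\in F_\sigma\cap M}$, using that $\sum_m(\alpha_m/A)(x^\ast-m)=0$. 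Taking the infimum over $u$ yields $\psi_{\Lambda,\bfalpha}^\vee(x^\ast)\ge\tfrac1\Lambda\log A$.

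For the upper bound I would use a ``tropical'' direction: choose $u_0\in\ri(\sigma)$ and put $c:=\Psi_D(u_0)=\min_{m\in\Delta_D\cap M}\langle m,u_0\rangle$ (using \eqref{eq:32}). From the description $F_\sigma=\Delta_D\cap(m_\sigma+\sigma^{\perp})$ --- equivalently, the standard correspondence of \cite[Example~2.5.13]{BurgosPhilipponSombra:agtvmmh}, whereby $F_\sigma$ is precisely the face of $\Delta_D$ on which $\langle\cdot,u_0\rangle$ is minimal --- one gets $\langle x,u_0\rangle=c$ for all $x\in F_\sigma$, $\langle m,u_0\rangle=c$ for $m\in F_\sigma\cap M$, and $\langle m,u_0\rangle>c$ for $m\in(\Delta_D\cap M)\setminus F_\sigma$. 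Then for $x\in F_\sigma$ and $t>0$, evaluating the infimum defining $\psi_{\Lambda,\bfalpha}^\vee$ at $u=t u_0$,
\[
\psi_{\Lambda,\bfalpha}^\vee(x)\ \le\ \langle x,tu_0\rangle-\psi_{\Lambda,\bfalpha}(tu_0)=\tfrac1\Lambda\log\sum_{m\in\Delta_D\cap M}\alpha_m\, e^{-\Lambda t(\langle m,u_0\rangle-c)},
\]
and the right-hand side tends to $\tfrac1\Lambda\log A$ as $t\to+\infty$, since the exponents vanish on $F_\sigma\cap M$ and are strictly positive off it. Hence $\max_{x\in F_\sigma}\psi_{\Lambda,\bfalpha}^\vee(x)\le\tfrac1\Lambda\log A$; combined with the previous paragraph this gives the displayed identity, and with the first paragraph it gives the proposition.

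The argument has no genuinely hard step; the only points requiring care are (i) verifying that, for $u_0$ in the relative interior of $\sigma$, the lattice points of $\Delta_D$ minimizing $\langle\cdot,u_0\rangle$ are \emph{exactly} those lying in $F_\sigma$ (so that the tropical limit isolates precisely the sum $A$), and (ii) keeping track of the $-\infty$ values of $\psi_{\Lambda,\bfalpha}^\vee$ near $\partial\Delta_D$ when passing between the function and its conjugate --- both being controlled by the boundedness of $|\psi_{\Lambda,\bfalpha}-\Psi_D|$ from Proposition~\ref{prop:15} together with the face description $F_\sigma=\Delta_D\cap(m_\sigma+\sigma^{\perp})$.
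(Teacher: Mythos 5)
Your proof is correct, and it takes a genuinely different route from the paper. The paper proves the proposition by computing the restricted metrized divisor on the orbit closure $V(\sigma)$: after choosing $m_0\in F_\sigma\cap M$ it invokes \cite[Corollary~4.3.18]{BurgosPhilipponSombra:agtvmmh} to see the induced non-Archimedean metrics stay canonical, uses \cite[Proposition~4.8.9]{BurgosPhilipponSombra:agtvmmh} to express the Archimedean function on $N(\sigma)_\R$ as a supremum over fibers of $\pi_\sigma$, evaluates that supremum by the tropical limit along $u_0\in\ri(\sigma)$ (so the terms with $m\notin F_\sigma$ vanish), and finally reads off $\upmu^{\abs}(O(\sigma))=-\psi(0)$ via Corollary~\ref{cor:8}\eqref{item:20}. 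You instead cite Theorem~\ref{thm:2}\eqref{item:22} to reduce immediately to $\max_{x\in F_\sigma}\psi_{\Lambda,\bfalpha}^\vee(x)$ and then compute that maximum by direct convex duality: Jensen's inequality at the $\bfalpha$-weighted barycenter of $F_\sigma$ gives the lower bound, and the same tropical limit along $u_0\in\ri(\sigma)$ gives the upper bound (as well as identifying the maximizer). Both proofs hinge on the identical asymptotic — that for $u_0\in\ri(\sigma)$ the lattice points of $\Delta_D$ minimizing $\langle\cdot,u_0\rangle$ are exactly those of $F_\sigma$, a fact the paper also uses implicitly. The trade-off is that your route replaces the machinery for restricting toric metrics to orbit closures with an explicit evaluation of the Legendre--Fenchel dual; it is more self-contained at the level of the computation, though it still leans on Theorem~\ref{thm:2}, whose own proof rests on Proposition~\ref{prop:1} about orbit restrictions, so the overall dependency graph is not simpler. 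One small remark: your Jensen step also pins down that $x^*$ is a maximizer of $\vartheta_{\ov D}$ on $F_\sigma$, which is slightly more information than the paper extracts and could be stated as a bonus.
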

\begin{proof}
  Let $N(\sigma )=N/(\R\sigma\cap N) $ and $M(\sigma )\subset M$ be the dual
  lattice. Choose $m_{0}\in F_{\sigma }\cap M$. The divisor $D'=D+\div
  (\chi^{m_{0}})$ intersects properly the closure of the orbit
  $V(\sigma )=\ov{O(\sigma )}$. The polytope of $D'|_{V(\sigma )}$ is
  $F_{\sigma }-m_{0}\subset M(\sigma )_{\R}$. The  metric of $D$
  induces an  metric on $D'|_{V(\sigma )}$. By
  \cite[Corollary~4.3.18]{BurgosPhilipponSombra:agtvmmh} at every
  non-Archimedean place the induced metric is the canonical
  metric. Let $\pi _{\sigma }\colon N_{\R}\to N(\sigma )_{\R}$ be the
  projection. By
  \cite[Proposition~4.8.9]{BurgosPhilipponSombra:agtvmmh}, the
  function associated to the metric on the Archimedean place is 
  given, for $v\in N(\sigma )_{\R}$, by
  \begin{displaymath}
    \psi (v)=\sup_{u\in \pi _{\sigma }^{-1}(v)}\frac{-1}{\Lambda
    }\log\bigg(\sum_{m\in \Delta \cap M}\alpha 
    _{m}\e^{-\Lambda \langle m-m_{0},u\rangle}\bigg).  
  \end{displaymath}
  Fix $v\in M(\sigma )_{\R}$ and $u\in \pi _{\sigma }^{-1}(v)$.
  If $m\in F_{\sigma }$, then $\langle m-m_{0},u\rangle$ does not
  depend on the choice of $u$ and agrees with $\langle
  m-m_{0},v\rangle$, when we consider $m-m_{0}\in M(\sigma )$. If
  $m\not \in F_{\sigma }$, we choose $u_{0}$ in the relative interior
  of $\sigma $. Then
  \begin{displaymath}
    \lim_{\lambda \to \infty} \langle m-m_{0},u+\lambda u_{0}\rangle=\infty.
  \end{displaymath}
  Hence, we deduce
  \begin{displaymath}
    \psi (v)=\frac{-1}{\Lambda
    }\log\bigg(\sum_{m\in F_{\sigma }\cap M}\alpha 
    _{m}\e^{-\Lambda \langle m-m_{0},v\rangle}\bigg).  
  \end{displaymath}
  By Corollary \ref{cor:8},
  \begin{displaymath}
    \upmu^{\abs}(O(\sigma ))=-\psi (0)=\frac{1}{\Lambda
    }\log\bigg(\sum_{m\in F_{\sigma }\cap M}\alpha 
    _{m}\bigg).
  \end{displaymath}
 \end{proof}

 We can compute now the successive minima of $X$ with respect to $\ov D$.

 \begin{thm} \label{thm:9} Let notation be as above. Then, for $
   i=1,\dots, n+1$,
   \begin{displaymath}
     \upmu^{i}_{\ov D}(X)=\min_{\sigma \in \Sigma 
       ^{i-1}}\frac{1}{\Lambda }\log \Big(\sum_{m\in F_{\sigma }\cap M }\alpha
     _{m}\Big).
   \end{displaymath}    
   If furthermore $D$ is ample, then 
   \begin{displaymath}
     \upmu^{i}_{\ov D}(X)=\min_{F\in \cF(\Delta)
       ^{n-i+1}}\frac{1}{\Lambda }\log \Big(\sum_{m\in F \cap M}\alpha
     _{m}\Big).
   \end{displaymath}    
 \end{thm}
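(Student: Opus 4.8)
The plan is to deduce Theorem~\ref{thm:9} from Proposition~\ref{prop:2}, which computes the absolute minimum of each torus orbit, combined with the reduction of the successive minima to orbit data in Lemma~\ref{lemm:3}(\ref{item:5}). Here $\ov D$ is the semipositive toric metrized $\R$-divisor introduced just before Proposition~\ref{prop:2} (the canonical metric at the non-Archimedean places and $\|\cdot\|_{\Lambda,\bfalpha}$ at the Archimedean one), and $D$ is nef by the standing hypothesis of this subsection, so all the cited results apply directly.

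For the first displayed formula, Lemma~\ref{lemm:3}(\ref{item:5}) followed by Proposition~\ref{prop:2} gives
\[
\upmu^{i}_{\ov D}(X)=\min_{\sigma\in\Sigma^{\le i-1}}\upmu^{\abs}_{\ov D}(O(\sigma))=\min_{\sigma\in\Sigma^{\le i-1}}\frac{1}{\Lambda}\log\Big(\sum_{m\in F_{\sigma}\cap M}\alpha_{m}\Big),
\]
where $\Sigma^{\le i-1}$ denotes the set of cones of $\Sigma$ of dimension $\le i-1$. It then remains to see that this minimum is already attained on cones of dimension exactly $i-1$. The key observation is monotonicity: if $\tau$ is a face of $\sigma$, then $\tau\subseteq\sigma$, so the inequalities defining $F_{\sigma}$ include those defining $F_{\tau}$, whence $F_{\sigma}\subseteq F_{\tau}$; since every $\alpha_{m}$ is nonnegative, this gives $\upmu^{\abs}_{\ov D}(O(\sigma))\le\upmu^{\abs}_{\ov D}(O(\tau))$. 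Because $\Sigma$ is complete and $i-1\le n$, every cone of dimension $<i-1$ is a face of some cone of dimension $i-1$ (a cone of a complete fan is a face of an $n$-dimensional cone, and the face lattice of a polyhedral cone is graded), so $\min_{\sigma\in\Sigma^{\le i-1}}$ may be replaced by $\min_{\sigma\in\Sigma^{i-1}}$, yielding the first formula.

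For the second formula, when $D$ is ample the assignment $\sigma\mapsto F_{\sigma}$ restricts to a bijection between the cones of $\Sigma$ of dimension $i-1$ and the faces of $\Delta$ of dimension $n-i+1$ (the same fact already used in the proof of Theorem~\ref{thm:4}), so substituting $F=F_{\sigma}$ in the first formula gives the claim. The only step requiring any thought is the passage from $\Sigma^{\le i-1}$ to $\Sigma^{i-1}$ above; everything else is a direct appeal to results established earlier in the paper. Alternatively one could carry out that reduction on the polytope side via Theorem~\ref{thm:2}(\ref{item:23}), using that $\max_{F}\vartheta_{\ov D}\le\max_{F'}\vartheta_{\ov D}$ whenever $F\subseteq F'$, but the monotonicity of the orbit minima seems the most economical route.
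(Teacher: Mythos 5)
Your proposal is correct and follows essentially the same route as the paper's own proof, which combines Lemma~\ref{lemm:3}\eqref{item:5} and Proposition~\ref{prop:2} with the monotonicity observation $\sigma\subset\tau\Rightarrow F_{\tau}\subset F_{\sigma}$, and for ample $D$ the cone--face bijection between $\Sigma^{i-1}$ and $\cF(\Delta)^{n-i+1}$. The only difference is that you make explicit the step the paper leaves implicit, namely why the minimum over $\Sigma^{\le i-1}$ reduces to one over $\Sigma^{i-1}$ (every cone of dimension $<i-1$ in the complete fan $\Sigma$ is a face of a cone of dimension $i-1$, and the value decreases along face inclusions since the $\alpha_m$ are nonnegative), which is a welcome clarification.
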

 \begin{proof}
   The first part follows directly from Proposition \ref{prop:2},
   Lemma \ref{lemm:3}\eqref{item:5} and the observation that, if
   $\sigma \subset \tau $, then $F_{\tau }\subset F_{\sigma }$. The
   second statement follows from the
   first and the fact that, when $D$ is ample, the correspondence
   between cones of $\Sigma $ and faces of $\Delta $ gives a
   bijection between cones of dimension $i-1$ and faces of dimension
   $n-i-1$.
 \end{proof}

The example below and those in \S~\ref{sec:transl-subt-can} and
\ref{sec:transl-subt-fub} share a common setting that we summarize here.

\begin{setting}\label{setting:1} Let $\K$ be a global field, 
  $\P^{r}$ the projective space of dimension $r$ over $\K$ and $H$
  the divisor corresponding to the hyperplane at infinity. Then
  $\P^{r}$ is a toric variety and $H$ is an ample toric divisor.

  Let $e_1,\dots,e_r$ be the standard basis of
  $(\R^{r})^{\vee}= \R^{r}$ 
and set also $e_0=0$.
  The polytope associated to $H$ is the
  standard simplex of $\R^{r}$:
  \begin{displaymath}
    \Delta^{r} =\conv(e_0,\dots,e_r).
  \end{displaymath}
  The toric divisor $H$ corresponds to the support function of this polytope
  $\Psi_{\Delta ^{r}} \colon \R^{r}\to \R$, that is
  \begin{displaymath}
    \Psi_{\Delta ^{r}} (u_{1},\dots,u_{r})=\min(0,u_{1},\dots,u_{r}).
  \end{displaymath}

  Let $N$ be a lattice of rank $n$ and $M$ the dual lattice.  Let
  $\iota\colon N\hookrightarrow \Z^{r}$ be an injective linear map. We
  set $m_{j}=\iota^{\vee} e_{j}\in M$ for the $j$-th coordinate of
  $\iota$, $j=1,\dots, r$, and also $m_{0}=\iota^{\vee} e_{0}=0$.  Let
   $p=(p_0:\dots:p_r)\in\P^r_{0}(\K)\simeq (\K^{\times})^{r}$ be a
  rational point and consider the monomial map
  $\varphi_{p,\iota}\colon \T\to \P^{r}$ given, for $t\in \T$, by
  \begin{equation*}
    \varphi_{p,\iota}(t)=(p_{0}\chi^{m_{0}}(t):\dots: p_{r}\chi^{m_{r}}(t)).
  \end{equation*}
  The image $\Im(\varphi_{p,\iota})$ is the translate of a subtorus of
  the open orbit
  $\P^{r}_{0}\simeq \G_{m}^{r}$ by the point $p$. We set $Y$
  for its closure in $\P^{r}$.  

  Let $\Sigma$ be the complete fan on $N_{\R}$ induced by $\iota$ and
  $\Sigma_{\Delta^{r}}$, and set $\Psi=\iota^{*}\Psi_{\Delta^{r}}$. We
  denote by $X$ and $D$ the proper toric variety over $\K$ and the
  toric Cartier divisor on $X$ associated to this data.  Set $
  \Delta=\conv(m_{0},\dots, m_{r})\subset M_{\R}$. We can verify that
  $\Sigma$ coincides with the normal fan of $\Delta$ and that $\Psi$
  is the support function of this polytope. In particular, $\Psi$ is
  strictly concave on $\Sigma$, the divisor $D$ is ample, and $\Delta
  _{D}=\Delta$.
  
  Therefore, the monomial map $\varphi_{p,\iota}$ extends to a toric
  morphism $X\rightarrow \P^{r}$ that we denote also by
  $\varphi_{p,\iota}$ as in
  \cite[(3.2.3)]{BurgosPhilipponSombra:agtvmmh}. Let $Y$ denote the image of
  $\varphi_{p,\iota}$. If $\iota(N)$ is a saturated sublattice
  of $\Z^{r}$, then $X$ is the normalization of $Y$. In general, the
  map $X\to Y$ is finite and its degree is given by the index of the
  $\Z$-module $\iota(N)$ in its saturation.  The definition of $\Psi$
  implies that $D=\varphi_{p,\iota}^{*}H$.
\end{setting}

 \begin{exmpl}\label{exm:7}
   We place ourselves in the Setting \ref{setting:1} with $\K=\Q$ and
   $p=(1:\dots:1)$ the distinguished point of the principal orbit of $\P^{r}$. Thus
   we consider the projective space $\P^{r}$ as a toric variety. We
   equip the
   divisor at infinity $H$ with the Fubini-Study metric at
   the Archimedean place and the canonical metric at the
   non-Archimedean places. We denote $\ov
   H^{\FS}$ the obtained metrized divisor. As in Example \ref{exm:8}
   this corresponds to the standard simplex, $\Lambda =2$
   and $\alpha _{m}=1$. Thus Theorem \ref{thm:9} implies
   \begin{displaymath}
     \upmu^{i}_{\ov H^{\FS}}(\P^{r})=\frac{1}{2}\log(r+2-i).
   \end{displaymath}
   We consider now the
   metrized divisor on $X$ given by
 \begin{displaymath}
   \ov D^{\FS}=\varphi_{p,\iota}^{*}\ov H^{\FS}.
 \end{displaymath}
 Then, since $D$ is ample and the map $X\to Y$ is finite, by Theorem
 \ref{thm:9} and Proposition \ref{prop:9}\eqref{item:3},
 \begin{displaymath}
   \upmu^{i}_{\ov D^{\FS}}(X)=     \upmu^{i}_{\ov H^{\FS}}(Y)=\min_{F\in \cF(\Delta) 
     ^{n-i+1}}\frac{1}{ 2}\log( \# \{j\mid m_{j}\in F\}).
 \end{displaymath}    
 Hence we recover the computation of the 
 successive minima of subtori with respect to the Fubini-Study metric
 in \cite{Sombra:msvtp}.

   As an illustration, we consider the quadric $Q\subset \P^{3}$
   defined as the image of  the 
   monomial map
   \begin{displaymath}
     \P^2 \longrightarrow \P^3, \quad (t_{0},t_{1},t_{2}) \longmapsto
     (t_{0} : t_{0}t_{1}: t_{0}t_{2}: t_{1}t_{2}).
   \end{displaymath}
   The polytope $\Delta$ is the unit square $[0,1]^{2}$. Considering the
   lattice points in its different faces, we deduce that 
   \begin{displaymath}
     \upmu^{1}_{\ov H^{\FS}}(Q)=\log(2), \quad \upmu^{2}_{\ov
       H^{\FS}}(Q)=\frac{1}{2}\log(2), \quad  \upmu^{3}_{\ov H^{\FS}}(Q)=0.
   \end{displaymath}
 \end{exmpl}

 \subsection{Weighted projective spaces}
 \label{sec:metr-from-polyt}
 Let $\Delta \subset M_{\R}$ be a lattice simplex of dimension $n$ and
 $(X,D)$ the associated polarized toric variety over $\Q$.  Let
 $u_{0},\dots,u_{n}$ be a set of vectors of $N_{\R}$, orthogonal to
 the faces of $\Delta $ and pointing inwards.  The variety $X$ is a
 weighted projective space if and only if the primitive vectors
 colinear to $u_{0},\dots,u_{n}$ generate the lattice $N$ while, for a
 general lattice simplex $\Delta $, the toric variety $X$ is a fake
 weighted projective space, see \cite{Buczynska:fwps}.

 In this section we are going to consider a family of Archimedean
 metrics on this kind of polarized toric varieties. To this end
 choose a system of affine functions on $M_{\R}$,  
 $\ell_i(x)=\langle u_i,x\rangle-\lambda_i$, $i=0,\dots,n$, such
 that $$\Delta=\{x\in M_{\R}\mid \ell_i(x)\geq0,i=0,\dots,n\}.$$ Let
 $c_{i}$, $i=0,\dots,n$ be a collection of positive real numbers such that
 $\sum_{i=0}^nc_iu_{i}=0$. 
 We consider the function on $\Delta$ given by
 \begin{equation*}
   \vartheta(x) := -\sum_{i=0}^{n}c_i\ell_i(x)\log(\ell_i(x)).
 \end{equation*}
 This function is concave
 \cite[Lemma~6.2.1(1)]{BurgosMoriwakiPhilipponSombra:aptv}.
 We endow $D$ with the canonical metric
 at all the finite places of $\Q$ and with the metric associated to
 $\vartheta$ under the correspondence of \cite[Theorem
 4.8.1(2)]{BurgosPhilipponSombra:agtvmmh} at the Archimedean one. 
 This is a particular case of the metrics associated
 to polytopes described in \cite[\S~6.2]{BurgosPhilipponSombra:agtvmmh}. 

   Let $s_{D}$ be the toric section of $\cO(D)$, $m_{0},\dots,m_{n}$
   the vertices of $\Delta $, ordered in such a way that
   $\ell_{i}(m_{i})>0$, and 
   $\Lambda:=-\big(\sum_{i=0}^nc_i\lambda_i\big)^{-1}$. Note that
   $\Lambda >0$ because
   $-\sum_{i=0}^nc_i\lambda_i=\sum_{i=0}^nc_i\ell_{i}(x)>0$.

   The next proposition shows that the metrics considered in this
   section are a particular case of the metrics considered in the
   previous section.

 \begin{prop} \label{prop:7}
   The Legendre-Fenchel dual of $\vartheta $ is the concave function
   \begin{displaymath}
     \psi (u)=\frac{-1}{\Lambda }\log\bigg(
     \sum_{i=0}^{n}\Lambda c_{i}\e^{-\Lambda \langle m_{i},u\rangle}\bigg).
   \end{displaymath}
   Therefore the metric at the Archimedan place is given,
   for $p\in X_{0}(\C)$, by
 \begin{displaymath}
   \|s_D(p)\|_{\infty}=
     \bigg(\sum_{i=0}^n\Lambda c _{i}|\chi^{m_{i}}(p)|
     ^{\Lambda}\bigg)^{\frac{-1}{\Lambda}}.
   \end{displaymath}
 \end{prop}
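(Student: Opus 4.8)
The plan is to compute the Legendre--Fenchel dual $\vartheta^{\vee}$ directly and check that it equals $\psi$; by involutivity of Legendre--Fenchel duality on closed concave functions this is the same as the assertion $\psi^{\vee}=\vartheta$. The key preliminary observation is that, since $\sum_{i=0}^{n}c_{i}u_{i}=0$, the affine function $\sum_{i=0}^{n}c_{i}\ell_{i}$ has vanishing linear part and so is identically equal to $-\sum_{i=0}^{n}c_{i}\lambda_{i}=1/\Lambda$ on $M_{\R}$ (in particular $\Lambda>0$, as already noted). The simplex $\Delta$ has the $n+1$ facets $\{\ell_{j}=0\}$, and a vertex of a simplex lies on all facets but one; since $\ell_{i}(m_{i})>0$ by the chosen ordering, $m_{i}$ lies on $\{\ell_{j}=0\}$ for every $j\ne i$, that is $\ell_{j}(m_{i})=\delta_{ij}\,\ell_{i}(m_{i})$. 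Evaluating the identity $\sum_{k}c_{k}\ell_{k}\equiv 1/\Lambda$ at $x=m_{i}$ then gives $\ell_{i}(m_{i})=1/(\Lambda c_{i})$.

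Next I would rewrite $\vartheta$ in barycentric coordinates on $\Delta=\conv(m_{0},\dots,m_{n})$. For $x=\sum_{i}t_{i}m_{i}$ with $t_{i}\ge 0$ and $\sum_{i}t_{i}=1$, affineness of $\ell_{i}$ together with $\ell_{i}(m_{j})=\delta_{ij}\ell_{i}(m_{i})$ yields $\ell_{i}(x)=t_{i}\,\ell_{i}(m_{i})=t_{i}/(\Lambda c_{i})$, and substituting into the definition of $\vartheta$ (with the convention $0\log 0=0$) gives
\[
  \vartheta\Big(\sum_{i}t_{i}m_{i}\Big)=-\frac{1}{\Lambda}\sum_{i=0}^{n}t_{i}\log\!\Big(\frac{t_{i}}{\Lambda c_{i}}\Big).
\]
Thus $\vartheta^{\vee}(u)=\inf_{x\in\Delta}\big(\langle x,u\rangle-\vartheta(x)\big)$ becomes the minimum over the standard simplex of
\[
  F(t)=\sum_{i=0}^{n}t_{i}\langle m_{i},u\rangle+\frac{1}{\Lambda}\sum_{i=0}^{n}t_{i}\log\!\Big(\frac{t_{i}}{\Lambda c_{i}}\Big).
\]
The function $F$ is convex and continuous on the compact simplex, and since $\frac{d}{ds}(s\log s)=\log s+1\to-\infty$ as $s\to 0^{+}$, starting from any point of the simplex with a vanishing coordinate and moving in a direction that makes that coordinate positive strictly decreases $F$ near the boundary; hence the minimum is attained in the relative interior, where it is the unique critical point. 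Solving the Lagrange equations for the constraint $\sum_{i}t_{i}=1$ gives $t_{i}=\Lambda c_{i}e^{-\Lambda\langle m_{i},u\rangle}/Z$ with $Z=\sum_{j=0}^{n}\Lambda c_{j}e^{-\Lambda\langle m_{j},u\rangle}$, whence $\langle m_{i},u\rangle+\frac{1}{\Lambda}\log(t_{i}/(\Lambda c_{i}))=-\frac{1}{\Lambda}\log Z$ for every $i$ and therefore $F(t)=-\frac{1}{\Lambda}\log Z=\psi(u)$. This proves $\vartheta^{\vee}=\psi$; concavity of $\psi$ is then automatic, and also follows directly from log-convexity of sums of exponentials as in Proposition~\ref{prop:15}.

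Finally, for the statement about the metric: by the definition of the metrics considered in this section (via \cite[Theorem~4.8.1(2)]{BurgosPhilipponSombra:agtvmmh}, cf.\ \cite[\S~6.2]{BurgosPhilipponSombra:agtvmmh}) the Archimedean metric attached to $\vartheta$ is the semipositive toric metric whose associated function in the sense of \eqref{eq:45} is the Legendre--Fenchel dual $\vartheta^{\vee}=\psi$; thus $\log\|s_{D}(p)\|_{\infty}=\psi(\val_{\infty}(p))$ for $p\in X_{0}(\C)$. Since $|\chi^{m_{i}}(p)|=e^{-\langle m_{i},\val_{\infty}(p)\rangle}$ by the definition of the valuation map in \eqref{eq:44}, exponentiating gives $\|s_{D}(p)\|_{\infty}=\big(\sum_{i=0}^{n}\Lambda c_{i}|\chi^{m_{i}}(p)|^{\Lambda}\big)^{-1/\Lambda}$, as claimed. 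The only points requiring care are the justification that the infimum defining $\vartheta^{\vee}$ is attained in the relative interior of $\Delta$ (the boundary behaviour of the entropy term) and the handling of the $0\log 0$ convention when passing to the barycentric form; everything else is bookkeeping with the identity $\ell_{i}(x)=t_{i}/(\Lambda c_{i})$.
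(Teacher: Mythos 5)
Your proof is correct, and it reaches the same answer by essentially the same reduction to the standard simplex, but it carries out the two key steps in a more hands-on way than the paper. The paper introduces the affine map $\varphi(x)=\big(\ell_{1}(x)/\ell_{1}(m_{1}),\dots,\ell_{n}(x)/\ell_{n}(m_{n})\big)$ onto the standard simplex, checks $\vartheta=\varphi^{\ast}\vartheta_{0}$ using the same identity $\ell_{i}(m_{i})=1/(\Lambda c_{i})$ you derive, and then invokes the transformation rule for Legendre--Fenchel duals under affine maps from \cite[Proposition 2.3.8(2)]{BurgosPhilipponSombra:agtvmmh} together with the known dual $\vartheta_{0}^{\vee}$ from \cite[Example 2.4.3]{BurgosPhilipponSombra:agtvmmh}. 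You instead pass to barycentric coordinates (which is the same affine change of variables, viewed differently) and compute $\vartheta^{\vee}(u)$ by solving the constrained optimization directly with Lagrange multipliers, also taking care of the boundary behaviour of the entropy term. Your route is more self-contained and elementary; the paper's is more modular, reusing two general lemmas from earlier in the monograph. Both are valid, and your derivation of $\ell_{i}(m_{i})=1/(\Lambda c_{i})$ and the identity $|\chi^{m_{i}}(p)|=\e^{-\langle m_{i},\val_{\infty}(p)\rangle}$ match the ingredients used (explicitly or implicitly) in the paper's argument.
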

 \begin{proof}
   We consider first the case of the simplex standard $\Delta ^{n}$
   and the concave function
   \begin{displaymath}
     \vartheta_{0} (x)=\frac{-1}{\Lambda }\sum_{i=0}^{n}x_{i}\log(x_{i}/\Lambda c_{i}),
   \end{displaymath}
   where we write $x_{0}=1-\sum_{i=1}^{n}x_{i}$.
   Arguing as in \cite[Example 2.4.3]{BurgosPhilipponSombra:agtvmmh},
   one checks that
   \begin{displaymath}
     \psi _{0}(u):= \vartheta_{0} ^{\vee}(u)=\frac{-1}{\Lambda
     }\log\bigg(\sum_{i=0}^{n}\Lambda c_{i}\e^{-\Lambda u_{i}}\bigg), 
   \end{displaymath}
   where $u=(u_{1},\dots,u_{n})$ and $u_{0}=0$.

   We now consider the function
   \begin{math}
     \varphi\colon M_{\R}\to \R^{n}
   \end{math}
   given by
   \begin{displaymath}
     \varphi(x)=\Big(\frac{\ell_{1}(x)}{\ell_{1}(m_{1})},\dots,\frac{\ell_{n}(x)}{\ell_{n}(m_{n})}\Big).
   \end{displaymath}
   This affine function sends $\Delta $ to the standard simplex.
   Note that, by the definition of $c_{i}$ and $\Lambda $, we have
   $$\ell_{i}(m_{i})=\frac{1}{\Lambda c_{i}} \quad \text{ and }\quad 
   \sum_{i=0}^{n}\frac{\ell_{i}(x)}{\ell_{i}(m_{i})}=1.$$  
   Using these relations one can verify that
   \begin{math}
     \vartheta =\varphi^{\ast}\vartheta _{0}.
   \end{math}
   We write $\varphi(x)=H(x)+a$, where $H$ is a linear isomorphism
   and $a\in \R^{n}$. Then, by  \cite[Proposition
   2.3.8(2)]{BurgosPhilipponSombra:agtvmmh},
   \begin{equation}\label{eq:8}
     \psi (u)=(H^{\vee})_{\ast}(\psi
     _{0}-a)(u)=\psi_{0}((H^{\vee})^{-1}u)-\langle H^{-1}a,u \rangle.
   \end{equation}
   Let $e_{1},\dots , e_{n}$ be the standard basis of $\R^{n}$ and put $e_{0}=0$.
   Since $\varphi^{-1}$ sends $e_{i}$ to $m_{i}$, we deduce that
   \begin{displaymath}
     (H^{\vee})^{-1}u=(\langle m_{1}-m_{0},u\rangle,\dots,\langle m_{n}-m_{0},u\rangle)
   \end{displaymath}
   and that $H^{-1}a=m_{0}$. Substituting this in equation
   \eqref{eq:8} we obtain the first statement of the proposition. The
   second statement follows directly from the first. 
 \end{proof}

  The faces of $\Delta$ are in one-to-one correspondence with the nonempty
  subsets $ I\subset \{0,\dots,n\}$ by the formula
  \begin{displaymath}
        F_{I} =\{x\in\Delta\mid \ell_j(x)=0, j\notin I\}.
  \end{displaymath}
  Therefore, Proposition \ref{prop:7} and Theorem \ref{thm:9} imply that the successive minima of
  $X$ are given by
    \begin{displaymath}
    \upmu^{i}_{\ov D}(X)=\min_{\substack{I\subset\{0,\dots,n\} \\ \# I=n-i+2}}
    \left(\frac{1}{\Lambda}\log\Big(\sum_{j\in I}\Lambda c_{j}\Big)\right), 
    \quad i=1,\dots, n+1.
  \end{displaymath}

  In contrast with the previous example, for the metrics of this
  section we can also compute explicitly the height of $X$ with respect to
  $D$ \cite[(6.2.4)]{BurgosPhilipponSombra:agtvmmh}:
  \begin{displaymath}
    \frac{\h_{\ov D}(X)}{\deg_{D}(X)}=\frac{n+1}{\Lambda
    }\sum_{j=2}^{n+1}\frac{1}{j}+\frac{1}{\Lambda
    }\sum_{i=0}^{n}\log(\Lambda c_{i}).
  \end{displaymath}

\subsection{Toric bundles}
In this section, we compute the successive minima of the toric
bundles that we considered in
\cite[\S~7.2]{BurgosPhilipponSombra:agtvmmh}. Let $n\ge 0$ and write
$\P^n=\P^n_{\Q}$ for short.  Let $a_r\geq \dots\geq a_0\geq 1$ be
integers, consider the bundle $\P(E)\rightarrow\P^n$ of hyperplanes of
the vector bundle
$$
E=\cO(a_0)\oplus \cO(a_1)\oplus \dots\oplus
\cO(a_r) \longrightarrow \P^n,
$$
where $\cO(a_{j})$ denotes the $a_{j}$-th power of the universal line
bundle of $\P^{n}$. This bundle is a smooth toric variety over
$\Q$ of dimension $n+r$. 

We consider its universal
line bundle $\cO_{\P(E)}(1)$, that is the line bundle corresponding to
the Cartier divisor $D:=a_0 
D_0+D_1$, where $D_0$ denotes 
the inverse image in $\P(E)$ of the hyperplane at infinity of
$\P^n$ and $D_1=\P(0\oplus \cO(a_1)\oplus\dots\oplus
\cO(a_r))$. 
It is an ample Cartier divisor. 

As explained in \cite[\S~7.2]{BurgosPhilipponSombra:agtvmmh}, there is
a standard splitting $N_{\R}= \R^{n+r}$.
This splitting gives us coordinates
$(x,y)=(x_{1},\dots,x_{n},y_{1},\dots,y_{r})$ on $M_{\R}=
\R^{n+r}$. We set $y_0=1-\sum_{i=1}^ry_i$, $L(y)=\sum_{j=0}^ra_jy_j$
and $x_0=L(y)-\sum_{i=1}^nx_i$.  With this notation, the polytope
associated to $D$ is
\begin{equation*}
\Delta_{D} :=
\left\{(x,y)\in\R^{n+r}\mid x_0,\dots,x_{n},y_{0},\dots, y_r\geq 0\right\} .
\end{equation*}

At the Archimedean place, we equip $D$ with the smooth metric induced
by the Fubini-Study metric in each sumand of $E$. 
If we denote by $s_{D}$ the toric section associated to $D$, this
metric is given, for $(z,w)\in (\C^{\times})^{n+r}\simeq
\P(E)_{0}(\C)$, by 
\begin{equation}\label{eq:1}
  \|s_{D}(z,w)\|_{\infty}= \bigg(\sum_{j=0}^{r}|w_{j}|^{2}\bigg( \sum_{i=0}^{n}|z_{i}|^{2}\bigg)^{a_{j}}  \bigg)^{-\frac12} 
\end{equation}
with $w_{0}=z_{0}=1$. We also equip $D$ with the canonical metric at
the non-Archimedean places and we denote by $\ov D$ the obtained
semipositive metrized divisor. 
Clearly,
\begin{equation}
  \label{eq:11}
      \|s_{D}(z,w)\|_{\infty}= \bigg(\sum_{m\in \Delta_{D} \cap M}\alpha
    _{m} |\chi^{m}(z,w)|^{2}\bigg)^{-\frac{1}{2}}
\end{equation}
for certain weights $\alpha_{m}\in \R_{\ge0}$. 
Hence, this is again a particular case of the
metrics considered in \S~\ref{sec:Twisted-FS}.

\begin{prop}\label{prop:8} With the previous notation
\begin{displaymath}
\upmu^\ess_{\overline{D}}(\P(E))= 
\frac{1}{2}\log\left((n+1)^{a_0}+\dots+(n+1)^{a_r}\right).
\end{displaymath}
\end{prop}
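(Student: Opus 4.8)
The plan is to reduce the statement to a single evaluation of the Archimedean metric function via Corollary \ref{cor:8}\eqref{item:20}. First I would check the hypotheses of that corollary: by construction $\ov D$ carries the canonical metric at every non-Archimedean place of $\Q$, so its $v$-adic metric agrees with the canonical one for all $v\neq\infty$; moreover $\ov D$ is semipositive, either because it was introduced as such, or because \eqref{eq:11} exhibits it as the special case $\Lambda=2$ of the weighted $L^{p}$-metrics of \S~\ref{sec:Twisted-FS}, which are semipositive by Proposition \ref{prop:15}. Since $\P(E)$ is a proper toric variety and $n_{\infty}=1$ for $\Q$, Corollary \ref{cor:8}\eqref{item:20} then gives
\begin{displaymath}
  \upmu^{\ess}_{\ov D}(\P(E)) = -\psi_{\ov D,\infty}(0),
\end{displaymath}
so everything comes down to computing the value at the origin of the concave function $\psi_{\ov D,\infty}\colon N_{\R}\to\R$ attached to the Fubini--Study metric in \eqref{eq:1}.

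Next I would compute $\psi_{\ov D,\infty}(0)$ directly from the defining relation \eqref{eq:45}, namely $\psi_{\ov D,\infty}(\val_{\infty}(p))=\log\|s_{D}(p)\|_{\infty}$ for $p$ in the principal orbit $\P(E)_{0}(\C)\simeq(\C^{\times})^{n+r}$. It is enough to pick any point $p=(z,w)$ all of whose coordinates have absolute value $1$ — for instance the distinguished point with $z_{i}=w_{j}=1$ — so that $\val_{\infty}(p)=0$. Substituting $|z_{i}|_{\infty}=|w_{j}|_{\infty}=1$ (together with the normalization $z_{0}=w_{0}=1$) into \eqref{eq:1} yields $\sum_{i=0}^{n}|z_{i}|^{2}=n+1$ and hence
\begin{displaymath}
  \|s_{D}(z,w)\|_{\infty} = \Big(\sum_{j=0}^{r}(n+1)^{a_{j}}\Big)^{-1/2},
\end{displaymath}
so that $\psi_{\ov D,\infty}(0)=-\frac{1}{2}\log\big((n+1)^{a_{0}}+\dots+(n+1)^{a_{r}}\big)$.

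Combining the two displays gives $\upmu^{\ess}_{\ov D}(\P(E))=\frac{1}{2}\log\big((n+1)^{a_{0}}+\dots+(n+1)^{a_{r}}\big)$, which is the claim. There is no genuine obstacle here: the only points needing a moment's care are verifying that $\ov D$ satisfies the hypotheses of Corollary \ref{cor:8}\eqref{item:20} and that the chosen point lies in the principal open subset with vanishing valuation. Alternatively, one may bypass \eqref{eq:1} and obtain the same value from \S~\ref{sec:Twisted-FS}, noting that $\upmu^{\ess}_{\ov D}(\P(E))$ is the case $i=1$ of Theorem \ref{thm:9}: the only cone of $\Sigma^{0}$ is $\{0\}$, with $F_{\{0\}}=\Delta_{D}$, whence $\upmu^{\ess}_{\ov D}(\P(E))=\frac{1}{2}\log\big(\sum_{m\in\Delta_{D}\cap M}\alpha_{m}\big)$, and $\sum_{m}\alpha_{m}=\sum_{j=0}^{r}(n+1)^{a_{j}}$ is read off \eqref{eq:11} by setting $|z_{i}|=|w_{j}|=1$.
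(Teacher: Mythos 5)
Your proposal is correct, and both of your routes are essentially the paper's argument: the paper invokes Theorem \ref{thm:9} (the $\Lambda=2$ case of the weighted $L^{p}$-metrics) with $i=1$ to reduce to $\frac{1}{2}\log\bigl(\sum_{m\in\Delta_D\cap M}\alpha_m\bigr)$, then computes the sum of weights by evaluating \eqref{eq:1} at the point with $z_i=w_j=1$; your primary route via Corollary \ref{cor:8}\eqref{item:20} merely short-circuits Theorem \ref{thm:9} to land directly on $-\psi_{\ov D,\infty}(0)$, evaluated at the same point, and your ``alternative'' route is word for word the paper's.
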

\begin{proof}
By Theorem \ref{thm:9}, we deduce from \eqref{eq:11} that
\begin{displaymath}
  \upmu^\ess_{\overline{D}}(\P(E))= \frac{1}{2}\log \Big( \sum_{m\in \Delta_{D}}\alpha_{m}\Big).
\end{displaymath}
To compute the sum inside the logarithm, it is enough to evaluate the
expression for
$\|s_{D}(z,w)\|_{\infty}^{-2}$ given by \eqref{eq:1} at $w_{j}=z_{i}=1$ for all
$j,i$, which gives the stated formula.
\end{proof}

\begin{prop}
 Let $1\leq i\leq n+r+1$. Then
 \begin{displaymath}
   \upmu^i_{\ov D}(\P(E))=\min_{\max(0,i-r-1)\leq\ell\leq
  \min(i-1,n)}\bigg(\frac{1}{2}\log
  \Big(\sum_{j=0}^{r+1-i+\ell}(n+1-\ell)^{a_j}\Big)\bigg).
 \end{displaymath}
\end{prop}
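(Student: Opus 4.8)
The plan is to invoke Theorem~\ref{thm:9}. As noted in equation~\eqref{eq:11}, the Archimedean metric on $\ov D$ is a weighted $L^{2}$-metric in the sense of \S~\ref{sec:Twisted-FS} (here $\Lambda=2$, and the weights $\alpha_{m}\in\R_{\ge0}$ are exactly those in \eqref{eq:11}), and $D$ is ample. Hence Theorem~\ref{thm:9} applies and gives, for $1\le i\le n+r+1$,
\[
  \upmu^{i}_{\ov D}(\P(E))=\min_{F\in\cF(\Delta_{D})^{n+r-i+1}}\frac12\log\Big(\sum_{m\in F\cap M}\alpha_{m}\Big).
\]
So everything reduces to a combinatorial analysis of $\Delta_{D}$: describing its faces, computing $\sum_{m\in F\cap M}\alpha_{m}$ for each face $F$, and carrying out the resulting minimization.

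First I would describe the faces. From the explicit presentation of $\Delta_{D}$, its facets are $F^{x}_{i}=\{m\in\Delta_{D}\mid x_{i}=0\}$ for $i=0,\dots,n$ and $F^{y}_{j}=\{m\in\Delta_{D}\mid y_{j}=0\}$ for $j=0,\dots,r$ (these $n+r+2$ facets match the torus-invariant prime divisors of $\P(E)\to\P^{n}$). Since $\Delta_{D}$ projects onto the standard $r$-simplex $\{y\ge0\}$ with the fibre over $y$ equal to the simplex $L(y)\cdot\{x\ge0\}$, and $L(y)=\sum_{j}a_{j}y_{j}>0$ everywhere because every $a_{j}\ge1$, one checks that for every $A\subseteq\{0,\dots,n\}$ with $|A|\le n$ and every $B\subseteq\{0,\dots,r\}$ with $|B|\le r$ the set $F(A,B):=\bigcap_{i\in A}F^{x}_{i}\cap\bigcap_{j\in B}F^{y}_{j}$ is a face of $\Delta_{D}$ of dimension exactly $n+r-|A|-|B|$, and that these exhaust the faces. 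Thus $\cF(\Delta_{D})^{n+r-i+1}$ consists of the $F(A,B)$ with $|A|+|B|=i-1$, $|A|\le n$, $|B|\le r$.

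Next I would identify the lattice points and their weights. A point $m=(x,y)\in\Delta_{D}\cap M$ has $y_{0},\dots,y_{r}$ nonnegative integers with $y_{0}+\dots+y_{r}=1$, so exactly one of them equals $1$; call its index $j(m)$. Then $L(y)=a_{j(m)}$, and $x_{0},\dots,x_{n}$ are nonnegative integers with $x_{0}+\dots+x_{n}=a_{j(m)}$. Expanding $\big(\sum_{i=0}^{n}|z_{i}|^{2}\big)^{a_{j}}$ by the multinomial theorem in \eqref{eq:1} and comparing with \eqref{eq:11} shows $\alpha_{m}=\binom{a_{j(m)}}{x_{0},\dots,x_{n}}$. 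Since $m\in F^{y}_{j'}$ iff $j'\ne j(m)$ and $m\in F^{x}_{i}$ iff $x_{i}=0$, we get $m\in F(A,B)$ iff $j(m)\notin B$ and $x_{i}=0$ for all $i\in A$. A further application of the multinomial theorem then gives, writing $\ell=|A|$,
\[
  \sum_{m\in F(A,B)\cap M}\alpha_{m}
  =\sum_{j\notin B}\ \sum_{\substack{x\ge0,\ x_{i}=0\ \forall i\in A\\ x_{0}+\dots+x_{n}=a_{j}}}\binom{a_{j}}{x_{0},\dots,x_{n}}
  =\sum_{j\notin B}(n+1-\ell)^{a_{j}}.
\]

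Finally I would carry out the minimization. The sum above depends on $A$ only through $\ell=|A|$, so among the faces $F(A,B)$ with $|A|=\ell$ and $|B|=i-1-\ell$ it remains to minimize $\sum_{j\notin B}(n+1-\ell)^{a_{j}}$ over $B$ with $|B|=i-1-\ell$. Since $n+1-\ell\ge1$ and $a_{0}\le\dots\le a_{r}$, the map $j\mapsto(n+1-\ell)^{a_{j}}$ is nondecreasing, so the minimum is attained by taking $B$ to be the $i-1-\ell$ largest indices, which yields $\sum_{j=0}^{r+1-i+\ell}(n+1-\ell)^{a_{j}}$. Minimizing over the admissible values of $\ell$ — those with $0\le\ell\le n$ and $0\le i-1-\ell\le r$, i.e.\ $\max(0,i-r-1)\le\ell\le\min(i-1,n)$ — gives the claimed formula. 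I expect the only delicate point to be the verification in the second paragraph that the sets $F(A,B)$ are precisely the faces of $\Delta_{D}$ and have the asserted dimensions; once that is in place, the rest is a direct computation with multinomial coefficients (and, for $i=1$, it recovers Proposition~\ref{prop:8}).
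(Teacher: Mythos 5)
Your proof is correct and follows essentially the same route as the paper: apply Theorem~\ref{thm:9}, identify the faces of $\Delta_{D}$ with intersections of the coordinate hyperplanes $x_{i}=0$, $y_{j}=0$, compute $\sum_{m\in F\cap M}\alpha_{m}$ for each face, and minimize over face dimensions using the ordering $a_{0}\le\dots\le a_{r}$. The one point where you proceed differently is the computation of $\sum_{m\in F(A,B)\cap M}\alpha_{m}$: you determine the weights $\alpha_{m}$ explicitly as multinomial coefficients $\binom{a_{j(m)}}{x_{0},\dots,x_{n}}$ and re-sum them, while the paper deliberately avoids "developing" the multinomial by introducing a bigraded homogenization $\chi_{0}^{m}$ of the monomials and evaluating both sides of the identity \eqref{eq:30} at the $0/1$ point $p_{I,J}$ supported on the face. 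Both yield $\sum_{j\notin B}(n+1-\ell)^{a_{j}}$, and your version is arguably more direct and self-contained; the paper's bihomogenization trick is a compact way to sidestep the bookkeeping of which lattice points lie on a given face and what their weights are, at the cost of introducing the extra grading.
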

\begin{proof}
  By Theorem \ref{thm:9}
     \begin{equation}\label{eq:47}
     \upmu^{i}_{\ov D}( \P(E))=\min_{F\in \cF(\Delta_{D})
       ^{n+r-i+1}}\frac{1}{2}\log \Big(\sum_{m\in F \cap M}\alpha
     _{m}\Big),
   \end{equation}    
   where the weights $\alpha _{m}$ in \eqref{eq:11} are defined by the
   equation
   \begin{equation}\label{eq:27}
     \sum_{j=0}^{r}|w_{j}|^{2}\bigg(
     \sum_{i=0}^{n}|z_{i}|^{2}\bigg)^{a_{j}}=
     \sum_{m\in \Delta_{D} \cap M}\alpha
    _{m} |\chi^{m}(z,w)|^{2}
   \end{equation}
   with $z_{0}=w_{0}=1$.
   In order to compute $\sum_{m\in F \cap M}\alpha_{m}$ easily without
   developing equation \eqref{eq:27} we use the following trick. Let
   \begin{displaymath}
     m=(x_{1},\dots,x_{n},y_{1},\dots,y_{r})\in \Delta _{D}\cap M,
   \end{displaymath}
   as before we put $y_{0}=1-\sum_{j=1}^{r}y_{j}$ and
   $x_{0}=L(y)-\sum_{i=1}^{n}x_{i}$ and  write
   \begin{displaymath}
     \chi_{0}^{m}(z_{0},\dots,z_{n},w_{0},\dots,w_{r})=\prod_{i=0}^{n}z_{i}^{x_{i}}
     \prod_{j=0}^{r}w_{j}^{y_{j}}.
   \end{displaymath}
   We claim that 
   \begin{equation}\label{eq:30}
     \sum_{j=0}^{r}|w_{j}|^{2}\bigg(
     \sum_{i=0}^{n}|z_{i}|^{2}\bigg)^{a_{j}}=
     \sum_{m\in \Delta_{D} \cap M}\alpha
    _{m} |\chi_{0}^{m}(z,w)|^{2}
   \end{equation}
   for all $(z_{0},\dots,z_{n},w_{0},\dots,w_{r})\in \C^{n+r+2}$.
   We consider the bigrading that gives $z_{i}$ bidegree $(1,0)$ and
   $w_{j}$ bidegree $(-a_{j},1)$. Then both sides of equation
   \eqref{eq:30} are bihomogeneous of bidegree $(0,1)$ and they agree
   whenever  $z_{0}=w_{0}=1$. Therefore they agree on $\C^{n+r+2}$. 

   The faces of $\Delta_{D}$ of dimension $n+r-h$ are the slices obtained
   cutting $\Delta_{D}$ by hyperplanes $x_{i}=0$, $i\in I$ and $y_{j}=0$,
   $j\in J$, with $I\subsetneq\{0,\dots,n\}$, $J\subsetneq\{0,\dots,r\}$
   and $\# I+\# J=h$. We denote $F_{I,J}$ such a face.  Consider
the point $p_{I,J}\in \C^{n+r+2}$ given by $z_{i}=0$ if $i\in I$,
$z_{i}=1$ if $i\not\in I$,   $w_{j}=0$ if $j\in J$,
$w_{j}=1$ if $j\not\in J$. This point satisfies
\begin{displaymath}
  \chi_{0}^{m}(p_{I,J})=
  \begin{cases}
    1&\text{ if }m\in F_{I,J}\cap M,\\
    0&\text{ if }m\in (\Delta _{D}\setminus F_{I,J})\cap M.
  \end{cases}
\end{displaymath}
Evaluating \eqref{eq:30} at
the point $p_{I,J}$ we obtain
\begin{displaymath}
  \sum_{j\not \in J}(n+1-\# I)^{a_{j}}=\sum_{m\in F_{I,J}\cap M}\alpha _{m}.
\end{displaymath}
Thus, by \eqref{eq:47},  $\upmu^i_{\ov D}(\P(E))$ is the
minimum of   
$\frac{1}{2}\log\left(\sum_{j\notin J}(n+1-\# I)^{a_{j}}\right)$
over all $I,J$ satisfying $\# I+\# J  = i-1$. We obtain the
result by writing
$\ell=\# I$ and taking into account that we
ordered the $a_j$ so that $a_r\geq \dots\geq a_0\geq 1$.
\end{proof}

In particular, when $i=1$ then $\ell$ necessarily takes the value $0$
and we recover Proposition \ref{prop:8}. Whereas for $n+1\leq i\leq
n+r+1$ it can be shown that the minimum is attained with $\ell=n$. For
this value the sum
inside the logarithm equals $n+r+2-i$ and we get
$$\upmu^i_{\ov D}(\P(E)) = \frac{1}{2}\log\left(n+r+2-i\right)\ \mbox{for}\
i=n+1,\dots,n+r+1.$$ 

Remarkably, as in \S~\ref{sec:metr-from-polyt}, in this example the
roof function and the height of $\P(E)$ with respect to $\ov D$ are 
also computed, see \cite[\S~7.2]{BurgosPhilipponSombra:agtvmmh}.

\begin{exmpl}
  The particular case $n=r=1$ corresponds to
  the Hirzebruch surfaces: for $b\ge 0$, we have
  $\F_{b}=\P(\cO(0)\oplus \cO(b))\simeq
  \P(\cO(a_{0})\oplus \cO(a_{0}+b)) $ for any $a_{0}\ge1$. Although
  the surface does not depend on the choice of $a_{0}$, the divisor
  does. We set $a_{1}=a_{0}+b$. Then we obtain
  \begin{displaymath}
    \upmu^{\ess}_{\ov
      D}(\F_{b})=\frac{1}{2}\log(2^{a_{0}}+2^{a_{1}}),\quad
    \upmu^{2}_{\ov
      D}(\F_{b})=\frac{1}{2}\log(2),\quad
        \upmu^{\abs}_{\ov
      D}(\F_{b})=0.
  \end{displaymath}
\end{exmpl}

\subsection{Translates of subtori with the canonical metric}
\label{sec:transl-subt-can}

In this section and the next one we study examples where more
that one $v$-adic metric may be different from the canonical one.
We place ourselves in Setting
\ref{setting:1}. We equip $H$ with
the canonical metric at all the places and denote $\ov H^{\can}$
the obtained toric metrized divisor. Write $\ov D=\varphi_{p,\iota}\ov
H^{\can}$. Note that the metric induced on $D$ is not necessarily the
canonical one.

\begin{prop} \label{prop:10}
With the previous notation, for each $v\in \mathfrak{M}_{\K}$ let
$\vartheta_{v}\colon\Delta\to \R$ be the function parametrizing the upper
envelope of the polytope 
\begin{equation*}
  \wt \Delta _{v}:=\conv((m_{0},\log|p_{0}|_{v}), \dots, (m_{r},\log|p_{r}|_{v}))
  \subset M_{\R}\times \R
\end{equation*}
  and set $\vartheta=\sum_{v\in
    \mathfrak{M}_{\K}}n_{v}\vartheta_{v}$. Then, $\vartheta $ is the
  roof function of $\ov D$. In particular, for $i=1,\dots,
  n+1$, 
  \begin{displaymath}
     \upmu^{i}_{\ov D}(X) = \upmu^{i}_{\ov D}(Y) = \min_{F\in
       \cF(\Delta)^{n-i+1}} \max_{x\in
    F}\vartheta(x).
  \end{displaymath}
\end{prop}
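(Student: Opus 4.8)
The plan is to reduce everything to the two black boxes already available: Theorem~\ref{thm:4} (which computes all successive minima of a semipositive toric metrized $\R$-divisor with ample underlying divisor from its global roof function) and Proposition~\ref{prop:9}\eqref{item:3} (invariance of the minima under finite morphisms). So the real content is to check that $\ov D=\varphi_{p,\iota}^{*}\ov H^{\can}$ is semipositive and that its global roof function is the function $\vartheta$ described in the statement.

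First I would compute the adelic family $\{\psi_{\ov D,v}\}_{v}$. By definition of the canonical metric (Example~\ref{exm:6}), $\psi_{\ov H^{\can},v}=\Psi_{H}=\Psi_{\Delta^{r}}=\min(0,\,\cdot_{1},\dots,\cdot_{r})$ for all $v$. Since $D=\varphi_{p,\iota}^{*}H$ and, under the identifications of \cite[(3.2.3)]{BurgosPhilipponSombra:agtvmmh}, the canonical section $s_{D}$ is the pullback of $s_{H}$, the defining formula \eqref{eq:45} gives, for $t\in\T_{0}$ with $u=\val_{v}(t)$,
\[
  \psi_{\ov D,v}(u)=\log\|s_{H}(\varphi_{p,\iota}(t))\|_{\can,v}
  =\Psi_{\Delta^{r}}\bigl(\val_{v}(\varphi_{p,\iota}(t))\bigr).
\]
Writing $\varphi_{p,\iota}(t)$ in the coordinates of $\P^{r}_{0}\simeq\G_{m}^{r}$ and using $m_{0}=0$, the $j$-th coordinate of $\val_{v}(\varphi_{p,\iota}(t))$ equals $\langle m_{j},u\rangle-\log|p_{j}/p_{0}|_{v}$, so
\[
  \psi_{\ov D,v}(u)=\min_{0\le j\le r}\bigl(\langle m_{j},u\rangle-\log|p_{j}/p_{0}|_{v}\bigr).
\]
In particular each $\psi_{\ov D,v}$ is a minimum of affine functions, hence concave, so $\ov D$ is semipositive; and $|\psi_{\ov D,v}-\Psi_{D}|$ is bounded, as it must be. This step, with the sign and normalization bookkeeping coming from the translation part of $\varphi_{p,\iota}$ (the choice of homogeneous representative of $p$), is where most of the care is required, and I expect it to be the only genuine obstacle; the rest is formal.

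Next I would dualize. For a concave function of the form $u\mapsto\min_{j}(\langle m_{j},u\rangle+a_{j})$, Legendre--Fenchel duality (cf. \cite[\S~2]{BurgosPhilipponSombra:agtvmmh}, \cite[\S~13]{Rockafellar:ca}) identifies its dual with the roof (upper envelope) function of the polytope $\conv\{(m_{j},-a_{j})\}_{j}\subset M_{\R}\times\R$, supported on $\conv\{m_{j}\}$; concretely it is $x\mapsto\max\{\sum_{j}\mu_{j}(-a_{j}):\sum_{j}\mu_{j}m_{j}=x,\ \mu_{j}\ge0,\ \sum_{j}\mu_{j}=1\}$. Applying this with $a_{j}=-\log|p_{j}/p_{0}|_{v}$ shows that $\vartheta_{\ov D,v}=\psi_{\ov D,v}^{\vee}$ is the upper envelope of $\conv\{(m_{j},\log|p_{j}|_{v})\}-(0,\log|p_{0}|_{v})=\wt\Delta_{v}-(0,\log|p_{0}|_{v})$; that is, $\vartheta_{\ov D,v}=\vartheta_{v}-\log|p_{0}|_{v}$ on $\Delta=\Delta_{D}$. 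Summing over $v$ with the weights $n_{v}$ and using the product formula $\sum_{v}n_{v}\log|p_{0}|_{v}=0$ for $p_{0}\in\K^{\times}$ (equivalently: $\vartheta=\sum_{v}n_{v}\vartheta_{v}$ is unchanged if one rescales the homogeneous coordinates of $p$), I obtain $\vartheta_{\ov D}=\vartheta$. Thus $\vartheta$ is indeed the global roof function of $\ov D$.

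Finally, $D$ is ample with $\Delta_{D}=\Delta$ by Setting~\ref{setting:1}, and $\ov D$ is semipositive by the first step, so Theorem~\ref{thm:4} applies and yields
\[
  \upmu^{i}_{\ov D}(X)=\min_{F\in\cF(\Delta)^{n-i+1}}\max_{x\in F}\vartheta_{\ov D}(x)
  =\min_{F\in\cF(\Delta)^{n-i+1}}\max_{x\in F}\vartheta(x),\qquad i=1,\dots,n+1.
\]
Since $\varphi_{p,\iota}$ factors as a finite surjective morphism $X\to Y$ followed by the inclusion $Y\hookrightarrow\P^{r}$ (Setting~\ref{setting:1}), and $\ov D=\varphi_{p,\iota}^{*}\ov H^{\can}$ is the pullback to $X$ of $\ov H^{\can}|_{Y}$, Proposition~\ref{prop:9}\eqref{item:3} gives $\upmu^{i}_{\ov D}(X)=\upmu^{i}_{\ov D}(Y)$ (with the usual abuse of writing $\ov D$ also for $\ov H^{\can}|_{Y}$), which completes the argument.
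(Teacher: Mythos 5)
Your proof is correct and follows the same overall route as the paper: identify $\vartheta$ as the global roof function of $\ov D$, check semipositivity and ampleness, then apply Theorem~\ref{thm:4} and Proposition~\ref{prop:9}\eqref{item:3}. The only difference is that where the paper simply cites \cite[Example~5.1.16]{BurgosPhilipponSombra:agtvmmh} for the identification $\vartheta_{\ov D}=\vartheta$, you carry out the Legendre--Fenchel computation explicitly, and you do it correctly, including the two pieces of bookkeeping that actually require care: the dependence on the homogeneous representative of $p$ (absorbed into the $\log|p_0|_v$ shift of $\wt\Delta_v$) and the product formula $\sum_v n_v\log|p_0|_v=0$, which makes the global sum well defined and independent of that choice. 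You also make explicit that each $\psi_{\ov D,v}$ is a minimum of affine functions, which gives semipositivity of $\ov D$, a hypothesis of Theorem~\ref{thm:4} that the paper invokes without comment.
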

\begin{proof}
  By \cite[Example 5.1.16]{BurgosPhilipponSombra:agtvmmh}, the
  function $\vartheta$ coincides with the roof function of $\ov
  D$. Since $D$ is ample and $\ov D$ is semipositive, Theorem
  \ref{thm:4} then gives the formula for the successive 
  minima of $X$. The fact that the successive minima of $X$ and $Y$
  coincide follows from Proposition~\ref{prop:9}\eqref{item:3}.
\end{proof}
 
By Proposition \ref{prop:10}, the computation of the  successive minima
of a translate of a 
subtori and of its normalization amounts to the computation of the
maximum of a piecewice affine function over a polytope. This is a
problem of linear programming. To do this in a concrete case, consider
the polytopes $\wt \Delta _{v}$ and the functions $\vartheta _{v}$ in
Proposition \ref{prop:10} and apply the following steps:

\begin{enumerate}[label=\rm (\alph*)]
\item \label{item:12} for each $v$ such that $\vartheta_{v}\not\equiv
  0$, compute the 
  regular subdivision $\Pi_{v}$ of $\Delta$ given by the
  projection of the faces of the polytope $\wt \Delta_{v}$;  

\item \label{item:13} compute a subdivision $\Pi$ refining $\Pi_{v}$
  for all $v$. This subdivision can be constructed by intersecting
  all the polyhedra in the different $\Pi _{v}$ as in \cite[Definition
  2.1.8]{BurgosPhilipponSombra:agtvmmh}; 

\item \label{item:14} the function $\vartheta $ is affine on each
  polytope of $\Pi $.
  Hence, for each face $F$ of $\Delta$, the maximum $\max_{x\in
    F}\vartheta(x)$ is realized at a vertex of $\Pi $ and to
  compute it we only
  need the values of  $\vartheta$ at the finite set $F\cap \Pi ^{0}$.
  Thus we obtain
  \begin{displaymath}
    \upmu^{i}_{\ov D}(X) = \upmu^{i}_{\ov D}(Y) = \min_{F\in
       \cF(\Delta)^{n-i+1}} \max_{x \in F\cap \Pi^{0}} \vartheta(x).
  \end{displaymath}
\end{enumerate}

Observe that, for each place $v$, the vertices of the subdivision $\Pi
_{v}$ in \ref{item:12} are lattice 
points. If the dimension of $Y$ is one, this implies that we can
choose $\Pi $ in \ref{item:13} such that all its vertices are
lattice points. This is 
the case in the example in the introduction. By contrast, in
higher dimension, we may need $\Pi $ to have non-lattice vertices as
shown in the next example.

\begin{exmpl}\label{exm:3}
Consider the quadric $S\subset \P^{3}$ defined as the closure of the
monomial map
\begin{displaymath}
  \T^2 \longrightarrow \P^3, \quad (t_{1},t_{2}) \longmapsto
  (1 : 2 t_{1}: 4 t_{2}: t_{1}t_{2}).
\end{displaymath} 
As before let $\ov D$ be the restriction of the metrized divisor $\ov
H$ to $S$.
The corresponding $v$-adic roof functions are described by the diagram
in Figure 
\ref{fig:localrfq}. These functions are the minimal concave piecewise
affine functions on the square with the prescribed values at the
vertices. The subdivisions $\Pi _{v}$ are also given in the diagram. 

\captionsetup[subfigure]{labelformat=empty}

\begin{figure}[h]
  \centering
  \begin{subfigure}[1]{0.32\textwidth}
    \input{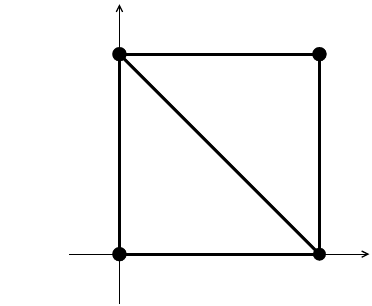_t}
    \caption{$v=\infty$}
  \end{subfigure}
  \hspace*{-1mm}
  \begin{subfigure}[2]{0.32\textwidth}
    \input{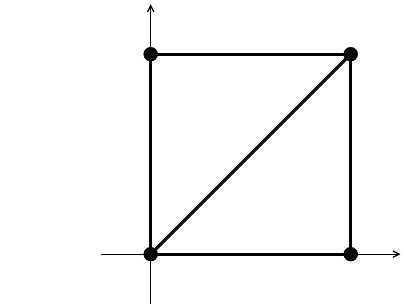_t}    
    \caption{$v=2$}
  \end{subfigure}
  \hspace*{7mm}
  \begin{subfigure}[3]{0.26\textwidth}
    \input{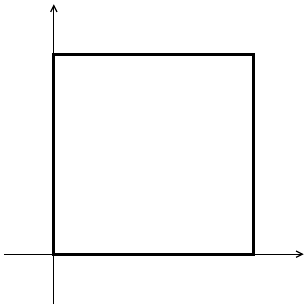_t}    
    \caption{$v\ne\infty, 2$}
  \end{subfigure}
\caption{Local roof functions}\label{fig:localrfq}
\end{figure}

The global roof function and the subdivision $\Pi $ are given in
Figure \ref{fig:globalrfq}. From this picture, it follows that 
\begin{math}
  \upmu^{\ess}_{\ov D}(S)=\frac{3}{2} \log(2) \text{ and }
  \upmu^{2}_{\ov D}(S)=  \upmu^{3}_{\ov D}(S)=0.
\end{math}

\begin{figure}[h]
  \centering
    \input{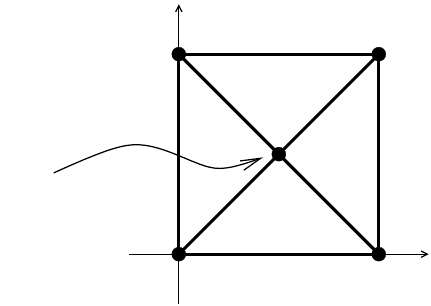_t}
\caption{Global roof function}\label{fig:globalrfq}
\end{figure}
\end{exmpl}

\begin{rem}
  The method used in this example can be applied to compute the
  successive minima of any
  toric variety over $\K$ with a semipositive toric metrized $\R$-divisor $\ov D$
  such that $D$ is ample and the associated functions
  $\psi _{\ov D,v}$ are piecewise affine. In this case, for each $v$, the local roof
  function $\vartheta _{v}$ is not given by Proposition
  \ref{prop:10}, but it is computed as the Legendre dual of $\psi
  _{\ov D,v}$. Moreover, if one is only interested in the essential
  minimum, we can drop the ampleness and semipositiveness assumptions.   
\end{rem}

\subsection{Translates of subtori with the Fubini-Study metric}
\label{sec:transl-subt-fub}
We consider now the case when $X$ is a toric variety over $\Q$ and
$\ov D$ is a semipositive toric metrized $\R$-divisor, with $D$ ample
and such  that, for every non-Archimedean place 
$v\in \mathfrak{M}_{\K}$, the function $\psi _{\ov D, v}$ is
piecewise affine and for $v=\infty$, 
the function $\psi _{\ov D,
  v}$ is smooth. This is the case when the non-Archimedean
metrics are defined by means of a model and the Archimedean metric is
smooth, which is the situation classically considered in Arakelov
geometry.
 
Let $S\subset \mathfrak{M}_{\K}$ be the finite subset containing all
non-Archimedean places with $\psi _{\ov D,v}\not = \Psi _{D}$. 

\begin{lem} \label{lemm:10} With the previous notation, the essential
  minimum of
  $X$ with respect to $\ov D$ is computed  by applying the
  following steps.
  \begin{enumerate}[label=\rm (\alph*)]
  \item \label{item:16} For each place $v\in S$ we compute the
    function $\vartheta_{\ov D,v}$ as the Legendre-Fenchel dual of $\psi
    _{\ov D,v}$.
  \item Set $\vartheta _{S}=\sum _{v\in S}\vartheta _{\ov D,v}$ and
    compute its Legendre-Fenchel dual $\psi_{S}$.
  \item \label{item:15} Find a value $u_{0}\in N_{\R}$ such that
    \begin{displaymath}
      \partial \psi _{\ov D,\infty}(-u_{0})\in \partial \psi_{S}(u_{0}).
    \end{displaymath}
    In this condition, the left hand side is the
    differential of a smooth function and hence a vector, while the
    right hand side is the sup-differential of a concave piecewise
    affine function and hence is a set of vectors.
  \item The essential minimum of $X$ with respect to $\ov D$ is given by
    \begin{displaymath}
      \upmu^{\ess}_{\ov D}(X)=-\psi _{S}(u_{0})-\psi _{\ov D,\infty}(-u_{0}).
    \end{displaymath}
  \end{enumerate}  
\end{lem}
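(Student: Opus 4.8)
The plan is to reduce the computation of $\upmu^{\ess}_{\ov D}(X)$ to a single finite-dimensional concave maximisation over $N_{\R}$, solved by a first-order optimality condition. Throughout, recall that $\K=\Q$, so all the weights $n_{v}$ equal $1$; write $\psi_{\infty}=\psi_{\ov D,\infty}$, $\psi_{v}=\psi_{\ov D,v}$ for $v\in S$, and $\Delta=\Delta_{D}$. First I would record, from Corollary \ref{cor:4}, that $\upmu^{\ess}_{\ov D}(X)=\max_{x\in M_{\R}}\vartheta_{\ov D}(x)=-\vartheta_{\ov D}^{\vee}(0)$. Since $\ov D$ is semipositive, every $\psi_{v}$ (including $v=\infty$) is concave with $|\psi_{v}-\Psi_{D}|$ bounded, hence with stability set $\Delta$; and since $D$ is ample, $\Delta$ is a polytope of full dimension $n$. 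For $v\notin S\cup\{\infty\}$ one has $\psi_{v}=\Psi_{D}$, so $\vartheta_{\ov D,v}\equiv0$ and therefore $\vartheta_{\ov D}=\vartheta_{\ov D,\infty}+\vartheta_{S}$ with $\vartheta_{S}=\sum_{v\in S}\vartheta_{\ov D,v}$; both summands are continuous concave functions with domain $\Delta$, and for $v\in S$ the dual $\vartheta_{\ov D,v}=\psi_{v}^{\vee}$ is piecewise affine because $\psi_{v}$ is. This justifies that steps \ref{item:16} and the one following it are effective, with $\psi_{S}:=\vartheta_{S}^{\vee}$ again piecewise affine.

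Next comes a duality computation. Using the biduality $\psi_{\infty}^{\vee\vee}=\psi_{\infty}$ (valid since $\psi_{\infty}$ is finite, continuous, hence closed and concave on $N_{\R}$) together with the fact that Legendre--Fenchel duality exchanges the pointwise sum and the sup-convolution of concave functions with common stability set \cite[Proposition 2.3.1]{BurgosPhilipponSombra:agtvmmh}, I would obtain
\begin{equation*}
\vartheta_{\ov D}^{\vee}=(\vartheta_{\ov D,\infty}+\vartheta_{S})^{\vee}=\psi_{\infty}\boxplus\psi_{S},
\end{equation*}
and hence, evaluating the sup-convolution at the origin,
\begin{equation*}
\upmu^{\ess}_{\ov D}(X)=-(\psi_{\infty}\boxplus\psi_{S})(0)=-\sup_{u_{0}\in N_{\R}}\bigl(\psi_{\infty}(-u_{0})+\psi_{S}(u_{0})\bigr).
\end{equation*}
It then remains to analyse the concave function $G(u_{0}):=\psi_{\infty}(-u_{0})+\psi_{S}(u_{0})$. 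Both $\psi_{\infty}$ and $\psi_{S}$ differ from the support function $\Psi_{\Delta}=\inf_{x\in\Delta}\langle x,\cdot\rangle$ of $\Delta$ by a bounded amount, so $G(u_{0})-\bigl(\inf_{x\in\Delta}\langle x,u_{0}\rangle-\sup_{x\in\Delta}\langle x,u_{0}\rangle\bigr)$ is bounded; the parenthesised term is $\le0$, homogeneous of degree one, and vanishes only when $\langle\cdot,u_{0}\rangle$ is constant on $\Delta$, which since $\dim\Delta=n$ forces $u_{0}=0$. Hence $G(u_{0})\to-\infty$ as $\|u_{0}\|\to\infty$ and, being continuous, $G$ attains its maximum at some $u_{0}$. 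By the superdifferential sum rule \cite[Theorem 23.8]{Rockafellar:ca}, applicable because both terms of $G$ are finite everywhere on $N_{\R}$, a point $u_{0}$ maximises $G$ if and only if $0\in\partial G(u_{0})=-\partial\psi_{\infty}(-u_{0})+\partial\psi_{S}(u_{0})$; since $\psi_{\infty}$ is smooth this reads $\nabla\psi_{\infty}(-u_{0})\in\partial\psi_{S}(u_{0})$, which is precisely step \ref{item:15}. For such $u_{0}$ one concludes $\upmu^{\ess}_{\ov D}(X)=-G(u_{0})=-\psi_{S}(u_{0})-\psi_{\infty}(-u_{0})$, the asserted formula.

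The main obstacle I anticipate is the attainment of the maximum of $G$, and in particular pinning down that ampleness of $D$ — through $\dim\Delta=n$ — is exactly what guarantees that a solution $u_{0}$ of the first-order condition exists; if $D$ were merely nef, $G$ could fail to attain its supremum along a direction orthogonal to $\Delta$ and the recipe would return only an infimum. Everything else — the vanishing of $\vartheta_{\ov D,v}$ off $S\cup\{\infty\}$, the biduality and the sum/sup-convolution identities, the piecewise affineness of $\psi_{S}$, and the superdifferential calculus — is routine convex analysis once the relevant boundedness and closedness hypotheses, already recorded in \cite{BurgosPhilipponSombra:agtvmmh,BurgosMoriwakiPhilipponSombra:aptv}, are invoked.
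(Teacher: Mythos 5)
Your proposal is correct and follows essentially the same route as the paper: reduce to $\upmu^{\ess}_{\ov D}(X)=-(\psi_S\boxplus\psi_{\ov D,\infty})(0)$, then solve the resulting concave optimisation by the first-order condition. The paper gets the sup-convolution formula directly from Corollary~\ref{cor:8}\eqref{item:21} rather than re-deriving it from Corollary~\ref{cor:4} via biduality, and for attainment of the supremum it simply invokes \cite[Theorem 16.4]{Rockafellar:ca} (equal stability sets imply the sup-convolution is attained), whereas you give a self-contained coercivity argument using $\dim\Delta=n$. One small caveat on your final remark: full-dimensionality of $\Delta$ is sufficient for your coercivity estimate, but the paper's Rockafellar citation shows attainment already holds when the stability sets merely agree (relative interiors intersect), so ampleness is not strictly the decisive point for attainment — it is needed elsewhere (to apply Theorem~\ref{thm:4} for the higher minima and to ensure $\Delta$ is a polytope with the right combinatorics), but nef would have sufficed for existence of $u_0$.
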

\begin{proof}
  By Corollary \ref{cor:8}, we know that
  \begin{displaymath}
    \upmu^{\ess}_{\ov D}(X)=
    -\Big(\big(\boxplus _{v\in S} \psi_{\ov D,v}\big)\boxplus
    \psi_{\ov D,\infty}  \Big)(0).
  \end{displaymath}
  By \cite[Proposition 2.3.1]{BurgosPhilipponSombra:agtvmmh}, the
  sup-convolution is dual to the sum and so
  $\boxplus _{v\in S} \psi_{\ov D,v}=\psi_{S}$. Hence
  \begin{displaymath}
    \upmu^{\ess}_{\ov D}(X)=
    -\Big(\psi_{S}\boxplus
    \psi_{\ov D,\infty} \Big)(0)=-
    \sup_{u\in N_{\R}}\big(\psi_{S}(u)+\psi_{\ov D,\infty}(-u)\big).
  \end{displaymath}
  Since the stability sets of $\psi_{S}$ and $\psi_{\ov D,\infty}$
  agree, by \cite[Theorem 16.4]{Rockafellar:ca}, the supremum is
  attained at some point. By the concavity of the functions, the supremum is
  attained at any point $u_{0}$ satisfying the condition
  \begin{displaymath}
    0\in \partial(\psi_{S}(u)+\psi_{\ov D,\infty}(-u))(u_{0}), 
  \end{displaymath}
  which is equivalent to the condition given in step \ref{item:15}.
\end{proof}

We place ourselves again in Setting \ref{setting:1} with $\K=\Q$
and we equip $H$ with the Fubini-Study metric at the Archimedean place
and the canonical metric at the non-Archimedean places. We denote $\ov
H^{\FS}$ the obtained toric metrized divisor and we set $\ov
D=\varphi_{p,\iota}^{\ast} \ov H^{\FS}$.

In this case the pair $(X,\ov D)$ satisfies the hypothesis of Lemma
\ref{lemm:10}. Moreover, for each Archimedean place $v$, the function
$\vartheta
_{\ov D,v}$ is given by the function $\vartheta _{v}$ in Proposition
\ref{prop:10}, hence step \ref{item:16} is already done. For the
Archimedean place the function $\psi _{\ov D,\infty}$ is given by
\begin{displaymath}
  \psi _{\ov
    D,\infty}(u)=-\frac{1}{2}\log\Big(\sum_{j=0}^{r}|p_{j}|^{2}\e^{-2\langle
  m_{j},u\rangle}\Big).
\end{displaymath}
We illustrate the recipe in Lemma \ref{lemm:10} in the following
examples, where $\ov D$  denotes the metrized divisor defined as before. 

\begin{exmpl}\label{exm:4}
  Let  $C \subset \P^2_{\Q}$ be the quadric curve over $\Q$ given as the image
of the  map  
\begin{displaymath}
  \P^1 \longrightarrow \P^2, \quad (t_{0}:t_{1}) \longmapsto
  \Big(t_{0}^{2} : \frac{1}{4}\, t_{0}t_{1}: \frac{1}{2}\, t_{1}^2\Big).
\end{displaymath} 
Then, for $v\not=2,\infty$, we
have $\psi _{\ov D,v}=\Psi _{D}$ and the corresponding metric agrees
with the canonical metric. Moreover
\begin{align*}
  \psi _{\ov D,2}(u)&=\min(0,u-2\log(2),2u-\log(2)),\\
  \psi_{\ov
    D,\infty}(u)&=
  -\frac{1}{2}\log\Big(1+\frac{1}{16}\e^{-2u}+\frac{1}{4}\e^{-4u}\Big).
\end{align*}
In this case $\psi _{S}=\psi _{\ov D,2}$ and $\partial \psi _{S}$ is
given by
\begin{displaymath}
  \partial \psi _{S}(u)=
  \begin{cases}
    2&\text{ if } u < -\log(2),\\
    [1,2]&\text{ if } u = -\log(2),\\
    1&\text{ if } -\log(2)< u < 2\log(2),\\
    [0,1]&\text{ if } u = 2\log(2),\\
    0&\text{ if } 2\log(2)< u.
  \end{cases}
\end{displaymath}
Then, analyzing the function $\partial \psi _{\ov D,\infty}(-u)$, we
deduce that the point $u_{0}$ that satisfies the condition in
step \ref{item:15} belongs to the interval $-\log(2)< u <
2\log(2)$. Thus we have to solve the equation $\partial \psi _{\ov
  D,\infty}(-u)=1$, whose only solution is
$u_{0}=\frac{1}{2}\log(2)$. Thus 
\begin{displaymath}
  \upmu^{\ess}_{\ov D}(C)=-\psi _{\ov D,2}\Big(\frac{1}{2}\log(2)\Big)-
  \psi_{\ov D,\infty}\Big(-\frac{1}{2}\log(2)\Big)=\frac{1}{2}\log(17).
\end{displaymath}
\end{exmpl}

\begin{exmpl}\label{exm:5}
  Consider the quadric surface of Example \ref{exm:3}, but recall that now
  $\ov D$ has the restriction of the Fubini-Study metric at the
  Archimedean place instead of the restriction of the canonical one.

  For $v\not = 2,\infty$, the function $\psi _{\ov D,v} =\Psi _{D}$.
  Hence $\psi _{S}=\psi_{\ov D,2}$. The function $\psi _{S}$ and its
  sup-differential is illustrated in Figure \ref{fig:psiS}. In this figure, we
  see a polyhedral decomposition of the plane. The two vertices of
  this polyhedral decomposition are the points $(-\log(2),\log(2))$
  and $(2\log(2),-2\log(2))$. 

  The function $\psi _{S}$ is affine in each of the
  four maximal polyhedra and its value on each polyhedra is given in
  the figure. In the interior of each of these polyhedra, the
  sup-differential contains a single vector also given in the figure. The
  sup-differential at a point belonging to a non-maximal polyhedra is the
  convex envelope of the sup-differentials of the neighbouring maximal
  polyhedra. For instance,
  \begin{alignat*}{2}
    \partial \psi _{S}(u_{1},u_{2})&=\conv((0,0),(1,0),(1,1)) \quad  &&\text{
      if } -\log(2)=
    u_{1}=-u_{2},\\
    \partial \psi _{S}(u_{1},u_{2})&=\conv((0,0),(1,1))  &&\text{ if }
    -\log(2)<u_{1}=-u_{2}<2\log(2).
  \end{alignat*}
\begin{figure}[h]
  \centering
    \input{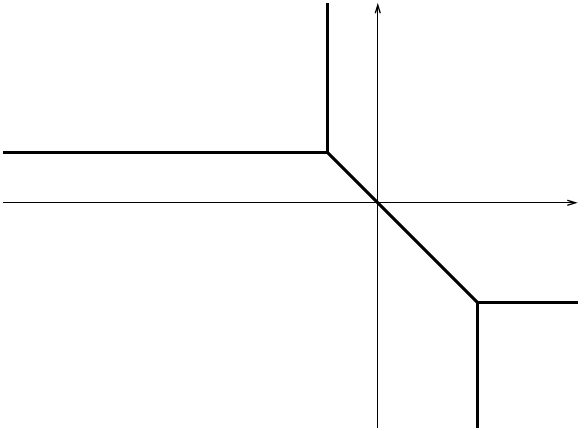_t}
\caption{Function $\psi_{S}$ and its gradient}\label{fig:psiS}
\end{figure}
\end{exmpl}
The function $\psi _{\ov D,\infty}$ is given by
\begin{displaymath}
  \psi _{\ov D,\infty}(u_{1},u_{2})=
  -\frac{1}{2}\log(1+4\e^{-2u_{1}}+16\e^{-2u_{2}}+\e^{-2(u_{1}+u_{2})}).
\end{displaymath}
One checks that
\begin{displaymath}
  0< \frac{\partial \psi _{\ov D,\infty}}{\partial u_{1}}
  (u_{1},u_{2}),\frac{\partial \psi _{\ov D,\infty}}{\partial u_{2}}
  (u_{1},u_{2})< 1.
\end{displaymath}
This implies that a point $u_{0}$ satisfying the condition of step
\ref{item:15} belongs to the interval
$-\log(2)<u_{1}=-u_{2}<2\log(2)$. Thus we have to solve the equation
\begin{displaymath}
  \frac{\partial \psi _{\ov D,\infty}}{\partial u_{1}}
  (-u_{1},-u_{2})=\frac{\partial \psi _{\ov D,\infty}}{\partial u_{2}}
  (-u_{1},-u_{2}),\quad \text{ with }u_{1}=-u_{2}.
\end{displaymath}
This equation has a single solution at the point
$u_{0}=\frac{1}{2}(\log(2),-\log(2))$. Thus
\begin{displaymath}
  \upmu^{\ess}_{\ov D}(S)=-\psi
  _{S}(u_{0})-
  \psi_{\ov D,\infty}(-u_{0})=
  \log(3\sqrt{2}).
\end{displaymath}

\newcommand{\noopsort}[1]{} \newcommand{\printfirst}[2]{#1}
  \newcommand{\singleletter}[1]{#1} \newcommand{\switchargs}[2]{#2#1}
  \def\cprime{$'$}
\providecommand{\bysame}{\leavevmode\hbox to3em{\hrulefill}\thinspace}
\providecommand{\MR}{\relax\ifhmode\unskip\space\fi MR }
\providecommand{\MRhref}[2]{%
  \href{http://www.ams.org/mathscinet-getitem?mr=#1}{#2}
}
\providecommand{\href}[2]{#2}

\end{document}